\newtheorem{theorem}{Theorem}
\newtheorem{lemma}[theorem]{Lemma}
\newtheorem{proposition}[theorem]{Proposition}
\newtheorem{corollary}[theorem]{Corollary}
\theoremstyle{definition}
\newtheorem{definition}[theorem]{Definition}
\newtheorem{example}[theorem]{Example}
\newtheorem{remark}[theorem]{Remark}
\definecolor{cUeberschrift}{rgb}{0.2,0.70,0.23}
\newcommand{\cR}{\mathcal{R}}
\newcommand{\cD}{\mathcal{D}}
\newcommand{\cL}{\mathcal{L}}
\newcommand{\cO}{\mathcal{O}}
\newcommand{\cP}{\mathcal{P}}
\newcommand{\cV}{\mathcal{V}}
\newcommand{\cE}{\mathcal{E}}
\newcommand{\cF}{\mathcal{F}}
\newcommand{\cT}{\mathcal{T}}
\newcommand{\cX}{\mathcal{X}}
\newcommand{\cM}{\mathcal{M}}
\newcommand{\cm}{\mathfrak{m}}
\newcommand{\Cstar}{\mathbb{C}^*}
\newcommand{\CC}{\mathbb{C}}
\newcommand{\RR}{\mathbb{R}}
\newcommand{\QQ}{\mathbb{Q}}
\newcommand{\ZZ}{\mathbb{Z}}
\newcommand{\NN}{\mathbb{N}}
\newcommand{\PP}{\mathbb{P}}
\renewcommand{\AA}{\mathbb{A}}
\newcommand{\marked}{\mathcal{M}}
\DeclareMathOperator{\rank}{rank}
\DeclareMathOperator{\tail}{tail}
\DeclareMathOperator{\im}{im}
\DeclareMathOperator{\SL}{SL}
\DeclareMathOperator{\Hom}{Hom}
\DeclareMathOperator{\divisor}{div}
\renewcommand{\div}{\divisor}
\DeclareMathOperator{\DivQ}{{Div_\QQ}}
\DeclareMathOperator{\cadiv}{CaDiv}
\DeclareMathOperator{\cl}{Cl}
\DeclareMathOperator{\wdiv}{Div}
\DeclareMathOperator{\CaSF}{CaSF}
\DeclareMathOperator{\tcadiv}{T-CaDiv}
\DeclareMathOperator{\Cox}{Cox}
\DeclareMathOperator{\Eff}{Eff}
\DeclareMathOperator{\Nef}{Nef}
\DeclareMathOperator{\Loc}{Loc}
\DeclareMathOperator{\Sing}{Sing}
\newcommand{\Sym}{\textrm{Sym}^\bullet}
\DeclareMathOperator{\spec}{Spec}
\DeclareMathOperator{\proj}{Proj}
\DeclareMathOperator{\ord}{ord}
\DeclareMathOperator{\orb}{orb}
\DeclareMathOperator{\dist}{dist}
\DeclareMathOperator{\supp}{supp}
\DeclareMathOperator{\relint}{relint}
\DeclareMathOperator{\id}{id}
\DeclareMathOperator{\TV}{TV}
\DeclareMathOperator{\coeff}{coeff}
\DeclareMathOperator{\coef}{coef}
\DeclareMathOperator{\face}{face}
\DeclareMathOperator{\conv}{conv}
\DeclareMathOperator{\spann}{span}
\DeclareMathOperator{\open}{open}
\DeclareMathOperator{\closed}{closed}
\DeclareMathOperator{\vol}{vol}
\newcommand{\hstar}{h^*}
\newcommand{\f}{{\mathfrak{f}}}
\newcommand{\fan}{\mathcal{S}}
\newcommand{\bangle}[1]{\langle\, #1 \,\rangle}
\newcommand{\an}[1]{#1^{\textnormal{an}}}
\newcommand{\ovl}{\overline}
\newcommand{\wt}{\widetilde}
\newcommand{\sdeg}{\mathfrak{deg}}
\definecolor{cKlaus}{rgb}{0.1,0.0,0.9}
\definecolor{cLars}{rgb}{0.1,0.60,0.43}
\definecolor{cHendrik}{rgb}{0.8,0.4,0.3}
\definecolor{cNathan}{rgb}{0.0,1.0,0.0}
\definecolor{cRobert}{rgb}{0.9,0.08,0.5}
\newcommand{\normal}{\mathcal{N}}
\newcommand{\tM}{\wt{M}}       % \tilde M
\newcommand{\tN}{\wt{N}}       % \tilde N
\newcommand{\tT}{\wt{T}}       % \tilde T (zugehoeriger Torus)
\newcommand{\kP}{{\mathcal P}}
\newcommand{\kV}{{\mathcal V}} % vertices of a polyhedron
\newcommand{\kR}{{\mathcal R}}
\newcommand{\kVR}{{\kV\cup\kR}}
\newcommand{\kFQ}{\QQ^{\kVR}}    % Q^vr
\newcommand{\kpR}{\QQ^{\kR}_{\geq 0}}    % positiver Oktant in Q^r
\newcommand{\kpVR}{\QQ^{\kVR}_{\geq 0}}    % positiver Oktant in Q^vr
\newcommand{\pFan}{\mathcal{S}} % p-fan S
\newcommand{\pDiv}{\mathcal{D}} % p-Div D
\newcommand{\Z}{\ZZ}
\newcommand{\Q}{\QQ}
\DeclareMathOperator{\kDiv}{Div}
\newcommand{\surj}{\rightarrow\hspace{-0.8em}\rightarrow}
\newcommand{\ko}{\overline}
\DeclareMathOperator{\algGr}{algGr} % index Algebraic Groups
\newcommand{\gHom}{\mbox{\rm Hom}}
\newcommand{\kG}{\Gamma}
\newcommand{\kst}{\,|\;}
\newcommand{\kk}{\CC}
\DeclareMathOperator{\Pol}{Pol}
\DeclareMathOperator{\Spec}{Spec}
\DeclareMathOperator{\CaDiv}{CaDiv}
\DeclareMathOperator{\Div}{Div}
\DeclareMathOperator{\Cl}{Cl}
\DeclareMathOperator{\innt}{int}
\newcommand{\CO}{{\mathcal O}}
\newcommand{\dual}{^{\scriptscriptstyle\vee}}
\newcommand{\then}{\Rightarrow}
\DeclareMathOperator{\loc}{loc}
\renewcommand{\iff}{\Leftrightarrow}
\newcommand{\til}[1]{\widetilde{#1}}
\DeclareMathOperator{\codim}{codim}
\newcommand{\chQ}{/\hspace{-0.3em}/^{\mbox{\tiny ch}}} % Chow-Quotient
\newcommand{\keps}{\varepsilon}
\newcommand{\kbb}{{\scriptstyle \bullet}}
\newcommand{\rato}{-\hspace{-0.3em}\to}  % rationale Abb
\DeclareMathOperator{\toric}{TV}
\DeclareMathOperator{\ttoric}{\widetilde{TV}}
\newcommand{\ptoric}{\PP}
\newcommand{\ul}{\underline}
\newcommand{\cU}{\mathcal{U}}
\DeclareMathOperator{\suppFunc}{sf}
\DeclareMathOperator{\convHull}{convHull}
\DeclareMathOperator{\project}{pr}
\newcommand{\smat}[1]{\left(\begin{smallmatrix}#1\end{smallmatrix}\right)}
\newcommand{\pmat}[1]{\begin{pmatrix}#1\end{pmatrix}}
\begin{document}

\title[The Geometry of $T$-Varieties]
{The Geometry of $T$-Varieties}

\author[K.~Altmann]{Klaus Altmann}
\address{Institut f\"ur Mathematik und Informatik,
        Freie Universit\"at Berlin,
        Arnimallee 3,
        14195 Berlin, Germany}
\email{altmann@math.fu-berlin.de}
\author[N.O.~Ilten]{Nathan Owen Ilten}
\address{Max Planck Institut f\"ur Mathematik,
        PF 7280,
        53072 Bonn, Germany}
\email{nilten@cs.uchicago.edu}
\author[L.~Petersen]{Lars Petersen}
\address{Institut f\"ur Mathematik und Informatik,
        Freie Universit\"at Berlin,
        Arnimallee 3,
        14195 Berlin, Germany}
\email{petersen@math.fu-berlin.de}
\author[H.~S\"u\ss{}]{Hendrik S\"u\ss{}}
\address{Institut f\"ur Mathematik, LS Algebra und Geometrie,
        Brandenburgische Technische Universit\"at Cottbus,
        PF 10\,13\,44,
        03013 Cottbus, Germany}
\email{suess@math.tu-cottbus.de}
\author[R.~Vollmert]{Robert Vollmert}
\address{Institut f\"ur Mathematik und Informatik,
        Freie Universit\"at Berlin,
        Arnimallee 3,
        14195 Berlin, Germany}
\email{vollmert@math.fu-berlin.de}

\begin{abstract}
This is a survey of the language of polyhedral divisors describing
$T$-varieties. This language is explained in parallel to the well established theory of toric varieties.
In addition to basic constructions, subjects touched on include singularities, separatedness and properness, divisors and intersection theory, cohomology, Cox rings, polarizations, and equivariant deformations, among others.
\end{abstract}

\maketitle

\setcounter{tocdepth}{1}
\tableofcontents

%%%%%%%%%%%%%%%%%%%%%%%%%%%%%%%%%%%%%%%%%%%%%%%%%%%%%%%%%%%%%%%%%%%%%
%%%%%%%%%%%%%%%%%%%%%%%%%%%%%%%%%%%%%%%%%%%%%%%%%%%%%%%%%%%%%%%%%%%%%
%%%
%%%   Introduction
%%%
%%%%%%%%%%%%%%%%%%%%%%%%%%%%%%%%%%%%%%%%%%%%%%%%%%%%%%%%%%%%%%%%%%%%%
%%%%%%%%%%%%%%%%%%%%%%%%%%%%%%%%%%%%%%%%%%%%%%%%%%%%%%%%%%%%%%%%%%%%%
\section{Introduction}
\label{sec:intro}

%%%%%%%%%%%%%%%%%%%%%%%%%%%%%%%%%%%%%%%%%%%%%%%%%%%%%%%%%%%%%%%%%%%%%
%%%   $\CC^*$-actions and toric varieties
%%%%%%%%%%%%%%%%%%%%%%%%%%%%%%%%%%%%%%%%%%%%%%%%%%%%%%%%%%%%%%%%%%%%%
\subsection{$\CC^*$-actions and toric varieties}
\label{subsec:introToric}
The present paper is a survey about complex $T$-varieties, i.e.\ about normal
($n$-dimensional) varieties $X$ over $\CC$ with the effective action of a torus $T:=(\CC^*)^k$.
The case $k=1$ is a classical one --- especially singularities with
a so-called ``good'' $\CC^*$-action have been studied intensively
by Pinkham \cite{0304.14006,0331.14018,0351.14004}.
The case $n=k$ is classical as well --- first studied by Demazure \cite{demazureTV}, such varieties are called ``toric varieties''. The basic theory encodes the category of toric varieties and 
equivariant morphisms in purely combinatorial terms.
Many famous theories in algebraic geometry have their combinatorial
counterpart and thus illustrate the geometry from an alternative point
of view. The dictionary between algebraic geometry and combinatorics sometimes even helps to establish new theories. The most prominent example of this phenomenon might be the development of mirror symmetry in the 1990's. Building on the notion of reflexive polytopes, Batyrev gave a first systematic mathematical treatment of this subject \cite{batyrevMS}.

%%%%%%%%%%%%%%%%%%%%%%%%%%%%%%%%%%%%%%%%%%%%%%%%%%%%%%%%%%%%%%%%%%%%%
%%%   Higher complexity
%%%%%%%%%%%%%%%%%%%%%%%%%%%%%%%%%%%%%%%%%%%%%%%%%%%%%%%%%%%%%%%%%%%%%
\subsection{Higher complexity}
\label{subsec:introComplex}
For some features, however, it is useful to consider lower dimensional torus
actions as well. For instance, when deforming a toric variety, the embedded torus $T$ still acts naturally on the total- and base-spaces $\til{X}\to S$, but it is usually too
small to provide a toric structure on them. The adjacent fibers $X_s$
are even worse: depending on the isotropy group of $T$ at
the point $s\in S$, only a subtorus of $T$ still acts.
Thus, it is worth to study the general case as well. The difference
$n-k$ is then called the complexity of a $T$-variety $X$.
\\[1ex]
While complexity $0$ means toric, the next case of complexity $1$ was 
systematically studied by Timashev (even for more general algebraic groups) in \cite{timGVar}. On the other hand, Flenner and Zaidenberg \cite{1093.14084} gave a very useful description of $\CC^*$-surfaces even for ``non-good'' actions.
Starting in 2003, there is a series of papers dealing with a general treatment
of $T$-varieties in terms of so-called polyhedral divisors. The idea is to
catch the non-combinatorial part of a $T$-variety $X$ in an
$(n-k)$-dimensional variety $Y$ which is a sort of
quotient $Y=X/T$. Now, $X$ can be described by presenting a 
``polyhedral'' divisor $\pDiv$ on $Y$ with coefficients being not numbers but instead convex polyhedra
in the vector space $N_\QQ:=N\otimes_\ZZ\QQ$ where $N$ is the lattice of
one-parameter subgroups of $T$.

%%%%%%%%%%%%%%%%%%%%%%%%%%%%%%%%%%%%%%%%%%%%%%%%%%%%%%%%%%%%%%%%%%%%%
%%%   What this paper is about
%%%%%%%%%%%%%%%%%%%%%%%%%%%%%%%%%%%%%%%%%%%%%%%%%%%%%%%%%%%%%%%%%%%%%
\subsection{What this paper is about}
\label{subsec:whatAbout}
The idea of the present paper is to give an introduction to this subject
and to serve as a survey for the many recent papers on $T$-varieties.
Moreover, since the notion of polyhedral divisors and the theory of
$T$-varieties closely follows the concept of toric varieties,
we will treat both cases in parallel. This means that the present 
paper also serves as a quick introduction to the fascinating field of toric varieties.
For a broader discussion and detailed proofs of facts related to toric geometry which are merely mentioned here, the reader is asked to consult any of the standard textbooks like 
\cite{mumford}, \cite{danilov}, \cite{fulton}, and \cite{oda}.
The subjects of non-toric $T$-varieties are covered in
\cite{tvar_0},
\cite{tvar_1},
\cite{tvar_2},
\cite{fansy},
\cite{tcodes},
\cite{candiv},
\cite{tidiv},
\cite{NathHend},
\cite{defrat_tvar},
\cite{andreas+nathan},
\cite{toroidal},
\cite{tvarcox},
\cite{coxmds}, and
\cite{coxComp1}.

There are also applications of the theory of $T$-varieties and polyhedral divisors
in affine geometry \cite{alvaro08,alvaro09,alvaro10} and coding theory \cite{tcodes}
which are not covered by this survey. Very recently, $\SL_2$-actions on affine T-varieties
have also been studied \cite{1105.4494}.

%%%%%%%%%%%%%%%%%%%%%%%%%%%%%%%%%%%%%%%%%%%%%%%%%%%%%%%%%%%%%%%%%%%%%
%%%%%%%%%%%%%%%%%%%%%%%%%%%%%%%%%%%%%%%%%%%%%%%%%%%%%%%%%%%%%%%%%%%%%
%%%
%%%   Affine $T$-varieties
%%%
%%%%%%%%%%%%%%%%%%%%%%%%%%%%%%%%%%%%%%%%%%%%%%%%%%%%%%%%%%%%%%%%%%%%%
%%%%%%%%%%%%%%%%%%%%%%%%%%%%%%%%%%%%%%%%%%%%%%%%%%%%%%%%%%%%%%%%%%%%%
\section{Affine $T$-varieties}
\label{sec:affine}

%%%%%%%%%%%%%%%%%%%%%%%%%%%%%%%%%%%%%%%%%%%%%%%%%%%%%%%%%%%%%%%%%%%%%
%%%   Affine toric varieties
%%%%%%%%%%%%%%%%%%%%%%%%%%%%%%%%%%%%%%%%%%%%%%%%%%%%%%%%%%%%%%%%%%%%%
\subsection{Affine toric varieties}
\label{subsec:affToric}
Let $M$ and $N$ be two mutually dual, free abelian groups of rank $k$.
In other words, both are isomorphic to $\ZZ^k$, and we have a natural
perfect pairing $M\times N \to \Z$.
Then $T := \Spec\CC[M] = N \otimes_\ZZ\CC^*$ is the coordinate free version 
of the torus $(\CC^*)^k$ mentioned in (\ref{subsec:introToric}).
On the other hand, $N=\gHom_{\algGr}(\CC^*,T)$ is the set of one-parameter subgroups of $T$,
and $M = \gHom_{\algGr}(T,\CC^*)$ equals the character group of $T$.
We denote by $M_\QQ := M\otimes_\ZZ\QQ$ and $N_\QQ := N\otimes_\ZZ\QQ$ 
the associated $\QQ$-vector spaces.
\\[1ex]
We always assume that the generators $a^i$ for a polyhedral cone 
$\sigma = \langle a^1,\ldots,a^m\rangle := \sum_{i=1}^m \QQ_{\geq 0} \,a^i \subseteq N_\QQ$ are primitive elements of $N$, i.e.\ they are not proper multiples of other elements of $N$. Dropping this assumption is the first step towards the theory of toric stacks which we will not pursue here, cf.\ \cite{toricDMStacks}.
Moreover, we will often identify a primitive element
$a\in N$ with the ray $\QQ_{\geq 0}\cdot a$ it generates.
Given $\sigma$ as above, we define its dual cone as
$\sigma\dual := \{u\in M_\QQ\kst \langle\sigma,u\rangle\geq 0\}$.
If $\sigma$ does not contain a non-trivial linear subspace (i.e.\
$0\in\sigma$ is a vertex), then $\dim\sigma\dual=k$, and we 
use the semigroup algebra 
$\CC[\sigma\dual\cap M] := \oplus_{u\in\sigma\dual\cap M}\CC\,\chi^u$
to define the $k$-dimensional \emph{toric variety}
$$
\toric(\sigma) := \toric(\sigma,N) := \Spec\CC[\sigma\dual\cap M].
$$
This is a normal variety.
If $\dim\sigma=k$, the (finite) set $E\subseteq \sigma\dual\cap M$
of indecomposable elements is called the \emph{Hilbert basis} of $\sigma\dual$. Furthermore, $\toric(\sigma)\subset\CC^E$ is defined by a binomial ideal which arises from the relations between the elements of the Hilbert basis $E$, cf.\ (\ref{subsec:mapsT}).

\begin{example}\label{ex:minus4}
Let $\sigma=\langle (-1,2),\,(1,2)\rangle\subseteq\QQ^2$.
This gives rise to $\sigma\dual=\langle[-2,1],\,[2,1]\rangle$ and
$E=\{[e,1]\kst -2\leq e \leq 2\}$, illustrated in Figure~\ref{fig:minus4}.
 The resulting
$\toric(\sigma)\subseteq\CC^5$ is the cone over the rational normal curve of
degree $4$, and its defining ideal is generated by the six minors expressing
the inequality
$\rank\smat{
  y_0 & y_1 & y_2 & y_3 \\
  y_1 & y_2 & y_3 & y_4
}\leq 1.$
\end{example}

\begin{figure}
\centering
\subfloat[$\sigma$]{\hspace*{1ex}\sigmaMinusVier\hspace*{1ex}}{\hspace*{5ex}}
\subfloat[$\sigma\dual$]{\sigmaDualMinusVier}
\caption{Cones for Example~\ref{ex:minus4}.}
\label{fig:minus4}
\end{figure}

%%%%%%%%%%%%%%%%%%%%%%%%%%%%%%%%%%%%%%%%%%%%%%%%%%%%%%%%%%%%%%%%%%%%%
%%%   Toric bouquets
%%%%%%%%%%%%%%%%%%%%%%%%%%%%%%%%%%%%%%%%%%%%%%%%%%%%%%%%%%%%%%%%%%%%%
\subsection{Toric bouquets}
\label{subsec:dissToric}
The previous notion allows for a generalization. If
$\Delta\subseteq N_\QQ$ is a polyhedron, then we denote by
$$
\tail(\Delta) := \{a\in N_\QQ\kst a+\Delta\subseteq \Delta\}
$$
its so-called \emph{tailcone}. Assume that this cone is pointed, i.e. contains no non-trivial linear subspace; then $\Delta$ has a non-empty set of  vertices $\kV(\Delta)$. Denoting their compact convex hull by $\Delta^c$, the polyhedron $\Delta$ splits 
into the ``Minkowski'' sum $\Delta^c+\tail(\Delta)$.
Moreover, $\Delta$ gives rise to its \emph{inner normal fan}
$\normal(\Delta)$ consisting of the linearity regions of the
function $\min\langle\Delta,\kbb\rangle:\tail(\Delta)\dual\to\QQ$.
Note that the cones of $\normal(\Delta)$ are in a one-to-one correspondence to the faces $F\leq\Delta$ via the map $F\mapsto\normal(F,\Delta) := \{u\in M_\QQ\kst
\langle F,u \rangle = \min \langle \Delta, u\rangle\}$.
We can now define
$$
\toric(\Delta) := \Spec\CC[\normal(\Delta)\cap M]
$$
where $\CC[\normal(\Delta)\cap M] := \CC[\sigma\dual\cap M]$ as a 
$\CC$-vector space. Note, however, that the multiplication is given by
$$
\chi^u\cdot\chi^{u'} := \left\{
\begin{array}{ll}
\chi^{u+u'} & \mbox{if $u,u'$ belong to a common cone of $\normal(\Delta)$}\\
0 & \mbox{otherwise}.
\end{array}\right.
$$
A scheme of the form $\toric(\Delta)$ is called a \emph{toric bouquet}.
We
obtain the decomposition
$$
\toric(\Delta) =
\bigcup_{v\in\kV(\Delta)}\toric(\QQ_{\geq 0}\cdot (\Delta-v))
$$
into irreducible (toric) components, compare with (\ref{subsec:toricOrbits}).
Note that there is a one-to-one correspondence between polyhedral cones
and affine toric varieties, whereas the construction of a toric bouquet only depends on the normal fan $\normal(\Delta)$. Hence, dilations of the compact edges of $\Delta$ and translations do not affect the structure of the corresponding bouquet.

\begin{example}
\label{ex:affineTB}
Consider the cone $\sigma := \langle (1,0),(1,1)\rangle \subset \QQ^2$ together with the $\sigma$-polyhedron $\Delta = \ovl{(0,0)(0,1)} + \sigma = \Delta^c + \tail(\Delta)$ and its inner normal fan $\normal(\Delta)$ as depicted in Figure \ref{fig:affineTB}. Observe that $\spec \CC[\normal(\Delta) \cap M]$ is equidimensional and consists of two irreducible components isomorphic to $\AA^2$ which are glued along an affine line. 

\begin{figure}[t]
\centering
\subfloat[$\Delta$]{\affineTB}\hspace*{10ex}
\subfloat[$\normal(\Delta)$]{\quasiFanAffineTB}
\caption{An affine toric bouquet, cf.\ Example \ref{ex:affineTB}.}
\label{fig:affineTB}
\end{figure}

\end{example}

%%%%%%%%%%%%%%%%%%%%%%%%%%%%%%%%%%%%%%%%%%%%%%%%%%%%%%%%%%%%%%%%%%%%%
%%%   Polyhedral or p-divisors
%%%%%%%%%%%%%%%%%%%%%%%%%%%%%%%%%%%%%%%%%%%%%%%%%%%%%%%%%%%%%%%%%%%%%
\subsection{Polyhedral and p-divisors}
\label{subsec:pDiv}
Let $M$, $N$, $\sigma\subseteq N_\QQ$ be as in (\ref{subsec:affToric}). In particular, we assume that $\sigma$ contains no non-trivial linear subspace. These data then give rise to a semigroup (with respect to Minkowski addition)
$$ 
\Pol^+_\Q(N,\sigma) := \{\Delta\subseteq N_\Q\kst
\Delta \mbox{ polyhedron with } \tail(\Delta)=\sigma\}.
% \subseteq \Pol_\Q(N,\sigma)
% and $\Pol\supseteq\Pol^+$ is the associated Grothendieck group.
$$
Note that $\Pol^+_\Q(N,\sigma)$ satisfies the cancellation law and contains $\sigma$ as its neutral element.
If, in addition, $Y$ is a normal, projective variety over $\CC$,
then we denote by $\CaDiv_{\geq 0}(Y)$ the semigroup of 
effective Cartier divisors and call elements
$$
% \pDiv=\sum_i \Delta_i\otimes D_i\in 
\pDiv = \sum_Z \pDiv_Z\otimes Z\in 
\Pol^+_\Q(N,\sigma)\otimes_{\Z_{\geq 0}} \CaDiv_{\geq 0}(Y)
$$
\emph{polyhedral divisors} on $Y$ in $N$ (or simply on $(Y,N)$) with $\tail(\pDiv) := \sigma$. We will also allow $\emptyset$ as an element of $\Pol^+_\Q(N,\sigma)$ which satisfies $\emptyset+\Delta := \emptyset$. This somewhat bizarre coefficient is needed to deal
with non-compact, open \emph{loci} defined as
$\Loc(\pDiv) := Y\setminus\bigcup_{\pDiv_Z=\emptyset}Z$.
For each $u \in (\tail(\pDiv))\dual$, we may then consider the evaluation
$$
\pDiv(u) := \sum_{\pDiv_Z\neq \emptyset} 
\min\langle \pDiv_Z,u\rangle\cdot Z|_{\Loc\pDiv}
\in \CaDiv_\Q(\Loc\pDiv).
$$
This is an ordinary $\QQ$-divisor with 
$\supp\pDiv(u)\subseteq\supp\pDiv :=
\Loc\pDiv\cap\bigcup_{\pDiv_Z\neq \emptyset} Z$.

\begin{definition}
\label{def:pDiv}
$\pDiv$ is called a \emph{p-divisor} if all evaluations
$\pDiv(u)$ are semiample (i.e.\ have positive, base point free
multiples) and, additionally, are big for $u\in\innt(\tail(\pDiv))\dual$.
Note that this condition is void if $\Loc(\pDiv)$ is affine.
\end{definition}

Polyhedral divisors have the property that
$\pDiv(u)+\pDiv(u')\leq \pDiv(u+u')$. Hence, they lead to a sheaf of rings $\CO_{\Loc}(\pDiv) := 
\bigoplus_{u\in\sigma^\vee\cap M} \CO_{\Loc}(\pDiv(u))\,\chi^u$
giving rise to the schemes
$$
\ttoric(\pDiv) := \Spec_{\Loc(\pDiv)} \CO(\pDiv) 
\mbox{ and the affine }
\toric(\pDiv) := \Spec \kG(\Loc(\pDiv),\CO(\pDiv)).
$$

The latter space does not change if $\pDiv$ is pulled back via a
birational modification $Y'\to Y$ or if $\pDiv$ is altered by a
\emph{principal polyhedral divisor} on $Y$, that is, an element in the image of the natural map 
$N\otimes_\Z\kk(Y)^*\to \Pol_\Q(N,\sigma)\otimes_\Z \CaDiv(Y)$. 
Two p-divisors which differ by chains of
those operations are called \emph{equivalent}. Note that this implies that $Y$ can always be replaced by a log-resolution.

\begin{theorem}[\cite{tvar_1}, Theorems (3.1), (3.4); Corollary (8.12)]
\label{thm:equivPT}
The map $\pDiv\mapsto \TV(\pDiv)$ yields a bijection between
equivalence classes of p-divisors and normal, affine $\CC$-varieties with an effective $T$-action.
\end{theorem}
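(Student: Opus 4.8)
The plan is to establish the two directions of the asserted bijection separately, after first checking that the rule $\pDiv\mapsto\TV(\pDiv)$ descends to equivalence classes.

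\textbf{Well-definedness and the forward direction.} I would first verify that $\CO(\pDiv)=\bigoplus_{u\in\sigma\dual\cap M}\CO(\pDiv(u))\,\chi^u$ is a sheaf of normal integral domains. Its multiplication is inherited from $\kk(Y)\otimes_\kk\kk[M]$ via the superadditivity $\pDiv(u)+\pDiv(u')\leq\pDiv(u+u')$, so it is automatically a domain; normality is checked graded piece by graded piece, each $\CO(\pDiv(u))$ being the (integrally closed) sheaf of sections of a $\QQ$-divisor. Hence $\TV(\pDiv)=\Spec\kG(\Loc\pDiv,\CO(\pDiv))$ is normal and affine, and its $M$-grading yields an effective $T$-action because the evaluated cone $\sigma\dual$ is full dimensional. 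Finite generation of the global sections and the identity $\dim\TV(\pDiv)=\dim Y+\rank N$ are precisely where the p-divisor hypotheses of Definition~\ref{def:pDiv} (semiampleness of every $\pDiv(u)$, bigness for $u$ in the interior of $\sigma\dual$) enter. Finally I would check invariance under the two equivalence moves: pullback along a birational $Y'\to Y$ leaves $\kG(\Loc\pDiv,\CO(\pDiv))$ unchanged since $Y$ is normal and we pass to global sections, whereas modification by a principal polyhedral divisor is an $M$-homogeneous change of coordinates and thus induces an isomorphism of $T$-varieties.

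\textbf{The reconstruction.} For the converse I would begin with an arbitrary normal affine $T$-variety $X=\Spec A$ and use its weight decomposition $A=\bigoplus_{u\in M}A_u$. Effectivity forces the weight monoid $S=\{u\kst A_u\neq 0\}$ to generate $M$ as a group; consequently the cone it spans is full dimensional, I name it $\sigma\dual$, and its dual $\sigma$ is automatically pointed and will serve as the tail cone. I would construct the base $Y$ as a normal projective model of the field of $T$-invariant rational functions (the degree-zero part of the graded fraction field of $A$), the equivalence relation absorbing the non-uniqueness of this model. For each $u\in S$ one writes $A_u=\kG(Y,\CO(D_u))$ for a suitable divisor $D_u$, and the algebra structure $A_u\cdot A_{u'}\subseteq A_{u+u'}$ becomes the superadditivity $D_u+D_{u'}\leq D_{u+u'}$. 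This concave assignment $u\mapsto D_u$ is exactly the datum of a polyhedral divisor: for every prime divisor $Z\subseteq Y$ the coefficient $\coeff_Z(D_u)$, as a function of $u$, is the support function $u\mapsto\min\langle\pDiv_Z,u\rangle$ of a polyhedron $\pDiv_Z\in\Pol^+_\Q(N,\sigma)$.

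\textbf{Main obstacle.} The hard part will be this reconstruction step: proving that $u\mapsto D_u$ is genuinely piecewise linear over a rational polyhedral subdivision of $\sigma\dual$, so that only finitely many polyhedra $\pDiv_Z$ occur, and that the resulting $\pDiv$ actually satisfies the semiampleness and bigness conditions of Definition~\ref{def:pDiv}. This is the technical heart of the theorem and generalizes the Dolgachev--Pinkham--Demazure description of $\kk^*$-surfaces; it forces a careful choice of $Y$ (for instance a log-resolution on which the relevant linear systems become base-point free) and culminates in the isomorphism $\TV(\pDiv)\cong X$, which simultaneously gives surjectivity and injectivity of the correspondence on equivalence classes.
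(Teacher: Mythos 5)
The survey states this theorem without proof, merely citing \cite{tvar_1}, and your outline follows essentially the same strategy as that reference: normality, finite generation, and equivalence-invariance of the multigraded section ring $\bigoplus_u \Gamma(\Loc\pDiv,\CO(\pDiv(u)))\chi^u$ via the semiampleness/bigness hypotheses for one direction, and the Dolgachev--Pinkham--Demazure-style reconstruction from the weight decomposition over a suitably chosen quotient $Y$ for the other. The points you single out as the technical heart --- choosing $Y$ (via GIT/limit quotients and log-resolutions) so that each $A_u$ is a full section module $\Gamma(Y,\CO(D_u))$, and proving that $u\mapsto D_u$ is piecewise linear with polyhedral coefficients satisfying the positivity conditions --- are indeed exactly where the work in \cite{tvar_1} lies, so I see no gap in the approach.
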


%%%%%%%%%%%%%%%%%%%%%%%%%%%%%%%%%%%%%%%%%%%%%%%%%%%%%%%%%%%%%%%%%%%
\subsection{Products of $T$-varieties}
\label{subsec:productsTV}
%%%%%%%%%%%%%%%%%%%%%%%%%%%%%%%%%%%%%%%%%%%%%%%%%%%%%%%%%%%%%%%%%%%
Consider some $T$-variety $X$ and a $T'$-variety $X'$.  Then the product $X\times X'$ carries the natural structure of a $T\times T'$-variety. As we shall see, the combinatorial data describing $X\times X'$ arise as a kind of product of the combinatorial data describing $X$ and $X'$.

For simplicity, we will only consider the affine case, although the construction easily globalizes. Thus, consider p-divisors $\pDiv,\pDiv'$ on respectively $Y,Y'$, with tailcones $\sigma,\sigma'$, respectively. We define the \emph{product} p-divisor $\pDiv\times\pDiv'$ on $Y\times Y'$ as follows:
\[
\pDiv\times\pDiv'=\sum_{Z\subset Y} (\pDiv_Z\times \sigma')\otimes (Z\times Y')+\sum_{Z'\subset Y'} (\sigma\times\pDiv_{Z'}')\otimes (Y\times Z')
\]

\begin{proposition}
For any p-divisors $\pDiv$ and $\pDiv'$, $\toric(\pDiv\times\pDiv')\cong \toric(\pDiv)\times\toric(\pDiv')$.
\end{proposition}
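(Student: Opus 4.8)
The plan is to prove the isomorphism on the level of global section rings and then apply $\Spec$, exploiting that the affine variety $\toric(\pDiv)=\Spec\bigoplus_{u\in\sigma\dual\cap M}\Gamma(\Loc\pDiv,\cO(\pDiv(u)))\,\chi^u$ is determined entirely by its $M$-graded pieces. Since $\pDiv\times\pDiv'$ lives on $Y\times Y'$ in the lattice $N\times N'$ with tailcone $\sigma\times\sigma'$, its characters are indexed by $(u,u')\in(\sigma\times\sigma')\dual\cap(M\times M')=(\sigma\dual\cap M)\times(\sigma'\dual\cap M')$, so the target grading is already the product grading. First I would reduce the whole statement to two computations: identifying the locus, and evaluating the product p-divisor.

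For the locus, I would observe that the coefficient $\pDiv_Z\times\sigma'$ is empty exactly when $\pDiv_Z=\emptyset$, and symmetrically $\sigma\times\pDiv'_{Z'}=\emptyset$ iff $\pDiv'_{Z'}=\emptyset$. Removing the corresponding prime divisors $Z\times Y'$ and $Y\times Z'$ then yields $\Loc(\pDiv\times\pDiv')=\Loc(\pDiv)\times\Loc(\pDiv')$. For the evaluation, fix $(u,u')$ and write $p\colon Y\times Y'\to Y$ and $q\colon Y\times Y'\to Y'$ for the two projections. Since $u'\in\sigma'\dual$ and $0\in\sigma'$, the minimum of $\langle\,\cdot\,,u'\rangle$ over the $\sigma'$-factor is $0$, so $\min\langle\pDiv_Z\times\sigma',(u,u')\rangle=\min\langle\pDiv_Z,u\rangle$; symmetrically for the second sum. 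Hence
\[
(\pDiv\times\pDiv')(u,u')=p^*\pDiv(u)+q^*\pDiv'(u')
\]
as $\QQ$-divisors. Because the prime divisors $Z\times Y'$ and $Y\times Z'$ are pairwise distinct, the two summands share no components, so rounding down distributes and the associated sheaf splits as the exterior tensor product $\cO(\pDiv(u))\boxtimes\cO(\pDiv'(u'))$.

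It then remains to take global sections and reassemble. Here I would invoke the Künneth isomorphism $\Gamma(V\times V',\cF\boxtimes\cG)\cong\Gamma(V,\cF)\otimes_\CC\Gamma(V',\cG)$ for the quasi-coherent sheaves $\cF=\cO(\pDiv(u))$ on $V=\Loc\pDiv$ and $\cG=\cO(\pDiv'(u'))$ on $V'=\Loc\pDiv'$, identifying the $(u,u')$-piece with $\Gamma(\Loc\pDiv,\cO(\pDiv(u)))\otimes_\CC\Gamma(\Loc\pDiv',\cO(\pDiv'(u')))$. Summing over all $(u,u')$ identifies $\Gamma(\toric(\pDiv\times\pDiv'))$ with the tensor product $\Gamma(\toric\pDiv)\otimes_\CC\Gamma(\toric\pDiv')$ of the two graded rings. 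One must still check that this additive identification respects multiplication, and not merely gives a graded vector-space isomorphism: this follows because the multiplication-defining inequalities $\pDiv(u_1)+\pDiv(u_2)\le\pDiv(u_1+u_2)$ on each factor pull back along $p$ and $q$ and add to give the corresponding inequality for $\pDiv\times\pDiv'$, so the structure maps match up block-diagonally. Applying $\Spec$ and using $\Spec(A\otimes_\CC B)=\Spec A\times\Spec B$ finishes the argument.

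The step I expect to be the main obstacle is the Künneth isomorphism, because the loci $\Loc\pDiv$ and $\Loc\pDiv'$ need not be affine; when they are, the formula is immediate from the affine description of the tensor product, and in the non-affine case one must either cite the Künneth theorem for $H^0$ of exterior products of quasi-coherent sheaves on separated quasi-compact $\CC$-schemes, or reduce to the affine situation. A secondary, more bookkeeping-type point will be verifying carefully that rounding down the $\QQ$-divisor $(\pDiv\times\pDiv')(u,u')$ genuinely commutes with the pullbacks along $p$ and $q$, which is exactly where the disjointness of the two families of prime divisors is used.
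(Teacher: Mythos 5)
Your proposal is correct and follows essentially the same route as the paper's proof: evaluate the product p-divisor weight by weight, recognize the resulting sheaf as an exterior tensor product, and apply the K\"unneth formula for global sections (the paper cites EGA III, Theorem 6.7.8, for exactly this step). The paper's version is terser --- it leaves implicit the identification of the locus, the rounding of the $\QQ$-divisor, and the compatibility with multiplication that you spell out --- but the underlying argument is the same.
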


\begin{proof}
For any $\widetilde{u}=(u,u')\in M\oplus M'$, 
\begin{align*}
\Gamma\big(\Loc(\pDiv\times\pDiv'),\CO(\pDiv\times\pDiv'(\widetilde{u}))\big)\hspace*{-2pt}&= \hspace*{-1pt}\Gamma\big(\Loc(\pDiv\times\pDiv'),\CO(\pDiv(u)\times Y')\otimes\CO(Y\times\pDiv'( u'))\big)\\
&=\hspace*{-1pt}\Gamma\big(\Loc(\pDiv),\CO(\pDiv(u))\big)\otimes\Gamma\big(\Loc(\pDiv'),\CO(\pDiv'(u'))\big)
\end{align*}
where the last equality is due to the K\"unneth formula for coherent sheaves, see \cite[Theorem 6.7.8]{EGAIII2}.
\end{proof}

\begin{remark}
A special case of the above is when $X$ and $X'$ are toric varieties. Here, the proposition simplifies to $\toric(\sigma\times\sigma')\cong \toric(\sigma)\times\toric(\sigma')$.
\end{remark}

%%%%%%%%%%%%%%%%%%%%%%%%%%%%%%%%%%%%%%%%%%%%%%%%%%%%%%%%%%%%%%%%%%%%%
%%%%%%%%%%%%%%%%%%%%%%%%%%%%%%%%%%%%%%%%%%%%%%%%%%%%%%%%%%%%%%%%%%%%%
%%%
%%%   The functor $\toric$
%%%
%%%%%%%%%%%%%%%%%%%%%%%%%%%%%%%%%%%%%%%%%%%%%%%%%%%%%%%%%%%%%%%%%%%%%
%%%%%%%%%%%%%%%%%%%%%%%%%%%%%%%%%%%%%%%%%%%%%%%%%%%%%%%%%%%%%%%%%%%%%
\section{The functor $\toric$}
\label{sec:functorTV}

%%%%%%%%%%%%%%%%%%%%%%%%%%%%%%%%%%%%%%%%%%%%%%%%%%%%%%%%%%%%%%%%%%%%%
%%%   Maps between  toric varieties
%%%%%%%%%%%%%%%%%%%%%%%%%%%%%%%%%%%%%%%%%%%%%%%%%%%%%%%%%%%%%%%%%%%%%
\subsection{Maps between toric varieties}
\label{subsec:mapsToric}
We will now see that all constructions from the previous section
are functorial. Let us start with the setting given in (\ref{subsec:affToric}).
A $\ZZ$-linear map $F\colon N'\to N$ satisfying $F_\Q(\sigma')\subseteq\sigma$
gives rise to a morphism $\toric(F)\colon \toric(\sigma',N')\to \toric(\sigma,N)$
of affine toric varieties via $F\dual(\sigma\dual\cap M)\subseteq(\sigma')\dual\cap M'$. For example, if $E\subseteq\sigma\dual\cap M$ is a Hilbert basis, then the embedding
$\toric(\sigma)\hookrightarrow \CC^E$ from (\ref{subsec:affToric})
is induced by the map $E\colon N\to\ZZ^E$.

%%%%%%%%%%%%%%%%%%%%%%%%%%%%%%%%%%%%%%%%%%%%%%%%%%%%%%%%%%%%%%%%%%%%%
%%%   Maps between $T$-varieties
%%%%%%%%%%%%%%%%%%%%%%%%%%%%%%%%%%%%%%%%%%%%%%%%%%%%%%%%%%%%%%%%%%%%%
\subsection{Maps between $T$-varieties}
\label{subsec:mapsT}
A generalization of the functoriality to the setting of (\ref{subsec:pDiv})  also has to take care of the underlying variety $Y$.
\\[1ex]
Let $(Y',\pDiv',N')$ and $(Y,\pDiv,N)$ be p-divisors. Consider now any tuple $(F,\phi,\mathfrak f)$ consisting of a map $F:N\to N'$, a dominant morphism $\phi:Y'\to Y$, and an element $\mathfrak f=\sum v_i \otimes f_i \in N\otimes \CC(Y')^*$, satisfying $F_*\pDiv'\subseteq \varphi^*\pDiv + \div(\mathfrak f)$ (to be checked for all coefficients separately).
Here, 
$$	F_*\pDiv':= \sum_{Z'} F(\pDiv'_{Z'})\otimes Z' $$
is the \emph{push forward} of $\pDiv'$ by $F$, 
$$\varphi^*\pDiv := \sum_Z \pDiv_Z\otimes\varphi^*(Z)$$
is the \emph{pull back} of $\pDiv$ by $\phi$, and 
$$\div(\mathfrak f)=\sum_i (v_i )\otimes \div(f_i)$$
is the \emph{principal polyhedral divisor} associated to $\mathfrak f$. 
Such a tuple provides us with naturally defined morphisms
\begin{align*}
&\ttoric(F,\varphi,\mathfrak f)\colon\ttoric(\pDiv',N')\to \ttoric(\pDiv,N),
\qquad\mbox{and}\\
&\toric(F,\varphi,\mathfrak f)\colon\toric(\pDiv',N')\to \toric(\pDiv,N).
\end{align*}
This construction yields an equivalence of categories
between polyhedral cones and affine toric varieties in the toric case (where only the map $F$ plays a role). To obtain a similar result for the relation between p-divisors and $T$-varieties one first needs to \emph{define the category} of the former by turning both types of equivalences mentioned in (\ref{subsec:pDiv}) into isomorphisms. This can be done by the common technique of localization which is known from the construction of derived categories. In the category of $T$-varieties we restrict to morphisms $\psi:X'\rightarrow X$, with $T.\psi(X')\subset X$ being dense. We will call such morphisms \emph{orbit dominating}.

\begin{theorem}[\cite{tvar_1}, Corollary 8.14]
\label{thm:equivCat}
The functor $\toric$ induces an equivalence from the 
% localized
category of p-divisors to the category of 
normal affine varieties
with effective torus action and orbit dominating equivariant morphisms.
\end{theorem}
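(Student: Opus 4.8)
The plan is to deduce the equivalence from the object-level bijection of Theorem~\ref{thm:equivPT} by upgrading it to a statement about morphisms; concretely, I would prove that $\toric$ is essentially surjective and fully faithful. Essential surjectivity is immediate, since Theorem~\ref{thm:equivPT} already asserts that every normal affine $\CC$-variety with effective $T$-action is of the form $\toric(\pDiv)$. All the real content therefore lies in full faithfulness, i.e.\ that for p-divisors $\pDiv'$ on $(Y',N')$ and $\pDiv$ on $(Y,N)$ the assignment $(F,\varphi,\mathfrak f)\mapsto \toric(F,\varphi,\mathfrak f)$ induces a bijection between morphisms of p-divisors (tuples modulo the localizing equivalences of (\ref{subsec:mapsT})) and orbit dominating equivariant morphisms $\toric(\pDiv')\to\toric(\pDiv)$. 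Since both varieties are affine, I would first identify $\toric(\pDiv)=\Spec A(\pDiv)$ with $A(\pDiv)=\bigoplus_{u\in\sigma\dual\cap M}\Gamma(\Loc\pDiv,\CO(\pDiv(u)))\,\chi^u$, so that equivariant morphisms correspond exactly to $\CC$-algebra homomorphisms $A(\pDiv)\to A(\pDiv')$ graded along some homomorphism of character lattices.

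For \textbf{fullness}, I would start from an orbit dominating equivariant morphism $\psi\colon\toric(\pDiv')\to\toric(\pDiv)$ and reconstruct a tuple. The equivariance structure supplies a homomorphism of tori $T'\to T$, hence on one-parameter subgroups a lattice map $F$, automatically compatible with the tailcones. The orbit dominating hypothesis $\overline{T\cdot\psi(\toric(\pDiv'))}=\toric(\pDiv)$ guarantees that $\psi$ descends to a \emph{dominant} rational map of the quotient bases $Y'\dashrightarrow Y$; since p-divisors are considered only up to birational modification of the base (see (\ref{subsec:pDiv})), I would resolve the indeterminacy by passing to an admissible model of $Y'$ and thereby obtain a genuine dominant morphism $\varphi\colon Y'\to Y$. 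Finally, comparing for each $u\in\sigma\dual\cap M$ the divisor of the rational function $\psi^*(\chi^u)$ with $\varphi^*\pDiv(u)$ measures the failure of $F_*\pDiv'$ to equal $\varphi^*\pDiv$ by a single principal contribution; reading off these orders of vanishing produces an element $\mathfrak f\in N\otimes\CC(Y')^*$ with $F_*\pDiv'\subseteq\varphi^*\pDiv+\div(\mathfrak f)$ coefficientwise, so that $\psi=\toric(F,\varphi,\mathfrak f)$.

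For \textbf{faithfulness}, I would check that this reconstruction is unique modulo exactly the localizing equivalences. The lattice map $F$ is pinned down by the induced torus homomorphism and is therefore intrinsic. Two bases $\varphi,\tilde\varphi$ realizing the same $\psi$ differ only by the birational modifications that have been inverted in the category of p-divisors, and the ambiguity in $\mathfrak f$ is precisely a principal polyhedral divisor, which is the second relation we localize at. In the opposite direction one must verify \emph{well-definedness}, namely that a tuple satisfying $F_*\pDiv'\subseteq\varphi^*\pDiv+\div(\mathfrak f)$ really does yield a graded ring homomorphism (the inclusion guarantees $\psi^*(\chi^u)$ lands in $\CO(\pDiv'(F\dual u))$), that equivalent tuples induce the same $\psi$, and that composition and identities are respected; these are the routine verifications that make $\toric$ a functor into the stated category.

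The main obstacle is the reconstruction of the base morphism $\varphi$ together with the proof that the orbit dominating condition is exactly what forces $Y'\dashrightarrow Y$ to be dominant. This amounts to showing that the passage $X\mapsto Y$ from a $T$-variety to its rational quotient base is itself functorial up to birational equivalence, so that indeterminacy loci can always be cleared by an admissible modification without changing $\toric(\pDiv)$. I expect the coefficientwise verification of $F_*\pDiv'\subseteq\varphi^*\pDiv+\div(\mathfrak f)$ and the bookkeeping of orders of vanishing along prime divisors of $Y'$ to be the most delicate remaining computation, all of it resting on the normality of $Y$ used to identify the graded pieces $\CO(\pDiv(u))$ with global sections on $\Loc\pDiv$.
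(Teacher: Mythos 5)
The paper does not actually prove this statement: it is imported verbatim from \cite{tvar_1}, Corollary 8.14, and the surrounding text of (\ref{subsec:mapsT}) only sets up the two categories (tuples $(F,\varphi,\mathfrak f)$ localized at pullbacks along birational modifications and at principal polyhedral divisors on one side, orbit dominating equivariant morphisms on the other). So there is no in-paper argument to compare against; what you have written is a reconstruction of the proof in the cited source, and as a strategy it is correct. Your identification of the role of orbit dominance is the right one: density of $T\cdot\psi(X')$ is exactly what makes $\CC(X)^T\to\CC(X')^{T'}$ injective and hence produces the dominant rational map $Y'\dashrightarrow Y$, and together with effectiveness it also forces uniqueness of the torus homomorphism, hence of $F$ (note the survey's $F\colon N\to N'$ in (\ref{subsec:mapsT}) is a slip; it is $F\colon N'\to N$ as in Section \ref{sec:properness}, which is the direction you use). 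Two places where your sketch is thinnest. First, a morphism in a localized category is a priori a zigzag, so both fullness and faithfulness require showing that every zigzag can be straightened to a single tuple after an admissible modification of the source and a principal shift — you gesture at this, but it is where the actual bookkeeping of \cite{tvar_1} lives. Second, your description of faithfulness slightly overstates the ambiguity: once the representatives $(Y',\pDiv')$ and $(Y,\pDiv)$ are fixed, $\varphi$ is determined by the degree-zero part of $\psi^*$ (as a rational map, hence as a morphism by normality and separatedness) and $\mathfrak f$ is determined degreewise by comparing $\psi^*(g\chi^u)$ with $\varphi^*(g)\chi^{F\dual u}$; the only genuine ambiguity is the choice of representatives, which is precisely what the localization removes. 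Neither point invalidates the approach, but a complete proof must address both.
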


%%%%%%%%%%%%%%%%%%%%%%%%%%%%%%%%%%%%%%%%%%%%%%%%%%%%%%%%%%%%%%%%%%%%%
%%%   Open embeddings
%%%%%%%%%%%%%%%%%%%%%%%%%%%%%%%%%%%%%%%%%%%%%%%%%%%%%%%%%%%%%%%%%%%%%
\subsection{Open embeddings}
\label{subsec:openEmb}
Let us fix a torus $T$. To glue affine $T$-varieties together one has to understand $T$-equivariant open embeddings. In the toric case,
an inclusion $F_\QQ:\sigma'\hookrightarrow\sigma$ of cones in $N_\QQ$ 
(i.e.\ $F = \id_N$ with $\sigma'\subseteq\sigma$) 
provides an open embedding $\toric(F)$
if and only if $\sigma'$ is a face of $\sigma$. 
Indeed, if $\sigma' = \face(\sigma,u) := \sigma\cap u^\bot$ 
is cut out by the supporting hyperplane $u\in\sigma\dual$,
then $\CC[{\sigma'}\dual \cap M] = \CC[\sigma\dual \cap M]_{\chi^u}$
equals the localization by $\chi^u$.
Thus, $\toric(\sigma')= [\chi^u\neq 0]\subseteq\toric(\sigma)$.

\begin{example}
\label{ex:embeddedTorus}
The origin $\{0\}$ is a common face of all cones $\sigma$. Hence, the torus
$T=\toric(0,N)\subseteq\toric(\sigma,N)$ appears as an open
subset in all toric varieties.
\end{example}

Similarly, if $\pDiv$ is a p-divisor on $Y$ and
$f\in \CO_{\Loc\pDiv}(\pDiv(u))$,
then $f(y)\chi^u\in\CC(Y)[M]\subseteq\CC(\toric(\pDiv))$ is an
$M$-homogeneous rational function on $\toric(\pDiv)$. The localization procedure of the toric setting now generalizes to
to the present setting:
the open subset $[f\chi^u\neq 0]\subseteq\toric(\pDiv)$ is again a 
$T$-variety. Its associated p-divisor $\pDiv_{f\chi^u}$
is still supported on $Y$ and its tailcone is
$\tail(\pDiv_{f\chi^u})=\face(\tail(\pDiv),u)$. Moreover,
\[\pDiv_{f\chi^u} = \face (\pDiv,u)+ \emptyset\otimes (\div f + \pDiv(u))\]
where the face operator is supposed to be applied to the polyhedral
coefficients only. More specifically,
$\face(\Delta,u) := \{a\in\Delta\kst \langle a,u\rangle = 
\min\langle \Delta,u\rangle\}$.
Note that $Z_u(f) := \div f + \pDiv(u)$ is an effective divisor which depends ``continuously'' upon $f$.
\\[1ex]
In contrast to the toric case, not all $T$-invariant open subsets are of the form $[f\chi^u\neq 0]$.
We will now present a precise characterization of open embeddings,
which is quite technical, but apparently does not easily simplify in the general case. For the much nicer case of complexity one, however, we refer to (\ref{subsec:openComp1}).
We need the following notation: for a polyhedral divisor $\pDiv = \sum_Z\pDiv_Z \otimes Z$ and a not necessarily closed point $y\in Y$ we define
$\pDiv_y := \sum_{y\in Z}\pDiv_Z\in\Pol^+_\Q(N,\sigma)$.
\\[1ex]
For any two  p-divisors $\pDiv,\pDiv'$ on a common $(Y,N)$, 
we say that $\pDiv'\subseteq\pDiv$ if
the polyhedral coefficients respectively 
satisfy $\pDiv'_Z\subseteq\pDiv_Z$. Two such p-divisors induce maps
$\til{\iota}:\ttoric(\pDiv')\to \ttoric(\pDiv)$ and
$\iota:\toric(\pDiv')\to \toric(\pDiv)$.
The former is an open embedding if and only if $\pDiv'\leq \pDiv$, i.e.\
if and only if all coefficients $\pDiv'_Z$ are faces of the corresponding
$\pDiv_Z$. In particular, this implies that
$\tail(\pDiv') \leq \tail(\pDiv)$. 

\begin{proposition}[\cite{tvar_2}, Proposition (3.4)]
\label{prop:openEmb}
The morphism $\iota$ is an open embedding $\iff$ $\pDiv'\leq \pDiv$ and,
additionally, for each $y\in\Loc(\pDiv')$
there are $u_y\in \tail(\pDiv)\dual\cap M$
and a divisor $Z_y\in |\pDiv(u_y)|$ such that $y \notin \supp Z_y$,
$\pDiv'_y=\face(\pDiv_y,u_y)$,
and
$\face(\pDiv'_z,u_y)=\face(\pDiv_z,u_y)$
for all $z \in \Loc(\pDiv)\setminus \supp(Z_y)$.
\end{proposition}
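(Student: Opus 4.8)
The plan is to reduce the statement to a local computation with the principal homogeneous localizations $[f\chi^u\neq 0]$ introduced in (\ref{subsec:openEmb}), whose associated p-divisors are given by the explicit formula $\pDiv_{f\chi^u}=\face(\pDiv,u)+\emptyset\otimes(\div f+\pDiv(u))$. Since $\iota$ is $T$-equivariant, its image (when open) is a $T$-stable open subset of the affine $T$-variety $\toric(\pDiv)$, and every such subset is covered by the $T$-stable principal opens $[f\chi^u\neq 0]$, because the homogeneous elements $f\chi^u$ form a basis of the $T$-stable topology. Hence $\iota$ is an open embedding if and only if $\toric(\pDiv')$ can be covered by opens each of which is simultaneously a principal homogeneous localization of $\toric(\pDiv)$ and of $\toric(\pDiv')$, identified by $\iota$; the bulk of the proof is the translation of this covering condition into the combinatorial data $(u_y,Z_y)$.

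For the direction $\Leftarrow$, given the stated data I would, for each $y\in\Loc(\pDiv')$, choose $f_y\in\Gamma(\Loc\pDiv,\CO(\pDiv(u_y)))$ with $\div f_y+\pDiv(u_y)=Z_y$, which exists precisely because $Z_y\in|\pDiv(u_y)|$. Note that $f_y\chi^{u_y}$ is a section over both $\toric(\pDiv)$ and $\toric(\pDiv')$, since $\pDiv'\leq\pDiv$ forces $\pDiv'(u_y)\geq\pDiv(u_y)$ and hence $\CO(\pDiv(u_y))\subseteq\CO(\pDiv'(u_y))$. Setting $U_y:=[f_y\chi^{u_y}\neq 0]$, I would compute both $\pDiv_{f_y\chi^{u_y}}$ and $\pDiv'_{f_y\chi^{u_y}}$ from the localization formula and check, component by component, that the conditions $y\notin\supp Z_y$, $\pDiv'_y=\face(\pDiv_y,u_y)$, and $\face(\pDiv'_z,u_y)=\face(\pDiv_z,u_y)$ for $z\notin\supp Z_y$ make these two localizations define the same open subvariety, with $\iota|_{U_y}$ the identity. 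Finally I would argue that the $U_y$, for $y$ ranging over $\Loc(\pDiv')$, cover $\toric(\pDiv')$ — each closed point maps to some such $y$ under $\ttoric(\pDiv')\to\toric(\pDiv')$ composed with $\ttoric(\pDiv')\to\Loc(\pDiv')$ — and conclude that $\iota$ is an isomorphism onto a $T$-stable open subset.

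For the direction $\Rightarrow$, assuming $\iota$ is an open embedding, I would first recover $\pDiv'\leq\pDiv$ by comparing graded coordinate rings: both sit $T$-equivariantly inside $\CC(Y)(M)$, the embedding induces graded inclusions $\Gamma(\Loc\pDiv,\CO(\pDiv(u)))\subseteq\Gamma(\Loc\pDiv',\CO(\pDiv'(u)))$ for all $u$, and reading off orders of vanishing forces the coefficients $\pDiv'_Z$ to be faces of $\pDiv_Z$ (equivalently, one passes through the affinizations $\ttoric\to\toric$ and invokes the already-established criterion that $\til\iota$ is open if and only if $\pDiv'\leq\pDiv$). Then, for each $y\in\Loc(\pDiv')$, I would look at the $T$-orbit in $\toric(\pDiv')$ attached to $y$; its image under $\iota$ lies in some principal open $[f\chi^{u_y}\neq 0]\subseteq\toric(\pDiv)$ contained in the image, and reading the localization formula backwards yields $Z_y:=\div f+\pDiv(u_y)\in|\pDiv(u_y)|$ together with the three required identities.

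The main obstacle will be the forward direction, and within it two coupled difficulties. First, extracting the distinguished $u_y$ and the divisor $Z_y$ from purely scheme-theoretic openness and matching them correctly with the local polyhedra $\pDiv_y,\pDiv'_y$ requires the orbit-cone type dictionary for p-divisors rather than merely for cones. Second, on prime components lying in $\supp(\pDiv'(u_y)-\pDiv(u_y))\setminus\supp Z_y$ the two localizations $\pDiv_{f_y\chi^{u_y}}$ and $\pDiv'_{f_y\chi^{u_y}}$ have differing loci, yet must be shown to define the same affine variety; controlling this uses the semiampleness and bigness built into the definition of a p-divisor, which guarantee enough global sections to realize the prescribed $Z_y$ while avoiding $y$. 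This is the step least likely to reduce to a formal manipulation.
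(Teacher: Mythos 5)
The survey states this proposition without proof, citing \cite{tvar_2}, and your reconstruction --- reducing to the principal homogeneous localizations $[f\chi^u\neq 0]$, using the face formula for $\pDiv_{f\chi^u}$ together with the fact that such opens form a basis of the $T$-invariant topology on an affine $T$-variety, and extracting $(u_y,Z_y)$ orbit-by-orbit in the forward direction --- is precisely the argument of the cited source. One remark on the step you single out as hardest: the third hypothesis, applied at generic points of prime divisors, already gives $\min\langle\pDiv'_Z,u_y\rangle=\min\langle\pDiv_Z,u_y\rangle$ for every prime divisor $Z\not\subseteq\supp Z_y$, hence $\supp\big(\pDiv'(u_y)-\pDiv(u_y)\big)\subseteq\supp Z_y$, so the ``differing loci'' of the two localizations are in fact equal by hypothesis and no appeal to semiampleness or bigness is needed there.
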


\begin{example}
Let $U \subset X = \Spec A$ be an open embedding of
normal affine varieties. By normality, $D = X \setminus U$ is
a Weil divisor. Then the open embedding $U \subset X$ of
trivial $T$-varieties corresponds to the divisors
$\pDiv' = \emptyset \otimes D \leq \pDiv = 0$ on $X$.
For $y \in \Loc(\pDiv') = U$, choose $f_y$ in the ideal of $D \subset X$
that doesn't vanish at $y$. Then $u_y := 0$ and
$Z_y := \div(f_y)$ satisfy the conditions of Proposition~\ref{prop:openEmb}.
Note that $U$ is not necessarily of the form $[f \neq 0]$.
\end{example}

\begin{example}
Consider the divisors
$$\pDiv' = \emptyset \otimes \{\infty\} \leq \pDiv = [1,\infty) \otimes \{\infty\}$$
on $\PP^1$ with tailcone $[0,\infty)$. Then $\ttoric(\pDiv) \to \toric(\pDiv)$ is the
blowup of $\AA^2$ at the origin, and the open subset
$\ttoric(\pDiv') \subset \ttoric(\pDiv)$ intersects the exceptional divisor, so
$\toric(\pDiv') \to \toric(\pDiv)$ is not injective.
\end{example}

%%%%%%%%%%%%%%%%%%%%%%%%%%%%%%%%%%%%%%%%%%%%%%%%%%%%%%%%%%%%%%%%%%%%
%%%   General toric varieties and fans
%%%%%%%%%%%%%%%%%%%%%%%%%%%%%%%%%%%%%%%%%%%%%%%%%%%%%%%%%%%%%%%%%%%%%
\subsection{General toric and $T$-varieties}
\label{subsec:fansToric}
The characterization of open embeddings for affine toric varieties
via the face relation between polyhedral cones leads directly to the notion of a (polyhedral) fan which is a special instance of a polyhedral subdivision:

\begin{definition}
\label{def:Fan}
A finite set $\Sigma$ of polyhedral cones in $N_\QQ$ is called a \emph{fan} if the intersection of two cones from $\Sigma$ is a common face of both and, moreover, if $\Sigma$ contains all faces of its elements. (The normal fan $\normal(\Delta)$ from (\ref{subsec:dissToric}) is an example.)
\end{definition}

Given a fan $\Sigma$ we can glue the affine toric varieties
associated to its elements, namely
\[\toric(\Sigma) := \bigcup_{\sigma\in\Sigma} \toric(\sigma)
\hspace{0.8em}\mbox{with}\hspace{0.8em}
\toric(\sigma)\cap\toric(\tau)= \toric(\sigma\cap\tau),\]
and $\{\toric(\sigma)\kst \sigma\in\Sigma\mbox{ maximal}\}$
is an open, affine, $T$-invariant covering of $\toric(\Sigma)$.
This variety is automatically separated, and it is compact
if and only if $|\Sigma| := \bigcup_{\sigma\in\Sigma}\sigma = N_\QQ$. Moreover,
the functoriality globalizes; here the right notion for
a morphism $(N',\Sigma')\to (N,\Sigma)$ is a linear map $F:N'\to N$ such that for all $\sigma'\in\Sigma'$ there is a $\sigma\in\Sigma$ with
$F(\sigma')\subseteq\sigma$.
\\[1ex]
The above construction can be generalized in a straight-forward, if somewhat technical, manner to describe general $T$-varieties.  Let $\pDiv,\pDiv'$ be p-divisors on  $(Y,N)$. Their \emph{intersection} $\pDiv\cap\pDiv'$ is the polyhedral divisor with $Z$-coefficient $\pDiv_Z\cap\pDiv_Z'$ for any divisor $Z$ on $Y$. If $\pDiv'\subseteq \pDiv$, we say $\pDiv'$ is a face of $\pDiv$ if the induced map $\iota:\toric(\pDiv')\to\toric(\pDiv)$ is an open embedding, i.e. the conditions of Proposition \ref{prop:openEmb} hold.
\begin{definition}
	\label{def:divfan}
	A finite set $\pFan$ of p-divisors on $(Y,N)$ is called a \emph{divisorial fan} if the intersection of two p-divisors from $\pFan$ is a common face of both and, moreover, $\pFan$ is closed under taking intersections.
\end{definition}
Similar to the toric case, we can construct a scheme from a divisorial fan $\pFan$ via
\[\toric(\pFan) := \bigcup_{\pDiv\in\pFan} \toric(\pDiv)
\hspace{0.8em}\mbox{with}\hspace{0.8em}
\toric(\pDiv)\cap\toric(\pDiv')= \toric(\pDiv\cap\pDiv').\]
In general, the resulting scheme may not be separated, but there is a combinatorial criterion for checking this, see \cite[Theorem 7.5]{tvar_2}. On the other hand, by covering any $T$-variety $X$ with invariant affine open subsets, it is possible to construct a divisorial fan $\pFan$ with $X=\toric(\pFan)$, see \cite[Theorem 5.6]{tvar_2}. 

In general, the notion of divisorial fan can be cumbersome to work with, in large part due to the technical nature of Proposition \ref{prop:openEmb}.
However, a manageable substitute can be obtained in complexity 1, see (\ref{subsec:nonAffComp1}).

%%%%%%%%%%%%%%%%%%%%%%%%%%%%%%%%%%%%%%%%%%%%%%%%%%%%%%%%%%%%%%%%%%%%%
%%%%%%%%%%%%%%%%%%%%%%%%%%%%%%%%%%%%%%%%%%%%%%%%%%%%%%%%%%%%%%%%%%%%%
%%%
%%%   How to construct p-divisors
%%%
%%%%%%%%%%%%%%%%%%%%%%%%%%%%%%%%%%%%%%%%%%%%%%%%%%%%%%%%%%%%%%%%%%%%%
%%%%%%%%%%%%%%%%%%%%%%%%%%%%%%%%%%%%%%%%%%%%%%%%%%%%%%%%%%%%%%%%%%%%%
\section{How to construct p-divisors}
\label{sec:tDown}

%%%%%%%%%%%%%%%%%%%%%%%%%%%%%%%%%%%%%%%%%%%%%%%%%%%%%%%%%%%%%%%%%%%%%
%%%   Toric varieties
%%%%%%%%%%%%%%%%%%%%%%%%%%%%%%%%%%%%%%%%%%%%%%%%%%%%%%%%%%%%%%%%%%%%%
\subsection{Toric varieties}
\label{subsec:toricT}
The affine toric setting presented in (\ref{subsec:affToric}) fits into the language of (\ref{subsec:pDiv}) when setting $Y = \spec \CC$. Since divisors on points are void, the only information carried by $\pDiv$ is its tailcone $\sigma$. Thus, $\ttoric(\pDiv) = \toric(\pDiv) = \toric(\sigma)$ in this case.

%%%%%%%%%%%%%%%%%%%%%%%%%%%%%%%%%%%%%%%%%%%%%%%%%%%%%%%%%%%%%%%%%%%%%
%%%   Toric downgrades
%%%%%%%%%%%%%%%%%%%%%%%%%%%%%%%%%%%%%%%%%%%%%%%%%%%%%%%%%%%%%%%%%%%%%
\subsection{Toric downgrades}
\label{subsec:tDown}
The toric case, however, can also provide us with more interesting examples. To this end we consider a toric variety $\toric(\delta,\tN)$ and fix a subtorus action $T\hookrightarrow\tT$. Now, what is the description of $\toric(\delta,\tN)$ as a $T$-variety?
Assuming that the embedding  $T\hookrightarrow\tT$
is induced from a surjection of the corresponding character groups
$p:\tM \surj M$, we denote the kernel by $M_Y$ and
obtain two mutually dual exact sequences \eqref{eqn:seq1} and \eqref{eqn:seq2}.
\\[1ex]
Setting $\sigma := N_\Q\cap\delta$, the map $p$ gives us a surjection
$\delta\dual\surj\sigma\dual$. On the dual side,
denote the surjection $\tN\surj N_Y$ by $q$. However,
instead of just considering the cone $q(\delta)$, we denote by
$\Sigma$ the coarsest fan refining the images of all faces of $\delta$ under the map $q$. Then, $|\Sigma| = q(\delta)$, and
the set $\Sigma(1)$ of rays in $\Sigma$,
i.e.\ its one-dimensional cones, contains the set $q(\delta(1))$.

\begin{align}
	\label{eqn:seq1}	\xymatrix@R=.3ex@!C=1.8cm{
% \ar@/_1.1pc/[lll]_-{\kss}&&
0 \ar[r] & M_Y \ar[r] & 
\raisebox{0.8ex}{$\tM$} \ar[r]^-{p} & 
M \ar[r] &  0\\
%%%%%%%%%%%%%%%%%%%%%%%%%%%%%%%%%%%%%%%%
& \nabla(u) := p^{-1}(u)\cap\delta\dual\, \ar@{^(->}[r] &
\delta\dual \ar@{->>}[r] & \sigma\dual\ni u
\\ {\ } \\}\\
\label{eqn:seq2}	\xymatrix@R=.3ex@!C=1.8cm{
%%%%%%%%%%%%%%%%%%%%%%%%%%%%%%%%%%%%%%%%
0 & N_Y \ar[l] & 
\raisebox{0.8ex}{$\tN$} \ar[l]_-{q} & 
N \ar[l] & \ar[l] 0
\\
%%%%%%%%%%%%%%%%%%%%%%%%%%%%%%%%%%%%%%%%
& a\in \Sigma(1)\subseteq |\Sigma| &
\delta \ar@{->>}[l]&
\;q^{-1}(a)\cap\delta=:\pDiv_a \ar@{_(->}[l]
}\end{align}
Next, we perform the following two mutually dual constructions.
For $u\in\sigma\dual$ and $a\in|\Sigma|$, let $\nabla(u)\subseteq M_{Y,\QQ}$ and $\pDiv_a\subseteq N_\QQ$ be as indicated in the diagram above. Note that we have to make use of some section $s: M\hookrightarrow \tM$ of $p$, i.e.\ a splitting of the two sequences, to shift both polyhedra from $p^{-1}(u)$ and $q^{-1}(a)$
into the respective fibers over $0$. Both polyhedra are
linked to each other by the equality
\[\min\langle a,\nabla(u)\rangle = \min \langle \pDiv_a, u\rangle\]
which is an easy consequence (via $a=q(x)$ and $u=p(y)$) of the
following lemma appearing in the proof of
\cite[Proposition 8.5]{tvar_0}.

\begin{lemma}
\label{lem:Gale}
$x\in\delta$, $y\in\delta\dual$ $\then$
$\min\langle q^{-1}q(x)\cap\delta, y\rangle +
\min\langle x, p^{-1}p(y)\cap\delta\dual\rangle = 
\langle x,y\rangle$.%
\end{lemma}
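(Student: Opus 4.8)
The plan is to recognize the claimed identity as an instance of strong linear-programming duality. First I would unwind the notation. Since the sequences \eqref{eqn:seq1} and \eqref{eqn:seq2} are mutually dual, we have $\ker q = N$ and $\ker p = M_Y$, so the two fibers appearing are the affine subspaces $q^{-1}q(x) = x + N_\QQ$ and $p^{-1}p(y) = y + M_{Y,\QQ}$. Writing
$$A := \min_{x'\in(x+N_\QQ)\cap\delta}\langle x',y\rangle, \qquad B := \min_{y'\in(y+M_{Y,\QQ})\cap\delta\dual}\langle x,y'\rangle,$$
the goal becomes $A + B = \langle x,y\rangle$. Both minima are attained: the feasible sets are nonempty polyhedra (they contain $x$, resp.\ $y$), and the objectives are bounded below by $0$ since $y\in\delta\dual$ and $x\in\delta$.

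For the inequality $A + B \ge \langle x,y\rangle$ I would use the orthogonality $\langle N_\QQ, M_{Y,\QQ}\rangle = 0$, which expresses the duality of the two sequences (indeed $M_Y$ is the annihilator of $N$). For any feasible $x' = x+n$ and $y' = y+m$ with $n\in N_\QQ$, $m\in M_{Y,\QQ}$, expanding the pairing gives
$$\langle x',y\rangle + \langle x,y'\rangle = \langle x,y\rangle + \langle x',y'\rangle \ge \langle x,y\rangle,$$
since $\langle n,m\rangle = 0$ and $\langle x',y'\rangle\ge 0$. As this holds for all feasible pairs, minimizing over $x'$ and $y'$ independently yields $A + B \ge \langle x,y\rangle$.

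For the reverse inequality I would invoke strong duality. Choose generators $u_1,\dots,u_r$ of $\delta\dual$ (so that $\delta = \{z : \langle z,u_j\rangle\ge 0\}$) and generators $m_1,\dots,m_s$ of $M_Y$; then $N_\QQ = M_{Y,\QQ}^\perp$, so membership $x'-x\in N_\QQ$ is encoded by $\langle x',m_i\rangle = \langle x,m_i\rangle$. Thus $A$ is the optimum of the linear program, in the variable $x'\in\tN_\QQ$,
$$\min\ \langle x',y\rangle \quad\text{s.t.}\quad \langle x',u_j\rangle\ge 0\ (j), \quad \langle x',m_i\rangle = \langle x,m_i\rangle\ (i).$$
This program is feasible (take $x'=x$) and bounded below (by $0$), so strong duality applies. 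Its dual maximizes $\sum_i\mu_i\langle x,m_i\rangle$ subject to $\sum_j\lambda_j u_j + \sum_i\mu_i m_i = y$ with $\lambda_j\ge 0$. Setting $y' := y - \sum_i\mu_i m_i = \sum_j\lambda_j u_j$, dual feasibility says precisely $y'\in(y+M_{Y,\QQ})\cap\delta\dual$, and the dual objective equals $\langle x,\sum_i\mu_i m_i\rangle = \langle x,y\rangle - \langle x,y'\rangle$. Hence the dual optimum is $\langle x,y\rangle - B$, and strong duality gives $A = \langle x,y\rangle - B$, i.e.\ $A + B = \langle x,y\rangle$.

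\textbf{Main obstacle.} The essential content is packaged by LP duality, so the only real work is bookkeeping: matching the dual feasible region and objective exactly with the second minimization $B$ (with the correct sign conventions), and checking the orthogonality $\langle N_\QQ, M_{Y,\QQ}\rangle = 0$ that makes the cross terms collapse. No genuine geometric difficulty arises beyond verifying feasibility and boundedness of the primal, both of which are immediate from $x\in\delta$ and $y\in\delta\dual$.
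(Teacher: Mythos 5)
Your proof is correct. Note that the survey itself does not prove Lemma~\ref{lem:Gale}; it only cites the proof of \cite[Proposition 8.5]{tvar_0}, so there is no in-paper argument to compare against. Your reduction is the natural one: the mutual duality of \eqref{eqn:seq1} and \eqref{eqn:seq2} gives $\ker q_\QQ = N_\QQ = M_{Y,\QQ}^\perp$ and $\ker p_\QQ = M_{Y,\QQ}$, so the two fibers are $x+N_\QQ$ and $y+M_{Y,\QQ}$, the cross-term identity $\langle x',y\rangle+\langle x,y'\rangle=\langle x,y\rangle+\langle x',y'\rangle$ gives the inequality $\ge$, and the LP dual of the first minimization is, after the substitution $y'=y-\sum_i\mu_i m_i=\sum_j\lambda_j u_j$, exactly the second minimization with value $\langle x,y\rangle-B$. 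The two small points you flag (attainment of both minima because the feasible polyhedra are nonempty and the objectives are bounded below by $0$, and biduality $\delta=(\delta\dual)\dual$ so that membership in $\delta$ is cut out by finitely many generators of $\delta\dual$) are exactly the ones that need checking, and both hold. This is an instance of the general principle that such ``Gale duality'' identities for the splitting of a pairing along a subspace and its annihilator are equivalent to LP strong duality; the original source argues in the same convex-duality spirit, so nothing essential is lost or gained, but your write-up is self-contained and complete.
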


Thus, defining $Y := \toric(\Sigma)$, we can refer to the upcoming discussion in (\ref{subsec:toricOrbits}) for the construction of 
$T$-invariant Weil divisors $\ko{\orb}(a)\subseteq\toric(\Sigma)$ 
for $a\in\Sigma(1)$ to define the p-divisor
$\pDiv^\delta :=\sum_{a\in\Sigma(1)} \pDiv_a\otimes \ko{\orb}(a)$
on $(Y,N)$. 
Although we have indexed the coeffixients of $\pDiv$ by a ray $a$ instead of a prime divisor, this is harmless since we may always identify $a$ with the divisor $\ko{\orb}(a)\subseteq\toric(\Sigma)$.
It follows from the characterization of toric nef Cartier divisors and the above equality involving $\nabla(u)$ and $\pDiv_a$ that
$\toric(\delta)=\toric(\pDiv^\delta)$.

\begin{remark}
\label{rmk:refinemantDowngrade}
Note that $q$ does not yield a map $(\tN,\delta)\to (N_Y,\Sigma)$ in general. To achieve this, one has to further subdivide $\delta$ into the fan $\til{\delta}$  consisting of the preimages of the cones of $\Sigma$.
Moreover, observe that $\toric(\til{\delta})=\ttoric(\pDiv)$, which illustrates both the contraction $\ttoric(\pDiv)\to \toric(\pDiv)$
as well as the morphism $\ttoric(\pDiv)\to Y$.
\end{remark}

%%%%%%%%%%%%%%%%%%%%%%%%%%%%%%%%%%%%%%%%%%%%%%%%%%%%%%%%%%%%%%%%%%%%%
%%%   General affine $T$-varieties
%%%%%%%%%%%%%%%%%%%%%%%%%%%%%%%%%%%%%%%%%%%%%%%%%%%%%%%%%%%%%%%%%%%%%
\subsection{General affine $T$-varieties}
\label{subsec:generalT}
Let $X\subseteq\CC^n$ be an equivariantly embedded,
normal, affine $T$-variety which intersects the torus non-trivially, and denote by $f_1,\ldots,f_m$ $M$-homogeneous generating
equations for $X$. If $T$ acts diagonally and effectively on
$\CC^n$, then each variable $x_i$ has a degree in $M$ which gives rise to a surjection $p:\tM := \ZZ^n\surj M$, $e_\nu\mapsto \deg x_\nu$
as in the previous section. Setting $\delta := \QQ^n_{\geq 0}$ and $u_i:=\deg f_i\in M$, we use the section 
$s: M\hookrightarrow \tM$ as mentioned in (\ref{subsec:tDown}) to
shift the equations $f_i$ to the Laurent polynomials
$f'_i := \chi^{-s(u_i)} f_i\in\CC[M_Y]$. They define
a closed subvariety of the torus $T_Y := \Spec\CC[M_Y]$, and we use this
to modify the definition of $Y$ from (\ref{subsec:tDown}) to the normalization of
\[\ko{V(f'_1,\ldots,f'_m)}\subseteq\toric(\Sigma).\]
If $\ko{\orb}(a)$ are Cartier divisors,
then we can consider their pull backs $Z_a := \ko{\orb}(a)\cap Y$. Thus, we arrive at the description $X = \toric(\pDiv)$  as a $T$-variety, where $\pDiv=\sum_{a\in\Sigma(1)} \pDiv_a\otimes Z_a$.

\begin{remark}
\label{rmk:toricEmbedding}
It is sometimes easier to consider $T$-equivariant embeddings
$X\hookrightarrow\toric(\delta)$ instead of
$X\hookrightarrow\CC^n$. In this case, everything also works out 
as discussed in the paragraph above.
\end{remark}

%%%%%%%%%%%%%%%%%%%%%%%%%%%%%%%%%%%%%%%%%%%%%%%%%%%%%%%%%%%%%%%%%%%%%
%%%   Encoding non-affine $T$-varieties
%%%%%%%%%%%%%%%%%%%%%%%%%%%%%%%%%%%%%%%%%%%%%%%%%%%%%%%%%%%%%%%%%%%%%
\subsection{Slices of divisorial fans}
\label{subsec:nonAffineT}
At the end of (\ref{subsec:fansToric}) we introduced the notion of divisorial fans, which encode invariant open affine coverings of general $T$-varieties.
The face conditions guarantees that for any divisorial fan $\pFan=\{\pDiv^i\}$ and any prime divisor $Z\subset Y$, the set
$$
\pFan_Z=\{\pDiv_Z^i\}
$$
is a polyhedral subdivision in $N_\QQ$ called a \emph{slice}. Indeed,
since open embeddings between affine $T$-varieties 
$\toric(\pDiv')\hookrightarrow\toric(\pDiv)$ imply that
$\pDiv'\leq\pDiv$, the polyhedra $\pDiv_Z^i\subseteq N_\QQ$ are supposed
to be glued along the common faces $\pDiv'_Z\leq\pDiv_Z$. 

These slices can be viewed another way as well.
Similarly to (\ref{subsec:generalT}), one can try to embed a general
$T$-variety $X$ into a toric variety $\toric(\cF)$ with some fan
$\cF$ of polyhedral cones $\delta$ in $\tN_\QQ$. Now, the method
of (\ref{subsec:tDown}) can be copied to yield an $\pFan$ on
$\toric(\Sigma)$ describing $\toric(\cF)$ as a $T$-variety 
denoted by $\toric(\pFan)$ and,
afterwards, one follows (\ref{subsec:generalT}) to find the right
$Y\subseteq \toric(\Sigma)$ to which $\pFan$ should be restricted in order to describe $X$. In this setting, the slices $\pFan_Z$ appear as intersections of the fan $\cF$ with certain affine subspaces, explaining the choice of terminology.
\\[1ex]
It is tempting to think that the slices $\pFan_Z$ capture the entire information of $\pFan$.
However, the important point is that one also must keep track of 
which of the polyhedral cells inside the different $\pDiv_Z$ belong
to a common p-divisor. This is related to the question of how much is contracting
along $\ttoric(\pFan)\to\toric(\pFan)$.
Thus, one is supposed to introduce labels
for the cells in $\pFan_Z$. Again, we refer to
(\ref{subsec:nonAffComp1}) for a nice way of overcoming this problem in complexity
one.

%%%%%%%%%%%%%%%%%%%%%%%%%%%%%%%%%%%%%%%%%%%%%%%%%%%%%%%%%%%%%%%%%%%%%
%%%%%%%%%%%%%%%%%%%%%%%%%%%%%%%%%%%%%%%%%%%%%%%%%%%%%%%%%%%%%%%%%%%%%
%%%
%%%   Globalization in Complexity 1
%%%
%%%%%%%%%%%%%%%%%%%%%%%%%%%%%%%%%%%%%%%%%%%%%%%%%%%%%%%%%%%%%%%%%%%%%
%%%%%%%%%%%%%%%%%%%%%%%%%%%%%%%%%%%%%%%%%%%%%%%%%%%%%%%%%%%%%%%%%%%%%
\section{Globalization in complexity 1}
\label{sec:globComp1}

%%%%%%%%%%%%%%%%%%%%%%%%%%%%%%%%%%%%%%%%%%%%%%%%%%%%%%%%%%%%%%%%%%%%%
%%%   The degree polyhedron
%%%%%%%%%%%%%%%%%%%%%%%%%%%%%%%%%%%%%%%%%%%%%%%%%%%%%%%%%%%%%%%%%%%%%
\subsection{The degree polyhedron}
\label{subsec:degPolytope}
Let $\pDiv=\sum_{P\in Y}\pDiv_P\otimes P$ denote a p-divisor of complexity one on the curve $Y$ with tailcone $\sigma$. Since prime divisors $Z$ coincide with closed points $P \in Y$ we will from now on replace the letter $Z$ by $P$ in this setting. As in the definition of
$\pDiv_P$ in (\ref{subsec:openEmb}) we can now introduce the \emph{degree}
\[\deg \pDiv := \sum_{P\in Y} \pDiv_P\in \Pol^+_\Q(N,\sigma).\]
Note that it satisfies the equation $\min\langle \deg\pDiv,u\rangle=\deg\pDiv(u)$. In contrast, considering the sum of the polyhedral coefficients of a p-divisor in higher complexity does not make much sense. Nonetheless, one may define the degree
of a p-divisor on a polarized $Y$ via the following
construction: let $C\subseteq Y$ be a curve or, more generally, a
numerical class of curves in $Y$. Then there is a well defined
generalized polyhedron $(C\cdot\pDiv)\in \Pol_\Q(N,\sigma)$ where the term ``generalized'' refers to an element in the
Grothendieck group associated to $\Pol^+_\Q(N,\sigma)$. Hence, it is representable as the formal difference of two polyhedra.

%%%%%%%%%%%%%%%%%%%%%%%%%%%%%%%%%%%%%%%%%%%%%%%%%%%%%%%%%%%%%%%%%%%%%
%%%   Using $\deg\pDiv$ to characterize open embeddings
%%%%%%%%%%%%%%%%%%%%%%%%%%%%%%%%%%%%%%%%%%%%%%%%%%%%%%%%%%%%%%%%%%%%%
\subsection{Using $\deg\pDiv$ to characterize open embeddings}
\label{subsec:openComp1}
For a p-divisor $\pDiv$ on a curve $Y$ it is as obvious as important to observe that
\[\deg\pDiv \neq\emptyset\;\iff\;\Loc\pDiv=Y.\]
Moreover, if $\pDiv$ is a polyhedral divisor on $Y$ then it is not hard to see that $\pDiv$ is a p-divisor $\iff$ $\deg\pDiv\subsetneq\tail(\pDiv)$
and $\pDiv( w)$ has a principal multiple for all $w\in (\tail(\pDiv))\dual$
with $w^\bot\cap(\deg\pDiv)\neq 0$. Note that the latter condition
is automatically fulfilled if $Y=\PP^1$.
\\[1ex]
Nonetheless, degree polyhedra play their most important roles in the characterization of open embeddings since they transform the technical condition from (\ref{subsec:openEmb}) into a very natural and geometric
one.

\begin{theorem}[\cite{NathHend}, Lemma 1.4]
\label{thm:openEmbComp1}
Let $Y$ be a curve and 
$\pDiv',\pDiv$
be a polyhedral and a
p-divisor, respectively, such that $\pDiv'\leq\pDiv$. Then, $\pDiv'$ is a p-divisor and
$\toric(\pDiv')\hookrightarrow \toric(\pDiv)$ is a
$T$-equivariant open embedding
$\iff$ 
$\deg\pDiv'=\deg\pDiv\cap\tail(\pDiv')$.
\end{theorem}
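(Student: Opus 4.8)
\emph{Strategy.} The statement is local-to-global over the curve $Y$, so the plan is to feed the general open-embedding criterion of Proposition~\ref{prop:openEmb} into the one-dimensional situation and translate its pointwise face conditions into the single equation on degree polyhedra. Write $\sigma:=\tail(\pDiv)$, $\sigma':=\tail(\pDiv')$, and fix $w\in\sigma\dual\cap M$ with $\face(\sigma,w)=\sigma'$, which exists because $\sigma'\leq\sigma$. I would first record the dictionary between faces and degrees: the face operator distributes over Minkowski sums, so that $\face(\deg\pDiv,u)=\sum_{P}\face(\pDiv_P,u)$ for every $u\in\sigma\dual$; and, since $\pDiv$ is a p-divisor, the inclusion $\deg\pDiv\subseteq\sigma$ recorded in (\ref{subsec:openComp1}) gives $\deg\pDiv\cap\sigma'=\deg\pDiv\cap w^\bot$, which equals $\face(\deg\pDiv,w)$ when $\deg\pDiv(w)=0$ and is empty when $\deg\pDiv(w)>0$. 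This is why the clean equation $\deg\pDiv'=\deg\pDiv\cap\sigma'$ is the right curve-analogue of the technical hypotheses; in particular it already encodes the dichotomy $\Loc(\pDiv')=Y$ versus $\Loc(\pDiv')\subsetneq Y$. On a curve every effective divisor has finite support, so the quantifier ``for all $z\in\Loc(\pDiv)\setminus\supp(Z_y)$'' in Proposition~\ref{prop:openEmb} becomes ``for all but finitely many closed points $P$''.

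For necessity, assume $\pDiv'$ is a p-divisor and $\iota$ is an open embedding. One inclusion is essentially formal: $\pDiv'_P\subseteq\pDiv_P$ gives $\deg\pDiv'\subseteq\deg\pDiv$ by monotonicity of Minkowski addition, while the p-divisor property of $\pDiv'$ yields $\deg\pDiv'\subseteq\sigma'$, whence $\deg\pDiv'\subseteq\deg\pDiv\cap\sigma'$. For the reverse inclusion I would exploit that $\deg\pDiv'\subseteq\sigma'\subseteq w^\bot$ forces $w$ to be constant on each coefficient $\pDiv'_P$: comparing the minima and the maxima of $w$ over $\deg\pDiv'=\sum_P\pDiv'_P$, both of which vanish, shows $\min\langle \pDiv'_P,w\rangle=\max\langle \pDiv'_P,w\rangle$ for every $P$, so that $\face(\pDiv'_P,w)=\pDiv'_P$. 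Applying Proposition~\ref{prop:openEmb} at the generic point $\eta$ produces this same $w$ together with the identity $\face(\pDiv'_P,w)=\face(\pDiv_P,w)$ for cofinitely many $P$; combined with the previous observation this gives $\pDiv'_P=\face(\pDiv_P,w)$ for cofinitely many $P$. Summing over $P$ and using distributivity identifies $\deg\pDiv'$ with $\face(\deg\pDiv,w)=\deg\pDiv\cap\sigma'$; the remaining finitely many points, where $\pDiv_P\neq\sigma$, are incorporated by invoking Proposition~\ref{prop:openEmb} again with suitable effective divisors $Z_\eta\in|\pDiv(w)|$ furnished by the semiampleness of $\pDiv(w)$, which on $\PP^1$ degenerates to the trivial choice $Z_\eta=0$.

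For sufficiency, assume $\deg\pDiv'=\deg\pDiv\cap\sigma'$. I would first verify that $\pDiv'$ is a p-divisor through the degree criterion of (\ref{subsec:openComp1}): the equality places $\deg\pDiv'$ inside $\sigma'$, and the required principal-multiple condition for $\pDiv'$ descends from that of $\pDiv$. To exhibit $\iota$ as an open embedding I would check the hypotheses of Proposition~\ref{prop:openEmb}. Since $\pDiv'\leq\pDiv$, for every $y\in\Loc(\pDiv')$ there is a functional $u_y\in\sigma\dual\cap M$ with $\pDiv'_y=\face(\pDiv_y,u_y)$ and $\face(\sigma,u_y)=\sigma'$; the genuine task is the cross-condition $\face(\pDiv'_z,u_y)=\face(\pDiv_z,u_y)$ for $z$ outside the support of a member $Z_y\in|\pDiv(u_y)|$ with $y\notin\supp Z_y$. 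Here the degree equality, read through distributivity, is used to show that the two sides can disagree only over the finite set where $\pDiv_z\neq\sigma$, and the semiampleness of $\pDiv(u_y)$ supplies the effective $Z_y$ whose support captures that finite set while avoiding $y$.

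\emph{Main obstacle.} The real difficulty is not any single inequality but the passage between the \emph{global} identity $\deg\pDiv'=\sum_P\pDiv'_P=\deg\pDiv\cap\sigma'$ and the \emph{pointwise} face identities demanded by Proposition~\ref{prop:openEmb}: deducing coefficientwise information from an identity of Minkowski sums requires the cancellation law in $\Pol^+_\Q(N,\sigma')$ together with the p-divisor constraint $\deg\pDiv'\subseteq\sigma'$, since Minkowski addition is not injective on summands in general. The second delicate ingredient is the treatment of the finitely many points where $\pDiv_P\neq\sigma$, where the avoidance divisors $Z_y$ must be produced; this is exactly the step that uses the semiampleness built into the definition of a p-divisor, and it is also where the empty-degree case $\Loc(\pDiv')\subsetneq Y$ must be separated off.
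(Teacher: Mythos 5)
Your overall strategy --- feeding Proposition~\ref{prop:openEmb} into the one-dimensional situation and translating its pointwise face conditions into the single degree equation --- is the right one (the survey itself offers no proof, citing \cite{NathHend}), and the combinatorial ingredients you isolate are the correct ones: distributivity of $\face(\cdot,u)$ over Minkowski sums, the min-equals-max argument forcing $u$ to be constant on each $\pDiv'_P$, and support-function cancellation. But the execution misfires at the load-bearing step, in both directions, for the same reason: whenever $\deg\pDiv\cap\tail(\pDiv')\neq\emptyset$, any $u_y$ with $\face(\tail(\pDiv),u_y)=\tail(\pDiv')$ has $\deg\pDiv(u_y)=\min\langle\deg\pDiv,u_y\rangle=0$, so every effective divisor in $|\pDiv(u_y)|$ on the (then necessarily complete) curve $Y$ has degree zero and is therefore $0$. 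Hence your sufficiency step ``the semiampleness of $\pDiv(u_y)$ supplies the effective $Z_y$ whose support captures that finite set while avoiding $y$'' is impossible in precisely the main case $\deg\pDiv'\neq\emptyset$: one is forced to take $\supp Z_y=\emptyset$, so Proposition~\ref{prop:openEmb} demands the cross-condition at \emph{every} $z$, and you must instead upgrade the global identity $\sum_P\pDiv'_P=\sum_P\face(\pDiv_P,u_y)$ to the pointwise identity $\pDiv'_P=\face(\pDiv_P,u_y)$ for all $P$. That upgrade is available --- your min/max argument already gives $\pDiv'_P\subseteq\face(\pDiv_P,u_y)$ for every $P$, and comparing support functions on $\tail(\pDiv')\dual$ then forces equality --- but you deploy this cancellation argument only in the necessity direction and in the ``main obstacle'' paragraph, never where it is actually needed.

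Symmetrically, in the necessity direction the ``remaining finitely many points'' of $\supp Z_\eta$ are not a residual case to be ``incorporated'': the same degree-zero observation shows $Z_\eta=0$ on \emph{any} curve once $\deg\pDiv'\neq\emptyset$ (your remark that this ``degenerates to the trivial choice on $\PP^1$'' is neither the right reason nor does it cover general $Y$), so the cross-condition holds at all $z$ simultaneously and yields $\pDiv'_z=\face(\pDiv_z,u_\eta)$ everywhere in one stroke. Finally, the case $\deg\pDiv'=\emptyset$ is only ``separated off'' but never treated, and it carries real content in both directions: for necessity one must show $\deg\pDiv\cap\tail(\pDiv')=\emptyset$, e.g.\ by noting that Proposition~\ref{prop:openEmb} forces $\supp Z_\eta$ to contain the nonempty set $\{P\;|\;\pDiv'_P=\emptyset\}$, whence $\deg\pDiv(u_\eta)>0$; for sufficiency one genuinely must construct $Z_y\in|\pDiv(u_y)|$ with prescribed support avoiding $y$, which uses $\deg\pDiv(u_y)>0$ together with a Riemann--Roch/section argument, not ``semiampleness'' alone. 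As written, the proposal is a correct skeleton whose decisive steps are either misplaced or missing.
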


In particular, if $\pFan = \{\pDiv^i\}$ is a divisorial fan as in (\ref{subsec:fansToric})
then the set $\{\tail(\pDiv^i)\,|\, \pDiv^i \in \pFan\}$ forms the so-called \emph{tailfan} $\tail(\pFan)$, and the subsets $\deg\pDiv^i\subsetneq\tail(\pDiv^i)$ glue together to a
proper subset $\deg\pFan\subsetneq|\tail(\pFan)|$.
The single degree polyhedra can be recovered as
$\deg\pDiv^i=\deg\pFan\cap\tail(\pDiv^i)$.

%%%%%%%%%%%%%%%%%%%%%%%%%%%%%%%%%%%%%%%%%%%%%%%%%%%%%%%%%%%%%%%%%%%%%
%%%   Non-affine $T$-varieties in complexity one
%%%%%%%%%%%%%%%%%%%%%%%%%%%%%%%%%%%%%%%%%%%%%%%%%%%%%%%%%%%%%%%%%%%%%
\subsection{Non-affine $T$-varieties in complexity 1}
\label{subsec:nonAffComp1}
Another important feature of curves as base spaces $Y$ is that
$\Loc\pDiv\subseteq Y$ is either projective (if $\deg\pDiv\neq\emptyset$) or
affine (if $\deg\pDiv=\emptyset$). In the latter case,
$\ttoric(\pDiv)=\toric(\pDiv)$, and gluing those cells does not cause
problems. For the other case, an easy observation is

\begin{lemma}
\label{lem:compLoc}
Let $\pDiv$ be a p-divisor on a curve $Y$ with
$\deg\pDiv\neq\emptyset$.
Then, for all $P\in Y$, we have that $\dim \pDiv_P = \dim(\tail(\pDiv))$.
\end{lemma}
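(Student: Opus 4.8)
The plan is to determine $\dim\pDiv_P$ by squeezing it between $\dim\sigma$ from below and $\dim\sigma$ from above, where $\sigma:=\tail(\pDiv)$. The lower bound is automatic and the upper bound is where the hypothesis $\deg\pDiv\neq\emptyset$ (together with the p-divisor property) enters. Apart from that single input, the argument is purely formal manipulation of Minkowski sums of polyhedra.

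First I would note that every coefficient is nonempty: since $\emptyset+\Delta=\emptyset$, a single empty coefficient would force $\deg\pDiv=\sum_{P}\pDiv_P=\emptyset$, contradicting the hypothesis. Hence each $\pDiv_P$ is a genuine polyhedron with tailcone $\sigma$, so it contains a translate of $\sigma$ and therefore $\dim\pDiv_P\geq\dim\sigma$; this is the automatic lower bound. (As a sanity check, all but finitely many coefficients equal the neutral element $\sigma$, for which the claim is trivial, so only the finitely many nontrivial $\pDiv_P$ carry content.) For the upper bound, fix a point $P_0$ and write $\deg\pDiv=\pDiv_{P_0}+Q$ with $Q:=\sum_{P\neq P_0}\pDiv_P$, a finite Minkowski sum once the infinitely many trivial summands are absorbed into the neutral element, hence again a nonempty polyhedron. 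Choosing any $q_0\in Q$ exhibits the translate $\pDiv_{P_0}+q_0\subseteq\deg\pDiv$, so $\dim\deg\pDiv\geq\dim\pDiv_{P_0}$. On the other hand, the characterization of p-divisors on curves recalled in (\ref{subsec:openComp1}) gives $\deg\pDiv\subsetneq\sigma$, in particular $\deg\pDiv\subseteq\sigma$, and monotonicity of dimension under inclusion yields $\dim\deg\pDiv\leq\dim\sigma$. Combining the three inequalities, $\dim\sigma\leq\dim\pDiv_{P_0}\leq\dim\deg\pDiv\leq\dim\sigma$, which forces equality throughout and proves $\dim\pDiv_{P_0}=\dim\sigma=\dim\tail(\pDiv)$.

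The only genuinely substantive input is the containment $\deg\pDiv\subseteq\sigma$; everything else is bookkeeping, so I do not expect a real obstacle so much as a single place where the geometry is used. This containment encodes both that $\Loc\pDiv=Y$ is projective and that the evaluations $\pDiv(u)$ are semiample, and it is exactly the statement already recorded in (\ref{subsec:openComp1}), so I would simply cite it.

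If a self-contained derivation of that containment were desired, I would instead argue directly: for $u\in\sigma\dual$ the evaluation $\pDiv(u)$ is semiample on the projective curve $Y=\Loc\pDiv$, hence $\deg\pDiv(u)\geq 0$, and the identity $\min\langle\deg\pDiv,u\rangle=\deg\pDiv(u)$ from (\ref{subsec:degPolytope}) then gives $\min\langle\deg\pDiv,u\rangle\geq 0$ for every $u\in\sigma\dual$. As this holds for all such $u$, we conclude $\deg\pDiv\subseteq(\sigma\dual)\dual=\sigma$, recovering the needed containment.
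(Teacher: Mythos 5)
Your proof is correct and follows essentially the same route as the paper, which establishes the chain $\dim(\tail(\pDiv)) \leq \dim \pDiv_P \leq \dim(\deg\pDiv) \leq \dim(\tail(\pDiv))$ via translations and inclusions; you have merely spelled out the translation arguments and the containment $\deg\pDiv\subseteq\tail(\pDiv)$ in more detail.
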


Indeed, we have the following chain of inequalities
\[\dim(\tail(\pDiv)) \leq \dim \pDiv_P \leq \dim (\deg\pDiv) \leq \dim(\tail(\pDiv))\]
each of which results from a combination of translations and inclusions.
 
Consider a divisorial fan  $\pFan=\{\pDiv^i\}$ on a curve. For the following, we will make the reasonable assumption that each (lower-dimensional) cone of $\Sigma=\tail(\pFan)$ is the face of a full-dimensional cone of $\tail(\pFan)$. 
For example, this is satisfied if the support of $\tail(\pFan)$ is a polyhedral cone or the entire space $N_\QQ$.
Now we have seen in (\ref{subsec:nonAffineT}) and (\ref{subsec:openComp1}) that a divisorial fan $\pFan=\{\pDiv^i\}$ on a curve determines a pair $(\sum_P \pFan_P \otimes P,\sdeg)$. This motivates the following definition:
\begin{definition}
Consider a pair $(\sum_P \pFan_P \otimes P,\sdeg)$
where $\pFan_P$ are all polyhedral subdivisions with some common tailfan $\Sigma$ and $\deg\subset |\Sigma|$. This pair is called an \emph{f-divisor} 
if for any full-dimensional $\sigma\in\tail(\pFan)$ with $\sdeg\cap\sigma\neq \emptyset$, $\pDiv^\sigma:=\sum \Delta_Z^\sigma$ is a p-divisor and  $\sdeg\cap\sigma=\deg \pDiv^\sigma$. Here $\Delta_Z^\sigma$ denotes the unique polyhedron in $\pFan_Z$ with $\tail(\Delta_Z^\sigma)=\sigma$.
\end{definition}

\begin{proposition}[{\cite[Proposition 1.6]{candiv,NathHend}}]
\label{prop:degHelps}
Consider an f-divisor $(\sum_P \pFan_P \otimes P,\sdeg)$.
Under the above assumptions, there is a divisorial fan $\pFan$
with slices $\pFan_P$ and degree $\sdeg$.  For any other such divisorial fan $\pFan'$, we have $\toric(\pFan)=\toric(\pFan')$. 
\end{proposition}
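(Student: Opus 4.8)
The plan is to reconstruct a divisorial fan $\pFan$ cone by cone out of the tailfan $\Sigma$, the slices $\pFan_P$, and the degree $\sdeg$, and then to show that whatever freedom remains in this reconstruction sits only in pieces of affine locus, over which $\ttoric=\toric$, so that no choice there can change the glued scheme. Throughout I would replace the technical face criterion of Proposition~\ref{prop:openEmb} by its complexity-one reformulation, Theorem~\ref{thm:openEmbComp1}: for $\pDiv'\leq\pDiv$ the map $\toric(\pDiv')\hookrightarrow\toric(\pDiv)$ is an open embedding exactly when $\deg\pDiv'=\deg\pDiv\cap\tail(\pDiv')$.

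For existence I would assemble $\pFan$ as follows. For every full-dimensional $\sigma\in\Sigma$ with $\sdeg\cap\sigma\neq\emptyset$ take $\pDiv^\sigma=\sum_P\Delta_P^\sigma\otimes P$ with $\Loc=Y$; by the f-divisor axiom this is a p-divisor with $\deg\pDiv^\sigma=\sdeg\cap\sigma$. For every full-dimensional $\sigma$ with $\sdeg\cap\sigma=\emptyset$ a full-locus piece would contribute a non-empty polyhedron to the degree over $\sigma$, so to match $\sdeg$ I would instead cover the corresponding cells by the affine-locus p-divisors $\sum_{P\neq P_0}\Delta_P^\sigma\otimes P+\emptyset\otimes P_0$, $P_0\in Y$, whose p-divisor conditions are void over the affine locus $Y\setminus\{P_0\}$. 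Finally I would close this set under taking faces and coefficient-wise intersections; since each $\pFan_P$ is a polyhedral subdivision, the $P$-coefficient of $\pDiv^{\sigma_1}\cap\pDiv^{\sigma_2}$ is $\Delta_P^{\sigma_1}\cap\Delta_P^{\sigma_2}=\Delta_P^{\sigma_1\cap\sigma_2}$, so intersections remain in the collection and its tailfan is $\Sigma$. To check that such an intersection is a common face I would treat a face $\tau\leq\sigma$ in two cases. If $\sdeg\cap\tau\neq\emptyset$ then, as $\deg$ commutes with the face operator, $\deg\pDiv^\tau$ is the face of $\deg\pDiv^\sigma$ in the direction cutting out $\tau$; because a polyhedron contained in $\sigma$ and meeting $\tau$ attains its minimum in that direction exactly along $\tau$, this face equals $\deg\pDiv^\sigma\cap\tau=\sdeg\cap\tau$, which is the equality required by Theorem~\ref{thm:openEmbComp1} and yields a projective locus. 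If $\sdeg\cap\tau=\emptyset$, both sides vanish and the face acquires affine locus. In either case the inclusion is open, so $\pFan$ is a divisorial fan; by construction its slices are the given $\pFan_P$ and its degree glues to $\bigcup_\sigma(\sdeg\cap\sigma)=\sdeg$.

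For uniqueness let $\pFan'$ be another divisorial fan with the same slices and degree. As $\deg\pFan'=\sdeg$ and the degree polyhedra of a fan satisfy $\deg\pDiv^i=\deg\pFan'\cap\tail(\pDiv^i)$, every full-dimensional $\sigma$ with $\sdeg\cap\sigma\neq\emptyset$ must carry a full-locus p-divisor of degree $\sdeg\cap\sigma$, whose coefficients are forced by the slices to be $\Delta_P^\sigma$; hence $\pFan'$ contains the same projective pieces $\pDiv^\sigma$ as $\pFan$, glued along the same forced intersections $\pDiv^{\sigma\cap\sigma'}$. Every other member of either fan has affine locus, and such a $\pDiv$ must have tailcone $\tau$ with $\sdeg\cap\tau=\emptyset$: otherwise $\pDiv$ and the forced full-locus $\pDiv^\tau$ would have the same tailcone yet could not be glued as common faces, by Theorem~\ref{thm:openEmbComp1}. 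For affine locus one has $\ttoric(\pDiv)=\toric(\pDiv)$, so these pieces glue like affine charts over an affine base, and---using the hypothesis that every cone is a face of a full-dimensional one---the union of the affine pieces with a given tailcone depends only on the cells $\Delta_P^\tau$. Thus $\toric(\pFan)$ and $\toric(\pFan')$ are built from the same projective charts glued along the same faces and completed by affine-over-$Y$ pieces depending only on the slices, so they coincide.

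The step I expect to be the main obstacle is precisely the two-case analysis at the interface between projective and affine cones: faces $\tau$ with $\sdeg\cap\tau=\emptyset$ lying inside a cone $\sigma$ with $\sdeg\cap\sigma\neq\emptyset$. One must verify that the open-embedding equality of Theorem~\ref{thm:openEmbComp1} holds there, so that the affine charts attach to the projective ones without the non-separated or non-injective behaviour exhibited by the blow-up example following Proposition~\ref{prop:openEmb}. This is exactly where the single datum $\sdeg$ manages to recover the cell-labelling information which, in higher complexity (cf.\ (\ref{subsec:nonAffineT})), genuinely carries more than the slices alone.
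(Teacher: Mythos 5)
Your construction is exactly the one the paper gives (in a single sentence following the proposition, with details deferred to \cite{candiv,NathHend}): full-locus p-divisors $\pDiv^\sigma$ for the full-dimensional tailcones meeting $\sdeg$, affine-locus pieces covering $\ttoric(\pDiv^\sigma)$ for the remaining cones, closed under intersections, with the face conditions verified through Theorem~\ref{thm:openEmbComp1}; so the approach is the same and the verification is sound. Two small slips worth fixing: a divisorial fan is by definition \emph{finite}, so you must take only finitely many points $P_0$ (two suffice, since $Y\setminus\{P_0\}$ and $Y\setminus\{P_1\}$ already cover $Y$); and in the uniqueness step the claim that ``every other member of either fan has affine locus'' is too strong --- a fan may contain full-locus members with lower-dimensional tailcone $\tau$ with $\sdeg\cap\tau\neq\emptyset$ (your own $\pFan$ contains the intersections $\pDiv^{\sigma\cap\sigma'}$) --- but comparing support functions of the Minkowski sums shows any such member is forced to be a face of some $\pDiv^\sigma$, hence contributes nothing new to the glued scheme.
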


Indeed, we can take the divisorial fan $\pFan$ to consist of p-divisors $\pDiv^\sigma$ for full-dim\-en\-sional $\sigma\in\tail(\pFan)$ with non-empty degree, for $\sigma$ with empty degree any finite set of p-divisors providing an open affine covering of $\ttoric(\pDiv^\sigma)$, and intersections thereof. 		

\begin{remark}
\label{rmk:degreeVSmarkings}
Since the subset $\sdeg\subseteq|\tail(\pFan)|$ is completely determined by the equality $\sdeg \cap \pDiv = \sum_P \pDiv_P$,
it is just necessary to know if $\deg\pDiv$ is empty or not.
This leads to the notion of \emph{markings} (of tailcones with
$\deg\pDiv\neq\emptyset$) in \cite{candiv,NathHend}.
\end{remark}

If $\pFan$ is an f-divisor, i.e. a pair $(\sum_P \pFan_P \otimes P,\sdeg)$, we write $\toric(\pFan)$ for the $T$-variety it determines as in proposition \ref{prop:degHelps}. Note that we use the same symbol for divisorial fans and f-divisors, since they both determine general $T$-varieties (although the former also contains the information of a specific affine cover).
As has already been observed in the affine case of (\ref{subsec:pDiv}),
different f-divisors $\pFan,\pFan'$ might yield the same (or equivariantly isomorphic)
$T$-varieties $\toric(\pFan)=\toric(\pFan')$. This is the case if there are isomorphisms
$\varphi:Y \rightarrow Y'$ and $F:N \rightarrow N'$ and an element $\sum_i v_i \otimes f_i \in N \otimes \CC(Y')$ such that $\fan'_{\varphi(P)}+\sum_i \ord_P(f_i)\cdot v_i = F(\fan_P)$  holds for every $P \in Y$ and we have $\sdeg'=F(\sdeg)$.

\begin{example}
\label{ex:cotangHirzebruch}
We consider the projectivized cotangent bundle on the first Hirzebruch surface $\cF_1$. The non-trivial slices of its divisorial fan $\pFan$ over $\PP^1$ are illustrated in Figure \ref{fig:cotangFanHirzebruch}. The associated tailfan $\Sigma$ and degree $\deg \pFan$ are given in Figure \ref{fig:cotangHirzebruchPlus}.

\begin{figure}[t]
\centering
\subfloat[$\pFan_0$]{\cotangfanHirzebruchNull}\hspace*{5ex}
\subfloat[$\pFan_1$]{\cotangfanHirzebruchOne}\hspace*{5ex}
\subfloat[$\pFan_\infty$]{\cotangfanHirzebruchInfty}
\caption{Non-trivial slices of $\pFan(\PP(\Omega_{\cF_1}))$, cf.\ Example \ref{ex:cotangHirzebruch}.}
\label{fig:cotangFanHirzebruch}
\end{figure}

\begin{figure}[h]
\centering
\subfloat[$\Sigma = \tail(\pFan$)]{\hspace*{1ex}\tailFanCotangHirzebruch\hspace*{1ex}}\hspace*{8ex}
\subfloat[$\deg \pFan$]{\degreeCotangHirzebruch}
\caption{Tailfan and degree of $\pFan\big(\PP(\Omega_{\cF_1})\big)$, 
cf.\ Example \ref{ex:cotangHirzebruch}.}
\label{fig:cotangHirzebruchPlus}
\end{figure}

\end{example}

%%%%%%%%%%%%%%%%%%%%%%%%%%%%%%%%%%%%%%%%%%%%%%%%%%%%%%%%%%%%%%%%%%%%%
%%%%%%%%%%%%%%%%%%%%%%%%%%%%%%%%%%%%%%%%%%%%%%%%%%%%%%%%%%%%%%%%%%%%%
%%%
%%%   The $T$-orbit decomposition
%%%
%%%%%%%%%%%%%%%%%%%%%%%%%%%%%%%%%%%%%%%%%%%%%%%%%%%%%%%%%%%%%%%%%%%%%
%%%%%%%%%%%%%%%%%%%%%%%%%%%%%%%%%%%%%%%%%%%%%%%%%%%%%%%%%%%%%%%%%%%%%
\section{The $T$-orbit decomposition}
\label{sec:orbitDecomp}

%%%%%%%%%%%%%%%%%%%%%%%%%%%%%%%%%%%%%%%%%%%%%%%%%%%%%%%%%%%%%%%%%%%%%
%%%   $T$-orbits in toric varieties
%%%%%%%%%%%%%%%%%%%%%%%%%%%%%%%%%%%%%%%%%%%%%%%%%%%%%%%%%%%%%%%%%%%%%
\subsection{$T$-orbits in toric varieties}
\label{subsec:toricOrbits}
Let $\sigma\subseteq N_\QQ$ be a not necessarily full-di\-men\-sio\-nal,
pointed polyhedral cone. Then, denoting $N_\sigma := N/\spann_\ZZ(\sigma\cap N)$, the affine toric variety $\toric(\sigma,N)$ contains a unique closed $T$-orbit $\orb(\sigma) := \Spec\CC[\sigma^\bot\cap M]=
N_\sigma\otimes_\ZZ\CC^*=\toric(0,N_\sigma)$
with $\dim\orb(\sigma)=\rank N-\dim\sigma$.
If $\tau\leq\sigma$ is a face, then we obtain the diagram
$$
\xymatrix@R=3.3ex@C=1.0em{
% \ar@/_1.1pc/[lll]_-{\kss}&&
\toric(0,N_\tau)\; \ar@{=}[r] \ar@{^(->}[d]^-{\open} &
\;\Spec\CC[\tau^\bot\cap M]\; \ar@{=}[r] \ar@{^(->}[d]^-{\open} &
\;\orb(\tau)\; \ar@{^(->}[rrr]^-{\closed} \ar@{^(->}[d]^-{\open} &&&
\;\toric(\tau) \ar@{^(->}[d]^-{\open}
\\
\toric(\ko{\sigma},N_\tau)\; \ar@{=}[r] &
\;\Spec\CC[\sigma\dual\cap\tau^\bot\cap M]\;  \ar@{=}[r] &
\;\ko{\orb}(\tau)\; \ar@{^(->}[rrr]^-{\closed} &&&
\;\toric(\sigma)
}
$$
with $\ko{\sigma} := \im(\sigma\to N_{\tau,\QQ})$.
This construction does also work in the global setting:
if $\Sigma$ is a fan, then there is a stratification
$\toric(\Sigma)=\bigsqcup_{\sigma\in\Sigma}\orb(\sigma)$
with $\orb(\sigma)\subseteq\ko{\orb}(\tau)$ $\iff$
$\tau\leq\sigma$. In particular,
we obtain the $T$-invariant Weil divisors of $\toric(\Sigma)$ as
$\ko{\orb}(a)$ for $a\in\Sigma(1)$.
Moreover, the closure of the $T$-orbits are again 
toric varieties with $\ko{\orb}(\tau)=\toric(\ko{\Sigma},N_\tau)$
and $\ko{\Sigma} := \{\ko{\sigma}\kst \tau\leq\sigma\in\Sigma\}$
being a fan in $N_\tau$.
\\[1ex]
Note that we can, alternatively, generalize the $T$-orbit decomposition of
the affine $\toric(\sigma)$ to the situation of 
toric bouquets $\toric(\Delta)$ from (\ref{subsec:dissToric}).
Its $T$-orbits correspond to the faces of $\Delta$. 
In particular, as already mentioned in
(\ref{subsec:dissToric}), the irreducible components of $\toric(\Delta)$ are
counted by the vertices $v\in\kV(\Delta)$.

%%%%%%%%%%%%%%%%%%%%%%%%%%%%%%%%%%%%%%%%%%%%%%%%%%%%%%%%%%%%%%%%%%%%%%%%
%%%  $T$-orbits for p-divisors
%%%%%%%%%%%%%%%%%%%%%%%%%%%%%%%%%%%%%%%%%%%%%%%%%%%%%%%%%%%%%%%%%%%%%%%%
\subsection{$T$-orbits for p-divisors}
\label{subsec:pDivOrbits}
Let $\pDiv$ be an \emph{integral} p-divisor on a projective variety $Y$,
i.e.\ assume that the coefficients $\pDiv_Z$ of the Cartier divisors $Z$ are
lattice polyhedra. Equivalently, we ask for $\pDiv(u)$ to be an
ordinary Cartier divisor for all $u\in\tail(\pDiv)\dual\cap M$.
Then, $\pi:\ttoric(\pDiv)\to Y$ is a flat map fitting into the following
diagram:
$$
\xymatrix@R=4.0ex@C=2.5em{
% \ar@/_1.1pc/[lll]_-{\kss}&&
& \;\ttoric(\pDiv)\; \ar[rr]^-{p} \ar[d]^-{\pi} &&
\;\toric(\pDiv)\; \ar[d]
\\
Y & 
\;\Loc\pDiv\; \ar@{_(->}[l] \ar[rr]&& 
\;\Spec\kG(\Loc\pDiv,\CO_{\Loc})
}
$$
For $y\in\Loc\pDiv\subseteq Y$, the fiber $\pi^{-1}(y)$
equals the toric bouquet $\toric(\pDiv_y)$
with
$\pDiv_y := \sum_{y\in Z}\pDiv_Z\in\Pol^+_\Q(N,\sigma)$
as it was already used in (\ref{subsec:openEmb}).
Note that the neutral element $\tail(\pDiv)$ serves as the
sum of the empty set of summands.
Hence, there is a decomposition of $\ttoric(\pDiv)$
into $T$-orbits $\orb(y,F)$ where $y\in\Loc\pDiv$ 
is a closed point
and $F\leq\pDiv_y$ is a non-empty face.
Their dimension is $\dim\, \orb(y,F)=\codim_N F$.
While every $T$-orbit $\orb(y,F)\subseteq \ttoric(\pDiv)$
maps, via the contraction $p$, isomorphically into $\ttoric(\pDiv)$,
some of them might be identified with each other:

\begin{proposition}[\cite{tvar_1}, Theorem 10.1]
\label{prop:orbitsX}
Let $(y,F)$ and $(y',F')$ correspond to the $T$-orbits
$\orb(y,F)$ and $\orb(y',F')$. Then they become identified along $p$
$\iff$ $\normal(F,\Delta)=\normal(F',\Delta')=:\lambda$
{\rm (cf.\ (\ref{subsec:dissToric}))} and 
$\Phi_u(y)=\Phi_u(y')$ for $u\in\innt\lambda$, 
where $\Phi_u$ denotes the stabilized
morphism associated to the semi-ample divisor $\pDiv(u)$.
\end{proposition}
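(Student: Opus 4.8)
The plan is to work downstairs on the affinization $\toric(\pDiv)=\Spec A$ with $A=\bigoplus_{u\in\sigma\dual\cap M}\Gamma(\Loc\pDiv,\CO(\pDiv(u)))\,\chi^u$, where $\sigma=\tail(\pDiv)$, and to read off the image $p(\orb(y,F))$ as a $T$-orbit of $\Spec A$ by evaluating the $M$-homogeneous functions $f\chi^u$ on it. Since $p$ restricts to an isomorphism on each orbit, the statement that $\orb(y,F)$ and $\orb(y',F')$ ``become identified'' means precisely that these two orbits have the same image in $\Spec A$; and two $T$-orbits of $\Spec A$ coincide iff every homogeneous function vanishes on one exactly when it vanishes on the other and the nonzero values agree up to the torus action. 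First I would evaluate $f\chi^u$ on $\orb(y,F)$ using the fiber description $\pi^{-1}(y)=\toric(\pDiv_y)$: restricting $f\chi^u$ to this toric bouquet factors it into the monomial $\chi^u$ on the bouquet times the value of $f$ at $y$ read as a section of $\CO(\pDiv(u))$, and this factorization is the computation everything rests on.

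For the monomial factor I would invoke the orbit description of toric bouquets from (\ref{subsec:toricOrbits}): on $\orb(y,F)\subseteq\toric(\pDiv_y)$ the character $\chi^u$ is non-vanishing exactly when $u\in\normal(F,\pDiv_y)=:\lambda$. Because $\pDiv(u)$ is semiample, for every $u\in\lambda$ some multiple $\CO(k\pDiv(u))$ has a section not vanishing at $y$, so there is a homogeneous function non-vanishing on $\orb(y,F)$ precisely for $u\in\lambda\cap M$. Thus the orbit cone of $p(\orb(y,F))$ — the saturated semigroup of $u$ admitting a non-vanishing $f\chi^u$ — equals $\lambda\cap M$. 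Since the orbit cone is an invariant of a $T$-orbit in $\Spec A$, equality of the two images forces $\normal(F,\pDiv_y)=\normal(F',\pDiv_{y'})=\lambda$, which is the first condition; in particular the two orbits then share the dimension $\codim_N F=\dim\lambda$.

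It remains to compare the nonzero values. For a fixed $u\in\innt\lambda$, letting $f$ range over sections of all multiples $k\pDiv(u)$ — these all occur inside $A$, since $(f\chi^u)(g\chi^u)=fg\,\chi^{2u}$ realizes products — the ratios $f(y)/\tilde f(y)$ are exactly the homogeneous coordinates recorded by the stabilized morphism $\Phi_u$, i.e.\ the map to $\Proj\bigoplus_k\Gamma(\Loc\pDiv,\CO(k\pDiv(u)))$. Hence, after quotienting out the free torus coordinate $\chi^u$, the ``base part'' of the image point is $\Phi_u(y)$, and matching the values of all such $f\chi^u$ across the two orbits is equivalent to $\Phi_u(y)=\Phi_u(y')$. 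I would restrict to $u\in\innt\lambda$ because there $\face(\pDiv_y,u)=F$ is constant, so $\Phi_u$ gives the finest contraction attached to this orbit; for $u$ on $\partial\lambda$ the divisor $\pDiv(u)$ contracts more and the corresponding condition is automatically implied. Conversely, given $\lambda=\lambda'$ and $\Phi_u(y)=\Phi_u(y')$ on $\innt\lambda$, I would produce $\xi\in\orb(y,F)$ and $\xi'\in\orb(y',F')$ with $p(\xi)=p(\xi')$ by using the common character lattice of $\lambda$ to fix matching torus coordinates and the equality of $\Phi_u$-images to see every $f\chi^u$ take the same value; being full $T$-orbits, this yields $p(\orb(y,F))=p(\orb(y',F'))$.

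The main obstacle will be the factorization step and its bookkeeping: justifying cleanly that evaluating $f\chi^u$ along a point of $\orb(y,F)$ splits as a character value on the bouquet orbit times the value of $f$ at $y$ as a section of $\CO(\pDiv(u))$, so that the relative structure $\ttoric(\pDiv)\to\Loc\pDiv$ and the affinization interact as claimed; and relatedly, arguing that for $u\in\innt\lambda$ the section ratios reproduce exactly $\Phi_u$ — this forces one to control base points via semiampleness and the passage to multiples, and to verify that $\innt\lambda$ is simultaneously necessary, to isolate the face $F$ rather than a larger one, and sufficient, boundary $u$ being subsumed.
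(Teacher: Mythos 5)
The survey does not prove this statement---it is quoted from \cite{tvar_1}, Theorem 10.1---so there is no in-paper argument to compare against; measured against the proof in that reference, your proposal reconstructs essentially the same argument: detect the image orbit in $\Spec A$ through the $M$-homogeneous functions $f\chi^u$, read off the cone $\lambda=\normal(F,\pDiv_y)$ as the saturated set of degrees admitting a non-vanishing function on the orbit, and identify the remaining data of non-zero values (modulo the $T$-action) with the point $\Phi_u(y)$ for $u\in\innt\lambda$. The three steps you flag as the main obstacles are exactly the ones that need writing out, and all go through: the factorization $f\chi^u|_{\pi^{-1}(y)}=f(y)\cdot\chi^u$ holds in any local trivialization of the $\CO(\pDiv(u))$ near $y$, with the trivialization ambiguity absorbed into the torus action; the boundary degrees $w\in\lambda\setminus\innt\lambda$ are subsumed because $ku-w\in\lambda$ for $k\gg0$ and multiplying by a section of $\CO(m\pDiv(ku-w))$ non-vanishing at both $y$ and $y'$ transports the cross-ratio condition from degree $ku$ to degree $w$; and in the converse direction the scale factors $f_u(y)/f_u(y')$ are multiplicative in $u$, hence define a character of $\spann_\ZZ(\lambda\cap M)$, i.e.\ an element of the torus matching the two points. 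So the outline is correct and follows the route of the cited source.
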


If $\pDiv$ is a general, i.e.\ not integral p-divisor, then
a similar result holds true. However, one has to deal with the
sublattices $S_y := \{u\in M\kst \pDiv(u) \mbox{ is principal in } y\}$
as  was done in \cite[\S7]{tvar_1}.

%%%%%%%%%%%%%%%%%%%%%%%%%%%%%%%%%%%%%%%%%%%%%%%%%%%%%%%%%%%%%%%%%%%%%%%%
%%%  Invariant Weil divisors on $T$-varieties
%%%%%%%%%%%%%%%%%%%%%%%%%%%%%%%%%%%%%%%%%%%%%%%%%%%%%%%%%%%%%%%%%%%%%%%%
\subsection{Invariant Weil divisors on $T$-varieties}
\label{subsec:WDivX}
In contrast to (\ref{subsec:pDivOrbits}), we are now going to use the notation $\orb(y,F)$ also for non-closed points $y\in Y$, e.g.\ for
generic points $\eta$ of subvarieties of $Y$. Note that the dimension of their closures is then given as $\dim\, \ko{\orb}(y,F)=\dim\ko{y}+\codim_N F$.
We conclude from (\ref{subsec:pDivOrbits}) that there are two kinds of $T$-invariant prime divisors in $\ttoric(\pDiv)$: on the one hand, we have the so-called \emph{vertical} divisors
\[D_{(Z,v)} := \ko{\orb}(\eta(Z),v)\]
for prime divisors $Z\subseteq Y$ and vertices $v\in\pDiv_Z$. On the other hand, there are the so-called \emph{horizontal} divisors
\[D_\varrho := \ko{\orb}(\eta(Y),\varrho)\]
with $\varrho$ being a ray of the cone $\tail(\pDiv)=\pDiv_{\eta(Y)}$.
The $T$-invariant prime divisors on $\toric(\pDiv)$
correspond exactly to those on $\ttoric(\pDiv)$ which are not
contracted via $p:\ttoric(\pDiv)\to \toric(\pDiv)$.%
\\[1ex]
In the special case of complexity one, i.e.\ if $Y$ is a curve, the vertical divisors $D_{(P,v)}$ (with $P\in Y(\CC)$ and
$v\in \Delta_P$) survive completely in $\toric(\pDiv)$.
In contrast, some of the horizontal divisors $D_\varrho$ may be contracted.

\begin{definition}
\label{def:extremalRays}
Let $\pFan$ be an f-divisor on $Y$. The set of rays $\varrho \in (\tail(\pFan))(1)$ for which the prime divisor $D_\varrho$ is not contracted via the map $p: \wt{\TV}(\pFan) \to \TV(\pFan)$ is denoted by $\cR := \cR(\pFan)$.
\end{definition}

\begin{remark}
\label{rmk:extremalRaysDegree}
The set $\cR(\pFan)$ can  be determined from $(\pFan, \deg \pFan)$. Indeed, $\cR(\pFan) = \{\rho \in (\tail(\pFan))(1) \,|\, \rho \cap \deg \pFan = \emptyset\}$, cf.\ for example \cite[Proposition 2.1]{phdLars}.
\end{remark}

%%%%%%%%%%%%%%%%%%%%%%%%%%%%%%%%%%%%%%%%%%%%%%%%%%%%%%%%%%%%%%%%%%%%%
%%%%%%%%%%%%%%%%%%%%%%%%%%%%%%%%%%%%%%%%%%%%%%%%%%%%%%%%%%%%%%%%%%%%%
%%%
%%%   Divisors
%%%
%%%%%%%%%%%%%%%%%%%%%%%%%%%%%%%%%%%%%%%%%%%%%%%%%%%%%%%%%%%%%%%%%%%%%
%%%%%%%%%%%%%%%%%%%%%%%%%%%%%%%%%%%%%%%%%%%%%%%%%%%%%%%%%%%%%%%%%%%%%
\section{Cartier divisors}
\label{sec:divisors}

%%%%%%%%%%%%%%%%%%%%%%%%%%%%%%%%%%%%%%%%%%%%%%%%%%%%%%%%%%%%%%%%%%%%%
%%%   Invariant principal divisors on toric varieties
%%%%%%%%%%%%%%%%%%%%%%%%%%%%%%%%%%%%%%%%%%%%%%%%%%%%%%%%%%%%%%%%%%%%%
\subsection{Invariant principal divisors on toric varieties}
\label{subsec:toricPrinc}
Let $\Sigma \subset N_\QQ$ be a polyhedral fan. The monomials $\chi^u$ in $\CC[M]\subseteq\CC(\toric(\Sigma))$ are the $T$-invariant, rational
functions on the toric variety $\toric(\Sigma)$. Hence,
the principal divisor associated to such a function must be a linear combination of the
$T$-invariant Weil divisors $\ko{\orb}(a)$
from (\ref{subsec:toricOrbits}). The coefficients can be obtained by restricting the situation to the open subsets
$\CC\times(\CC^*)^{k-1}\cong\toric(a,N)\subseteq\toric(\Sigma)$ 
for each $a\in\Sigma(1)$ separately. This gives us
\[\div(\chi^u)=\sum_{a\in\Sigma(1)}\langle a,u\rangle\cdot \ko{\orb}(a),\]
i.e.\ 
the orbits of codimension one satisfy the relation
$\sum_{a\in\Sigma(1)} a \otimes \ko{\orb}(a)\sim0$.
Note that every divisor is linearly equivalent to an invariant one. Assuming that $\Sigma$ contains a cone whose dimension equals the rank of $N$, this leads to the famous exact sequence
$$
0 \to M \stackrel{\div}{\longrightarrow} 
\big[\Div_T\toric(\Sigma)=\ZZ^{\Sigma(1)}\big] \to
\Cl(\toric(\Sigma))\to 0.
$$

%%%%%%%%%%%%%%%%%%%%%%%%%%%%%%%%%%%%%%%%%%%%%%%%%%%%%%%%%%%%%%%%%%%%%
%%%   Invariant principal divisors
%%%%%%%%%%%%%%%%%%%%%%%%%%%%%%%%%%%%%%%%%%%%%%%%%%%%%%%%%%%%%%%%%%%%%
\subsection{Invariant principal divisors on $T$-varieties}
\label{subsec:princT}

Let $\pDiv$ be a p-divisor on $Y$. 
Similarly to (\ref{subsec:openEmb}),
the place of the monomials $\chi^u$ in
(\ref{subsec:toricPrinc}) is now taken by the functions
$f(y)\chi^u\in\CC(Y)[M]\subseteq\CC(\toric(\pDiv))$.
Identifying a ray $\varrho\in\tail(\pDiv)(1)$ with
its primitive lattice vector and denoting by $\mu(v)$ the smallest integer $k \geq 1$ for $v\in N_\Q$ such that $k\cdot v$ is a lattice point, one has the following characterization
of $T$-invariant principal divisors:

\begin{theorem}[\cite{tidiv}, Proposition 3.14]
\label{thm:principDiv}
The principal divisor which is associated to $f(y)\,\chi^u\in \CC(Y)[M]$ on $\wt{TV}(\pDiv)$ or $\TV(\pDiv)$ is given by
\[\div\big(f(y)\,\chi^u\big)=
\sum_{\varrho
% \in(\tail\pDiv)(1)
} \langle\varrho,u\rangle D_\varrho +
\sum_{(Z,v)}\mu(v)\big(\langle v,u\rangle + \ord_Z f\big) \cdot D_{(Z,v)}\]
where, if focused on $\toric(\pDiv)$, one is supposed
to omit all prime divisors being contracted via 
$p:\ttoric(\pDiv)\to\toric(\pDiv)$.
\end{theorem}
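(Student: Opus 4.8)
The plan is to prove the divisor formula by reducing to the toric downgrade situation and the principal-divisor formula on toric varieties, exactly as in the analogous argument for $\div(\chi^u)$ in (\ref{subsec:toricPrinc}). The key observation is that $f(y)\chi^u$ is an $M$-homogeneous rational function, so its divisor must be a $\ZZ$-linear combination of the $T$-invariant prime divisors classified in (\ref{subsec:WDivX}), namely the horizontal divisors $D_\varrho$ and the vertical divisors $D_{(Z,v)}$. Thus the entire content of the theorem is the computation of the two families of coefficients, and each coefficient can be computed locally by restricting to a suitable invariant open subset where only one prime divisor is visible.

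First I would treat the horizontal coefficients. Along the generic point $\eta(Y)$ the fiber of $\pi\colon\ttoric(\pDiv)\to Y$ is the toric variety $\toric(\tail(\pDiv))$, and near $D_\varrho=\ko{\orb}(\eta(Y),\varrho)$ the function $f(y)$ restricts to a nonzero constant (it has neither zeros nor poles along the generic fiber), so $\ord_{D_\varrho}(f(y)\chi^u)$ is purely the toric order of $\chi^u$ along the ray $\varrho$. By the toric computation this equals $\langle\varrho,u\rangle$, giving the first sum. Here I would invoke the orbit description of (\ref{subsec:pDivOrbits}): the generic fiber is literally a toric variety with fan $\tail(\pDiv)$, and the formula $\div(\chi^u)=\sum_a\langle a,u\rangle\cdot\ko{\orb}(a)$ from (\ref{subsec:toricPrinc}) applies verbatim to that fiber.

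Second I would treat the vertical coefficients $D_{(Z,v)}$. Fixing a prime divisor $Z\subseteq Y$ and a vertex $v\in\pDiv_Z$, I would localize at the generic point $\eta(Z)$ and work on the toric bouquet $\toric(\pDiv_{\eta(Z)})$, passing to the irreducible component corresponding to $v$, which is $\toric(\QQ_{\geq 0}\cdot(\pDiv_{\eta(Z)}-v))$ by the decomposition in (\ref{subsec:dissToric}). The factor $\mu(v)$ enters because $v$ need not be a lattice point: the primitive lattice generator of the ray of the translated cone through $v$ is obtained by clearing the denominator of $v$, which is exactly $\mu(v)$, and this is where I expect the main subtlety to lie. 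On this local model the function $\chi^u$ contributes $\mu(v)\langle v,u\rangle$ from pairing with the (rescaled) vertex, while $f(y)$ contributes a zero or pole of order $\mu(v)\cdot\ord_Z f$ coming from the way $f$ is pulled back along $\pi$ and multiplied into the grading; combining the two yields $\mu(v)\big(\langle v,u\rangle+\ord_Z f\big)$. The careful bookkeeping of the lattice index $\mu(v)$, together with verifying that the local ring along $D_{(Z,v)}$ is a discrete valuation ring with the claimed uniformizer, is the step I would expect to be the genuine obstacle, and it is precisely what is carried out in \cite{tidiv}, Proposition 3.14.

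Finally, to pass from $\ttoric(\pDiv)$ to $\toric(\pDiv)$ one simply observes that the contraction $p\colon\ttoric(\pDiv)\to\toric(\pDiv)$ is the identity on the complement of the contracted divisors and an isomorphism on the function fields (both carry the same $M$-homogeneous rational functions), so the divisor formula pushes forward by discarding exactly those $D_\varrho$ with $\varrho\notin\cR(\pDiv)$ in the sense of Definition~\ref{def:extremalRays}; this is the meaning of the instruction to omit the contracted prime divisors. No term is altered, since $p$ restricts to an isomorphism on each surviving orbit by (\ref{subsec:pDivOrbits}).
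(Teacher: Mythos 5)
Your proposal takes essentially the same route as the paper, which likewise reduces the claim to the toric formula of (\ref{subsec:toricPrinc}) by (formally) locally inverting the toric downgrade of (\ref{subsec:tDown}) and defers the bookkeeping to the cited reference. The one imprecision is the phrase about ``passing to the irreducible component corresponding to $v$'': the coefficient $\mu(v)\big(\langle v,u\rangle+\ord_Z f\big)$ is read off in the discrete valuation ring at the generic point of $D_{(Z,v)}$ via the local toric model given by the Cayley cone $\langle \tail(\pDiv)\times 0,\ \pDiv_Z\times 1\rangle\subset N_\QQ\times\QQ$ (cf.\ (\ref{sec:toroidal-resolutions})), in which $D_{(Z,v)}$ is the ray with primitive generator $(\mu(v)v,\mu(v))$ and $f\chi^u$ becomes $\chi^{(u,\ord_Z f)}$ up to a unit --- not by restricting the function to the component itself --- though your remark about the uniformizer shows you have the right mechanism in mind.
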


A proof which (formally) locally inverts the toric downgrade construction from (\ref{subsec:tDown}) is given in \cite[(2.4)]{coxComp1}. Hence, the claim becomes a direct consequence of (\ref{subsec:toricPrinc}).

%%%%%%%%%%%%%%%%%%%%%%%%%%%%%%%%%%%%%%%%%%%%%%%%%%%%%%%%%%%%%%%%%%%%%%%%
%%%  Invariant Cartier divisors on toric varieties
%%%%%%%%%%%%%%%%%%%%%%%%%%%%%%%%%%%%%%%%%%%%%%%%%%%%%%%%%%%%%%%%%%%%%%%%
\subsection{Invariant Cartier divisors on toric varieties}
\label{subsec:CDtoric}
We consider an invariant Cartier divisor $D$ on $\TV(\Sigma)$. Restricting to an open affine chart $\TV(\sigma) \subset \TV(\Sigma)$ we have that $D|_{\TV(\sigma)} = \divisor (\chi^{-u_\sigma})$ with $u_\sigma \in M_{\sigma} := M/(M \cap \sigma^\perp)$. The compatibility condition on intersections gives us that $\phi_{\tau\sigma}(u_\sigma) = u_\tau$ where $\tau$ denotes a face of $\sigma$ and $\phi_{\tau\sigma}: M_\sigma \to M_\tau$ is the natural restriction. Hence, we can regard $\{u_\sigma\ |\ \sigma \in \Sigma\}$ as a piecewise continuous linear function $\psi_D: |\Sigma| \to \QQ$ which is locally defined as $\psi_D|_{|\sigma|} = u_\sigma$. Vice versa, any continuous function $\psi: |\Sigma| \to \QQ$ which is linear on the cones $\sigma \in \Sigma$ and integral on $|\Sigma| \cap N$ can be evaluated along the primitive generators of the rays $\rho \in \Sigma(1)$ and thus defines an invariant Cartier divisor $D_\psi$ on $\TV(\Sigma)$.

A special instance of the above correspondence shows that every lattice polyhedron $\nabla\subseteq M_\QQ$ such that $\Sigma$ is a subdivision of the inner normal fan $\normal(\nabla)$, cf.\ (\ref{subsec:dissToric}) and (\ref{subsec:projTor}),
gives rise to a Cartier divisor $\div(\nabla)$ whose sheaf $\cO_{\TV(\Sigma)}(\div(\nabla))$ is globally generated by the monomials corresponding to $\nabla\cap M$. Locally,
on the chart $\toric(\sigma)\subseteq\toric(\Sigma)$ for a
maximal cone $\sigma\in\Sigma$,
% being contained in $\delta\in\normal(\nabla)$, 
one has 
$\cO_{\toric(\Sigma)}(\div(\nabla)) = \chi^u\cdot\cO_{\toric(\Sigma)}$ with $u\in\nabla\cap M$ being the vertex corresponding to 
the unique cone $\delta\in\normal(\nabla)$ containing $\sigma$.
The associated Weil divisor is then given by
$$
\div(\nabla)= -\!\sum_{a\in\Sigma(1)}
\min\langle a,\nabla\rangle\cdot \ko{\orb}(a).
$$

%%%%%%%%%%%%%%%%%%%%%%%%%%%%%%%%%%%%%%%%%%%%%%%%%%%%%%%%%%%%%%%%%%%%%%%%
%%%  Invariant Cartier divisors on $T$-varieties
%%%%%%%%%%%%%%%%%%%%%%%%%%%%%%%%%%%%%%%%%%%%%%%%%%%%%%%%%%%%%%%%%%%%%%%%
\subsection{Invariant Cartier divisors in complexity 1}
\label{subsec:CDivX}
Let $\pFan$ be an f-divisor on the curve $Y$. A \emph{divisorial support function} on $\pFan$ is a collection $(h_P)_{P \in Y}$ of continuous piecewise affine linear functions $h_P: |\pFan_P| \to \QQ$ such that 
\begin{enumerate}
\item $h_P$ has integral slope and integral translation on every polyhedron in the polyhedral complex $\pFan_P \subset N_\QQ$.
\item all $h_P$ have the same linear part $=: \ul{h}$.
\item the set of points $P \in Y$ for which $h_P$ differs from $\ul{h}$ is finite. 
\end{enumerate}

Observe that we may restrict $h_P$ to a subcomplex of $\pFan_P$. Similarly, we may restrict a divisorial support function $h$ to a p-divisor $\cD \in \pFan$ which we will denote by $h|_\cD$.

In addition, we can associate a divisorial support function $\suppFunc(D)$ to any Cartier divisor $D \in \cadiv Y$ by setting
$\suppFunc(D)_P \equiv \coeff_P(D)$. Moreover, we can consider any element $u \in M$ as a divisorial support function by setting $\suppFunc(u)_P \equiv u$.

\begin{definition}
\label{def:CaSF}
A divisorial support function $h$ on $\pFan$ is called \emph{principal} if $h =  \suppFunc(u) + \suppFunc(D)$ for some $u \in M$ and some principal divisor $D$ on $Y$. It is called \emph{Cartier} if its restriction $h|_\cD$ is principal for every $\cD \in \pFan$ with $\Loc \cD = Y$. The set of Cartier support functions is a free abelian group which we denote by $\CaSF(\pFan)$.
\end{definition}

The following result not only plays an important role for the proof of Proposition \ref{prop:cadiv} but it is also interesting by itself, since it generalizes the fact that the Picard group of an affine toric variety is trivial.

\begin{lemma}[\cite{tidiv}, Proposition 3.1]
Assume that $\pDiv$ has complete locus. Then every invariant Cartier divisor on $\TV(\pDiv)$ is already principal.
\end{lemma}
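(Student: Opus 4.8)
The plan is to show that a $T$-invariant Cartier divisor $D$ on $X:=\TV(\pDiv)$ is the divisor of a homogeneous invariant rational function $f(y)\chi^u$, i.e.\ to match $D$ against the formula of Theorem~\ref{thm:principDiv}. Writing $D$ as an invariant Weil divisor $D=\sum_\varrho a_\varrho D_\varrho+\sum_{(Z,v)} b_{(Z,v)} D_{(Z,v)}$ as in (\ref{subsec:WDivX}), I would exploit Cartierness on three successively smaller strata. First I restrict $D$ to the generic fibre of $X$ over $Y$: since $\CO(\pDiv(u))$ has stalk $\CC(Y)$ at the generic point for every $u$, this fibre is the affine toric variety $\TV(\sigma)$ base-changed to the field $\CC(Y)$, where $\sigma=\tail(\pDiv)$, and the $D_\varrho$ restrict to its invariant boundary divisors while the vertical $D_{(Z,v)}$ disappear. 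As $D$ stays Cartier after restriction, the triviality of the Picard group of an affine toric variety (an integral conewise-linear support function on the single cone $\sigma$ is globally linear) provides some $u\in M$ with $a_\varrho=\langle\varrho,u\rangle$ for every ray $\varrho$. Replacing $D$ by $D-\div(\chi^u)$ and invoking Theorem~\ref{thm:principDiv} for $\div(\chi^u)$, I may henceforth assume that $D$ is supported on vertical divisors and that its support function has vanishing linear part.

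Next I analyse the vertical part over each prime divisor $Z\subseteq Y$. Locally around the fibre over $Z$ the variety $X$ is, formally, a toric downgrade as in (\ref{subsec:tDown}), and an invariant Cartier divisor there corresponds to a support function that is \emph{affine} on each maximal cell; for a single p-divisor the slice over $Z$ is the one polyhedron $\pDiv_Z$, so Cartierness forces this function to be affine on all of $\pDiv_Z$. Combined with the vanishing linear part from the previous step, the function is a constant $n_Z\in\ZZ$, which means precisely that $b_{(Z,v)}=\mu(v)\,n_Z$ for every vertex $v\in\pDiv_Z$ --- the desired slope independence. Finiteness of the p-divisor's coefficients gives $n_Z=0$ for all but finitely many $Z$, so the data assemble into an honest divisor $E:=\sum_Z n_Z Z$ on $Y$, together with a local invariant trivialisation $f_Z(y)$ of $D$ near each fibre satisfying $\ord_Z f_Z=n_Z$.

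By Theorem~\ref{thm:principDiv} (applied with $u=0$) it remains to show that $E$ is a \emph{principal} divisor on $Y$, for then $D=\div(f(y))$ with $\div_Y f=E$. This is the heart of the matter and the step where completeness of $\Loc(\pDiv)=Y$ is indispensable: it is \emph{not} true that an arbitrary Cartier $E$ on $Y$ yields a Cartier $D$ on $X$, because the contraction $p\colon\ttoric(\pDiv)\to X$ destroys Cartierness, and it is exactly this that pins $E$ down. Concretely, completeness forces $\Gamma(Y,\CO_Y)=\CC$, hence the degree-zero part of the section ring $A=\Gamma(X,\CO_X)=\bigoplus_{u\in\sigma\dual\cap M}\Gamma(Y,\CO(\pDiv(u)))\chi^u$ equals $\CC$, so the units of $A$ are just scalar multiples of the monomials $\chi^{u}$ with $u\in\sigma^\perp\cap M$. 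With transition functions thus rigid, the local invariant trivialisations $f_Z$ of the preceding paragraph must glue into a single global $f\in\CC(Y)^*$ with $\div_Y f=E$; equivalently, this rigidity is the assertion that the relevant Picard class of the cone-like affine variety $X$ vanishes, generalising the triviality of $\Pic$ for affine toric varieties.

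The main obstacle is precisely this last gluing/vanishing step: everything before it is bookkeeping with Theorem~\ref{thm:principDiv} and the toric picture, whereas showing that Cartierness on $X$ (rather than on the resolution $\ttoric(\pDiv)$) forces $E$ to be principal is where completeness must be used and cannot be circumvented. I would expect the cleanest formulation to be the conceptual one --- $\Gamma(Y,\CO_Y)=\CC$ makes $A$ ``graded local'', so every $T$-linearised invertible $A$-module is graded-free of rank one --- with the coefficientwise argument above serving as its explicit incarnation.
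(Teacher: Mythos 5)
The survey does not actually prove this lemma --- it only cites \cite{tidiv}, Proposition 3.1 --- so your argument has to be measured against the proof given there, which is much shorter and runs through the orbit structure of $\TV(\pDiv)$ rather than through coefficients. Your first two reduction steps are essentially sound, provided you carry them out on $\ttoric(\pDiv)$ rather than on $\TV(\pDiv)$: the contraction $p$ may collapse horizontal divisors as well as fibres, so ``the generic fibre of $X$ over $Y$'' and ``locally around the fibre over $Z$'' only make literal sense after pulling $D$ back along $p$, where Cartierness is preserved. Granting that, you correctly reduce to the claim that $D-\div(\chi^u)=\pi^*E$ for some $E=\sum_Z n_Z Z$ on $Y$, and that $E$ must be principal.

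The gap is in that last step. Knowing the units of $A=\Gamma(X,\CO_X)$ tells you nothing about the transition functions of $D$, which are units on proper invariant open subsets $U_i\cap U_j$ of $X$ --- and those rings have many more units (anything invertible away from the removed locus). So ``rigidity of the units of $A$'' does not force the local equations $f_Z$ to glue; as written, this step is a non sequitur. (Your description of the units is also slightly off: $f\chi^u$ with $f$ non-constant is a unit whenever $\pDiv(u)$ is principal and $\pDiv(u)+\pDiv(-u)=0$, which can occur for $u\in\sigma^\perp$ when the tailcone is not full-dimensional; this is exactly the case where a naive graded Nakayama argument needs extra care.) What completeness of $\Loc\pDiv$ actually buys is the following: by Proposition~\ref{prop:orbitsX}, the minimal orbits $\orb(y,\pDiv_y)$ of $\ttoric(\pDiv)$ over the various closed points $y\in Y$ are all identified by $p$, because the relevant morphism $\Phi_0$ attached to $\pDiv(0)=0$ is constant once $\Gamma(Y,\CO_Y)=\CC$. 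Hence $X$ has a unique closed $T$-orbit lying in the closure of every orbit, and the only invariant open subset of $X$ containing it is $X$ itself. Since an invariant Cartier divisor is principal, with a homogeneous equation $f\chi^u$, on some invariant neighbourhood of the closed orbit, it is principal globally. This is precisely the content of your closing ``graded local'' remark (graded Nakayama for $A$ with $A_0=\CC$), but that statement is essentially equivalent to the lemma itself: it has to be proved, and it replaces --- rather than being an incarnation of --- the gluing argument you propose.
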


Let $\TV(\pFan)$ be a complexity-one $T$-variety and denote by $\tcadiv(\TV(\pFan))$ the free abelian group of $T$-invariant Cartier divisors on $\TV(\pFan)$. Choosing affine invariant charts on which the Cartier divisor in question trivializes, one can use its local representations $f(y) \chi^u \in \CC(Y)[M]$ to define piecewise affine functions $h_P := \ord_P f + \langle \cdot, u\rangle$. It is then not hard to see that these functions glue and yield an element $h \in \CaSF(\fan)$ which leads to the following correspondence:

\begin{proposition}[\cite{tidiv}, Proposition 3.10]
\label{prop:cadiv}
$\tcadiv(\TV(\pFan)) \cong \CaSF(\pFan)$ as free abelian groups.
\end{proposition}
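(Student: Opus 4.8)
The plan is to produce two mutually inverse group homomorphisms between $\tcadiv(\TV(\pFan))$ and $\CaSF(\pFan)$, using the affine-chart description of invariant Cartier divisors. Recall that $\TV(\pFan)$ is covered by the invariant affine opens $\TV(\cD)$, $\cD\in\pFan$, glued along $\TV(\cD\cap\cD')$, and that the cells of the slice $\pFan_P$ are exactly the coefficients $\cD_P$ of the members $\cD$ of $\pFan$. The whole argument hinges on the local fact that, on a small enough invariant affine open, a $T$-invariant Cartier divisor is cut out by a single $M$-homogeneous rational function $f(y)\,\chi^u\in\CC(Y)[M]$; for a chart $\TV(\cD)$ with $\Loc\cD=Y$ the Lemma recalled just above (triviality of the Picard group of a complete-locus $\TV(\cD)$) gives such a representation on the whole chart, while for an affine-locus chart one may have to refine the cover.

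First I would construct the forward map $D\mapsto h$. Cover $\TV(\pFan)$ by invariant affine opens on which $D$ is principal, say $D=\div(f_j\,\chi^{u_j})$, and on the cell $\cD_P$ set $h_P:=\ord_P f_j+\langle\,\cdot\,,u_j\rangle$ for a chart $j$ meeting that cell. To see that $h_P$ is well defined on the whole cell I would compare two such local equations: their quotient is an invertible $M$-homogeneous function, so by Theorem~\ref{thm:principDiv} its divisor vanishes, which forces the two candidate affine functions to coincide. The three axioms of a divisorial support function then follow from Theorem~\ref{thm:principDiv}: integrality of slope and translation because $f_j\chi^{u_j}$ is a genuine local generator; the common linear part $\ul h$ because the recession behaviour of all $h_P$ is governed by the single support function on $\tail(\pFan)$ recording the horizontal divisors $D_\varrho$, whose coefficients $\langle\varrho,u\rangle$ do not depend on $P$; and finiteness of the set of $P$ with $h_P\neq\ul h$ because each $f_j$ has only finitely many zeros and poles. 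The Cartier condition, i.e.\ principality of $h|_\cD$ for every complete-locus $\cD$, is immediate, since on such a chart $D$ is globally principal by the Lemma.

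Next I would build the inverse $h\mapsto D$. For each $\cD\in\pFan$ the restriction of $h$ to the cells over $\cD$ reconstructs a local equation $f\,\chi^u$: for a complete-locus $\cD$ this is precisely the principal representation $\suppFunc(u)+\suppFunc(\div f)$ guaranteed by the Cartier hypothesis, and for an affine-locus $\cD$ one reads off the slope $u=\ul h$ and the order data $P\mapsto h_P-\ul h$, which has finite support and thus defines a rational function $f$ on $Y$; in either case this yields a $T$-invariant Cartier divisor $D_\cD$ on $\TV(\cD)$. These glue, because on $\TV(\cD\cap\cD')$ the two reconstructed equations carry the same support-function data, so their ratio is an invertible homogeneous function and $D_\cD,D_{\cD'}$ restrict to the same divisor. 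Finally I would check that the two assignments are inverse to one another, each recovering the same local data up to the identification of $f\chi^u$ with its support function, and that both are homomorphisms, which reduces to $(f\chi^u)(f'\chi^{u'})=(ff')\chi^{u+u'}$ together with additivity of $\ord_P$ and of $\langle\,\cdot\,,u\rangle$.

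The hard part will be the gluing, and in particular verifying that the Cartier axiom --- imposed only on the complete-locus members of $\pFan$ --- is exactly strong enough to guarantee that the locally reconstructed divisors patch into a globally Cartier divisor. The delicate point is the interplay between the two types of charts: on a complete-locus chart, principality of $h|_\cD$ is both necessary (a non-principal integral support function there would produce a non-Cartier Weil divisor) and available (via the vanishing of the Picard group), whereas on an affine-locus chart a Cartier divisor comes essentially for free but one must confirm that its support-function data matches that of the neighbouring complete-locus charts along their common faces. Checking this compatibility cell by cell, using the face relations encoded in the f-divisor $\pFan$ and the divisor computation of Theorem~\ref{thm:principDiv}, is where the genuine work lies; once it is done, bijectivity and the homomorphism property are formal.
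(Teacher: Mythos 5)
Your proposal follows the same route as the paper (and as \cite{tidiv}, Proposition 3.10, to which the text defers): trivialize the invariant Cartier divisor on invariant affine charts by $M$-homogeneous local equations $f\chi^u$, read off $h_P=\ord_P f+\langle\cdot,u\rangle$, glue via Theorem~\ref{thm:principDiv}, and use the Lemma on complete loci to match the Cartier condition in Definition~\ref{def:CaSF}; this is essentially correct. The one slip is in the inverse map on affine-locus charts, where you claim the finitely supported divisor $\sum_P a_P\,P$ of translation constants ``defines a rational function on $Y$'' --- it need not be globally principal even on $\PP^1$, but it is locally principal on the smooth curve $\Loc\cD$, so after refining the invariant cover over $\Loc\cD$ (as you yourself anticipate in your opening paragraph) the reconstructed divisor is still Cartier and the argument goes through.
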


We will sometimes identify an element $h \in \CaSF(\pFan)$ with its induced $T$-invariant Cartier divisor $D_h$ via this correspondence. As a consequence of Theorem \ref{thm:principDiv} we obtain that the Weil divisor associated to a given Cartier divisor $h = (h_P)_P$ on $\TV(\pFan)$ is equal to
\[-\sum_{\varrho} \ul{h} (\varrho) D_\varrho - \sum_{(P,v)} \mu(v) h_P(v) D_{(P,v)}.\]

\begin{remark}
\label{rmk:downgradeCaSF}
Let $D_\psi$ denote a Cartier divisor on the toric variety $\TV(\Sigma)$ which is associated to the integral piecewise linear function $\psi: |\Sigma| \to \QQ$ (see (\ref{subsec:CDtoric})). Performing a downgrade to complexity one (cf.\ (\ref{subsec:tDown})), we denote the induced f-divisor by $\pFan$. The associated Cartier support function $h^\psi$ or, more specifically, its only non-trivial parts $h^\psi_0$ and $h^\psi_\infty$ then result from the restriction of $\psi$ to the slices $\pFan_0$ and $\pFan_\infty$, respectively.
\end{remark}

%%%%%%%%%%%%%%%%%%%%%%%%%%%%%%%%%%%%%%%%%%%%%%%%%%%%%%%%%%%%%%%%%%%%%
%%%   The divisor class group
%%%%%%%%%%%%%%%%%%%%%%%%%%%%%%%%%%%%%%%%%%%%%%%%%%%%%%%%%%%%%%%%%%%%%
\subsection{The divisor class group}
\label{subsec:classT}
Using the results from (\ref{subsec:WDivX}) and (\ref{subsec:princT}) one obtains that (cf.\ \cite[Section 3]{tidiv})
\[\cl(\TV(\pFan)) = \frac {\bigoplus_\varrho \ZZ \cdot D_\varrho \oplus \bigoplus_{D_{(Z,v)}} \ZZ \cdot D_{(Z,v)}}{\big\langle \sum u(\varrho) D_\varrho + \sum_{D_{(Z,v)}} \mu(v) (\langle v, u \rangle + a_Z) D_{(Z,v)} \big\rangle},\] 
where $u$ runs over all elements of $M$ and $\sum_Z a_Z Z$ over all principal divisors on $Y$. However, assuming that $\TV(\pFan)$ is a complete rational $T$-variety of complexity one, it is possible to find a representation of $\cl(\TV(\pFan))$ which is analogous to the exact sequence from (\ref{subsec:toricPrinc}) as follows.

Let $\fan = \sum_{P \in \PP^1} \fan_P \otimes [P]$ be a complete f-divisor on $\PP^1$. In particular, $\deg \fan \subsetneq |\tail(\fan)| = N_\QQ$. Choose a minimal finite subset $\cP \subseteq \PP^1$ containing at least two points and such that $\fan_P$ is trivial (i.e.\ $\fan_P = \tail(\fan)$) for $P \in \PP^1 \setminus \cP$. If $v$ is a vertex of the slice $\fan_P$, we denote  $P(v)=P$. Let us define $\cV \;:=\; \{v \in \fan_{P}(0) \kst P \in \cP\}$ together with the maps
\[Q:\; \ZZ^{\cV \cup \cR} \to \ZZ^\cP/\ZZ \quad \textnormal{with} \quad
e(v) \mapsto \mu(v)\,\ovl{e(p(v))} \quad \textnormal{and} \quad
e(\varrho) \mapsto 0\,,\]
and
\vspace{-1ex}
\[\phi:\; \ZZ^{\cV \cup \cR} \to N \quad \textnormal{with} \quad e(v) \mapsto \mu(v)v \quad \textnormal{and} \quad e(\varrho)\mapsto \varrho \]
with $e(v)$ and $e(\varrho)$ denoting the natural basis vectors.

\begin{proposition}[\cite{coxComp1}, Section 2]
\label{prop:rationalCl}
For a complete rational complexity-one $T$-variety $\TV(\fan)$, one has the following exact sequence
\[0 \to (\ZZ^\cP/\ZZ)^\vee \oplus M \to \big(\ZZ^{\cV \cup \cR}\big)^\vee \to \cl(\TV(\fan)) \to 0 \]
where the first map is induced from $(Q,\phi)$.
\end{proposition}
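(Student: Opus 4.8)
The plan is to reduce the whole statement to the combinatorial presentation of $\cl(\TV(\fan))$ recalled in (\ref{subsec:classT}) and then to match its numerator and denominator with the two right-hand terms of the asserted sequence. First I would fix bases: by (\ref{subsec:WDivX}) the group of $T$-invariant Weil divisors on $\TV(\fan)$ is freely generated by the horizontal divisors $D_\varrho$ with $\varrho\in\cR$ and the vertical divisors $D_{(P,v)}$ with $P\in\cP$ and $v\in\fan_P(0)$. Identifying $D_\varrho$ and $D_{(P,v)}$ with the basis of $\big(\ZZ^{\cV\cup\cR}\big)^\vee$ dual to $e(\varrho)$ and $e(v)$ turns the middle term into exactly this group, and the surjection onto $\cl(\TV(\fan))$ is the one from (\ref{subsec:classT}). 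It then remains to show that the image of the first map equals the subgroup of invariant principal divisors, and that the first map is injective.

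For the relations I would dualize $(Q,\phi)$ explicitly. For $u\in M=N^\vee$ one obtains
\[
\phi^\vee(u)=\sum_{\varrho}\langle\varrho,u\rangle\,D_\varrho+\sum_{(P,v)}\mu(v)\langle v,u\rangle\,D_{(P,v)},
\]
which, by Theorem~\ref{thm:principDiv} applied to $f\equiv 1$, is precisely the Weil divisor of the semiinvariant $\chi^u$; this is the $a_Z=0$ part of the relations in (\ref{subsec:classT}). For $w\in(\ZZ^\cP/\ZZ)^\vee$, viewed as $w\in\ZZ^\cP$ with $\sum_P w_P=0$, one gets $Q^\vee(w)=\sum_{(P,v)}\mu(v)\,w_P\,D_{(P,v)}$. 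Here I would invoke rationality, i.e.\ $Y=\PP^1$, to note that the principal divisors on $\PP^1$ are exactly those of degree $0$; since vertical divisors occur only over $\cP$, the relation attached to a principal divisor $\sum_P a_P P$ depends only on $(a_P)_{P\in\cP}$, and conversely every sum-zero family on $\cP$ extends to a degree-$0$, hence principal, divisor. Thus $\phi^\vee$ and $Q^\vee$ together have image equal to the full relation subgroup of (\ref{subsec:classT}), giving exactness at $\big(\ZZ^{\cV\cup\cR}\big)^\vee$ and at $\cl(\TV(\fan))$.

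Injectivity of the first map is the substantive point. As source and target are free, it is equivalent to surjectivity of $(Q,\phi)_\QQ\colon \QQ^{\cV\cup\cR}\to(\QQ^\cP/\QQ)\oplus N_\QQ$. Surjectivity onto the first factor is immediate, since each $P\in\cP$ carries a vertex and the classes $\overline{e(P)}$ span $\QQ^\cP/\QQ$; by the standard splitting criterion it then remains to prove $\phi\big(\ker Q_\QQ\big)=N_\QQ$. I would produce explicit kernel elements: each $e(\varrho)\in\ker Q_\QQ$ contributes $\varrho$; for vertices $v,v'$ over a common $P$ the element $\tfrac{1}{\mu(v)}e(v)-\tfrac{1}{\mu(v')}e(v')$ lies in $\ker Q_\QQ$ and contributes $v-v'$; and, choosing one vertex $v_P$ over each $P\in\cP$, the element $\sum_{P}\tfrac{1}{\mu(v_P)}e(v_P)$ lies in $\ker Q_\QQ$ (its $Q$-image is the class of the all-ones vector, hence $0$) and contributes $\sum_P v_P$. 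Hence $\phi(\ker Q_\QQ)$ contains $\spann(\cR)$, all edge directions of each slice, and the single vector $\sum_P v_P$.

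The hard part is to see that these span $N_\QQ$. Dually this says that any $u\in M$ vanishing on every $\varrho\in\cR$ and constant on the vertices of each slice must be $0$: the degree-$0$ condition $\sum_P\langle v_P,u\rangle=0$, available because $|\cP|\ge 2$, forces $u$ to annihilate $\sum_P v_P$ as well, while completeness of the tailfan $\tail(\fan)$ (full support $N_\QQ$, with pointed maximal tailcones) should guarantee that the extremal rays together with the slice vertices linearly span $N_\QQ$, whence $u=0$. I expect this spanning property---extracting a full-rank family of rays and vertex-differences from a complete slice subdivision---to be the main obstacle, and it is exactly where the hypotheses \emph{complete}, \emph{rational} and $|\cP|\ge 2$ enter; I would establish it by a local analysis at a full-dimensional cone of $\tail(\fan)$ and the corresponding full-dimensional slice polyhedron.
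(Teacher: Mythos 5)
Your skeleton is the right one — identify the middle term with invariant divisors, match $\phi^\vee(u)$ with $\div(\chi^u)$ and $Q^\vee(w)$ with the relations coming from degree-zero divisors on $\PP^1$, and reduce injectivity to surjectivity of $(Q,\phi)_\QQ$ — but there are two genuine gaps. First, a factual slip: the invariant vertical prime divisors are \emph{not} confined to $\cP$. For every $P\in\PP^1\setminus\cP$ the slice is the tailfan, whose unique vertex is $0$, so there is a vertical divisor $D_{(P,0)}$ (the general fiber of $\pi$); the presentation of (\ref{subsec:classT}) therefore has infinitely many more generators than $\big(\ZZ^{\cV\cup\cR}\big)^\vee$. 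Before your matching of relations can start, you must eliminate these via the relations $D_{(P,0)}\sim\sum_{v\in\fan_{P_0}(0)}\mu(v)D_{(P_0,v)}$ (from the principal divisors $P-P_0$, $P_0\in\cP$) and then verify that the relations \emph{induced} on the remaining generators are exactly $\im\phi^\vee+\im Q^\vee$; this works (the leftover contribution $\sum_{P\notin\cP}\ord_Pf$ re-enters as $-\sum_{P\in\cP}\ord_Pf$ and is absorbed into a sum-zero tuple on $\cP$), but your statement ``vertical divisors occur only over $\cP$'' is false as written and the argument as given does not go through without this step.

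Second, and more seriously, the injectivity step is exactly where you stop: you correctly reduce it to showing that $\cR$, the vertex differences within each slice, and $\sum_{P}v_P$ span $N_\QQ$, but you only assert that completeness ``should guarantee'' this, and the route you propose — a local analysis at a full-dimensional cone of $\tail(\fan)$ — is unlikely to succeed, because the rays of such a cone need not lie in $\cR$ at all: $\cR$ consists only of the rays \emph{not} meeting $\deg\fan$, and it can be empty (it is empty for $\PP(\Omega_{\cF_1})$ in Example \ref{ex:cotangHirzebruch}), so no full-dimensional tailcone contributes any usable rays there. The clean argument avoids combinatorics entirely: an element $(w,u)$ of the kernel of the first map gives, by your own computation, $\div(f\chi^u)=0$ on $\TV(\fan)$ for a suitable $f$ with $\ord_Pf=-w_P$; by normality $f\chi^u$ is then a global unit, hence a constant on the complete connected variety $\TV(\fan)$, which forces $u=0$ (a nonzero constant has weight $0$) and then $f$ constant, i.e.\ $w=0$. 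Completeness and rationality enter precisely here, not through a spanning argument on a single tailcone.
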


\begin{example}
\label{ex:picRankCotangHirz}
We return to the projectivized cotangent bundle of $\cF_1$, cf.\ Example \ref{ex:cotangHirzebruch}. From the discussion above we see that its Picard rank is equal to $-2 -2 + 7 = 3$ which confirms a classical result.
\end{example}

%%%%%%%%%%%%%%%%%%%%%%%%%%%%%%%%%%%%%%%%%%%%%%%%%%%%%%%%%%%%%%%%%%%%%
%%%%%%%%%%%%%%%%%%%%%%%%%%%%%%%%%%%%%%%%%%%%%%%%%%%%%%%%%%%%%%%%%%%%%
%%%
%%%   Further Remarks on Divisors
%%%
%%%%%%%%%%%%%%%%%%%%%%%%%%%%%%%%%%%%%%%%%%%%%%%%%%%%%%%%%%%%%%%%%%%%%
%%%%%%%%%%%%%%%%%%%%%%%%%%%%%%%%%%%%%%%%%%%%%%%%%%%%%%%%%%%%%%%%%%%%%
\section{Canonical divisors, positivity, and divisor ideals}
\label{sec:furtherDivisors}

%%%%%%%%%%%%%%%%%%%%%%%%%%%%%%%%%%%%%%%%%%%%%%%%%%%%%%%%%%%%%%%%%%%%%
%%%   The canonical divisor
%%%%%%%%%%%%%%%%%%%%%%%%%%%%%%%%%%%%%%%%%%%%%%%%%%%%%%%%%%%%%%%%%%%%%
\subsection{The canonical divisor}
\label{subsec:canonicalDiv}
Given an $n$-dimensional toric variety $\TV(\Sigma)$ the invariant logarithmic $n$-form $\omega := \frac{d\chi^{e_1}}{\chi^{e_1}} \wedge \dots \wedge \frac{d\chi^{e_n}}{\chi^{e_n}}$, where $\{e_1,\dots,e_n\}$ denotes a basis of $M$, defines a rational differential form on $\TV(\Sigma)$. Moreover, the latter turns out to be independent of the chosen basis and thus leads to the description $K_{\TV(\Sigma)} = - \sum_{\varrho \in \Sigma(1)} D_\varrho$. Considering a torus action of complexity $k>0$, let $K_Y$ denote a representation of the canonical divisor on the base $Y$, i.e.\ $K_Y = \div \omega_Y$ for some rational differential form $\omega_Y \in \Omega^1(Y)$. Then one can construct a rational differential form
\[\Omega^1(\TV(\fan)) \ni \omega_{\TV(\fan)} := \omega_Y \wedge \frac{d\chi^{e_1}}{\chi^{e_1}} \wedge \dots \wedge \frac{d\chi^{e_k}}{\chi^{e_k}}\]
on the $T$-variety $\TV(\fan)$ where, as above, $\{e_1,\dots,e_k\}$ denotes a $\ZZ$-basis of the lattice $M$. By (formally) locally inverting the toric downgrade construction of (\ref{subsec:tDown}) the following equality is an immediate consequence of (\ref{subsec:toricPrinc}) and the above representation of $K_Y$:
\[K_{\TV(\fan)} = \sum_{(P,v)} (\mu(v)\,\coef_P K_Y + \mu(v) -1)\, D_{(P,v)} - \sum_{\varrho} D_\varrho\,.\]

\begin{example}
\label{ex:anticanDivCotangHirz}
Let us revisit $\PP(\Omega_{\cF_1})$ from Example \ref{ex:cotangHirzebruch}. Recall that $\cR(\pFan) = \emptyset$ and note that the anticanonical divisor $-K_{\PP(\Omega_{\cF_1})}$ is Cartier. As a Weil divisor it may be represented as
\[-K_{\PP(\Omega_{\cF_1})} = 2 \big (D_{([0],(0,0))}+ D_{([0],(0,1))} + D_{([0],(0,-1))}\big).\]
Our aim now is to provide the relevant data of the associated Cartier support function $h := \suppFunc(-K_{\PP(\Omega_{\cF_1})})$. To this end we denote the maximal cones of the tailfan by $\sigma_1,\dots,\sigma_8$ (starting with the cone $\langle (1,0),(1,1)\rangle$ and counting counter-clockwise). To illustrate the associated piecewise linear functions $h_P$ over the relevant slices $\pFan_P$ one may use the following list which displays the local representations $\suppFunc(u_i) + \suppFunc(E_i)$ of $-K_{\PP(\Omega_{\cF_1})}|_{\TV(\pDiv^i)}$ where $\pDiv^i$ is the p-divisor which is associated to the maximal cone $\sigma_i$:

\[\begin{array}{l} \begin{array}{c|c|c|c|c} & \sigma_1 & \sigma_2 & \sigma_3 & \sigma_4 \\ \hline u_i & [-2,0] & [0,-2] & [-2,-2] & [2,0] \\[.5ex] E_i & -2[0] + 2[1] & 0 & 0 & -2[0] +2[\infty] \end{array} \\[4ex]
\begin{array}{c|c|c|c|c} & \sigma_5 & \sigma_6 & \sigma_7 & \sigma_8 \\ \hline u_i & [2,0] & [2,2] & [0,2] & [-2,0] \\[.5ex] E_i & -2[0] + 2[\infty] & 0 & 0 & -2[0] + 2[1] \end{array} \end{array}\]
More specifically, this means that for every point $P \in \PP^1$ we have that $h_P|_{\pDiv^i_P} = u_i + \coeff_P E_i$ which is to be considered as an affine linear map $N_\QQ \supset |\pDiv^i_P| \to \QQ$.
\end{example}

%%%%%%%%%%%%%%%%%%%%%%%%%%%%%%%%%%%%%%%%%%%%%%%%%%%%%%%%%%%%%%%%%%%%%
%%%   Positivity Criteria
%%%%%%%%%%%%%%%%%%%%%%%%%%%%%%%%%%%%%%%%%%%%%%%%%%%%%%%%%%%%%%%%%%%%%
\subsection{Positivity criteria}
\label{subsec:positivity}
We consider a complete complexity-one $T$-variety $\TV(\fan)$. For a cone $\sigma \in \tail(\fan)$ of maximal dimension and a point $P \in Y$ we denote by $\Delta^\sigma_P \in \fan_P$ the unique polyhedron whose tailcone is equal to $\sigma$. Given a Cartier support function $h \in \CaSF(\fan)$ we may write $h_P|_{\Delta^\sigma_P} = u(\sigma) + a_P(\sigma)$ and define $h|_\sigma(0) \;:=\; \sum_P a_P(\sigma) P$.

\begin{definition}
\label{def:concave}
A function $f:C  \to \QQ$ defined over some convex subset $C \subset \QQ^k$ is called \emph{concave} if $f(tx_1 + (1-t)x_2) \geq tf(x_1) + (1-t)f(x_2)$ for all $x_1, x_2 \in C$ and $0 \leq t \leq 1$. Given a polyhedral subdivision of $C$, the function $f$ is called \emph{strictly concave} if the inequality from above becomes strict for all pairs of points $(x_1,x_2) \in C^2$ which lie in different maximal cells.
\end{definition}

\begin{proposition}[\cite{tidiv}, Section 3.4]
\
\begin{enumerate}
\item $D_h$ is \emph{nef} if and only if $h_P$ is concave for all $P \in Y$ and $\deg h|_\sigma(0) \leq 0$ for every maximal cone $\sigma \in \tail(\pFan)$.
\item $D_h$ is \emph{semiample} if and only if $h_P$ is concave for all $P \in Y$ and $-h|_\sigma(0)$ is semiample for every maximal cone $\sigma \in \tail(\pFan)$.
\item $D_h$ is \emph{ample} if and only if $h_P$ is strictly concave for all $P \in Y$ and $-h|_{\tail(\pDiv)}(0)$ is ample for all maximal $\pDiv \in \fan$ with affine locus.
\end{enumerate}
\end{proposition}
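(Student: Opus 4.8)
The plan is to test positivity on $T$-invariant curves and to translate each intersection number into the combinatorial data of $h$, splitting the invariant curves into a \emph{vertical} family contained in the fibres of $\pi\colon\ttoric(\fan)\to Y$ and a \emph{horizontal} family dominating $Y$. First I would record the standard reduction that, for a complete variety carrying a torus action, nef-ness may be checked on $T$-invariant curves, and the Nakai--Moishezon/Kleiman criteria for ampleness on $T$-invariant subvarieties: a generic one-parameter subgroup degenerates any curve (or subvariety) class to a cycle supported on invariant ones, and numerical positivity survives in the limit. Granting this, everything reduces to computing $D_h\cdot C$ for the two families of invariant curves visible in the orbit decomposition of (\ref{subsec:pDivOrbits}) and (\ref{subsec:WDivX}).

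From the dimension formula $\dim\ko{\orb}(y,F)=\dim\ko y+\codim_N F$, the one-dimensional invariant orbit closures are exactly the curves $C_{P,\tau}=\ko{\orb}(\eta(P),\tau)$ over a closed point $P\in Y$, indexed by the walls $\tau$ (codimension-one faces) of the subdivision $\fan_P$, together with the horizontal curves $C_\sigma=\ko{\orb}(\eta(Y),\sigma)$ indexed by the maximal cones $\sigma$ of $\tail(\fan)$. For the vertical curves the situation is purely toric: the fibre of $\pi$ over $P$ is a toric variety whose invariant curves are indexed by the walls of $\fan_P$, and $D_h\cdot C_{P,\tau}$ equals the toric intersection number of the support function $h_P$ against the wall $\tau$ (compare the bouquet picture of (\ref{subsec:dissToric})). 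By the classical toric criterion (\cite{fulton},\cite{oda}) these numbers are $\geq 0$ for every wall and every $P$ exactly when each $h_P$ is concave, and $>0$ exactly when each $h_P$ is strictly concave; this is the fibrewise condition occurring in all three parts.

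For the horizontal curves I would compute $D_h\cdot C_\sigma$ from the Weil-divisor formula of (\ref{subsec:CDivX}) together with the principal-divisor description of Theorem~\ref{thm:principDiv}. Restricting to the p-divisor $\pDiv^\sigma$ attached to the maximal cone $\sigma$ and writing $h_P|_{\Delta^\sigma_P}=u(\sigma)+a_P(\sigma)$, the character part $u(\sigma)$ contributes a principal divisor and drops out, while the translations assemble into the $\QQ$-divisor $h|_\sigma(0)=\sum_P a_P(\sigma)\,P$ on $Y$; after clearing the lattice factors $\mu(v)$ one obtains $D_h\cdot C_\sigma=-\deg h|_\sigma(0)$. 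Thus nef-ness along the horizontal curves is equivalent to $\deg h|_\sigma(0)\le 0$ for all maximal $\sigma$, giving part (1). For part (2) I would pass from nef to semiample by analysing global sections: expressing $\Gamma(\TV(\fan),\cO(mD_h))$ as a direct sum of spaces of sections of evaluation divisors on $Y$ through the sheaf structure $\cO(\pDiv(u))$ of (\ref{subsec:pDiv}), concavity of the $h_P$ already produces base point freeness in the fibre (toric) directions, so a suitable multiple of $D_h$ is base point free precisely when the base $\QQ$-divisors $-h|_\sigma(0)$ are semiample on $Y$; the semiampleness of the evaluations $\pDiv^\sigma(u)$ built into Definition~\ref{def:pDiv} is what makes the fibre and base directions compatible. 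Part (3) is the corresponding ampleness statement: strict concavity forces strict positivity on all invariant subvarieties contained in fibres, while ampleness of $-h|_{\tail(\pDiv)}(0)$ on $Y$ for the p-divisors $\pDiv$ with affine locus supplies strict positivity in the horizontal directions, the affine-locus condition isolating exactly the maximal cones where the base curve genuinely contributes.

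The hard part will be the global, horizontal analysis behind parts (2) and (3) rather than the fibrewise toric bookkeeping. On a curve the numerical inequality $\deg h|_\sigma(0)\le 0$ is strictly weaker than semiampleness or ampleness of $-h|_\sigma(0)$ --- a non-torsion degree-zero line bundle on a curve of positive genus is the standard obstruction --- so the equivalences for semiample and ample cannot be deduced from intersection numbers alone. The crux is to prove that the global (semi)ampleness of $D_h$ is governed exactly by that of the associated $\QQ$-divisors $-h|_\sigma(0)$ on $Y$, which demands uniform control of $\Gamma(\TV(\fan),\cO(mD_h))$ in the fibre direction and genuine use of the semiampleness and bigness clauses of the p-divisor condition in Definition~\ref{def:pDiv}. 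Throughout, the contraction $p\colon\ttoric(\fan)\to\TV(\fan)$ must be tracked, discarding the horizontal divisors that get contracted (the complement of $\cR(\fan)$ from (\ref{subsec:WDivX})) so that only the invariant curves actually present on $\TV(\fan)$ enter the intersection computations.
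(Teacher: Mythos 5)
Your overall strategy --- reduce nefness to $T$-invariant curves by the one-parameter-subgroup degeneration, compute $D_h\cdot C$ separately for the vertical curves (walls of $\fan_P$, giving concavity of $h_P$) and the horizontal fixed-point sections $C_\sigma$ (giving $-\deg h|_\sigma(0)$), and then pass to (2) and (3) through the weight decomposition of $\Gamma(X,\cO(mD_h))$ into sections of the divisors $-h|_\sigma(0)$ on $Y$ --- is the right one, and it is essentially the argument of the cited source (the survey itself gives no proof). Your observation that the numerical condition $\deg h|_\sigma(0)\le 0$ cannot yield semiampleness on a positive-genus base, so that (2) and (3) genuinely require the sections analysis, is exactly the correct dividing line; and for (3) the cleanest completion is ``semiample plus strictly positive on every invariant curve implies ample,'' which avoids a full Nakai--Moishezon computation on higher-dimensional invariant subvarieties.

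There is one concrete gap. Your curve test only produces the horizontal conditions for those maximal cones $\sigma$ whose fixed-point section $C_\sigma$ actually survives in $\TV(\fan)$, and these are precisely the cones with $\Loc(\pDiv^\sigma)$ affine (equivalently $\sdeg\cap\sigma=\emptyset$): when the locus is complete, $C_\sigma$ is contracted by $p$ to the unique fixed point of $\toric(\pDiv^\sigma)$, so no invariant curve on $X$ witnesses the condition $\deg h|_\sigma(0)\le 0$ or the semiampleness of $-h|_\sigma(0)$. The ``only if'' directions of (1) and (2) for such $\sigma$ therefore need a separate input, namely Definition~\ref{def:CaSF}: Cartier-ness of $h$ forces $h|_{\pDiv^\sigma}$ to be principal whenever $\Loc\pDiv^\sigma=Y$, so $h|_\sigma(0)$ is a principal divisor on $Y$, hence of degree zero and (trivially) semiample --- and never ample, which is exactly why part (3) restricts its horizontal condition to the cones with affine locus. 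Relatedly, your closing remark tracks the contraction via the set $\cR(\fan)$ of non-contracted horizontal \emph{divisors} $D_\varrho$ (indexed by rays), but the objects relevant to the curve test are the fixed-point sections $C_\sigma$ (indexed by maximal cones), and the criterion for their contraction is $\sdeg\cap\sigma\neq\emptyset$, not membership of a ray in $\cR$. Once this dichotomy is made explicit the argument closes up.
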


Similar results for the case of varieties with general reductive group actions of complexity one can be found in \cite{timDivs}.

It is well known for toric varieties that the anti-canonical divisor is always big. This is not true in general for higher complexity $T$-varieties. Nevertheless, for rational $T$-varietes of complexity one, it is sufficient to impose the condition of connected isotropy groups. In our language, this is equivalent to the condition that all the vertices of $\pFan_P$ are lattice points. 

\begin{proposition}
  For  a rational $T$-variety $X$ of complexity one with connected isotropy groups, the anti-canonical divisor is big.
\end{proposition}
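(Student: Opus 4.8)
The plan is to control the asymptotic growth of the plurianticanonical section spaces $h^0\big(X,\cO_X(-mK)\big)$ as $m\to\infty$ and show it is of maximal order $m^{\dim X}$, which is precisely bigness of $-K$. I take $X=\TV(\fan)$ complete, so that $Y=\PP^1$ and the tailfan $\Sigma=\tail(\fan)$ is complete; write $k$ for the dimension of the torus, so $\dim X=k+1$. First I would invoke the canonical divisor formula from (\ref{subsec:canonicalDiv}). The hypothesis of connected isotropy groups means every vertex $v$ of every slice $\fan_P$ is a lattice point, i.e.\ $\mu(v)=1$, so the correction terms $\mu(v)-1$ disappear and
\[
-mK \;=\; \sum_\varrho m\,D_\varrho \;-\; \sum_{(P,v)} m\,\coef_P K_Y\cdot D_{(P,v)}.
\]

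Next I would compute the $T$-weight decomposition of the global sections. A rational function $f(y)\chi^u$ is a section of $\cO_X(-mK)$ exactly when $\div\big(f(y)\chi^u\big)-mK\geq 0$; comparing with Theorem \ref{thm:principDiv} splits this into a horizontal part, $\langle\varrho,u\rangle\geq -m$ for all $\varrho$, saying $u\in m\Box$ for the polytope $\Box:=\{u\in M_\QQ\mid \langle\varrho,u\rangle\geq -1\ \forall\,\varrho\in\Sigma(1)\}$, and a vertical part forcing $f\in H^0\big(\PP^1,\cO(D_u)\big)$ with $D_u=\sum_P\big(\min_{v\in\fan_P(0)}\langle v,u\rangle-m\,\coef_P K_Y\big)P$. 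Since the vertices $v$ are lattice points, the coefficients of $D_u$ are already integers, and using $\deg K_{\PP^1}=-2$ one obtains the clean formula
\[
\deg D_u \;=\; g(u)+2m,\qquad g(u):=\sum_P \min_{v\in\fan_P(0)}\langle v,u\rangle .
\]
Therefore $h^0\big(X,\cO_X(-mK)\big)=\sum_{u\in m\Box\cap M}\max\{0,\deg D_u+1\}$.

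To finish I would exploit that $0$ lies in the interior of the compact polytope $\Box$ (the rays $\varrho$ positively span $N_\QQ$ as $\Sigma$ is complete), and that $g$ is piecewise linear with $g(0)=0$, hence Lipschitz: $|g(u)|\leq L|u|$ for a constant $L$ depending only on $\fan$. Choosing $\varepsilon>0$ with $L\varepsilon<2$ and small enough that the ball $B=\{|u|\leq\varepsilon\}$ lies inside $\Box$, every lattice point $u\in mB$ satisfies $u\in m\Box$ and $\deg D_u\geq(2-L\varepsilon)m>0$. As $mB$ contains $\sim\vol(B)\,m^{k}$ lattice points, each contributing at least $(2-L\varepsilon)m+1$ to the sum, we get $h^0\big(X,\cO_X(-mK)\big)\geq C\,m^{k+1}=C\,m^{\dim X}$ for large $m$, which is bigness.

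The hard part, and the place where both hypotheses are genuinely used, is keeping $\deg D_u$ positive on a full-dimensional family of weights. Rationality of $Y$ enters through $\deg K_{\PP^1}=-2$, which produces the decisive $+2m$; over a base of genus $g\geq 1$ this term becomes $(2g-2)m\geq 0$ and the argument (correctly) fails, reflecting that anticanonical bigness can genuinely break down there. Connected isotropy is used to delete the contributions $m(\mu(v)-1)$, which for $\mu(v)>1$ would enlarge $K$, equivalently shrink $\deg D_u$, and could destroy the estimate. The only delicate point is the elementary continuity bound $g(0)=0$, $|g(u)|\le L|u|$; I would note that one should \emph{not} try to replace $g$ by the degree-polyhedron evaluation $\min\langle\deg\pDiv^\sigma,u\rangle$, since $g$ takes the vertex-minimum over the whole subdivision $\fan_P$ rather than over the single cell with tailcone $\sigma$, and the two need not agree.
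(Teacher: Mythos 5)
Your argument is correct, but it is a genuinely different proof from the one in the paper. The paper argues qualitatively through the effective cone: choosing $K_Y=-2[Q]$ it writes down the effective representative $-K_X=\sum_\varrho D_\varrho+2\sum_{v}D_{(Q,v)}$, uses the fact that $\Eff(X)$ is generated by the finitely many invariant prime divisor classes, checks (letting $Q$ vary) that $-K_X-D$ is effective for each such generator $D$, and concludes that $-K_X$ lies in the interior of the effective cone and is therefore big. You instead count sections: via the weight decomposition $\Gamma(X,\cO_X(-mK))(u)=\Gamma(\PP^1,\cO(D^*(u)))$ from (\ref{subsec:globalSections}) you exhibit a full-dimensional set of weights $u$ with $\deg D^*(u)=g(u)+2m>0$, giving $h^0(-mK)\gtrsim m^{\dim X}$. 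Your route is longer but buys something: it makes completely explicit where the two hypotheses enter (the $+2m$ coming from $\deg K_{\PP^1}=-2$, and the disappearance of the $m(\mu(v)-1)$ corrections), it yields an effective lower bound on the anticanonical volume, and it shows precisely why the statement fails for higher-genus quotients or disconnected isotropy; the paper's proof is shorter but leans on the generation of $\Eff(X)$ by invariant classes and the identification of the big cone with the interior of the pseudoeffective cone. Two small imprecisions, neither of which harms the proof since you only need a lower bound: the weight polytope of $-mK$ is cut out by the rays $\varrho\in\cR$ only (not all of $\Sigma(1)$), so your displayed formula for $h^0$ should be an inequality $\geq$ rather than an equality; and the sum defining $g$ should be read as running over the finitely many $P$ with nontrivial slice or with $\coef_P K_Y\neq 0$, the remaining terms being $0$ because the only vertex of a trivial slice is the origin.
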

\begin{proof}
  By (\ref{subsec:canonicalDiv}) we get an effective anti-canonical divisor
$-K_X  = \sum_\rho D_\rho + 2\cdot \sum_v D_{Q,v}$ for any point $Q \in Y$. Moreover, the effective cone is generated by invariant divisor classes. Now, for every $D=D_\rho$ or $D=D_{P,v}$ we see that the class of $ (-K_X) - D$ is effective.  This implies that $-K_X$ sits in the interior of the effective cone, and is thus big.
\end{proof}

%%%%%%%%%%%%%%%%%%%%%%%%%%%%%%%%%%%%%%%%%%%%%%%%%%%%%%%%%%%%%%%%%%%%%
%%%   The ideal of a Weil divisor
%%%%%%%%%%%%%%%%%%%%%%%%%%%%%%%%%%%%%%%%%%%%%%%%%%%%%%%%%%%%%%%%%%%%%
\subsection{The ideal of a Weil divisor}
\label{subsec:idealWD}
Previous considerations allow us to express the ideal
$I\subseteq \kG(\toric(\pDiv),\cO_{\toric(\pDiv)})=
\kG(\Loc\pDiv,\,\cO_{\Loc}(\pDiv))$
% \oplus_{u}\kG(\Loc,\pDiv(u))$ 
of a $T$-invariant prime divisor:
\[
\textstyle
\begin{array}{r@{\hspace{0.6em}}c@{\hspace{0.6em}}l}
I(D_{(Z,v)})
&=&
\bigoplus_{u\in\tail(\pDiv)\dual\cap M}
\{f\in\kG(\Loc,\pDiv(u))\kst
\div(f\,\chi^u)\geq D_{(Z,v)}\}\cdot\chi^u
\\[1.0ex]
&=&
\bigoplus_{u\in\tail(\pDiv)\dual\cap M}
\{f\in\kG(\Loc,\pDiv(u))\kst
\langle v,u\rangle + \ord_Z f> 0\}\cdot\chi^u.
\end{array}
\]
Since, if $f$ belongs to $\kG(\Loc \pDiv ,\pDiv(u))$,  the 
expression $\min \langle\pDiv_Z,u\rangle + \ord_Z f$ is non-negative,
we obtain for $D_{(Z,v)}$ the affine coordinate ring

\[
\textstyle
\begin{array}{r@{\hspace{0.6em}}c@{\hspace{0.6em}}l}
\kG(\Loc\pDiv,\,\cO)/I 
&=&
\bigoplus_{u\in\normal(v,\pDiv_Z)\cap M;\, \langle v,u\rangle\in\ZZ}
\kG(\Loc,\pDiv(u))/\kG(\Loc,\pDiv(u)-Z)
\\[1.0ex]
&\subseteq &
\bigoplus_{u\in\normal(v,\pDiv_Z)\cap M;\, \langle v,u\rangle\in\ZZ}
\kG(Z, \pDiv(u)|_{Z}).
\end{array}\]
Let $\varphi: \widetilde{Z} \rightarrow Z$ denote the normalization map. Furthermore, we define the cone $\sigma_{(Z,v)} := \QQ_{\geq 0} \cdot (\pDiv_Z -v)$ and denote by $\iota:Z \hookrightarrow Y$ the inclusion. Then we have that
\[\pDiv_{(Z,v)} := \sum_{W \subset Y} (\pDiv_W +\sigma_{(Z,v)})\otimes (\varphi \circ \iota)^* W\]
defines a p-divisor over $\widetilde{Z}$ with lattice $N_{(Z,v)} = (N + \ZZ v) \subset N_\QQ$ such that 
\[\TV(\pDiv_{(Z,v)}) = \wt{D_{(Z,v)}}\]
equals the normalization of the prime divisor $D_{(Z,v)} \subset \toric(\pDiv)$ (see \ref{subsec:toricOrbits}). Note that $(\varphi \circ \iota)^*Z$ is defined only up to linear equivalence, and different choices give rise to isomorphic $T$-varieties.

For a prime divisor $D_\rho \subset \toric(\pDiv)$ of horizontal type we obtain the ideal
\[\mathcal{I}_{\varrho}  := \bigoplus_{u \in \pDiv_Y^\vee \setminus \varrho^\perp} \cO(\pDiv(u))\]
in an analogous way. The corresponding p-divisor 
\[\pDiv_\varrho := \sum_{Z} \project_\varrho(\pDiv_Z) \cdot Z,\]
lives on $Y$ and has tailcone $\project_\varrho(\tail(\pDiv))$ with lattice $\project_\varrho(N)$, where $\project_\varrho$ is the projection $N_\QQ \rightarrow N_\QQ/\QQ\cdot\varrho$. As above, we have that $D_\varrho = \TV(\pDiv_\varrho)$.

%%%%%%%%%%%%%%%%%%%%%%%%%%%%%%%%%%%%%%%%%%%%%%%%%%%%%%%%%%%%%%%%%%%%%
%%%%%%%%%%%%%%%%%%%%%%%%%%%%%%%%%%%%%%%%%%%%%%%%%%%%%%%%%%%%%%%%%%%%%
%%%
%%%   Cohomology groups of line bundles
%%%
%%%%%%%%%%%%%%%%%%%%%%%%%%%%%%%%%%%%%%%%%%%%%%%%%%%%%%%%%%%%%%%%%%%%%
%%%%%%%%%%%%%%%%%%%%%%%%%%%%%%%%%%%%%%%%%%%%%%%%%%%%%%%%%%%%%%%%%%%%%
\section{Cohomology groups of line bundles in complexity 1}
\label{sec:cohomologyGroups}

%%%%%%%%%%%%%%%%%%%%%%%%%%%%%%%%%%%%%%%%%%%%%%%%%%%%%%%%%%%
\subsection{Toric varieties}
\label{subsec:cohomTorVar}
%%%%%%%%%%%%%%%%%%%%%%%%%%%%%%%%%%%%%%%%%%%%%%%%%%%%%%%%%%%
Consider an equivariant line bundle $\cL$ on the toric variety $\TV(\Sigma)$. The induced torus action on the cohomology spaces $H^i(\TV(\Sigma),\cL)$ yields a weight decomposition of the latter, i.e.\ $H^i(\TV(\Sigma),\cL) = \bigoplus_{u\in M} H^i\big(\TV(\Sigma),\cL\big)(u)$. As in (\ref{subsec:CDtoric}), we denote by $\psi: |\Sigma| \to \QQ$ a continuous piecewise linear function representing the line bundle $\cL$. Introducing the closed subsets $Z_u \;:=\; \{v \in N_{\RR} \;|\; \bangle{u,v} \geq \psi(v)\}$, the complex with entries $H^0(\sigma, Z_u; \CC)$ associated to the covering of $|\Sigma|$ by the cones $\sigma \in \Sigma$ can be identified with the usual \v{C}ech complex for $\TV(\Sigma)$ given by the covering via the affine open subsets $\TV(\sigma)$. This gives us that
\[H^i\big(\TV(\Sigma),\cL \big)(u) = H^i\big(|\Sigma|,Z_u;\CC \big),\]
cf.\ \cite{demazureTV}. It follows that if $\Sigma$ is complete and $h$ is concave, e.g.\ $\cL$ is globally generated, then
\[H^i(\TV(\Sigma),\cL) = 0 \quad \textnormal{for} \quad i>0\,,\]
see \cite[Corollary 7.3]{danilov}. One also has an analogous result for toric bouquets which is proved by induction on the number of irreducible components and the fact that the restriction homomorphism on the level of global sections is surjective (cf.\ \cite[Lemma 6.8.1]{danilov}):

\begin{proposition}[\cite{phdLars}, Section 2.4]
\label{prop:cohomDappledTVLB}
Let $X$ be a complete equidimensional toric bouquet and $\cL$ a nef line bundle on $X$. Then $H^i(X,\cL) = 0$ for $i>0$.
\end{proposition}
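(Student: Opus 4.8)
The plan is a double induction, the outer one on $\dim X$ and the inner one on the number $r$ of irreducible components, peeling off one component at a time via a Mayer--Vietoris sequence, with the single-component case supplied by the classical toric vanishing theorem. Recall from (\ref{subsec:dissToric}) that a toric bouquet $X=\toric(\Delta)$ decomposes as $X=\bigcup_{v\in\kV(\Delta)}X_v$ into irreducible toric components $X_v=\toric(\QQ_{\geq 0}\cdot(\Delta-v))$, which are complete and, by equidimensionality, all of dimension $n=\dim X$. For the inner base case $r=1$ the bouquet is a single complete toric variety; since $\cL$ is nef it is globally generated, so $H^i(X,\cL)=0$ for $i>0$ by the concavity criterion recalled in (\ref{subsec:cohomTorVar}), i.e.\ by \cite[Corollary 7.3]{danilov}. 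The outer base case $\dim X=0$ (a finite reduced set of points) is trivial.

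For the inductive step I would write $X=X'\cup X_1$, where $X_1$ is one fixed component and $X'=\bigcup_{j\geq 2}X_j$ is the complete bouquet formed by the remaining $r-1$ components, and set $W:=X'\cap X_1$ for the scheme-theoretic intersection. Since $X$ is the gluing of $X'$ and $X_1$ along $W$, the standard Mayer--Vietoris sequence of this closed cover, tensored with the locally free sheaf $\cL$, reads \[0\to\cL\to \cL|_{X'}\oplus\cL|_{X_1}\to\cL|_{W}\to 0.\] Here $X'$ is again a complete equidimensional bouquet with $r-1$ components, so the inner inductive hypothesis gives $H^i(X',\cL)=0$ for $i>0$, while $H^i(X_1,\cL)=0$ for $i>0$ is the base case. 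The intersection $W$ is again a union of torus-invariant subvarieties, hence a complete toric bouquet, but now of dimension strictly less than $n$, and $\cL|_W$ remains nef; the outer induction on dimension therefore yields $H^i(W,\cL)=0$ for $i>0$ as well.

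Feeding these vanishings into the long exact cohomology sequence gives $H^i(X,\cL)=0$ for every $i\geq 2$ at once, since each such group is squeezed between $H^{i-1}(W,\cL)=0$ and $H^i(X',\cL)\oplus H^i(X_1,\cL)=0$. The only group left is $H^1(X,\cL)$, which the sequence identifies with the cokernel of the restriction map $H^0(X',\cL)\oplus H^0(X_1,\cL)\to H^0(W,\cL)$. Thus the whole statement reduces to the surjectivity of this map.

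This surjectivity is the heart of the argument and the point where the toric geometry really enters; it is the input flagged just before the statement. It suffices that the single restriction $H^0(X_1,\cL)\to H^0(W,\cL|_W)$ already be onto, $W$ being a union of invariant subvarieties of the complete toric variety $X_1$ on which $\cL$ is globally generated. Decomposing all section spaces into $T$-isotypic weight pieces indexed by $u\in M$, this becomes the assertion that every weight vector of a section over the invariant subscheme $W$ extends to $X_1$; in the language of (\ref{subsec:cohomTorVar}) it is a convexity statement for the regions $Z_u$ cut out by the concave support function of $\cL$, which is exactly \cite[Lemma 6.8.1]{danilov}. Granting it, the map is surjective, $H^1(X,\cL)=0$, and the induction closes. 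The main obstacle is therefore not the homological bookkeeping but verifying this surjectivity --- equivalently, that a nef (hence concave) $\cL$ carries enough global sections to restrict onto every invariant stratum where two components meet; the equidimensionality hypothesis is what makes the relevant support functions genuinely concave on each full-dimensional slice, so that \cite[Lemma 6.8.1]{danilov} applies uniformly.
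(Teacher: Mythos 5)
Your argument is essentially the paper's own: the proposition is proved there by induction on the number of irreducible components, with the one non-formal input being the surjectivity of the restriction homomorphism on global sections, cited to \cite[Lemma 6.8.1]{danilov} exactly as you cite it. Your Mayer--Vietoris bookkeeping and the auxiliary induction on dimension to handle the intersection locus are just an explicit unwinding of that sketch.
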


%%%%%%%%%%%%%%%%%%%%%%%%%%%%%%%%%%%%%%%%%%%%%%%%%%%%%%%%%%%%%%%%%%%%%
%%%   Global sections
%%%%%%%%%%%%%%%%%%%%%%%%%%%%%%%%%%%%%%%%%%%%%%%%%%%%%%%%%%%%%%%%%%%%%
\subsection{Global sections}
\label{subsec:globalSections}
Recall from (\ref{subsec:CDtoric}) that a globally generated equivariant line bundle $\cL$ on a complete toric variety $\TV(\Sigma)$ corresponds to a polytope $\nabla \subset M_\QQ$ such that $\Gamma(\TV(\Sigma),\cL) = \bigoplus_{u \in \nabla \cap M} \CC \chi^u$. In the following we will see how to generalize this formula to complexity-one $T$-varieties.

Consider a $T$-invariant Weil divisor $D$ on a complexity-one $T$-variety $\TV(\fan)$ over the curve $Y$ and define the polyhedron
\[\Box_D := \convHull \; \{u\in M_\QQ \kst \langle\varrho,u \rangle \geq -\coef_\varrho D \mbox{ for all } \varrho\in\kR\}\,,\]
where $\coef_\varrho D$ denotes the coefficient of $D_\varrho$ inside $D$. Moreover, there is a map $D^*:\Box_D \to \kDiv_\QQ Y$ with $\coef_P D^*(u) := \min\{\langle v,u \rangle +\coef_{(P,v)}D/\mu(v) \kst v \in \pFan_P(0)\}$. As a direct consequence of Theorem \ref{thm:principDiv}
one obtains that
\[\kG\big(X,\cO_X(D)\big)(u)=\kG \big(Y, \cO_Y(D^*(u))\big),\]
cf.\ \cite[Section 3.3]{tidiv}. In the case that $D = D_h$ is Cartier the function $D^*$ is usually denoted by $h^*$ and we write $\Box_h$ instead of $\Box_{D}$.

\begin{example}
\label{ex:globalSecAnticanCotangHirz}
Let us return to Example \ref{ex:anticanDivCotangHirz} and compute $\dim \Gamma(\PP(\Omega_{\cF_1}),-K_{\PP(\Omega_{\cF_1})})$. The weight polytope of the anticanonical divisor is pictured in Figure \ref{fig:box}. Together with all weights $(u_1,u_2) = u \in \Box_h$ the following list displays the degree of the induced divisor $h^*(u)$ on $\PP^1$:
\[\begin{array}{c|rrrrrrrrrrrrrr} u_1 & 0 & 1 & 2 & -1 & 0 & 1 & 2 & -2 & -1 & 0 & 1 & 2 & -1 & 0 \\ u_2 & 2 & 2 & 2 & 1 & 1 & 1 & 1 & 0 & 0 & 0 & 0 & 0 & -1 & -1 \\[.5ex] \hline \deg h^*(u) & 0 & 0 & 0 & 0 & 1 & 1 & 0 & 0 & 1 & 2 & 1 & 0 & 0 & 0 \end{array}\]
Summing up over all degrees yields $\dim \Gamma(X,-K_X) = 20$.
\end{example}

\begin{figure}[htbp]
\centering
\boxCotangHirzebruchOne
\caption{Weight polytope associated to $-K_\PP(\Omega_{\cF_1})$, cf.\ Example \ref{ex:globalSecAnticanCotangHirz}.}
\label{fig:box}
\end{figure}

%%%%%%%%%%%%%%%%%%%%%%%%%%%%%%%%%%%%%%%%%%%%%%%%%%%%%%%%%%%%%%%%%%%%%%%%
%%%  Higher cohomology groups
%%%%%%%%%%%%%%%%%%%%%%%%%%%%%%%%%%%%%%%%%%%%%%%%%%%%%%%%%%%%%%%%%%%%%%%%
\subsection{Higher cohomology groups for complexity-one torus actions}
\label{subsec:higherCohomology}
We suspect that higher cohomology group computations for equivariant line bundles on complexity-one $T$-varieties are rather hard in general, in particular if one cannot directly make use of a quotient map to the base curve $Y$. Hence, we restrict to the case where $X = \wt{X}$. Invoking Proposition \ref{prop:cohomDappledTVLB}, one can directly generalize the toric vanishing result from (\ref{subsec:cohomTorVar}) with a ``cohomology and base change'' argument for the flat projective morphism $\pi: \wt{X} \to Y$ to the complexity-one case:

\begin{proposition}[\cite{phdLars}, Section 2.4]
\label{prop:nefCohom}
Let $X = \wt{X}$ be a complete complexity-one $T$-variety over the base curve $Y$. For any nef $T$-invariant Cartier divisor $D_h$ on $X$ we have that $R^i\pi_* \cO_X(D_h) = 0$ for $i>0$. Hence,
\[H^i(X,\cO_X(D_h)) = \bigoplus_{u \in \Box_h \cap M} H^i\big(Y,\cO_Y(\hstar(u))\big)\,.\]
In particular, $H^i(X,\cO_X(D_h)) = 0$ for $i \geq 2$.
\end{proposition}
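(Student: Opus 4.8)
The plan is to exploit the flat projective quotient map $\pi\colon X\to Y$ (recall $X=\wt X$) and push the entire computation down to the base curve $Y$. First I would analyse the fibres of $\pi$. By (\ref{subsec:pDivOrbits}) the fibre $\pi^{-1}(P)$ is the toric bouquet $\toric(\pFan_P)$ attached to the slice $\pFan_P$; since $\pi$ is proper (both $X$ and $Y$ are complete) these fibres are complete, and by Lemma~\ref{lem:compLoc} they are equidimensional of dimension $\dim N$. Because $\cO_X(D_h)$ is a line bundle on $X$ and $X$ is flat over $Y$, the sheaf $\cO_X(D_h)$ is $\pi$-flat, and its restriction to $\pi^{-1}(P)$ is the line bundle $\cO(D_h|_{\pi^{-1}(P)})$. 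As the restriction of a nef divisor to a closed subscheme is again nef, Proposition~\ref{prop:cohomDappledTVLB} applies to each fibre and yields $H^i(\pi^{-1}(P),\cO_X(D_h)|_{\pi^{-1}(P)})=0$ for all $i>0$ and all $P\in Y$.

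With fibrewise vanishing in hand, the next step is the cohomology-and-base-change argument. Since $Y$ is a reduced variety, $\pi$ is proper, and $\cO_X(D_h)$ is $\pi$-flat, the function $P\mapsto\dim_{\CC}H^i(\pi^{-1}(P),\cO_X(D_h)|_{\pi^{-1}(P)})$ is constant (equal to $0$) for each fixed $i>0$. Grauert's theorem then forces $R^i\pi_*\cO_X(D_h)$ to be locally free of rank $0$, i.e.\ $R^i\pi_*\cO_X(D_h)=0$ for $i>0$, which is the first assertion of the proposition.

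It remains to identify $\pi_*\cO_X(D_h)$ and feed this into the Leray spectral sequence. As $\pi$ is $T$-invariant, $\pi_*\cO_X(D_h)$ is an $M$-graded coherent $\cO_Y$-module whose weight-$u$ piece is computed exactly as in (\ref{subsec:globalSections}): using the formula for $\div(f(y)\chi^u)$ from Theorem~\ref{thm:principDiv} together with the Weil-divisor expression for $D_h$, the condition $\div(f\chi^u)+D_h\geq 0$ splits into the horizontal inequalities $\langle\varrho,u\rangle\geq\ul{h}(\varrho)$, which involve only $u$ and cut out the bounded polytope $\Box_h$, and the vertical inequalities $\ord_P f\geq -\coef_P\hstar(u)$, which say precisely that $f\in\cO_Y(\hstar(u))$. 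Hence
\[
\pi_*\cO_X(D_h)\;=\;\bigoplus_{u\in\Box_h\cap M}\cO_Y(\hstar(u))\,\chi^u ,
\]
a finite sum. Since $R^i\pi_*\cO_X(D_h)=0$ for $i>0$, the Leray spectral sequence degenerates to $H^i(X,\cO_X(D_h))=H^i(Y,\pi_*\cO_X(D_h))$, and because cohomology commutes with finite direct sums this equals $\bigoplus_{u\in\Box_h\cap M}H^i(Y,\cO_Y(\hstar(u)))$, the displayed formula. Finally, $Y$ is a curve and hence has cohomological dimension $1$, so $H^i(Y,\cO_Y(\hstar(u)))=0$ for $i\geq 2$, giving the concluding vanishing $H^i(X,\cO_X(D_h))=0$ for $i\geq 2$.

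The main obstacle is the passage from fibrewise vanishing to the vanishing of the higher direct images. The delicate points are to verify that the hypotheses of cohomology and base change are genuinely met (flatness of $\cO_X(D_h)$ over $Y$ and reducedness of $Y$) and, above all, that Proposition~\ref{prop:cohomDappledTVLB} applies verbatim to the fibres, i.e.\ that each $\pi^{-1}(P)$ really is a complete, equidimensional toric bouquet on which $D_h$ restricts to a nef line bundle. Once $R^i\pi_*\cO_X(D_h)=0$ has been established, the identification of $\pi_*$ and the Leray degeneration are essentially formal.
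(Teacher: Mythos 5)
Your proposal is correct and follows essentially the same route as the paper: fibrewise vanishing on the complete equidimensional toric bouquets $\pi^{-1}(P)$ via Proposition~\ref{prop:cohomDappledTVLB}, cohomology and base change for the flat projective morphism $\pi:\wt X\to Y$ to kill $R^i\pi_*\cO_X(D_h)$ for $i>0$, the identification $\pi_*\cO_X(D_h)=\bigoplus_{u\in\Box_h\cap M}\cO_Y(h^*(u))$, and Leray plus the cohomological dimension of the curve $Y$. This is precisely the argument the paper sketches, merely written out in full.
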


A precursor of this result for smooth projective surfaces and semiample line bundles is formulated as Corollary 3.27 in  \cite{tcodes}. Its proof uses the well known intersection theory for smooth projective surfaces, cf.\ \cite[Chapter V]{ha}.

Furthermore, it is worthwhile to note that the direct image sheaf always splits into a direct sum of line bundles, namely $\pi_* \cO_{\wt{X}}(D_h) = \bigoplus_{u \in \, \square_h \cap M} \cO_Y(h^*(u))$ with no positivity assumptions on $D_h$.
Since the higher direct image sheaves are not locally free in general, explicit higher cohomology group computations have so far proven unsuccessful.

%%%%%%%%%%%%%%%%%%%%%%%%%%%%%%%%%%%%%%%%%%%%%%%%%%%%%%%%%%%%%%%%%%%%%
%%%%%%%%%%%%%%%%%%%%%%%%%%%%%%%%%%%%%%%%%%%%%%%%%%%%%%%%%%%%%%%%%%%%%
%%%
%%%   Cox rings as affine $T$-varieties
%%%
%%%%%%%%%%%%%%%%%%%%%%%%%%%%%%%%%%%%%%%%%%%%%%%%%%%%%%%%%%%%%%%%%%%%%
%%%%%%%%%%%%%%%%%%%%%%%%%%%%%%%%%%%%%%%%%%%%%%%%%%%%%%%%%%%%%%%%%%%%%
\section{Cox rings as affine $T$-varieties}
\label{sec:CoxR}

Assume $X$ to be a complete normal variety with finitely generated divisor class group $\cl(X)$. The \emph{Cox ring} of $X$ is then given as  the $\Cl(X)$-graded abelian group
\[\Cox(X) := \bigoplus_{D\in\Cl(X)}\kG(X,\CO_X(D))\]
which carries a canonical ring structure, see \cite[Section 2]{tvarcox}.
In analogy to \cite{MDS}, but neither supposing $X$ to be $\QQ$-factorial nor projective, we call $X$ a Mori dream space (MDS) if $\Cox(X)$ is a finitely generated $\CC$-algebra.

%%%%%%%%%%%%%%%%%%%%%%%%%%%%%%%%%%%%%%%%%%%%%%%%%%%%%%%%%%%%%%%%%%%%%
%%%   The Cox ring of toric varieties
%%%%%%%%%%%%%%%%%%%%%%%%%%%%%%%%%%%%%%%%%%%%%%%%%%%%%%%%%%%%%%%%%%%%%
\subsection{The Cox ring of a toric variety}
\label{subsec:toricCox}
The well known quotient construction $\PP^n \cong \big(\spec\CC[z_0,\dots,z_n]\setminus V(z_0,\dots,z_n)\big)/\Cstar$ can be generalized to toric varieties $\TV(\Sigma)$ for which the set of rays $\Sigma(1)$ generates $N_\QQ$ (cf.\ \cite{cox}).

The graded homogeneous coordinate ring $\CC[z_0,\dots,z_n]$ for $\PP^n$ is replaced by the polynomial ring $\CC[z_\varrho \kst \varrho \in \Sigma(1)]$ whose variables correspond to the rays of $\Sigma$ and whose grading is induced by the divisor class group $\cl(\TV(\Sigma))$, which assigns the degree $[D_\varrho]$ to the variable $z_\varrho$. Furthermore, applying $\Hom_\ZZ(-,\Cstar)$ to the short exact sequence from (\ref{subsec:toricPrinc}), one defines the algebraic group $G := \ker\big((\CC^*)^{\Sigma(1)}\to T_N\big)$ where $T_N := N \otimes_\ZZ \Cstar = \Hom_\ZZ(M,\Cstar)$. Note that $G$ acts naturally on $\CC^{\Sigma(1)}$ and leaves $V(B(\Sigma))$ invariant, where $B(\Sigma) \subset \CC[z_\varrho \kst \varrho \in \Sigma(1)]$ denotes the \emph{irrelevant ideal} 
which is generated by $\big\{z_{\hat{\sigma}} := \prod_{\varrho \notin \sigma(1)} z_\varrho \;\big|\; \sigma \in \Sigma \big\}$. The ideal $B(\Sigma)$ generalizes the standard 
irrelevant ideal $\langle z_0,\dots,z_n \rangle \subset \CC[z_0,\dots,z_n]$ of projective space.
 Furthermore, it follows that $\TV(\Sigma)=\big(\CC^{\Sigma(1)}\setminus V(B(\Sigma))\big)/G$ is a good quotient.

On the other hand, one can also approach the Cox ring via a more polyhedral point of view.
As usual we denote the first non-trivial lattice point on
a ray $\varrho \in \Sigma(1) \subseteq N_\QQ$ with the same letter~$\varrho$. Then we consider the canonical map $\varphi:\Z^{\Sigma(1)}\to N$, $e(\varrho) \mapsto \varrho$.
It sends some faces (including the rays) of the positive orthant $\QQ^{\Sigma(1)}_{\geq 0}$ to cones of the fan $\Sigma$. Applying the functor $\TV$, we obtain a rational map $\Spec \CC[z_\varrho \kst \varrho \in \Sigma(1)] \rato \TV(\Sigma)$.
In particular, we recover the affine spectrum of
$\CC[z_\varrho \kst \varrho\in\Sigma(1)] = \Cox(\TV(\Sigma))$
as the toric variety $\TV(\QQ^{\Sigma(1)}_{\geq 0})$.
Thus, the Cox ring of a toric variety gives rise to an affine toric variety itself, and the defining cone $\QQ^{\Sigma(1)}_{\geq 0}$ can be seen as a polyhedral resolution of the given fan $\Sigma$, since all linear relations among the rays have been removed.

%%%%%%%%%%%%%%%%%%%%%%%%%%%%%%%%%%%%%%%%%%%%%%%%%%%%%%%%%%%%%%%%%%%%%
%%%   The action of the Picard torus
%%%%%%%%%%%%%%%%%%%%%%%%%%%%%%%%%%%%%%%%%%%%%%%%%%%%%%%%%%%%%%%%%%%%%
\subsection{The action of the Picard torus}
\label{subsec:picardTorus}
Let $X$ be an MDS and suppose that $\cl(X)$ is torsion free. It follows that the total coordinate space $\spec \Cox(X)$ is a normal affine variety, and the $\cl(X)$-grading encodes an effective action of the so-called Picard torus $T := \gHom_{\algGr}(\cl(X),\CC^*)$. One could now ask for a description of $\spec \Cox(X)$ in terms of some p-divisor $\cE_{\Cox}$ on the so-called Chow quotient $Y := \spec \Cox(X) \chQ T$ which is defined as the normalization of the distinguished component of the inverse limit of the GIT quotients of $\spec \Cox(X)$, cf.\ \cite[Section 6]{tvar_1}. This has been done in \cite{coxmds}, and in the case of smooth Mori dream surfaces, the result is as follows (cf.\ \cite[Section 6]{coxmds}):
$Y=X$ and, up to shifts of the polyhedral coefficients, $\cE_{\Cox} = \sum_{E\subseteq X}\Delta_E \otimes E$ with
\[\Delta_E = \{D \in \Eff(X) \subseteq \cl_\QQ(X) \kst
(D\cdot E)\geq -1 \mbox{ and } (D\cdot E')\geq 0 \mbox{ for } E'\neq E \}\]
where $E,E'$ run through all negative curves in $X$. The common tailcone of the $\Delta_E$ equals $\Nef(X)$, which is dual to $\Eff(X)$, where the latter cone carries the degrees of $\Cox(X)$. If  $X$ is a del Pezzo surface, the formula for $\Delta_E$ simplifies to
$\Delta_E = \ko{0 E} + \Nef(X) \subseteq \cl_\QQ(X)$.

%%%%%%%%%%%%%%%%%%%%%%%%%%%%%%%%%%%%%%%%%%%%%%%%%%%%%%%%%%%%%%%%%%%%%
%%%   Complexity One (algebraic point of view)
%%%%%%%%%%%%%%%%%%%%%%%%%%%%%%%%%%%%%%%%%%%%%%%%%%%%%%%%%%%%%%%%%%%%%
\subsection{Generators and relations for complexity-one torus actions}
\label{subsec:coxComp1alg}
It is a fundamental problem to give a presentation of the Cox ring of an MDS in terms of generators and relations. The crucial idea to approach this problem for a $T$-variety $\TV(\pFan)$  is to relate its presentation to an appropriate quotient of $\TV(\pFan)$ by the torus action. This ansatz was pursued in \cite{tvarcox} whose key result states that the Cox ring of $\TV(\pFan)$ is equal to a finitely generated algebra over the Cox ring of $Y^\circ$, where $Y^\circ\subseteq Y$  is the image of the rational map coming from the composition of the rational $p^{-1}:\toric(\pFan)\to\ttoric(\pFan)$ with $\pi:\ttoric(\pFan)\to Y$. Let us now become more specific for complexity-one torus actions.
For a point $P \in \PP^1$ together with the set $\cV_P := \{D_{(P,v)}\,|\, v \in \pFan_P(0)\}$ of all vertical divisors lying over $P$, we define the tuple $\mu(P) \;:=\; \big(\mu(v) \;|\; v \in \pFan_P(0)\big).$
After applying an automorphism of $\PP^1$, we may assume that $\cP = \{0,\infty,c_1,\ldots, c_r\}$, with $c_i \in \CC^*$ (cf.\ (\ref{subsec:classT})).

\begin{theorem}[\cite{tvarcox}, Theorem 1.2]
\label{thm:coxCplxOneExplicit}
The Cox ring of $\toric(\fan)$ is given by
\[\CC\big[T_{D_{(P,v)}}, S_{D_\rho} \;|\; P \in \cP, \varrho \in \cR \big] \Big/ \Big\langle  T^{\mu(0)}+c_iT^{\mu(\infty)}+T^{\mu(c_i)}  \;\Big|\; i=1,\ldots,r  \Big\rangle \,,\]
where $T^{\mu(P)} := \prod_{v \in \pFan_P(0)} T_{D_{(P,v)}}^{\mu(v)}$ and $\cR$ is as in Definition \ref{def:extremalRays}.
\end{theorem}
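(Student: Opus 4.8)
The plan is to realize the total coordinate space $\Spec\Cox(\TV(\fan))$ as an explicit object and to match the presented ring against it. First I would fix the canonical sections: to each invariant prime divisor attach a variable, namely $T_{D_{(P,v)}}$ for the vertical divisors $D_{(P,v)}$ (with $P\in\cP$, $v\in\fan_P(0)$) and $S_{D_\rho}$ for the non-contracted horizontal divisors $D_\rho$ ($\rho\in\cR$, cf.\ Definition~\ref{def:extremalRays}). By the description of invariant divisors in (\ref{subsec:WDivX}) and the class group sequence of Proposition~\ref{prop:rationalCl}, these sections generate $\Cox(\TV(\fan))$ as a $\Cl(\TV(\fan))$-graded algebra; finite generation and surjectivity follow from the general result of \cite{tvarcox} recalled at the start of (\ref{subsec:coxComp1alg}), which presents $\Cox(\TV(\fan))$ as finitely generated over the Cox ring of the base $\PP^1$.

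Next I would produce the relations from the geometry of $\PP^1$. The key input is the principal divisor formula of Theorem~\ref{thm:principDiv}: for a rational function $f$ on $\PP^1$ with $\div f=[P]-[P']$, its pullback satisfies $\div_{\TV(\fan)}(f)=\sum_v\mu(v)\,D_{(P,v)}-\sum_v\mu(v)\,D_{(P',v)}$, since the horizontal contribution $\langle\rho,u\rangle$ vanishes for $u=0$. Hence the monomial $T^{\mu(P)}=\prod_{v\in\fan_P(0)}T_{D_{(P,v)}}^{\mu(v)}$ is a global section whose class $[\sum_v\mu(v)D_{(P,v)}]$ is independent of $P$, so the $T^{\mu(P)}$ for varying $P$ all lie in one and the same graded piece. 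Via the global sections formula of (\ref{subsec:globalSections}) that piece is identified with $H^0(\PP^1,\cO_{\PP^1}(1))$, which is two-dimensional and spanned by $T^{\mu(0)}$ and $T^{\mu(\infty)}$; expressing $T^{\mu(c_i)}$ in this basis and normalizing the coordinate on $\PP^1$ so that $\cP=\{0,\infty,c_1,\dots,c_r\}$ yields precisely the trinomial relations $T^{\mu(0)}+c_iT^{\mu(\infty)}+T^{\mu(c_i)}$. Note that the horizontal variables $S_{D_\rho}$ enter no relation, reflecting that the base curve only constrains the vertical divisors.

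Finally I would upgrade the resulting surjection from the presented ring $R$ onto $\Cox(\TV(\fan))$ to an isomorphism by a dimension and irreducibility argument. On one side $\dim\Spec\Cox(\TV(\fan))=\dim\TV(\fan)+\rank\Cl(\TV(\fan))$, which by Proposition~\ref{prop:rationalCl} and $|\cP|=r+2$ equals $|\cV|+|\cR|-r$; on the other side $R$ is a polynomial ring in $|\cV|+|\cR|$ variables modulo the $r$ trinomials. The main obstacle is to show that these $r$ trinomials form a regular sequence cutting out an irreducible (indeed normal) variety, so that $R$ is an integral domain of dimension $|\cV|+|\cR|-r$; once this is known, a surjection of domains of equal dimension is forced to be an isomorphism. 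I expect to handle the primeness by exploiting the block structure of the relations---each $T^{\mu(P)}$ uses a disjoint set of vertical variables---together with the fact that the $c_i$ are distinct and nonzero, which is exactly the situation governing Cox rings defined by trinomial equations; a computation of the initial ideal, or an explicit parametrization away from the coordinate hyperplanes, should establish that the ideal is prime of the expected codimension $r$.
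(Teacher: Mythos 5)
The survey does not prove this statement; it is quoted verbatim from \cite{tvarcox}, so there is no in-paper argument to compare against. Your outline is a reasonable reconstruction of the strategy of the source: take the canonical sections of the invariant prime divisors as generators, extract the trinomial relations from the two-dimensionality of $H^0(\PP^1,\cO_{\PP^1}(1))$, and rule out further relations. Your steps (2) and (3) are essentially sound: Theorem~\ref{thm:principDiv} with $u=0$ gives $\div(f)=\sum_{(P,v)}\mu(v)\ord_P(f)\,D_{(P,v)}$, so all $T^{\mu(P)}$ lie in one class; the \emph{weight-zero} part of that graded piece is $\Gamma(\PP^1,\cO_{\PP^1}(1))$ by the formula of (\ref{subsec:globalSections}) (you should say explicitly that you restrict to weight $u=0$, since the full graded piece can have other weight summands -- but all $T^{\mu(P)}$ are $T$-invariant, so this is harmless); and the dimension bookkeeping $|\cV|+|\cR|-r=\dim\TV(\fan)+\rank\Cl(\TV(\fan))$ does check out against Proposition~\ref{prop:rationalCl}. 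A surjection of affine domains of equal dimension is indeed an isomorphism, so the plan closes once the trinomial ring is known to be a domain of codimension $r$.

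The genuine gap is your first step. Generation of $\Cox(\TV(\fan))$ by the canonical sections $T_{D_{(P,v)}}$, $S_{D_\rho}$ is the real content of the theorem, and the ``general result'' you invoke -- as recalled at the start of (\ref{subsec:coxComp1alg}) -- only asserts that $\Cox(\TV(\fan))$ is a finitely generated algebra over $\Cox(Y^\circ)$; it does not identify the generators. To use it you would at least need to add that the pullbacks of the two generators of $\Cox(\PP^1)$ are exactly the monomials $T^{\mu(0)}$ and $T^{\mu(\infty)}$, and that vertical divisors over points outside $\cP$ contribute no new generators because their canonical sections are linear combinations of these two monomials; this is what allows you to discard the base as a source of extra generators. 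As written, the step is effectively outsourced to the very theorem being proved. Likewise, the primeness and regular-sequence property of the trinomial ideal, which you correctly single out as the remaining obstacle, is true (these rings are normal complete intersections), but your proposal only gestures at a proof. Those two points -- generation by canonical sections and integrity of the trinomial ring -- are precisely where the mathematical work of \cite{tvarcox} lives, so the proposal should be read as a correct skeleton rather than a complete argument.
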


\begin{example}
\label{ex:coxCotangHirzAlg}
Let us describe the Cox ring of $\PP(\Omega_{\cF_1})$, cf.\ Example \ref{ex:cotangHirzebruch}. Using Theorem \ref{thm:coxCplxOneExplicit}
and representatives $(1,1)$, $(1,0)$, and $(0,1)$ for $1$, $0$, and $\infty$, respectively, we see that $\Cox(\PP(\Omega_{\cF_1}))  = \CC[T_1,\dots,T_7]/(T_1T_2T_3-T_4T_5-T_6T_7)$.
\end{example}

%%%%%%%%%%%%%%%%%%%%%%%%%%%%%%%%%%%%%%%%%%%%%%%%%%%%%%%%%%%%%%%%%%%%%
%%%   Complexity One (polyhedral point of view)
%%%%%%%%%%%%%%%%%%%%%%%%%%%%%%%%%%%%%%%%%%%%%%%%%%%%%%%%%%%%%%%%%%%%%
\subsection{Polyhedral point of view for complexity-one torus actions}
\label{subsec:coxComp1pol}
Similar to the viewpoint taken in (\ref{subsec:picardTorus}) and keeping the setting from (\ref{subsec:coxComp1alg}), one can describe $\spec \Cox(X)$ via a p-divisor $\pDiv_{\Cox}$. In this case, however, the given $T$-action on $X$ carries over to $\Cox(X)$, and by combining it with the action of the Picard torus, $\Spec \Cox(X)$ turns into a variety with an effective complexity-one action of a diagonizable group which in general involves torsion. Factoring out the latter gives rise to a finite abelian covering $C \to \PP^1$ (see (\ref{subsec:GalCovP1})), so that $\pDiv_{\Cox}$ lives on a curve $C$ of usually higher genus rather than on $\PP^1$. Nonetheless, if the class group $\cl(X)$ is torsion free $C \to \PP^1$ is the identity map. In this case, the p-divisor $\pDiv_{\Cox}$ describing $\Spec \Cox(X)$ utilizes the very same points $P \in \PP^1$ as $\pFan$.
Denoting by $\kV := \bigcup_{P \in \kP} \pFan_P(0)$, we define the compact polyhedra
\[\Delta_P^c := \conv\{e(v)/\mu(v)\kst v\in \pFan_P(0)\}
\subseteq \kFQ\]
where the $e(v)\in\Z^{\kV}$ denote the canonical basis vectors and $\cR$ is as usual. We similarly define  the polyhedral cone
\[\sigma \;:=\; \QQ_{\geq 0}\cdot\prod_{S \in \kP} \Delta_P^c 
\;+\; \kpR \;\subseteq\; \kpVR.\]

\begin{proposition}[\cite{coxComp1}, Theorem 1.2]
\label{prop:coxTorsionFree}
Assume $\cl(X)$ to be torsion free. Then the p-divisor $\pDiv_{\Cox}$ of
$\Spec \Cox(X)$ is, up to shifts of the polyhedral coefficients,
given by $\sum_{P\in\kP}(\Delta_P^c+\sigma)\otimes [P]$
on $\PP^1$. 
\end{proposition}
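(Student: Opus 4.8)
The plan is to produce $\pDiv_{\Cox}$ by realizing $\Spec\Cox(X)$ explicitly as a complexity-one $T$-variety and then inverting the toric downgrade of (\ref{subsec:tDown}), exactly in the spirit of the general affine construction (\ref{subsec:generalT}). By Theorem~\ref{thm:coxCplxOneExplicit} the ring $\Cox(X)$ is the quotient of the polynomial ring $\CC[T_{D_{(P,v)}},S_{D_\varrho}]$ on the index set $\cV\cup\cR$ by the trinomials $T^{\mu(0)}+c_iT^{\mu(\infty)}+T^{\mu(c_i)}$, so $\Spec\Cox(X)$ sits $T_{\Cox}$-equivariantly inside $\CC^{\cV\cup\cR}=\TV(\delta)$ with $\delta=\kpVR$. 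Here $T_{\Cox}$ is the torus obtained by combining the lifted $T$-action with the Picard-torus action encoded by the $\cl(X)$-grading; because $\cl(X)$ is torsion free this is an honest torus, the combined character lattice is the extension $0\to M\to M_{\Cox}\to\cl(X)\to 0$ read off from Proposition~\ref{prop:rationalCl}, and the action has complexity one.

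First I would set up the downgrade. The relevant base lattice is $M_Y=(\ZZ^\cP/\ZZ)^\vee=\ker\big(p\colon(\ZZ^{\cV\cup\cR})^\vee\twoheadrightarrow M_{\Cox}\big)$, sitting inside $(\ZZ^{\cV\cup\cR})^\vee$ as the image of $Q^\vee$ from Proposition~\ref{prop:rationalCl}; dually the quotient map of (\ref{subsec:tDown}) is precisely $q=Q\colon\ZZ^{\cV\cup\cR}\twoheadrightarrow\ZZ^\cP/\ZZ$, $e(v)\mapsto\mu(v)\ovl{e(P(v))}$, $e(\varrho)\mapsto 0$, and $\phi$ supplies the section $s$ used to shift coefficients into the fibre over $0$. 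The coarsest fan $\Sigma$ refining $q(\delta)$ has rays $a_P:=\ovl{e(P)}$ for $P\in\cP$. Shifting the trinomials by $s$ turns them into Laurent polynomials cutting out, inside the quotient torus $\Spec\CC[M_Y]$, a curve whose normalization I claim is $\PP^1$ carrying exactly the marked points $\cP=\{0,\infty,c_1,\dots,c_r\}$; under this identification $\ko{\orb}(a_P)$ restricts to $[P]$. This is what forces $\pDiv_{\Cox}$ to live on the very same $\PP^1$, and not on a nontrivial abelian cover, in the torsion-free case.

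Next I would compute the polyhedral coefficients. By (\ref{subsec:tDown}) one has $\pDiv_{\Cox,P}=q^{-1}(a_P)\cap\delta$, viewed after the section shift as a polyhedron in the fibre over $0$. Writing $x\in\kpVR$ in the coordinates dual to the $e(v),e(\varrho)$, the equation $Q(x)=\ovl{e(P)}$ unwinds to $\sum_{v\in\pFan_P(0)}\mu(v)x_v=1+t$ together with $\sum_{v\in\pFan_S(0)}\mu(v)x_v=t$ for every $S\neq P$, for a common $t\ge 0$. The recession cone is the locus where $t$ ranges over $\QQ_{\ge 0}$ with all these sums equal, which is exactly $\ker Q\cap\delta=\QQ_{\ge0}\cdot\prod_{S\in\cP}\Delta_S^c+\kpR=\sigma$ (products and Minkowski sums agree, since the slices $\pFan_S(0)$ occupy disjoint coordinates), while normalising the $P$-equation to $1$ yields the compact summand $\Delta_P^c=\conv\{e(v)/\mu(v)\mid v\in\pFan_P(0)\}$. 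A one-line Minkowski decomposition (rescale the $\pFan_P(0)$-coordinates of $x$ by $(1+t)^{-1}$ to split off a point of $\Delta_P^c$) then gives
\[ q^{-1}(a_P)\cap\delta \;=\; \Delta_P^c+\sigma, \]
so $\pDiv_{\Cox}=\sum_{P\in\cP}(\Delta_P^c+\sigma)\otimes[P]$, up to the section-dependent translations --- precisely the asserted ``up to shifts of the polyhedral coefficients''.

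Finally I would check that this $\pDiv_{\Cox}$ is a genuine p-divisor with $\TV(\pDiv_{\Cox})\cong\Spec\Cox(X)$, invoking Theorem~\ref{thm:equivPT} and the fact that the construction of (\ref{subsec:generalT}) returns the normalization of the equation variety. The main obstacle is not the coefficient computation, which is the routine polyhedral identity above, but the second step: rigorously identifying the base curve produced by the shifted trinomials with $\PP^1$ equipped with exactly the points $\cP$, and verifying that $\ko{\orb}(a_P)$ pulls back to $[P]$ there. This is where the precise trinomial form of Theorem~\ref{thm:coxCplxOneExplicit} and the Gale-type compatibility of Lemma~\ref{lem:Gale} do the real work; the remainder is bookkeeping with the sequences of Proposition~\ref{prop:rationalCl}.
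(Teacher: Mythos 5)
Your argument is correct and follows essentially the same route as the source the paper cites for this proposition: downgrade the trinomial presentation of $\Cox(X)$ from Theorem~\ref{thm:coxCplxOneExplicit} along the sequences of Proposition~\ref{prop:rationalCl}, so that $N_Y=\ZZ^{\cP}/\ZZ$ with $q=Q$, the base is the normalized rational curve cut out by the shifted trinomials meeting the boundary divisors exactly in $\cP$, and the coefficient over $P$ is $q^{-1}(\ovl{e(P)})\cap\QQ^{\cV\cup\cR}_{\geq 0}=\Delta_P^c+\sigma$. Your fibre-polyhedron computation is the identity that makes the statement work, and you correctly isolate the one step needing genuine care (identifying the base with the $\cP$-pointed $\PP^1$ and matching $\ko{\orb}(\ovl{e(P)})\cap Y$ with $[P]$), so nothing essential is missing.
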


For more details and the general result which also covers the case of a class group with torsion we refer to \cite[Theorem 4.2]{coxComp1}.
Comparing Proposition \ref{prop:coxTorsionFree} with the analogous result in the toric setting (\ref{subsec:toricCox}),
we see that it consists of a similar construction which resolves the linear dependencies among the elements of $\pFan_P(0) \in N_\QQ$ and the rays $\varrho \in \cR$ by assigning separate dimensions to each
of them.

\begin{example}
\label{ex:coxCotangHirzPol}
Once more we consider the threefold $\PP(\Omega_{\cF_1}))$, cf.\ Example \ref{ex:cotangHirzebruch}. The tailcone $\sigma := \tail(\pDiv_{\Cox})$ of the p-divisor $\pDiv_{\Cox}$ is then given as the cone over the product of a quadrangle (product of two intervals) and a triangle. This realization is induced by the three polytopes $\Delta^c_1$ (triangle), $\Delta^c_0$ (compact interval), and $\Delta^c_\infty$ (compact interval). The non-trivial coefficients of $\pDiv_{\Cox}$ are thus equal to $\Delta^c_P + \sigma$, $P \in \{1,0,\infty\}$ (up to a shift).
\end{example}

It turns out that the polyhedral divisors $\cE_{\Cox}$ (as constructed in (\ref{subsec:picardTorus})) and $\pDiv_{\Cox}$ are related by upgrade and downgrade constructions via the exact sequence from above, cf.\ \cite{upDowngrade} and \cite[Section 3.5]{phdLars}.

%%%%%%%%%%%%%%%%%%%%%%%%%%%%%%%%%%%%%%%%%%%%%%%%%%%%%%%%%%%%%%%%%%%%%
%%%%%%%%%%%%%%%%%%%%%%%%%%%%%%%%%%%%%%%%%%%%%%%%%%%%%%%%%%%%%%%%%%%%%
%%%
%%%   Valuations and proper morphisms
%%%
%%%%%%%%%%%%%%%%%%%%%%%%%%%%%%%%%%%%%%%%%%%%%%%%%%%%%%%%%%%%%%%%%%%%%
%%%%%%%%%%%%%%%%%%%%%%%%%%%%%%%%%%%%%%%%%%%%%%%%%%%%%%%%%%%%%%%%%%%%%
\section{Invariant valuations and proper equivariant morphisms}
\label{sec:properness}

For a function field $\CC(X)$ of a variety $X/\CC$, we consider discrete $\CC$-valuations of $F$. These are maps $\nu: \CC(X) \rightarrow \QQ \cup\{\infty\}$ fulfilling the properties
\begin{enumerate}
\item $\nu(f\cdot g) = \nu(f)+\nu(g)$,
\item $\nu(f+g) \geq \min(\nu(f),\nu(g)$),
\item $\nu(\CC^*)=0$, and $\nu(f)=\infty \Leftrightarrow f=0$.
\end{enumerate}

A center of a valuation is a point $x \in X$ such that $\nu(\CO_{X,x}) \geq 0$ and $\nu(\mathfrak{m}_{X,x}) > 0$.  
 By the valuative criterion for properness \cite[Theorem 4.7]{ha}, a scheme is complete if every discrete valuation has a unique center. Moreover, it is separated if there exists at most one center.

%%%%%%%%%%%%%%%%%%%%%%%%%%%%%%%%%%%%%%%%%%%%%%%%%%%%%%%%%%%%%%%%%%%%%%%%
\subsection{Completeness and properness for toric varieties}
\label{subsec:propernessToric}
%%%%%%%%%%%%%%%%%%%%%%%%%%%%%%%%%%%%%%%%%%%%%%%%%%%%%%%%%%%%%%%%%%%%%%%%
For a toric variety $X=\toric(\Sigma)$, the function field of $X$ equals the quotient field of $\CC[M]$ and a valuation $\nu$ defines a linear form on $M$ via $v:u \mapsto \nu(\chi^u)$. Moreover, this gives a one-to-one correspondence between $N_\QQ$ and the discrete valuations on $\CC(X)$. Here, valuations with center on $X$ correspond to elements in the support of $\Sigma$. Hence, a toric variety $\toric(\Sigma)$ is complete if and only if its fan is complete, i.e. $|\Sigma|=N_\QQ$. Moreover, let  $F:\Sigma \rightarrow \Sigma'$ be a map of fans. Then $F$ defines a proper morphism exactly when $|\Sigma| = F^{-1}(|\Sigma'|)$.

%%%%%%%%%%%%%%%%%%%%%%%%%%%%%%%%%%%%%%%%%%%%%%%%%%%%%%%%%%%%%%%%%%%%%
\subsection{Valuations on $T$-varieties and hypercones}
%%%%%%%%%%%%%%%%%%%%%%%%%%%%%%%%%%%%%%%%%%%%%%%%%%%%%%%%%%%%%%%%%%%%%
The function field of a $T$-variety $X=\toric(\pFan)$ equals the quotient field of $\CC(Y)[M]$. An invariant valuation corresponds via the relation $\nu(f\cdot \chi^u) = \mu(f) + \langle u, v \rangle$ to a pair $(\mu,v)$, where $\mu$ is a valuation on $\CC(Y)$ and $v \in N_\QQ$. In this situation we write simply $\nu=(\mu,v)$.

If $\mu$ is a valuation with center $y \in Y$, then we get a well defined group homomorphism
\[\mu:\CaDiv_{\geq 0}(Y) \rightarrow \QQ_{\geq 0};\quad D \mapsto \mu(f),\]
where $D=\div(f)$ locally at $y$. This map extends to a homomorphism $\mu: \Pol^+(N,\sigma)\otimes_{Z_{\geq 0}} \CaDiv_ {\geq 0} (Y) \rightarrow \Pol^+(N,\sigma)$. 
Now we have the following

\begin{proposition}[{\cite[Lemma~7.7]{tvar_2}}]
\label{prop:val-center}
  A valuation $(\mu,v)$ has a center on $\toric(\pDiv)$ if and only if $v \in \mu(\pDiv)$.
\end{proposition}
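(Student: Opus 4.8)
The plan is to reduce the geometric center condition to a numerical inequality on homogeneous elements, and then to recognize that inequality as membership in the polyhedron $\mu(\pDiv)$. Since $\toric(\pDiv)=\Spec A$ with $A=\Gamma(\Loc\pDiv,\CO(\pDiv))$ is affine, the valuative criterion tells us that $\nu=(\mu,v)$ has a center on $\toric(\pDiv)$ if and only if $A$ lies in the valuation ring of $\nu$, i.e.\ $\nu(A)\geq 0$. As $A=\bigoplus_{u\in\sigma\dual\cap M}\Gamma(\Loc\pDiv,\CO(\pDiv(u)))\,\chi^u$ is $M$-graded and a valuation of a finite sum is at least the minimum of the valuations of its summands, $\nu(A)\geq 0$ holds if and only if it holds on homogeneous elements. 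Thus, writing $\sigma:=\tail(\pDiv)$, the center condition is equivalent to
\[\mu(f)+\langle u,v\rangle\geq 0\quad\text{for all } u\in\sigma\dual\cap M \text{ and all } f\in\Gamma(\Loc\pDiv,\CO(\pDiv(u))).\]

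Next I would record the bridge between the two sides. Assuming the center $y$ of $\mu$ lies in $\Loc\pDiv$ (if $y\notin\Loc\pDiv$, then $y\in Z_0$ for some $Z_0$ with $\pDiv_{Z_0}=\emptyset$ and $\mu(Z_0)>0$, so the $\emptyset$-summand forces $\mu(\pDiv)=\emptyset$ and no center can exist, matching both sides), the $\QQ$-linearity of $\mu$ together with Minkowski-additivity of support functions gives
\[\mu(\pDiv(u))=\sum_Z\mu(Z)\,\min\langle\pDiv_Z,u\rangle=\min\langle\mu(\pDiv),u\rangle.\]
Since $\mu(\pDiv)$ is a polyhedron with tailcone $\sigma$, membership $v\in\mu(\pDiv)$ is equivalent to $\langle u,v\rangle\geq\min\langle\mu(\pDiv),u\rangle$ for all $u\in\sigma\dual$, and by rationality it suffices to test lattice points $u\in\sigma\dual\cap M$. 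So the goal becomes to match the displayed center condition with these inequalities.

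For the implication ($\Leftarrow$) I would fix $u$ and any $f\in\Gamma(\Loc\pDiv,\CO(\pDiv(u)))$. Then $\div f+\pDiv(u)\geq 0$ on $\Loc\pDiv$, and since $\mu$ is nonnegative on effective divisors supported in $\Loc\pDiv$ (their local equations at $y$ are regular), linearity yields $\mu(f)\geq-\mu(\pDiv(u))=-\min\langle\mu(\pDiv),u\rangle$. If $v\in\mu(\pDiv)$, then $\langle u,v\rangle\geq\min\langle\mu(\pDiv),u\rangle$, and adding the two gives $\mu(f)+\langle u,v\rangle\geq 0$; hence a center exists.

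The implication ($\Rightarrow$) is the main obstacle, because it requires producing sections that actually realize the minimal $\mu$-value, and this is exactly where the p-divisor hypothesis enters. Fixing $u\in\sigma\dual\cap M$, semiampleness of $\pDiv(u)$ provides an $m>0$ with $\pDiv(mu)=m\pDiv(u)$ base point free, hence a global section $g\in\Gamma(\Loc\pDiv,\CO(\pDiv(mu)))$ whose effective divisor $\div g+\pDiv(mu)$ avoids $y$; this gives $\mu(g)=-\mu(\pDiv(mu))=-m\,\min\langle\mu(\pDiv),u\rangle$. Applying the center condition to the homogeneous element $g\,\chi^{mu}$ then yields $-m\min\langle\mu(\pDiv),u\rangle+m\langle u,v\rangle\geq 0$, i.e.\ $\langle u,v\rangle\geq\min\langle\mu(\pDiv),u\rangle$. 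Letting $u$ range over $\sigma\dual\cap M$ gives precisely $v\in\mu(\pDiv)$, completing the equivalence. The delicate points to check carefully are the existence of a base-point-free section non-vanishing at the possibly non-closed center $y$, and the bookkeeping of the $\emptyset$-convention in the degenerate case $y\notin\Loc\pDiv$.
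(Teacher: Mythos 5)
Your argument for the central equivalence is correct, and since the paper only cites \cite[Lemma~7.7]{tvar_2} without reproducing a proof, there is no in-house argument to compare it with. Reducing the existence of a center on the affine variety to non-negativity of $\nu$ on $M$-homogeneous elements, establishing $\mu(\pDiv(u))=\min\langle\mu(\pDiv),u\rangle$, and then playing effectivity of $\div f+\pDiv(u)$ against a base-point-free section avoiding $y$ is exactly the right mechanism, and your treatment of the possibly non-closed center $y$ via closedness of zero loci is fine. The genuine gap is the case you dispose of in a parenthesis: when the center $y$ of $\mu$ on $Y$ does not lie in $\Loc\pDiv$, you assert that ``no center can exist'' on $\toric(\pDiv)$. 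This is precisely the content of the Remark following the proposition, it is indispensable for the ``only if'' direction, and nothing you have written implies it: you would need a homogeneous $f\chi^u\in\Gamma(\Loc\pDiv,\CO(\pDiv))$ with $\mu(f)+\langle u,v\rangle<0$, and the semiampleness and bigness conditions of Definition~\ref{def:pDiv} alone do not produce one.

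That this is not mere bookkeeping can be seen from the polyhedral divisor $\emptyset\otimes E+[1,\infty)\otimes\widetilde{L}$ on the blow-up of $\PP^2$ at a point $p\in L$, where $E$ is the exceptional curve and $\widetilde{L}$ the strict transform of a line $L$ through $p$: its evaluations are semiample and big on $\Loc=\mathrm{Bl}_p\PP^2\setminus E$, its ring of global sections is $\bigoplus_{u\geq 0}\Gamma(\PP^2,\CO(u))\chi^u\cong\CC[x,y,z]$, and the valuation $(\ord_E,v)$ is non-negative on this ring for every $v\geq 1$, hence has a center on $\AA^3$, even though the center of $\ord_E$ is the generic point of $E$, which lies outside the locus, so that $\ord_E(\pDiv)=\emptyset$. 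What excludes such examples is the requirement---suppressed in the survey's Definition~\ref{def:pDiv} but part of the definition in \cite{tvar_1,tvar_2}---that $\Loc\pDiv$ be semiprojective, i.e.\ proper over $Y_0:=\Spec\Gamma(\Loc\pDiv,\CO)$. Granting this, the missing step is short and should be added: $\nu\geq 0$ on $A$ restricted to the degree-zero part gives $\mu\geq 0$ on $\Gamma(\Loc\pDiv,\CO)$, so $\mu$ has a center on $Y_0$; the valuative criterion of properness applied to $\Loc\pDiv\to Y_0$ lifts this to a center on $\Loc\pDiv$, which coincides with $y$ by separatedness of $Y$, whence $y\in\Loc\pDiv$ after all. (Alternatively one may quote the properness of the contraction $p:\ttoric(\pDiv)\to\toric(\pDiv)$ and push the resulting center on $\ttoric(\pDiv)$ down to $\Loc\pDiv$.) With either patch your proof is complete.
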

\begin{remark}
	The criterion $v \in \mu(\pDiv)$ implies in particular that $\mu$ has a center in the locus of $\pDiv$.
\end{remark}
Now, we turn to the case of complexity one. In this situation $Y$ is a smooth complete curve and the non-negative multiples $m \cdot \ord_y$ of the vanishing orders at points $y \in Y$ are the only discrete valuations. For those $\mu$ we thus have $\mu(\pDiv)=m \cdot \pDiv_y$ which is equal to the tailfan if $m=0$.

For an invariant valuation $(\mu,v)$ with center on $X=\toric(\pDiv)$, we must either have $\mu \equiv 0$ and $v \in \tail(\pDiv)$ or $\mu \not \equiv 0$ and $(v,m) \in \langle \pDiv_y \times \{1\}\rangle \subset N_\QQ \times \QQ$. Thus, the set of invariant valuations with center on $X$ can be identified with the disjoint union of the polyhedral cones $\langle \tail(\pDiv) \times \{0\} \cup \pDiv_y \times \{1\}\rangle$ glued together along the common subsets $\tail(\pDiv) \times \{0\}$.  Such an object is called a hypercone. It was introduced by Timashev in \cite{timGVar} (in a version suitable to describe more generally reductive group actions of complexity one). Since the valuations with center already determine an affine variety, a hypercone gives an alternative description of a affine $T$-variety of complexity one. In this language a polyhedral divisor is nothing but a (hyper-)cut through a hypercone.

%%%%%%%%%%%%%%%%%%%%%%%%%%%%%%%%%%%%%%%%%%%%%%%%%%%%%%%%%%%%%%%%%%%
\subsection{Complete $T$-varieties and proper morphisms}
%%%%%%%%%%%%%%%%%%%%%%%%%%%%%%%%%%%%%%%%%%%%%%%%%%%%%%%%%%%%%%%%%%%
Consider a $T$-variety $X = \TV(\pFan)$ with $\fan=\{\pDiv_i\}_{1\leq i \leq r}$. The divisorial fan $\pFan$ provides us with an open affine torus invariant covering of $X$ by the varieties $U_i=\toric(\pDiv_i)$, $1 \leq i \leq r$. As already mentioned at the start of (\ref{sec:properness}), the valuative criterion of completeness states that a variety $X$ is complete exactly when every valuation of $\CC(X)$ has a unique center. For $T$-varieties it is in fact sufficient to restrict oneself to \emph{invariant} valuations. Using the results of the previous section we obtain the following 

\begin{theorem}[{\cite[Theorem~7.5]{tvar_2}}]
\label{sec:thm-complete}
  $\toric(\pFan)$ is complete if and only if every \emph{slice} $\mu(\pFan) :=\{\mu(\pDiv) \mid \pDiv \in \pFan\}$ is a complete subdivision of $N_\QQ$.
\end{theorem}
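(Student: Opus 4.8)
The plan is to feed the valuative criterion, in the torus-equivariant form recalled at the beginning of Section~\ref{sec:properness}, through Proposition~\ref{prop:val-center}. Since for $T$-varieties it suffices to test invariant valuations, $\toric(\pFan)$ is complete if and only if every invariant valuation $\nu=(\mu,v)$ of $\CC(X)$ has exactly one center on $X$. First I would fix such a $\nu$ and use the invariant affine cover $U_i=\toric(\pDiv_i)$ coming from $\pFan=\{\pDiv_i\}$, with overlaps $U_i\cap U_j=\toric(\pDiv_i\cap\pDiv_j)$. By Proposition~\ref{prop:val-center}, $\nu$ has a center on $U_i$ exactly when $v\in\mu(\pDiv_i)$, and a center on the overlap $U_i\cap U_j$ exactly when $v\in\mu(\pDiv_i\cap\pDiv_j)$. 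Thus the whole question is transported to the polyhedra $\mu(\pDiv_i)\subseteq N_\QQ$, i.e.\ to the slice $\mu(\pFan)$.

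The two halves of ``unique center'' then split cleanly. For \emph{existence} of a center, $\nu$ has a center somewhere on $X$ iff $v\in\bigcup_i\mu(\pDiv_i)=|\mu(\pFan)|$; demanding this for every $v$ and every $\mu$ is exactly the requirement that each slice have full support $|\mu(\pFan)|=N_\QQ$. For \emph{uniqueness}, suppose $\nu$ has centers on both $U_i$ and $U_j$, i.e.\ $v\in\mu(\pDiv_i)\cap\mu(\pDiv_j)$; these centers agree in $X$ precisely when they already lie on the (affine, hence separated) overlap $U_i\cap U_j$, that is when $v\in\mu(\pDiv_i\cap\pDiv_j)$. Since $\pDiv_i\cap\pDiv_j\subseteq\pDiv_i$ coefficientwise forces $\mu(\pDiv_i\cap\pDiv_j)\subseteq\mu(\pDiv_i)\cap\mu(\pDiv_j)$ by monotonicity of $\mu$, uniqueness for all $v$ is equivalent to the reverse inclusion holding for all $i,j$, i.e.\ to $\mu(\pDiv_i)\cap\mu(\pDiv_j)=\mu(\pDiv_i\cap\pDiv_j)$ being a common face of the two cells. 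This is precisely the statement that the cells $\{\mu(\pDiv_i)\}$ meet along common faces, i.e.\ that $\mu(\pFan)$ is a genuine polyhedral subdivision. Combining the two halves gives the asserted equivalence.

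I expect the main obstacle to be this uniqueness/separatedness step, and specifically the failure of Minkowski summation to commute with intersection. Writing $\mu(\pDiv)=\sum_Z\mu(Z)\cdot\pDiv_Z$, the coefficientwise identity $(\pDiv_i\cap\pDiv_j)_Z=(\pDiv_i)_Z\cap(\pDiv_j)_Z$ does \emph{not} propagate to $\mu(\pDiv_i\cap\pDiv_j)=\mu(\pDiv_i)\cap\mu(\pDiv_j)$ once several prime divisors $Z$ through the center of $\mu$ contribute, so in complexity $>1$ the slice can genuinely fail to be a subdivision even though each $\pDiv_i\cap\pDiv_j$ is a face in the divisorial-fan sense; this is exactly the source of non-separated examples and must be argued, not assumed. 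For the direction turning a non-subdivided or non-complete slice into a failure of completeness, I would, given an offending pair $(\mu,v)$, either produce a valuation with no center (when $v\notin|\mu(\pFan)|$) or one with two distinct centers on $U_i$ and $U_j$ (when $v$ lies in two cells but outside their common face), invoking Proposition~\ref{prop:val-center} in reverse; the mild care needed here is to verify that an invariant center on an affine chart is indeed a center on $X$ and that the reduction to invariant valuations recalled above is being applied legitimately.
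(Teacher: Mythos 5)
Your argument is correct and follows exactly the route the paper indicates: the valuative criterion of properness restricted to invariant valuations, combined with Proposition~\ref{prop:val-center} applied chart by chart to the $\toric(\pDiv_i)$ and to their affine (hence separated) overlaps $\toric(\pDiv_i\cap\pDiv_j)$, splitting ``unique center'' into existence (covering of $N_\QQ$ by the $\mu(\pDiv_i)$) and uniqueness ($\mu(\pDiv_i)\cap\mu(\pDiv_j)=\mu(\pDiv_i\cap\pDiv_j)$). The only point to make explicit is that ``complete subdivision'' of the slice $\mu(\pFan)$ is to be read as precisely this pair of conditions (as it is in the cited source), so your identification of the separatedness half with the subdivision property is a matter of unwinding the definition rather than a gap.
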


As a relative version of this result we obtain a characterization of proper equivariant morphisms. Recall from (\ref{subsec:mapsT}) that an orbit dominating equivariant morphism $X' \rightarrow X$ of $T$-varieties is given by a triple $\psi = (\varphi, F, \f)$, consisting of a dominant morphism $\varphi:Y' \rightarrow Y$, a lattice homomorphism $F:N' \rightarrow N$ and an element $\f \in \CC(Y') \otimes N$. First we define the preimage of a p-divisor over $Y$ and $N$ via $\psi$ as a polyhedral divisor over $Y'$ and $N$
\[\psi^{-1} \pDiv := F^{-1}(\varphi^*\pDiv + \div(\f)).\]

Let $X' = \TV(\pFan')$ be a $T$-variety with $\fan' = \{\pDiv'_i\}_{1 \leq i \leq r}$.  Assume that $\psi$ defines a map of polyhedral divisors $\pDiv'_i \rightarrow \pDiv$ for every $i$.

\begin{theorem}
\label{sec:thm-proper}
The morphism $X' \rightarrow X=\toric(\pDiv)$ induced by $\psi$ is proper if and only if $\fan'_\mu:=\{\mu(\pDiv'_i)\}_i$ defines a complete polyhedral subdivision of $\mu(\psi^{-1}(\pDiv))$ for every $\mu$.
\end{theorem}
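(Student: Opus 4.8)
The plan is to read off the statement from the relative valuative criterion of properness, in exact parallel to the way Theorem~\ref{sec:thm-complete} was obtained from the valuative criterion of completeness. Recall that $\psi\colon X'\to X$ is proper precisely when every discrete valuation $\nu$ of $\CC(X')$ whose image $\psi_*\nu$ admits a center on $X$ admits a \emph{unique} center on $X'$; here existence of the center upstairs encodes universal closedness and uniqueness encodes separatedness, while finite type is automatic. The first reduction I would carry out is to restrict this test to \emph{invariant} valuations. Since $X$ and $X'$ are $T$-varieties and $\psi$ is equivariant, a valuation and its torus degeneration have centers on the same $T$-invariant affine charts, so invariant valuations already detect both the existence and the uniqueness of centers; this is the relative analogue of the reduction used in \cite[Theorem~7.5]{tvar_2}. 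Thus I fix an invariant valuation $\nu=(\mu,v')$, with $\mu$ a discrete valuation of $\CC(Y')$ and $v'\in N'_\QQ$, and translate both halves of the criterion into polyhedral conditions.

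For the hypothesis, I would compute the push-forward $\psi_*\nu$ on $\CC(X)=\CC(Y)[M]$. Writing $\psi^{\#}$ on an $M$-homogeneous function $g\,\chi^u$ and using $\nu(g\,\chi^u)=\mu(g)+\langle u,v'\rangle$, one finds that $\psi_*\nu=(\mu',F(v'))$, where $\mu'$ is the valuation on $\CC(Y)$ induced from $\mu$ through $\varphi$ together with the twist coming from $\f$, and where $F(v')\in N_\QQ$ appears because $\chi^u$ pulls back through $F\dual\colon M\to M'$. By Proposition~\ref{prop:val-center}, $\psi_*\nu$ has a center on $X=\toric(\pDiv)$ if and only if $F(v')\in\mu'(\pDiv)$. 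The point I must establish is the membership equivalence
\[
v'\in\mu(\psi^{-1}\pDiv)\ \Longleftrightarrow\ F(v')\in\mu'(\pDiv),
\]
which rests on the compatibility of the coefficientwise preimage $F^{-1}$ and the pullback $\varphi^*$ with the evaluation map $\mu$, together with the fact that applying $\mu$ to $\varphi^*\pDiv+\div(\f)$ reproduces exactly $\mu'(\pDiv)$.

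For the conclusion I would use that the divisorial fan $\pFan'=\{\pDiv'_i\}$ furnishes the invariant affine cover $U_i=\toric(\pDiv'_i)$ of $X'$. Applying Proposition~\ref{prop:val-center} to each chart, $\nu$ has a center on $U_i$ if and only if $v'\in\mu(\pDiv'_i)$. Hence $\nu$ has \emph{some} center on $X'$ iff $v'\in\bigcup_i\mu(\pDiv'_i)=|\fan'_\mu|$, and it has a \emph{unique} center iff $v'$ lies in the relative interior of a single cell, i.e.\ iff the family $\fan'_\mu=\{\mu(\pDiv'_i)\}_i$ glues along common faces into a genuine polyhedral subdivision rather than merely a covering. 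Combining the two translations, the invariant valuative criterion becomes: for every $\mu$ and every $v'\in\mu(\psi^{-1}\pDiv)$ there is a unique cell of $\fan'_\mu$ whose relative interior contains $v'$. This is exactly the assertion that $\fan'_\mu$ is a complete polyhedral subdivision of $\mu(\psi^{-1}\pDiv)$, so the equivalence stated in the theorem follows.

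The main obstacle I anticipate is twofold, and both points lie in the two reductions rather than in the final combinatorial bookkeeping. The first is justifying that testing properness on invariant valuations suffices in the relative setting; this uses the $T$-invariance of the charts $U_i$ and the equivariance of $\psi$ to guarantee that passing to the invariant degeneration of a valuation does not change which charts carry a center. The second, and more computational, is verifying the membership equivalence displayed above, where one must check that the principal-divisor twist $\div(\f)$, the pullback $\varphi^*$, and the linear map $F$ interact correctly with $\mu$ on the Minkowski-additive level of polyhedral coefficients, so that $\mu(\psi^{-1}\pDiv)=F^{-1}(\mu'(\pDiv))$ holds as subsets of $N'_\QQ$. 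Once these compatibilities are in hand, the passage from the invariant valuative criterion to the stated completeness-of-slices condition is formal via Proposition~\ref{prop:val-center}.
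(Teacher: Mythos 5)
Your argument is exactly the route the paper takes (and only sketches): relativize the valuative criterion, restrict to invariant valuations $(\mu,v')$, and apply Proposition~\ref{prop:val-center} chartwise upstairs and once downstairs, so that existence of a center on $X$ becomes $F(v')+\mu(\div\f)\in(\varphi_*\mu)(\pDiv)$ while existence on $X'$ becomes $v'\in\bigcup_i\mu(\pDiv_i')$. The only place to be slightly more careful than your write-up is that $\psi_*\nu$ is $(\varphi_*\mu,\,F(v')+\sum_i\mu(f_i)v_i)$ rather than $(\mu',F(v'))$, but the same shift by $\mu(\div\f)$ occurs inside $\mu(\psi^{-1}\pDiv)=F^{-1}\bigl((\varphi_*\mu)(\pDiv)+\mu(\div\f)\bigr)$, so the two sides of your membership equivalence match and the proof goes through as you describe.
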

Since the properness can be checked locally on the base, the above theorem can be used to determine whether an arbitrary orbit dominating equivariant morphism $X' \rightarrow X$ is proper.

\begin{remark}
	It follows from \cite[Proposition 1.5]{andreas+nathan} that it suffices to check the conditions of Theorems~\ref{sec:thm-complete} and \ref{sec:thm-proper} just for slices of type $\fan_y$ instead of for all slices $\fan_\mu$.
\end{remark}

%%%%%%%%%%%%%%%%%%%%%%%%%%%%%%%%%%%%%%%%%%%%%%%%%%%%%%%%%%%%%%%%%%%%%
%%%%%%%%%%%%%%%%%%%%%%%%%%%%%%%%%%%%%%%%%%%%%%%%%%%%%%%%%%%%%%%%%%%%%
%%%
%%%   Singularities and the canonical divisor
%%%
%%%%%%%%%%%%%%%%%%%%%%%%%%%%%%%%%%%%%%%%%%%%%%%%%%%%%%%%%%%%%%%%%%%%%
%%%%%%%%%%%%%%%%%%%%%%%%%%%%%%%%%%%%%%%%%%%%%%%%%%%%%%%%%%%%%%%%%%%%%
\section{Resolution of singularities}
\label{sec:singCanDiv}

%%%%%%%%%%%%%%%%%%%%%%%%%%%%%%%%%%%%%%%%%%%%%%%%%%%%%%%%%%%%%%%%%%%%
\subsection{Toric resolution of singularities}
%%%%%%%%%%%%%%%%%%%%%%%%%%%%%%%%%%%%%%%%%%%%%%%%%%%%%%%%%%%%%%%%%%%%
The only non-singular affine toric varieties are 
products of tori and affine spaces and are given by cones $\sigma$ spanned by a subset of a lattice basis. We will call such  cones regular. Reducing to the sublattice $N \cap (\QQ \cdot \sigma)$ we may assume that $\sigma$ is of maximal dimension. A cone $\sigma=\langle a^1,\ldots, a^r  \rangle$ is regular if and only if it is simplicial and 
its \emph{multiplicity} $m(\sigma):=\det (a^1, \ldots, a^r)$ equals $\pm 1$.

Since equivariant proper birational morphisms to an affine toric variety $X=\toric(\sigma)$ correspond to fans subdividing the cone $\sigma$, a resolution of $X$ is given by a triangulation of $\sigma$ into regular cones. Indeed, such a triangulation always exists. It can be constructed by starting with an arbitrary triangulation and subsequently refining non-regular cones $\tau=\langle a^1, \ldots, a^r \rangle$ with rays along lattice elements $\tilde{a} \in \sum_i [0,1)\cdot a^i$. On the one hand, every non-regular cone contains such a lattice element. On the other, the resulting cones of the stellar subdivision along $\tilde{a}$ have strictly smaller multiplicities than $\tau$. Indeed, if $\tilde{a}=\sum \lambda_i a^i$ we get $m(\tau_i)= \lambda_i m(\tau)$, where $\tau_i$ is the cone spanned by $\tilde{a}$ and all the generator of $\tau$ except $a^i$. So after a finite number of steps we end up with a subdivision in regular cones.

This combinatorial approach to resolving toric singularities was generalized by Varchenko in \cite{varchenkoRes} to obtain (embedded) resolutions of non-degenerate hypersurface singularities. Consider a regular cone $\omega \subset M_\QQ$ and a polynomial 
$h=\sum_{u \in M} \alpha_u \chi^u \in \CC[\omega \cap M] \cong \CC[x_1,\ldots, x_n]$.
Now, the normal fan $\Sigma_h$ of the unbounded polyhedron $\nabla = \conv(\supp h + \omega)$ refines the dual cone $\sigma=\omega\dual$. For a compact face $F \prec \nabla$ we consider the restricted polynomial $h_F = \sum_{u \in F \cap M} \alpha_u \chi^u$. If all restricted polynomials define smooth hypersurfaces in the torus $T=\spec \CC[M]$, we call $h$ \emph{non-degenerated}. %Due to Varchenko we have the following result.

\begin{theorem}
  If $h$ is non-degenerated and $\varphi:\toric{(\Sigma')} \rightarrow \toric{(\Sigma_h)}$ is a toric resolution of singularities then the strict transform of $V(h)$ under the induced morphism $\toric(\Sigma') \rightarrow \toric(\sigma)=\AA^n$ is an (embedded) log-resolution of the singularities of $V(h)$.
\end{theorem}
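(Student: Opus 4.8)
The plan is to verify the two defining properties of an embedded log-resolution directly on the smooth toric variety $\TV(\Sigma')$, working orbit by orbit and letting the non-degeneracy hypothesis enter exactly at the torus-orbit restrictions. Since $\Sigma'$ refines $\Sigma_h$, which subdivides $\sigma=\omega\dual$, the induced morphism $\TV(\Sigma')\to\TV(\sigma)=\AA^n$ is proper and birational, and $\TV(\Sigma')$ is smooth because $\Sigma'$ consists of regular cones. It therefore remains to show that the strict transform $\wt V$ of $V(h)$ is smooth and that $\wt V$ together with the toric boundary $\partial\TV(\Sigma')$ has simple normal crossings; this is stronger than, and implies, the log-resolution property for $\wt V$ and the exceptional locus.

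The heart is a local computation on each maximal regular cone $\tau\in\Sigma'$, with primitive generators $a_1,\dots,a_n$ and dual coordinates $y_1,\dots,y_n$ on $U_\tau\cong\AA^n$. As $\relint\tau$ lies in the relative interior of a full-dimensional cone of $\Sigma_h$, it selects a vertex $u_0\prec\nabla$, and for every $u\in\supp h$ one has $\langle a_i,u-u_0\rangle\geq 0$, so $u-u_0\in\tau\dual$. Hence the pullback factors as $\chi^{u_0}\cdot\wt h_\tau$, where $\chi^{u_0}=\prod_i y_i^{\langle a_i,u_0\rangle}$ is a monomial supported on $\partial\TV(\Sigma')$ and $\wt h_\tau:=\sum_u\alpha_u\chi^{u-u_0}\in\CC[y_1,\dots,y_n]$, so that $\wt V\cap U_\tau=V(\wt h_\tau)$. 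I would then reduce the entire statement to the claim that for every face $\gamma\leq\tau$ the set $\wt V\cap\orb(\gamma)$ is either empty or smooth of codimension one in $\orb(\gamma)$; this transversality across the toric stratification is precisely the criterion ensuring that $\wt V$ is smooth and that $\wt V+\partial\TV(\Sigma')$ is SNC.

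To establish the claim I would restrict $\wt h_\tau$ to $\orb(\gamma)=\{y_i=0 : a_i\in\gamma\}$. A monomial $\chi^{u-u_0}$ survives exactly when $\langle a_i,u\rangle=\langle a_i,u_0\rangle=\min\langle\nabla,a_i\rangle$ for all $a_i\in\gamma$, i.e. when $u$ lies on the face $F_\gamma\prec\nabla$ minimized by $\gamma$. Thus $\wt h_\tau|_{\orb(\gamma)}$ equals, up to the nonvanishing monomial $\chi^{-u_0}$, the restriction to the subtorus $\orb(\gamma)$ of the face polynomial $h_{F_\gamma}$. Since all exponents of $h_{F_\gamma}$ lie in $u_0+\lin(F_\gamma)$ with $\lin(F_\gamma)\subseteq\gamma^\perp$, the polynomial is a monomial in the complementary torus directions, so $V(h_{F_\gamma})$ is invariant under the corresponding subtorus and its smoothness in $T$ is equivalent to the smoothness of $V(\wt h_\tau|_{\orb(\gamma)})$ in $\orb(\gamma)$. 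Non-degeneracy supplies exactly this smoothness when $F_\gamma$ is a compact face; when $F_\gamma$ is a vertex, $h_{F_\gamma}$ is a single monomial and the intersection is empty, consistent with $\wt h_\tau(0)=\alpha_{u_0}\neq 0$.

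The main obstacle is this last step: correctly matching the orbit restriction $\wt h_\tau|_{\orb(\gamma)}$ with a face polynomial and transferring smoothness between $T$ and the subtorus $\orb(\gamma)$, together with the bookkeeping for faces $F_\gamma$ that are non-compact. For such faces one notes that $F_\gamma\cap\supp h$ is a bounded set cutting out a genuine lower-dimensional face of $\conv(\supp h)$, so $h_{F_\gamma}$ agrees with the polynomial of the associated compact face and is again smooth on $T$; moreover the ever-present term $\alpha_{u_0}\chi^{u_0}$ keeps $h_{F_\gamma}\not\equiv 0$, so no orbit is contained in $\wt V$. Once orbit-transversality holds for every $\gamma$ and every chart $U_\tau$, the two properties assemble globally: $\wt V$ is smooth and $\wt V+\partial\TV(\Sigma')$ has simple normal crossings, whence $\wt V\to V(h)$ is the desired embedded log-resolution.
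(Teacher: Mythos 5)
The paper states this theorem without proof, citing Varchenko \cite{varchenkoRes}, so there is no in-paper argument to compare with; I can only judge your proposal on its own terms. Your skeleton is the standard one and is correct as far as it goes: on each smooth chart $U_\tau$ the pullback factors as $\chi^{u_0}\cdot\wt{h}_\tau$ with $\wt{h}_\tau(0)=\alpha_{u_0}\neq 0$, the restriction of $\wt{h}_\tau$ to an orbit $\orb(\gamma)$ is, up to a unit, the face polynomial $h_{F_\gamma}$ descended to the quotient torus, and orbit-by-orbit transversality yields smoothness of the strict transform together with the normal crossing property. Everything you do for \emph{compact} faces $F_\gamma$ is fine.

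The gap is your final paragraph on non-compact faces, and it cannot be patched as written. You assert that for non-compact $F_\gamma$ the polynomial $h_{F_\gamma}$ ``agrees with the polynomial of the associated compact face''; but the face of $\conv(\supp h)$ cut out by $\gamma$ need not be a compact face of $\nabla$, and the hypothesis constrains only compact faces of $\nabla$. The problem already appears for $\gamma=0$: there $F_0=\nabla$, which is never compact, and your claim amounts to smoothness of $V(h)\cap T$, which does not follow. For instance $h=x+y-3xy+x^2y^2$ has compact face polynomials $x$, $y$, $x+y$, hence is non-degenerate, yet $V(h)$ is singular at $(1,1)\in T$, a point no toric modification touches, so the strict transform is not even smooth there. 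Similarly, $h=x(x-1)^2+y$ is non-degenerate, but the non-compact face $\face(\nabla,(0,1))$ has face polynomial $x(x-1)^2$, and correspondingly $V(h)$ is tangent to $\{y=0\}$ at $(1,0)$; hence the statement you reduce everything to, that $\wt{V}+\partial\TV(\Sigma')$ has simple normal crossings, is false under the stated hypotheses. What is actually true --- and what Varchenko proves --- is the statement localized at the origin: the orbits contained in $\pi^{-1}(0)$ are exactly the $\orb(\gamma)$ with $\relint\gamma\subseteq\innt\sigma$, and for these $F_\gamma$ \emph{is} compact, so your argument establishes transversality at every point of $\pi^{-1}(0)$; since the normal crossing condition is open and $\pi$ is proper, this gives an embedded log-resolution over a neighbourhood of the origin, which is the intended reading of ``resolution of the singularity'' here. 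So either restrict your orbit analysis to cones meeting $\innt\sigma$ and localize the conclusion accordingly, or strengthen non-degeneracy to all faces of $\nabla$, including $\nabla$ itself.
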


\begin{example}
\label{exmp-varchenko}
  We consider the surface singularity $X$ given by the equation $h=x^2+y^3+z^5$. In a first step, we refine the positive orthant $\sigma=\omega^\vee$ by replacing it with the normal fan $\Sigma_h$ of the Newton diagram of $h$. This Newton diagram and its normal fan are pictured in Figure~\ref{fig:varchenko}; $\Sigma_h$ is obtained from the positive orthant $\sigma$ via a stellar subdivision along the ray $(15,5,6)$.  In a second step we refine this fan and obtain $\Sigma'$ as pictured in Figure~\ref{fig:var-res}.

The first step introduces a \emph{central} exceptional divisor in $X$ corresponding to the ray $\rho =\langle (15,5,6) \rangle$, with the remaining singularities in the strict transform of $X$ being toric. In the second step, the singularities of $\toric(\Sigma_h)$ are resolved, which also resolves the (toric) singularities of the strict transform of $X$.
\end{example}

% For an element $v \in \relint \tau$, with $\tau \in \Sigma$, the set $\Delta_\tau := \Delta \cap v^\perp$ is a face of $\Delta$. It does only depend on the cone $\tau$ and not on the specific element $v \in \relint \tau$. This gives an $1$-to$1$-correspondence between cones of $\Sigma$ and faces of $\Delta$

\begin{figure}
  \centering
  \newtonDiagram
  \newtonDual
  \caption{The Newton diagram of $h$ and its normal fan $\Sigma_h$. }
  \label{fig:varchenko}
\end{figure}

\begin{figure}
  \centering
  \varchenkoResolution
  \caption{Refinement of $\Sigma_h$.}
   \label{fig:var-res}
\end{figure}

\begin{remark}
  The approach of Varchenko and the notion of non-degeneracy is generalized in the context of tropical geometry, see \cite{1154.14039}.
\end{remark}

%%%%%%%%%%%%%%%%%%%%%%%%%%%%%%%%%%%%%%%%%%%%%%%%%%%%%%%%%%%%%%%%%%%%%%%%
\subsection{Toroidal resolutions}
%%%%%%%%%%%%%%%%%%%%%%%%%%%%%%%%%%%%%%%%%%%%%%%%%%%%%%%%%%%%%%%%%%%%%%%%
\label{sec:toroidal-resolutions}
We recall from \cite{mumford,danilovDeRham} that a complex normal variety $X$ is \emph{toroidal at the closed point} $x \in X$ if there exists an affine toric variety $\TV(\sigma)$  such that the germ $(\an{X},x)$ is analytically isomorphic to the germ $(\an{\TV(\sigma)},x_\sigma)$ with $x_\sigma$ being the unique fixed point in $\TV(\sigma)$. If the above property holds for every $x \in X$ then $X$ itself is called \emph{toroidal}.

More  generally, let $B \subset X$ be a closed subvariety. The pair $(X,B)$ is called \emph{toroidal} if for every closed point $x \in X$, there exists a toric variety $\TV(\sigma)$ together with a distinguished set of faces $F(\sigma)$ of $\sigma$ such that the germ $(\an{X},\an{B};x)$ is analytically isomorphic to the germ $(\an{\TV(\sigma)},\bigcup_{\tau \in F(\sigma)} \an{\TV(\tau)};x_\sigma)$.

Given a p-divisor on some $Y$, we consider a log-resolution $\varphi$ of the pair $(Y, \supp \pDiv)$. The pullback $\varphi^* \pDiv$ leads to the same variety, i.e. $X=\toric(\pDiv) = \toric(\varphi^*\pDiv)$, but by \cite[Proposition~2.6]{tsing} we get a toroidal pair $(\widetilde{X},B)$ consisting of the variety $\widetilde{X}= \ttoric(\varphi^*\pDiv)$ and the exceptional set $B$ of the contraction morphism $r:\ttoric(\varphi^*\pDiv) \rightarrow \toric(\varphi^*\pDiv)$.

 Hence, $\ttoric(\varphi^*\pDiv)$ is called a \emph{toroidal resolution} of $X$. Moreover, for any closed point $x\in\widetilde{X}$, $\widetilde{X}$ is formally locally isomorphic in a neighborhood of $x$ to the toric variety which is given by the so-called ``Cayley cone''
\[C(\pDiv_{Z_1},\ldots, \pDiv_{Z_r})
= \langle(\tail(\pDiv)) \times 0,\; \pDiv_{Z_1} \times e_1, \ldots, \pDiv_{Z_r}\times e_r \rangle \subset N_\QQ \times \QQ^r.\]
Here, $Z_1, \ldots, Z_r$ denotes a set of prime divisors intersecting transversely in the point $\pi(x)\in Y$. Note that one can obtain a resolution of singularities from this construction by purely toric methods when applying them to the occurring Cayley cones. In particular, $\ttoric(\pDiv)$ is smooth if and only if all the Cayley cones $C(\pDiv_{Z_1},\ldots, \pDiv_{Z_r})$ are regular. If $\Loc \pDiv$ is affine we have that $\ttoric(\pDiv)=\toric(\pDiv)$ and thus obtain a smoothness criterion in this case. If $\Loc \pDiv$ is complete then, as in the toric situation, $\toric(\pDiv)$ is smooth if and only if
it is a product of toric and affine spaces \cite[Lemma~5.2]{tsing}. Hence, $\pDiv$ can be obtained by a toric downgrade, see Section~\ref{sec:tDown}. For general loci the situation seems to be more complicated.

\begin{remark}
	The theory of toroidal embeddings~\cite{mumford} associates with the toroidal resolution $\widetilde{X}\to X$
a fan that is glued from the Cayley cones described above. The same construction shows
that p-divisors generalize Mumford's description of complexity-one
torus actions~\cite[Chapter 4, \S 1]{mumford}: if
$\pDiv = \sum \pDiv_P \otimes P$ is a p-divisor on a curve, then the corresponding toroidal
fan is obtained by gluing the cones
$C(\pDiv_P) = \langle (\tail(\pDiv)) \times 0, \pDiv_P \times 1 \rangle$
along their common face $\tail(\pDiv)$~\cite{toroidal}. This in turn 
coincides with Timashev's description in terms of hypercones and 
hyperfans~\cite{timGVar,timTorus}.
\end{remark}

\begin{example}
Given $N=\ZZ$, $Y=\PP^2=\proj(\CC[x,y,z])$ and the two  prime divisors $Z_1=V(y)$ and $Z_2=V(x^2-zy)$ we consider the p-divisor
\[\pDiv=[\nicefrac{1}{2},\infty)\otimes Z_1 + [-\nicefrac{1}{3},\infty)\otimes Z_2.\]
The corresponding affine variety is the hypersurface in $\AA^4$ given by $x_1x_2^2-x_3^2-x_4^3$. To obtain a toroidal resolution we first need to blow up the intersection points of $Z_1$ and $Z_2$ or their strict transforms, respectively, in order to get to a normal crossing situation, which is pictured in Figure~\ref{fig:nc}.

\begin{figure}
\centering  
\normalCrossing
\caption{Normal crossing situation after three blow ups.}
\label{fig:nc}
\end{figure}

Let us denote the morphism coming from the composition of these blowups by $\varphi: \widetilde{Y}\rightarrow Y$. For the pullback of $\pDiv$ we obtain
\[ \varphi^*\pDiv=[\nicefrac{1}{2},\infty)\otimes Z_1 + [-\nicefrac{1}{3},\infty)\otimes Z_2  +[\nicefrac{1}{6},\infty)\otimes E_1+[\nicefrac{1}{3},\infty)\otimes E_2.\]
Here, $E_1$ and $E_2$  denote the exceptional divisors. Now, $\ttoric(\varphi^*\pDiv) \rightarrow X$ is a toroidal resolution. We will consider the intersection point $Z_1 \cap E_2$. The corresponding Cayley cone is illustrated in the left picture of Figure~\ref{fig:cayley}. It is spanned by $(1,0,0)$, $(1,2,0)$ and $(1,0,3)$. There is a canonical resolution spanned by the additional rays $(1,1,1)$, $(1,1,0)$, $(1,0,1)$ and $(1,0,2)$, cf.\ right-hand picture in Figure~\ref{fig:cayley}. For the other intersection points one has to refine the Cayley cone similarly.

\begin{figure}
  \centering    
  \cayleyCone \hspace*{4ex} \cayleyResolution
  \caption{Cayley cone $C(\pDiv_{Z_1},\pDiv_{E_2})$ and its refinement.}
  \label{fig:cayley}
\end{figure}
\end{example} 

Let us now turn to the case of complexity one. Here,  $(Y,\supp \pDiv)$ is already smooth. Hence, we only have to consider $\ttoric(\pDiv)$. It has only toric singularities which are given by the cones $\sigma_P = \langle \pDiv_P \times \{1\} \rangle \subset  N \times \QQ$ and can be resolved by toric methods.

\begin{example}
On $Y=\PP^1$ we consider the p-divisor
 \[\pDiv=[-\nicefrac{1}{2},\infty)\otimes 0 + [\nicefrac{1}{3},\infty)\otimes \infty 
+ [\nicefrac{1}{5},\infty)\otimes 1.\]
This is exactly the surface from Example~\ref{exmp-varchenko}. The corresponding cones $\sigma_0$, $\sigma_\infty$, and $\sigma_1$ and their toric resolutions are pictured in Figure~\ref{fig:toroidalRes}.

\begin{figure}
  \centering
  \subfloat[$\sigma_0$]{\toroidalResI}
  \subfloat[$\sigma_\infty$]{\toroidalResII}
  \subfloat[$\sigma_1$]{\toroidalResIII}
  \caption{The cones $\sigma_y$ and their refinements.}
\label{fig:toroidalRes}
\end{figure}
Note that the resolution obtained here is exactly the resolution from Example~\ref{exmp-varchenko}. Moreover, the cones $\sigma_0$, $\sigma_\infty$, and $\sigma_1$  can be identified with the facets $\langle e_i, \rho \rangle$ in the fan $\Sigma_h$. Now, the subdivisions of these cones can be seen as induced by the subdivision of $\Sigma_h$ we have considered.

This observation is not a coincidence, but a  result of the toric embedding coming from the Cox ring representation in Theorem~\ref{thm:coxCplxOneExplicit}. Indeed, in the case of hypersurfaces singularities with complexity one torus action these two resolution strategies are closely related. 
\end{example}

%%%%%%%%%%%%%%%%%%%%%%%%%%%%%%%%%%%%%%%%%%%%%%%%%%%%%%%%%%%%%%%%%%%%%%%%%%%%%%%
%%%%%%%%%%%%%%%%%%%%%%%%%%%%%%%%%%%%%%%%%%%%%%%%%%%%%%%%%%%%%%%%%%%%%%%%%%%%%%%
%%%
%%%  (Log-)terminality and rationality of singularities
%%%
%%%%%%%%%%%%%%%%%%%%%%%%%%%%%%%%%%%%%%%%%%%%%%%%%%%%%%%%%%%%%%%%%%%%%%%%%%%%%%%
%%%%%%%%%%%%%%%%%%%%%%%%%%%%%%%%%%%%%%%%%%%%%%%%%%%%%%%%%%%%%%%%%%%%%%%%%%%%%%%

\section{(Log-)terminality and rationality of singularities}

%%%%%%%%%%%%%%%%%%%%%%%%%%%%%%%%%%%%%%%%%%%%%%%%%%%%%%%%%%%%%%%%%%%%%%%%%%%%%%%%%%%%
\subsection{Canonical divisors and discrepancies in the toric case}
%%%%%%%%%%%%%%%%%%%%%%%%%%%%%%%%%%%%%%%%%%%%%%%%%%%%%%%%%%%%%%%%%%%%%%%%%%%%%%%%%%%%
Recall from (\ref{subsec:canonicalDiv}) that a canonical divisor on a toric variety $X=\toric(\sigma)$ is given by $K_X=-\sum_\rho D_\rho$. Since every line bundle trivializes on an affine toric variety, $X$ is $\QQ$-Gorenstein of index $\ell$ exactly when there is an element $w \in \frac{1}{\ell}M$ such that $\langle w, \rho \rangle=1$ for all rays $\rho \in \sigma(1)$ and $\ell \in \NN$ is minimal with this property. Hence, $\sigma$ is the cone over a lattice polytope in the hyperplane $[\langle w, \cdot \rangle = l]$ with primitive vertices.

Let $\Sigma$ denote a triangulation of $\sigma$ which comes from a resolution of singularities. A ray $\rho \in \Sigma{(1)} \setminus \sigma(1)$ then corresponds to an exceptional divisor with respect to the map $\toric(\Sigma) \rightarrow \toric(\sigma)$ and its discrepancy is given by $\text{discr}_\rho=\langle w, \rho \rangle - 1$. Obviously, this value is always greater than $-1$ which implies log-terminality of toric varieties. Note that $X$ is canonical if and only if 
\[
\text{trunc}(\sigma) := \sigma \cap \{\langle w, \cdot \rangle < 1\}
\]
contains no lattice points apart from the origin. Furthermore, $X$ is terminal if and only if $\overline{\text{trunc}}(\sigma)= \sigma \cap [\langle w, \cdot \rangle \leq 1]$ contains only the cone generators as lattice points.

\begin{figure}
  \centering
  \subfloat[non-canonical]{\noncanonical}
  \subfloat[canonical]{\canonical}
  \subfloat[terminal]{\terminal}
  \caption{Different types of toric singularities.}
\label{fig:singtypes}
\end{figure}

%%%%%%%%%%%%%%%%%%%%%%%%%%%%%%%%%%%%%%%%%%%%%%%%%%%%%%%%%%%%%%%%%%%%
\subsection{Rationality and log-terminality for p-divisors}
%%%%%%%%%%%%%%%%%%%%%%%%%%%%%%%%%%%%%%%%%%%%%%%%%%%%%%%%%%%%%%%%%%%%%
Since toric singularities are log-terminal and thus rational it is sufficient to study a toroidal resolution of $X$, e.g. the contraction map $\ttoric(\varphi^*\pDiv) \rightarrow \toric(\varphi^*\pDiv)$, for checking the rationality or log-terminality of a variety.  This observation leads to the results of \cite{tsing}, which are summarized in this section.

\begin{theorem}[{\cite[Thm. 3.4]{tsing}}]
  $\toric(\pDiv)$ has rational singularities if and only if $H^i(\Loc \pDiv, \CO(\pDiv(u)))=0$ holds for all $u \in M\cap\tail(\pDiv)^\vee$ and all $i>0$.
\end{theorem}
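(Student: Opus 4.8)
The plan is to reduce the statement to a cohomology computation on the toroidal resolution and then read it off the affine fibration over the base. Since $\toric(\pDiv)=\Spec\kG(\Loc\pDiv,\CO(\pDiv))$ is affine, any resolution $g\colon W\to\toric(\pDiv)$ satisfies $g_*\CO_W=\CO_{\toric(\pDiv)}$ by normality, and $R^ig_*\CO_W=0$ for $i>0$ holds if and only if $H^i(W,\CO_W)=0$ for $i>0$ (on an affine base a coherent sheaf vanishes exactly when its module of global sections does, and $H^i(W,\CO_W)=\kG(\toric(\pDiv),R^ig_*\CO_W)$ by Leray). As rationality of singularities over $\CC$ can be tested on a single resolution, the whole task becomes the choice of a convenient $W$ together with the computation of $H^i(W,\CO_W)$.

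For $W$ I would use the toroidal resolution of the previous subsection. We may assume $Y$ is smooth, since by the remark following Theorem~\ref{thm:equivPT} the base of a p-divisor can always be replaced by a log-resolution without altering $\toric(\pDiv)$. Choosing a log-resolution $\varphi\colon Y'\to Y$ of the pair $(Y,\supp\pDiv)$, the space $\ttoric(\varphi^*\pDiv)$ is toroidal, hence has at worst toric singularities, and $\toric(\varphi^*\pDiv)=\toric(\pDiv)$. Toric singularities are log-terminal and therefore rational, so resolving them by a toric morphism $\rho\colon W\to\ttoric(\varphi^*\pDiv)$ contributes nothing: $\rho_*\CO_W=\CO_{\ttoric(\varphi^*\pDiv)}$ and $R^i\rho_*\CO_W=0$ for $i>0$, whence $H^i(W,\CO_W)=H^i(\ttoric(\varphi^*\pDiv),\CO)$ by Leray. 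Thus $\toric(\pDiv)$ has rational singularities if and only if $H^i(\ttoric(\varphi^*\pDiv),\CO)=0$ for all $i>0$.

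To compute the latter I would pass along the structure map $\pi\colon\ttoric(\varphi^*\pDiv)\to\Loc(\varphi^*\pDiv)$. Being a relative spectrum, $\pi$ is affine, so $R^j\pi_*\CO=0$ for $j>0$ while $\pi_*\CO=\bigoplus_{u\in\tail(\pDiv)\dual\cap M}\CO\big(\varphi^*\pDiv(u)\big)\,\chi^u$, directly from the definition of the sheaf $\CO(\pDiv)$. Leray then degenerates and yields the weight decomposition $H^i(\ttoric(\varphi^*\pDiv),\CO)=\bigoplus_u H^i\big(\Loc(\varphi^*\pDiv),\CO(\varphi^*\pDiv(u))\big)$.

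The remaining point, which I expect to be the main obstacle, is to descend the base from $Y'$ back to $Y$, i.e.\ to prove $H^i\big(\Loc(\varphi^*\pDiv),\CO(\varphi^*\pDiv(u))\big)=H^i\big(\Loc\pDiv,\CO(\pDiv(u))\big)$ for every $u$. Since $\varphi^*\pDiv(u)=\varphi^*(\pDiv(u))$ and, in the integral case, $\CO(\pDiv(u))$ is a line bundle, the projection formula reduces this to $R\varphi_*\CO_{Y'}=\CO_Y$, which holds because $Y$ is smooth and $\varphi$ is a resolution; one then concludes by Leray for $\varphi$. The non-integral case needs the usual modification via the sublattices $S_y$ from (\ref{subsec:pDivOrbits}), replacing the line bundle by its appropriate reflexive rounding. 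Combining the three reductions shows that $\toric(\pDiv)$ has rational singularities exactly when $H^i(\Loc\pDiv,\CO(\pDiv(u)))=0$ for all $u\in M\cap\tail(\pDiv)\dual$ and all $i>0$, as claimed.
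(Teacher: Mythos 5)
Your strategy is exactly the route the survey itself points to for this result (the survey does not reproduce the proof of \cite{tsing} but states that it suffices to study the toroidal resolution and use rationality of toric singularities), and your first three reductions are sound: rationality is detected by $H^i(W,\CO_W)$ on a single resolution because $\toric(\pDiv)$ is affine; $\ttoric(\varphi^*\pDiv)$ is toroidal, hence formally locally toric, so its singularities are rational and a further resolution does not change $H^i(\cdot,\CO)$; and the affine structure morphism $\pi$ converts $H^i(\ttoric(\varphi^*\pDiv),\CO)$ into $\bigoplus_u H^i\big(\Loc(\varphi^*\pDiv),\CO(\varphi^*\pDiv(u))\big)$.

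The genuine gap is the descent step, and neither of your proposed fixes closes it. The opening reduction ``we may assume $Y$ is smooth'' is not available: replacing $(Y,\pDiv)$ by $(Y',\varphi^*\pDiv)$ changes the groups $H^i(\Loc\pDiv,\CO(\pDiv(u)))$ that appear in the statement, so this reduction \emph{is} the descent problem rather than a way around it. If instead you keep the original $Y$, the identity $R\varphi_*\CO_{Y'}=\CO_Y$ you invoke is, at $u=0$, precisely the assertion that $Y$ has rational singularities, which is not a hypothesis; and via the Leray spectral sequence a nonzero $R^{>0}\varphi_*$ really can make vanishing upstairs and vanishing downstairs inequivalent. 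Moreover the projection formula only applies when $\CO(\varphi^*\pDiv(u))=\varphi^*\CO(\pDiv(u))$; since $\CO(D)$ means $\CO(\lfloor D\rfloor)$ for a $\QQ$-divisor and $\lfloor\varphi^*D\rfloor\neq\varphi^*\lfloor D\rfloor$ in general, this fails outside the integral case. What is actually needed is $R^i\varphi_*\CO_{Y'}\big(\lfloor\varphi^*\pDiv(u)\rfloor\big)=0$ for $i>0$, which one gets from a relative Kawamata--Viehweg-type vanishing using that $\pDiv(u)$ is semiample (so $\varphi$-numerically trivial) and that the fractional part $\{\varphi^*\pDiv(u)\}$ has SNC support because $\varphi$ is a log-resolution of $(Y,\supp\pDiv)$ --- or, alternatively, from hypotheses on $Y$ that make the step vacuous. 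The sublattices $S_y$ of (\ref{subsec:pDivOrbits}) concern the identification of orbits under the contraction $p$ and are not the relevant tool for this rounding problem.
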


\begin{corollary}
  If $\toric(\pDiv)$ has complexity one, i.e. $Y$ is a curve, then $\toric(\pDiv)$ has rational singularities if and only if 
  \begin{enumerate}
  \item $\Loc \pDiv$ is affine, or
  \item $Y=\PP^1$ and  $\deg \lfloor \pDiv(u) \rfloor \geq -1$ for all $u \in (\tail(\pDiv))\dual \cap M$.
  \end{enumerate}
\end{corollary}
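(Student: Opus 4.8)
\emph{Plan.} The plan is to feed the dichotomy available on a curve into the cohomological criterion of the preceding theorem. By that theorem, $\toric(\pDiv)$ has rational singularities precisely when $H^i(\Loc\pDiv,\cO(\pDiv(u)))=0$ for every $u\in M\cap(\tail(\pDiv))\dual$ and every $i>0$; here $\cO(\pDiv(u))$ is to be read as the line bundle $\cO_{\Loc\pDiv}(\lfloor\pDiv(u)\rfloor)$ attached to the integral divisor $\lfloor\pDiv(u)\rfloor$ on the smooth curve $\Loc\pDiv$. Recalling from (\ref{subsec:openComp1}) and (\ref{subsec:nonAffComp1}) that on a curve $\Loc\pDiv$ is affine exactly when $\deg\pDiv=\emptyset$ and equals the complete curve $Y$ exactly when $\deg\pDiv\neq\emptyset$, and that every proper open subset of a smooth projective curve is affine, I would split the argument into these two mutually exclusive and exhaustive cases.

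First, suppose $\Loc\pDiv$ is affine (case (1)). Then every coherent sheaf on $\Loc\pDiv$ has vanishing higher cohomology, so the criterion of the theorem is satisfied for free and $\toric(\pDiv)$ has rational singularities. This settles one implication and shows that case (1) always yields rationality.

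Next, suppose $\Loc\pDiv=Y$ is a complete smooth curve. Since $Y$ is one-dimensional, only $i=1$ can contribute, so the criterion reads $H^1(Y,\cO_Y(\lfloor\pDiv(u)\rfloor))=0$ for all admissible $u$. The key observation is to test $u=0$: as $\min\langle\pDiv_P,0\rangle=0$, the evaluation $\pDiv(0)$ is the zero divisor, whence $\cO_Y(\lfloor\pDiv(0)\rfloor)=\cO_Y$ and the criterion forces $H^1(Y,\cO_Y)=0$, i.e.\ $g(Y)=0$, i.e.\ $Y\cong\PP^1$. Conversely, on $\PP^1$ one has $H^1(\PP^1,\cO(D))=0$ if and only if $\deg D\geq -1$, so the required vanishing for all $u$ translates verbatim into $\deg\lfloor\pDiv(u)\rfloor\geq -1$ for all $u\in(\tail(\pDiv))\dual\cap M$. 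This yields condition (2) and closes both directions.

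I expect the only genuinely delicate point to be bookkeeping rather than geometry: making explicit that $\cO(\pDiv(u))$ must be interpreted via the round-down $\lfloor\pDiv(u)\rfloor$ (so that the criterion of the theorem and the degree condition in (2) refer to the same object), and justifying the clean affine/projective dichotomy for $\Loc\pDiv$ together with the observation that $u=0$ is always admissible and pins down the genus. Everything else reduces to the standard cohomology of line bundles on $\PP^1$ and Serre vanishing on affine schemes.
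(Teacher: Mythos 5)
Your proof is correct and is essentially the intended derivation: the paper states this corollary without proof, and the expected argument is exactly to combine the affine/complete dichotomy for $\Loc\pDiv$ on a curve with Serre vanishing in the affine case, and in the complete case to test $u=0$ to force $g(Y)=0$ and then use $H^1(\PP^1,\cO(D))=0\iff\deg D\geq -1$. Your attention to the round-down convention $\cO(\pDiv(u))=\cO(\lfloor\pDiv(u)\rfloor)$ is exactly the right bookkeeping point.
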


To obtain a nice characterization of log-terminality we need to restrict ourselves to sufficiently simple torus actions. A torus action on an affine variety is called a \emph{(very) good} one,  if there is a unique fixed point, which is contained in the closures of all orbits (and the GIT-chamber structure of $X$ is trivial). Such a situation corresponds to a p-divisor $\pDiv$ having a tailcone of maximal dimension, such that the locus of $\pDiv$ is projective (and $\pDiv(u)$ is ample for all $u \in \relint \sigma\dual$). %In this section all p-divisors are supposed to be of this type. Hence, all torus actions are supposed to be good.

Fix a canonical divisor $K_Y = \sum_Z c_Z \cdot Z$ on $Y$. Recall from (\ref{subsec:canonicalDiv}) that a $T$-invariant canonical divisor on $\ttoric(\pDiv)$ and
$\toric(\pDiv)$, respectively, is given by
\begin{align}
  \label{eq:KX}
   K &=  \sum_{(Z,v)} (\mu(v)(1+c_Z)-1)D_{(Z,v)} - \sum_{\rho}E_\rho \nonumber
 \end{align}
where one is again supposed, if focused on $\toric(\pDiv)$,
to omit all prime divisors being contracted via $p:\ttoric(\pDiv)\to\toric(\pDiv)$.

\begin{theorem}[\cite{tidiv}, Corollary 3.15]
 An affine $T$-variety $\toric(\pDiv)$ with a good torus action is $\QQ$-factorial if and only if
  \[
  \rank \cl(Y) + \sum_Z (\#\pDiv_Z^{(0)} -1) + \#\pDiv_0^{(1)} = \dim N.
  \]
  In particular, $Y$ has a finitely generated class group.
\end{theorem}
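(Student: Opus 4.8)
The plan is to reduce the statement to a computation of the rank of the divisor class group $\cl(X)$ of $X:=\TV(\pDiv)$, and then to read this rank off the presentation of $\cl(X)$ given in (\ref{subsec:classT}).

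First I would dispose of the Picard group. Since the action is (very) good, $\Loc(\pDiv)=Y$ is projective, so the Lemma above (complete locus forces every invariant Cartier divisor to be principal) applies. As $X$ is affine and $T$ is a torus, every class in $\Pic(X)$ admits a $T$-invariant representative, which is therefore principal; hence $\Pic(X)=0$. By definition $X$ is $\QQ$-factorial exactly when the inclusion $\Pic(X)_\QQ\hookrightarrow\cl(X)_\QQ$ is an isomorphism, so here this is equivalent to $\rank\cl(X)=0$. It thus suffices to establish the rank formula $\rank\cl(X)=\rank\cl(Y)+\#\cR+\sum_Z(\#\pDiv_Z^{(0)}-1)-\dim N$ and to identify $\#\cR$ with $\#\pDiv_0^{(1)}$.

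Next I would compute the rank by quotienting the group $\tdiv(X)$ of invariant Weil divisors in stages. Writing $\tdiv(X)=\bigoplus_{\varrho\in\cR}\ZZ\,D_\varrho\oplus\bigoplus_{(Z,v)}\ZZ\,D_{(Z,v)}$, pullback along $\pi\colon\ttoric(\pDiv)\to Y$ gives an injection $\lambda\colon\Div(Y)\hookrightarrow\tdiv(X)$, $Z\mapsto\pi^*Z=\sum_{v\in\pDiv_Z^{(0)}}\mu(v)D_{(Z,v)}$. Because $\pDiv_Z=\tail(\pDiv)$ (a single vertex) for all but finitely many $Z$, the quotient $\tdiv(X)/\lambda(\Div Y)$ is finitely generated of rank $\#\cR+\sum_Z(\#\pDiv_Z^{(0)}-1)$, the generic summands dying. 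Since $\lambda$ is injective and sends principal divisors to invariant principal divisors, $\lambda(\Div Y)/\lambda(\mathrm{P}(Y))\cong\cl(Y)$, so $\tdiv(X)/\lambda(\mathrm{P}(Y))$ is an extension of the former group by $\cl(Y)$, of rank $\rank\cl(Y)+\#\cR+\sum_Z(\#\pDiv_Z^{(0)}-1)$. By Theorem \ref{thm:principDiv} and the presentation in (\ref{subsec:classT}), $\cl(X)$ is obtained from this group by dividing out the remaining relations $[\div(\chi^u)]$, $u\in M$. The induced map $M\to\tdiv(X)/\lambda(\mathrm{P}(Y))$ is rationally injective: a nonzero $u$ in its kernel would make $\chi^u$ a $T$-homogeneous unit times a pullback from $Y$, but as $\tail(\pDiv)$ is full-dimensional the only homogeneous units on $X$ are constants, forcing $u=0$. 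This subtracts $\dim N$, yielding the rank formula; setting it to zero gives the asserted equality. The \emph{in particular} is then immediate for the rank, since the right-hand side is a finite sum of non-negative integers equal to $\dim N$, forcing $\rank\cl(Y)<\infty$; finite generation of $\cl(Y)$ (rather than mere finiteness of its rank) follows from the same extension once one invokes finite generation of $\cl(X)$, which I would treat as a small separate point.

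The hard part will be the bookkeeping over the infinitely many prime divisors of $Y$: arranging the stage-wise quotients so that the generic vertical divisors contribute \emph{precisely} one copy of $\cl(Y)$ — this is exactly what produces the $-1$ in each term $\#\pDiv_Z^{(0)}-1$ — and checking that for a very good action no ray of the tailcone is contracted by $p$, so that $\#\cR=\#\pDiv_0^{(1)}$. The latter I expect to argue from the ampleness of the evaluations $\pDiv(u)$ for $u\in\relint(\tail(\pDiv)\dual)$, via the orbit-identification criterion of Proposition \ref{prop:orbitsX} (cf.\ Remark \ref{rmk:extremalRaysDegree} in complexity one), which prevents the horizontal orbits from being glued to one another.
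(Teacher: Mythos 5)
Your overall strategy is the right one, and since the survey gives no proof of this statement (it only cites \cite{tidiv}), the comparison has to be with the argument it implicitly relies on, namely the presentation of $\cl(\TV(\pFan))$ from (\ref{subsec:classT}). Your main computation is correct: complete locus gives $\Pic(X)=0$, so $\QQ$-factoriality is $\rank\cl(X)=0$, and the stage-wise quotient of $\bigoplus_{\varrho\in\cR}\ZZ D_\varrho\oplus\bigoplus_{(Z,v)}\ZZ D_{(Z,v)}$ by $\lambda(\mathrm{P}(Y))$, then $\lambda(\Div Y)$, then the rationally injective image of $M$ does yield $\rank\cl(X)=\rank\cl(Y)+\#\cR+\sum_Z(\#\pDiv_Z^{(0)}-1)-\dim N$. (Two small caveats: you should note that the vertical divisors $D_{(Z,v)}$ all survive in $X$ because the interior evaluations are big; and for the ``in particular'', deducing finite generation of $\cl(Y)$ from finite generation of $\cl(X)$ is circular as stated --- the clean route is that finiteness of $\rank\cl(Y)$ kills the divisible part of the class group of the projective variety $Y$, whence finite generation by N\'eron--Severi.)

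The genuine gap is your last step. The claim that for a (very) good action no ray of the tailcone is contracted by $p$ is false, and the proposed mechanism cannot work: whether $D_\varrho$ is contracted is governed by the evaluations $\pDiv(u)$ for $u$ in the relative interior of the \emph{facet} $\varrho^\perp\cap\tail(\pDiv)\dual$, about which ampleness of $\pDiv(u)$ on $\relint(\tail(\pDiv)\dual)$ says nothing. Concretely, take $X=\CC^2$ with the diagonal $\CC^*$-action: this is a (very) good action, described by $\pDiv=[0,\infty)\otimes\{0\}+[1,\infty)\otimes\{\infty\}$ on $Y=\PP^1$; the unique ray $\varrho$ of $\tail(\pDiv)=\QQ_{\geq 0}$ meets $\deg\pDiv=[1,\infty)$, so $D_\varrho$ (the zero section of $\ttoric(\pDiv)\to\PP^1$, i.e.\ the exceptional curve over the origin) is contracted and $\cR=\emptyset$. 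Since $\CC^2$ is smooth, the displayed equation must read $1+0+0=1$; if the third term counted all rays of the tailcone it would read $1+0+1\neq 1$ and the theorem would be false. So $\#\pDiv_0^{(1)}$ has to be the number of \emph{non-contracted} rays, i.e.\ exactly your $\#\cR$: the identification you flagged as needing proof is true by the meaning of the notation, whereas the statement you planned to prove in its place is wrong. Deleting that step (and characterizing $\cR$ by $\varrho\cap\deg\pDiv=\emptyset$ in complexity one, as in Remark \ref{rmk:extremalRaysDegree}, or in general by non-constancy of $\Phi_u$ for $u\in\relint(\varrho^\perp\cap\tail(\pDiv)\dual)$) completes the proof.
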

\begin{remark}
  Note, that the left hand side of the equation in the theorem is always at least $\dim N$ due to the properness condition for $\pDiv$.
\end{remark}

We now turn to criteria for (log-)terminality. Due to the toroidal resolution from above one obtains the following result which generalizes the fact of log-terminality of toric varieties.

\begin{proposition}[{\cite[Cor. 4.10]{tsing}}]
  Every $\QQ$-factorial variety $X$ with torus action of complexity strictly smaller than $\codim (\Sing_X)-1$ is log-terminal.
\end{proposition}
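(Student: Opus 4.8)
The plan is to verify log-terminality by exhibiting a single log-resolution all of whose discrepancies exceed $-1$. Since $X$ is $\QQ$-factorial it is in particular $\QQ$-Gorenstein, so $K_X$ is $\QQ$-Cartier and the discrepancies $a(E)$ are well defined. I would take the toroidal resolution from the previous section: first a log-resolution $\varphi\colon\widetilde Y\to Y$ of the pair $(Y,\supp\pDiv)$, producing $\ttoric(\varphi^*\pDiv)$ with only toric (Cayley-cone) singularities, and then a toric refinement of those Cayley cones to obtain an honest resolution $r\colon\widetilde X\to X$. Any exceptional divisor whose center on $X$ avoids $\Sing X$ lies over the smooth locus and automatically has discrepancy $\geq 1$, so it suffices to bound the discrepancies of those $E$ whose center $Z=c_X(E)$ is contained in $\Sing X$; for such $E$ one has $\codim_X Z\geq\codim(\Sing X)>c+1$, where $c$ denotes the complexity.

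Next I would use the torus to turn this codimension hypothesis into a usable rank condition. Stratifying $\Sing X$ by orbit type and applying the equivariant local (Luna-type) slice theorem at the generic point of each component, $X$ becomes \'etale-locally a product of a smooth factor with a transverse slice $S$ carrying an induced torus action of complexity at most $c$, of strictly smaller dimension, and inheriting the same inequality between complexity and codimension of the singular locus. Since log-terminality is \'etale-local and insensitive to smooth factors, induction on $\dim X$ reduces the claim to the case of an isolated singularity sitting at a $T$-fixed point. There the hypothesis $\codim(\Sing X)=\dim X>c+1$ is exactly $\dim N=k\geq 2$, so the content becomes: \emph{a $\QQ$-factorial affine $T$-variety with an isolated fixed singularity and $\dim T\geq 2$ is log-terminal.}

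For the isolated case I would compute discrepancies directly on the toroidal resolution. The divisors introduced by the toric refinement of the Cayley cones are toric-exceptional over $\ttoric(\varphi^*\pDiv)$, and since every $\QQ$-Gorenstein toric variety is log-terminal these contribute discrepancies $>-1$; this is precisely the step generalizing log-terminality of toric varieties. The remaining, and genuinely delicate, divisors are the horizontal divisors $D_\varrho$ with $\varrho\notin\cR(\pFan)$, i.e.\ those contracted by $p\colon\ttoric(\pDiv)\to\toric(\pDiv)$. Using the canonical divisor formula
\[
K=\sum_{(Z,v)}\big(\mu(v)(1+c_Z)-1\big)\,D_{(Z,v)}-\sum_{\varrho}D_\varrho
\]
together with the fact that $p^{*}K_X$ is numerically trivial along the contracted fibres, I would solve for the coefficient of each contracted $D_\varrho$ in $K_{\widetilde X}-p^{*}K_X$ and show it is $>-1$ under $k\geq 2$.

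The hard part will be exactly this last discrepancy estimate. The coefficient of $D_\varrho$ in $K_{\widetilde X}$ is $-1$, so log-terminality hinges on the correction coming from $p^{*}K_X$, which is forced by the numerical-triviality condition on the fibres of $p$ and therefore encodes the geometry of the base $Y$ (its canonical class enters through the numbers $c_Z$) competing against the ``vertical'' positivity supplied by the cone $\tail(\pDiv)$. When $\dim N=k\geq 2$ there is enough room in the tailcone for the vertical contribution to dominate the base defect and push the discrepancy strictly above $-1$; when $k=1$ this room disappears, and the boundary example of the cone over an elliptic curve (complexity $1$, $\codim\Sing=2$) is only log-canonical, showing that the inequality $c<\codim(\Sing X)-1$ is sharp. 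Making the domination quantitative---tracking how the contracted fibre's numerical class pins down the discrepancy in terms of $c_Z$ and the lattice data of $\pDiv_\varrho$---is the crux of the argument.
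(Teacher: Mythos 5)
Your opening moves match the intended argument---the toroidal resolution, and the observation that an exceptional divisor whose center is \emph{not} contained in $\Sing X$ automatically has discrepancy $\geq 1$---but you stop one step short of where the hypothesis finishes the proof, and the machinery you substitute has real gaps. The hypothesis $\codim(\Sing X)>c+1$ is equivalent to $\dim\Sing X\leq\dim T-2$. Every divisor contracted by $p:\ttoric(\varphi^*\pDiv)\to\toric(\pDiv)$ is $T$-invariant, hence of the form $D_\varrho$ or $D_{(Z,v)}$ with $Z$ exceptional over $Y$; since $p$ embeds each orbit and these divisors are swept out by orbits of dimension $\dim T-1$ (resp.\ $\dim T$), their centers on $X$ have dimension at least $\dim T-1>\dim\Sing X$ and so cannot lie inside $\Sing X$. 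Thus \emph{all} $p$-exceptional divisors already have discrepancy $\geq 1$ by your own first observation: there is no ``hard discrepancy estimate'' left for the contracted $D_\varrho$. The remaining exceptional divisors come from the toric refinement of the Cayley cones, and their log-discrepancies over $X$ are computed by the linear functional on the Cayley cone whose values on the rays are the log-discrepancies of the corresponding invariant divisors of $\ttoric(\varphi^*\pDiv)$ over $X$; these values are $1$ (non-contracted rays) or $1+b\geq 2$ (contracted rays), hence the functional is positive on the whole cone and every toric-exceptional discrepancy exceeds $-1$. That is the precise sense in which the result ``generalizes log-terminality of toric varieties,'' and the proof ends there.

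The route you actually propose does not close. The slice-theorem induction only splits off the orbit through the generic point of a component of $\Sing X$, not the component itself; for a component whose generic orbit has smaller dimension than the component (e.g.\ a positive-dimensional family of fixed points) the slice does not lower the dimension and the induction stalls. Worse, the base case you reduce to---``a $\QQ$-factorial isolated fixed singularity with $\dim T\geq 2$ is log-terminal''---is justified only by the heuristic that the tailcone has ``enough room,'' which is not an argument; as you have set it up it would require the full log-terminal--Fano-pair criterion for $(\Loc(Y),B)$ (the much harder Theorem quoted just below the Proposition in the paper), whereas the isolated case also follows from the one-line dimension count above, since $\dim p(D_\varrho)\geq\dim T-1\geq 1>0=\dim\Sing X$. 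You correctly flag the missing step as the crux, but the crux is not a quantitative estimate: it is noticing that the codimension hypothesis forces the centers of the contracted divisors out of the singular locus.
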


For a p-divisor we introduce a \emph{boundary divisor} on its locus $\loc(Y)$. Let 
$\mu_Z$ be the maximal multiplicity $\mu(v)$ of all vertices $v \in \pDiv_Z$. We define
\[B=\sum_Z \frac{\mu_Z-1}{\mu_Z} \cdot Z.\]

\begin{theorem}[{\cite[Thm. 4.7]{tsing}}]
  A $\QQ$-factorial affine variety $\toric(\pDiv)$ with a very good torus action is log-terminal if and only if  the pair $(\loc(Y),B)$ is log-terminal and Fano.
\end{theorem}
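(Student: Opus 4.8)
The plan is to reduce log-terminality of $X=\toric(\pDiv)$ to a discrepancy computation on the toroidal resolution from (\ref{sec:toroidal-resolutions}), organised into a ``base'' part and a ``vertex'' part. A very good action forces $\tail(\pDiv)$ to be full-dimensional and $Y=\Loc\pDiv$ to be projective with every $\pDiv(u)$ ample, so $X$ is a generalized cone with a single attracting fixed point; away from that point log-terminality is governed by the toroidal local models (the Cayley cones $C(\pDiv_{Z_1},\dots,\pDiv_{Z_r})$ attached to transverse intersections of the $Z_i$), while at the fixed point it becomes a global positivity question. First I would fix a log-resolution $\varphi\colon\wt{Y}\to Y$ of $(Y,\supp\pDiv)$, pass to $\ttoric(\varphi^*\pDiv)$, and then refine the fibre cones $\sigma_P=\langle\pDiv_P\times\{1\}\rangle$ by a toric resolution to obtain a genuine log-resolution $W\to\toric(\pDiv)$. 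Since every toric singularity is log-terminal, the divisors created by the fibre subdivision automatically have discrepancy $>-1$, and the divisors $D_{(\wt{Z},v)}$ over strict transforms survive uncontracted; hence the whole question rests on the vertical divisors $D_{(E,v)}$ over $\varphi$-exceptional $E$ and the horizontal divisors $E_\varrho$.

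Next I would feed the canonical-divisor formula $K=\sum_{(Z,v)}(\mu(v)(1+c_Z)-1)\,D_{(Z,v)}-\sum_\varrho E_\varrho$ from (\ref{subsec:canonicalDiv}) into $W$ and combine it with the toric discrepancy rule $\operatorname{discr}_\varrho=\langle w,\varrho\rangle-1$ and the truncation criteria of the toric subsection above. Working with log-discrepancies $a_\ell=\operatorname{discr}+1$ keeps the multiplicities harmless: the expected outcome of the bookkeeping is a multiplicative identity $a_\ell(D_{(E,v)};X)=\mu(v)\cdot a_\ell\big(E;(\Loc Y,B)\big)$, where the factor $\coef_E\varphi^*B$ assembled from the $\mu_Z$ (recall $\coef_Z B=1-1/\mu_Z$) and the factor $c_E$ from $K_Y$ combine exactly into the log-discrepancy of $E$ in the pair $(\Loc Y,B)$. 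Because $\mu(v)>0$, positivity of the left side is equivalent to positivity of the right, so ``discrepancy $>-1$'' for all vertical divisors over $\varphi$-exceptional $E$, running over every log-resolution $\varphi$, is precisely log-terminality of $(\Loc Y,B)$.

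It then remains to treat the divisors lying over the fixed point, the horizontal $E_\varrho$ together with the central exceptional divisor of the vertex blow-up. These discrepancies are no longer local on $Y$: the vertex-exceptional divisor is a modification of $Y$ itself, and by adjunction the restriction of $K_W$ plus the exceptional divisor recovers $K_Y+B$ as an orbifold-canonical class, so its discrepancy is controlled by the global positivity of the base $\QQ$-divisor underlying $-K_X$. Here I would use the Cartier-support-function correspondence to identify that base divisor with $-(K_Y+B)$, and then invoke part (3) of the positivity criterion in (\ref{subsec:positivity}), which equates the cone-direction positivity of $-K_X$ with ampleness of $-h|_{\tail(\pDiv)}(0)$. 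This shows that all such discrepancies exceed $-1$ exactly when $-(K_Y+B)$ is ample, that is, when $(\Loc Y,B)$ is Fano; combining with the previous paragraph yields the claimed equivalence.

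The hard part will be the last step: matching the base $\QQ$-divisor extracted from $-K_X$ with $-(K_Y+B)$ and showing that in this higher-rank cone the classical proportionality constraint $-(K_Y+B)\sim_\QQ r\cdot(\text{polarization})$ dissolves into the plain ampleness (Fano) condition. This is exactly where projectivity of $Y$ and ampleness of the evaluations $\pDiv(u)$ for $u\in\relint\tail(\pDiv)\dual$ enter, since the full-dimensional cone of polarizations must be shown to meet the anticanonical direction. Rather than the one-parameter cone formula, I would verify this by a direct intersection-theoretic computation of the vertex discrepancy on $\ttoric(\pDiv)$, using that $\codim(\Sing_X)$ and the very-good hypotheses pin down the relevant positivity.
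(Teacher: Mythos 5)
The survey itself gives no proof of this statement (it only cites \cite{tsing}), so your outline has to stand on its own; its architecture --- toroidal resolution, the canonical divisor formula of (\ref{subsec:canonicalDiv}), and a split of the discrepancy bookkeeping into a base part and a vertex part --- is indeed the right one, but the two load-bearing steps are respectively asserted and deferred, and both have real problems. The central missing ingredient is the $\QQ$-Gorenstein representation of the canonical class: to compute discrepancies at all you must write $\ell K_X=\div(f\chi^{u_0})$ for some $u_0\in M$ and $f\in\CC(Y)^*$ (this is where $\QQ$-factoriality enters, and your argument never uses it), and every discrepancy is then a comparison against $\frac{1}{\ell}\div(f\chi^{u_0})$ pulled back via Theorem \ref{thm:principDiv}, not a subtraction of the two canonical-divisor formulas. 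Doing this gives $a_\ell(D_{(E,v)};X)=\mu(v)\left(1+\tilde{c}_E-\langle v,u_0/\ell\rangle-\ord_E\varphi^*f^{1/\ell}\right)$, which equals $\mu(v)\cdot a_\ell(E;(Y,B))$ only at the vertex $v$ maximizing $\langle\cdot,u_0\rangle$ on $(\varphi^*\pDiv)_E$ (for the others one gets an inequality in the favorable direction). The identification of that extremal value with $\coef_E\varphi^*(K_Y+B)$ rests on the relation $-(K_Y+B)\sim_\QQ\pDiv(-u_0/\ell)$, which is itself extracted from matching coefficients of the vertical divisors $D_{(Z,v)}$ at the vertices of maximal multiplicity $\mu_Z$. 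So the "bookkeeping" whose outcome you announce is precisely the content of the proof, and the route you describe (combining $c_E$ with a coefficient of $\varphi^*B$ directly) does not produce it.

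The vertex part is where the Fano condition lives, and your description of it is structurally off. For $\dim N\geq 2$ and a very good action, the contraction $p:\ttoric(\pDiv)\to\toric(\pDiv)$ is a \emph{small} map (the contracted locus $\bigcup_y\orb(y,\pDiv_y)$ has codimension $\dim N$), so there is no "vertex-exceptional divisor which is a modification of $Y$" to which one could apply adjunction; the divisors over the fixed point are instead the horizontal divisors $D_{\varrho'}$ attached to the new rays $\varrho'$ introduced when subdividing $\tail(\pDiv)$ and the Cayley cones. Their log-discrepancy is $-\langle\varrho',u_0\rangle/\ell$, so positivity for all of them is equivalent to $-u_0\in\innt(\tail(\pDiv)\dual)$, which via $-(K_Y+B)\sim_\QQ\pDiv(-u_0/\ell)$ and the very good hypothesis ($\pDiv(w)$ ample exactly for $w\in\relint\tail(\pDiv)\dual$, trivial GIT chamber structure) is equivalent to ampleness of $-(K_Y+B)$. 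Your substitute --- invoking part (3) of the ampleness criterion in (\ref{subsec:positivity}) --- does not apply, since that criterion concerns Cartier divisors on complete complexity-one $T$-varieties, while $X$ here is affine and of arbitrary complexity; and the concluding "direct intersection-theoretic computation" is exactly the step you would still have to supply.
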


\begin{remark}
  Here, for simplicity we restricted ourselves to the case of $\QQ$-factorial singularities.  In \cite{tsing} the more general case of $\QQ$-Gorenstein singularities is considered. 
\end{remark}

%%%%%%%%%%%%%%%%%%%%%%%%%%%%%%%%%%%%%%%%%%%%%%%%%%%%%%%%%%%%%%%%%%%%%
%%%%%%%%%%%%%%%%%%%%%%%%%%%%%%%%%%%%%%%%%%%%%%%%%%%%%%%%%%%%%%%%%%%%%
%%%
%%%   Polarizations
%%%
%%%%%%%%%%%%%%%%%%%%%%%%%%%%%%%%%%%%%%%%%%%%%%%%%%%%%%%%%%%%%%%%%%%%%
%%%%%%%%%%%%%%%%%%%%%%%%%%%%%%%%%%%%%%%%%%%%%%%%%%%%%%%%%%%%%%%%%%%%%
\section{Polarizations}
\label{sec:polar}

%%%%%%%%%%%%%%%%%%%%%%%%%%%%%%%%%%%%%%%%%%%%%%%%%%%%%%%%%%%%%%%%%%%%%
%%%   Toric varieties built from polyhedra
%%%%%%%%%%%%%%%%%%%%%%%%%%%%%%%%%%%%%%%%%%%%%%%%%%%%%%%%%%%%%%%%%%%%%
\subsection{Toric varieties built from polyhedra}
\label{subsec:projTor}
Let $\nabla$ be a lattice polyhedron in $M_\QQ$, i.e. its
vertices (we assume that there is at least one)
are contained in $M$. Then we can choose a finite
subset $F\subseteq \nabla\cap M$ such that 
$F+(\tail(\nabla) \cap M)=\nabla\cap M$.
Moreover, let $E\subseteq \tail(\nabla) \cap M$ be the Hilbert basis,
cf.\ (\ref{subsec:affToric}).
Then, if $y_e$ ($e\in E$) and $z_f$ ($f\in F$)
denote the affine and projective coordinates, respectively,
we define
$\ptoric(\nabla)\subseteq\CC^{\#E}\times\PP^{\#F-1}_\CC$ as the zero set of the
equations 
$$
\textstyle
\prod_i y_{e_i}\prod_j z_{f_j} = \prod_k y_{e_k}\prod_l z_{f_l}
$$
associated to relations
$$
\textstyle
\sum_i (e_i,0) + \sum_j (f_j,1)= \sum_k (e_k,0) + \sum_l (f_l,1)
$$
inside $M\oplus\ZZ$. If $\nabla=\sigma\dual$ from
(\ref{subsec:affToric}), then this yields the affine variety $\ptoric(\sigma\dual)=\toric(\sigma)$.
The easiest compact examples are
$\nabla=$ $[0,1]\subseteq\QQ$, 
$[0,1]^2\subseteq\QQ^2$, and $\conv\big((0,0), (1,0), (0,1)\big)\subseteq\QQ^2$,
yielding $\PP^1$, $\,\PP^1\times\PP^1$, and $\PP^2$, respectively.
A nice mixed example $\nabla$ arises from adding $\QQ^2_{\geq 0}$ to the
line segment $\ko{(1,0),(0,1)}$, see Figure \ref{fig:polytopeTV}. With $E=\{(1,0),(0,1)\}$ and $F=\{(1,0),(0,1)\}$
we obtain $\ptoric(\nabla)\subseteq\CC^2\times\PP^1$ cut out by the single equation
$y_{(1,0)}z_{(0,1)}=y_{(0,1)}z_{(1,0)}$. That is,
$\ptoric(\nabla)$ equals $\CC^2$ blown up in the origin.

\begin{figure}
\centering
\subfloat[$\nabla$]{\polytopeTV}%\hspace*{10ex}
\subfloat[$\normal(\nabla)$]{\normalPolytopeTV}
\caption{From $\nabla \subset M_\QQ$ to the toric variety $\TV(\normal(\nabla))$, cf.\ (\ref{subsec:projTor}).}
\label{fig:polytopeTV}
\end{figure}

%%%%%%%%%%%%%%%%%%%%%%%%%%%%%%%%%%%%%%%%%%%%%%%%%%%%%%%%%%%%%%%%%%%%%
%%%   Maps induced from basepoint free divisors
%%%%%%%%%%%%%%%%%%%%%%%%%%%%%%%%%%%%%%%%%%%%%%%%%%%%%%%%%%%%%%%%%%%%%
\subsection{Maps induced from basepoint free divisors}
\label{subsec:bpfreeMaps}
We will present the relation between the toric varieties
$\toric(\Sigma)$ from (\ref{subsec:fansToric}) and those rather
simple ones defined in (\ref{subsec:projTor}). 
Let $\nabla$ be again a lattice polyhedron in $M_\QQ$
and denote by $\Sigma := \normal(\nabla)$ its inner normal fan,
cf.\ (\ref{subsec:dissToric}). Then, $\sigma := |\Sigma|$ and
$\tail(\nabla)$ are mutually dual cones and there is
a structure morphism $\toric(\Sigma)\to\toric(\sigma)$.
By (\ref{subsec:toricPrinc}),
the polyhedron $\nabla$ corresponds to a basepoint free divisor
which is globally generated by $F\subseteq\nabla\cap M$.
% if understood over $\toric(\sigma)$.
If $\nabla\cap M$ generates the abelian group $M$,
then the associated morphism
$$
\Phi_\nabla:\toric(\Sigma)\to \ptoric(\nabla) 
\hspace{1em}\mbox{over}\hspace{0.7em}
\toric(\sigma)\subseteq\CC^E
$$
is the normalization map. If $\nabla$ is replaced by
a sufficiently large scalar multiple $\nabla := N\cdot\nabla$
(for each vertex $u\in\nabla$, the set $\nabla-u$ has to contain the Hilbert basis of the cone generated by it),
then it becomes an isomorphism. Thus, $\nabla$ is even an ample divisor.
Moreover, if $\Gamma\leq\nabla$ is a face, then we can use the 
face-cone correspondence from (\ref{subsec:dissToric}) to define
$\ko{\orb}(\Gamma) := \ko{\orb}(\normal(\Gamma,\nabla))\subseteq \toric(\Sigma)$.
Note that $\ko{\orb}(\Gamma)=\ptoric(\Gamma)\subseteq\ptoric(\nabla)$ 
if $\Phi_\nabla$ is an isomorphism.

%%%%%%%%%%%%%%%%%%%%%%%%%%%%%%%%%%%%%%%%%%%%%%%%%%%%%%%%%%%%%%%%%%%%%
%%%   Coordinate changes
%%%%%%%%%%%%%%%%%%%%%%%%%%%%%%%%%%%%%%%%%%%%%%%%%%%%%%%%%%%%%%%%%%%%%

%%%%%%%%%%%%%%%%%%%%%%%%%%%%%%%%%%%%%%%%%%%%%%%%%%%%%%%%%%%%%%
\subsection{Coordinate changes}
\label{subsec:coordCh}
%%%%%%%%%%%%%%%%%%%%%%%%%%%%%%%%%%%%%%%%%%%%%%%%%%%%%%%%%%%%%%
For projective toric varieties $\PP(\nabla)=\toric(\Sigma)$
with $\Sigma=\normal(\nabla)$,
there are three sorts of coordinates hanging around.
First, for every $u\in M$, the element $\chi^u\in\CC[M]$
is a rational function on $\toric(\Sigma)$.
Second, in (\ref{subsec:toricCox}) we have seen that
$\toric(\Sigma)=\big(\CC^{\Sigma(1)}\setminus V(B(\Sigma)\big))/G$
with $G=\ker\big((\CC^*)^{\Sigma(1)}\to T\big)$.
This leads to the ``Cox coordinates'' $y_a$ for $a\in\Sigma(1)$. Finally, in (\ref{subsec:projTor}) and (\ref{subsec:bpfreeMaps}) we have seen the projective
coordinates $z_u$ for $u\in\nabla\cap M$. What does the relation between these
coordinates $\chi^u$, $y_a$, and $z_u$ look like?
\\[0.5ex]
First, if $u,u'\in \nabla\cap M$, then $z_u/z_{u'}=\chi^{u-u'}$. In particular,
if $u'$ is the vertex corresponding to $\sigma\in \Sigma$,
i.e.\ if $\sigma=\normal(u',\nabla)$,
then $\chi^{u-u'}\in\CC[\sigma\dual\cap M]$ is a regular function
on the open subset $\toric(\sigma)\subseteq\toric(\Sigma)$.
If $u\in\nabla\cap M$, then the associated effective divisor
$\nabla(u)\in |\CO_{\toric(\nabla)}(\nabla)|$
equals $\nabla(u)=\div(\chi^u)+\div(\nabla)=
\sum_{a\in\Sigma(1)} \big(\langle a,u\rangle - 
\min \langle a,\nabla\rangle\big)$. Hence,
the two sets of homogeneous coordinates compare via
$$
z_u=\prod_{a\in\Sigma(1)} y_a ^{\langle a,u\rangle - 
\min \langle a,\nabla\rangle}.
$$
Note that shifting $\nabla$ without moving $u$ accordingly does
change the exponents. They just measure
the lattice distance of $u$ from the facets of $\nabla$. 
Finally, since the rational functions $\chi^u$ are quotients of $z_\kbb$
coordinates, one obtains that
$$
\chi^u=\prod_{a\in\Sigma(1)} y_a ^{\langle a,u\rangle}.
$$
This product is invariant under the $G$-action.

%%%%%%%%%%%%%%%%%%%%%%%%%%%%%%%%%%%%%%%%%%%%%%%%%%%%%%%%%%%%%%%%%%%%%
%%%   Divisorial polytopes
%%%%%%%%%%%%%%%%%%%%%%%%%%%%%%%%%%%%%%%%%%%%%%%%%%%%%%%%%%%%%%%%%%%%%
\subsection{Divisorial polytopes}
\label{subsec:DivPol}
In (\ref{subsec:projTor}) and (\ref{subsec:bpfreeMaps}) we have seen that  polarized toric varieties correspond to lattice polyhedra. This can be generalized to complete complexity-one $T$-varieties if we replace lattice polyhedra with so-called \emph{divisorial polytopes}:

\begin{definition}
	Let $Y$ be a smooth projective curve.
A divisorial polytope $\Psi$ on $Y$ consists of a lattice polytope $\Box\subset M_\QQ$ and a piecewise affine concave function (cf.\ Definition \ref{def:concave}) $$\Psi=\sum \Psi_P\otimes P:\Box\to \DivQ Y,$$ 
such that
\begin{enumerate}
\item For $u$ in the interior of $\Box$, $\deg \Psi(u) > 0$ ;
\item For any vertex $u$ of $\Box$ with $\deg \Psi(u)=0$, $\Psi(u)$ is trivial;
\item For all $P\in Y$, the graph of $\Psi_P$ is integral, i.e. has its vertices in $M \times \ZZ$.
\end{enumerate}
\end{definition}

We construct a polarized $T$-variety from $\Psi$ in a roundabout manner.
For $v\in N_\QQ$ and any point $P\in Y$, set $\Psi_P^*(v)=\min_{u\in \Box} (\langle v,u\rangle -\Psi_P(u))$; this gives us a collection of piecewise affine concave functions on $N_\QQ$. Now let $\pFan_P$ be the polyhedral subdivision of $N_\QQ$ induced by $\Psi_P^*$ and take $\pFan=\sum \pFan_P\otimes P$.  Furthermore, let $\marked$ be the subset of $\tail(\pFan)$ consisting of those $\sigma\in\tail(\pFan)$ such that $(\deg \circ \Psi)|_{F_\sigma} \equiv 0$, where $F_\sigma \prec \Box$ is the face where $\langle \cdot, v \rangle$ takes its minimum for all $v \in \sigma$.

As in remark \ref{rmk:degreeVSmarkings}, the sum of slices $\pFan$ together with markings $\marked$ determines a $T$-variety $X_{\Psi}$.	
	Furthermore, $\Psi^*=(\Psi_P^*)_{P\in Y}$ is a divisorial support function determining an ample Cartier divisor on $X_\Psi$ as in (\ref{subsec:CDivX}) such that $(\Psi^{* *},\Box_{\Psi^*})=(\Psi,\Box)$.

\begin{theorem}[\cite{NathHend}, Theorem 3.2]
The mapping
$$
\Psi\mapsto(X_\Psi,\cO(D_{\Psi^*}))
$$
gives a correspondence between divisorial polytopes and pairs of complete complexity-one $T$-varieties with an invariant ample line bundle.
\end{theorem}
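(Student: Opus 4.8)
The plan is to exhibit the stated mapping as a bijection by constructing an explicit inverse and invoking slice-wise Fenchel duality. The essential mechanism is that, for each point $P\in Y$, passing from the concave function $\Psi_P$ on $\Box$ to $\Psi_P^*$ on $N_\QQ$ is the Legendre--Fenchel transform, which is an involution on the class of piecewise affine concave functions; this is exactly the content of the already-recorded identity $(\Psi^{**},\Box_{\Psi^*})=(\Psi,\Box)$. The two directions of the correspondence are therefore the two directions of this transform, performed simultaneously over all $P$ and bundled with the combinatorial (marking) data.

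First I would verify that the forward map lands in the correct target. Given a divisorial polytope $\Psi$, each $\Psi_P^*$ is a finite concave piecewise affine function on all of $N_\QQ$, being a minimum of the affine functions $v\mapsto\langle v,u\rangle-\Psi_P(u)$ over the compact polytope $\Box$; hence each induced subdivision $\pFan_P$ is a complete polyhedral subdivision of $N_\QQ$, and by Theorem~\ref{sec:thm-complete} (together with the remark reducing the check to slices $\pFan_y$) the resulting $X_\Psi$ is complete. Since only finitely many $\Psi_P$ differ from the common linear part, the collection $\Psi^*=(\Psi_P^*)$ is a genuine divisorial support function and yields an invariant Cartier divisor $D_{\Psi^*}$ by Proposition~\ref{prop:cadiv}. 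For ampleness I would invoke the positivity criterion of (\ref{subsec:positivity}): strict concavity of each $\Psi_P^*$ relative to its own linearity subdivision $\pFan_P$ is automatic, so the only genuine condition is that $-\Psi^*|_\sigma(0)$, which equals the divisor $\Psi(u_\sigma)$ for the vertex $u_\sigma=F_\sigma$ of $\Box$, be ample on the curve $Y$ for each maximal tailcone $\sigma$. On a complete curve this means positive degree, which is precisely condition (1) for interior vertices, while condition (2) identifies the markings $\marked$ as exactly the cones where this degree vanishes.

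Conversely, given a complete complexity-one $T$-variety $X=\TV(\pFan)$ with an invariant ample line bundle $\cO(D_h)$, where $h\in\CaSF(\pFan)$ corresponds to $D_h$ via Proposition~\ref{prop:cadiv}, I would reverse the transform: set $\Box:=\Box_h$ (the weight polytope of (\ref{subsec:globalSections})) and $\Psi_P:=(h_P)^*$, the slice-wise Fenchel dual of the support function. The three defining conditions must then be checked. Integrality of the graph of $\Psi_P$ comes from the integrality built into $h\in\CaSF(\pFan)$; concavity of $\Psi_P$ is the dual of $h_P$ being strictly concave relative to $\pFan_P$, which holds by ampleness via the positivity criterion; and the degree inequalities (1)--(2) are the Fenchel-dual restatement of the ampleness degree conditions, read off from the same criterion. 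That the two assignments are mutually inverse then rests on the involutivity of the transform: $\Psi_P^{**}=\Psi_P$ and $\Box_{\Psi^*}=\Box$ in one direction, and $(h_P^*)^*=h_P$, $\Box_{h^*}=\Box_h$ in the other, using crucially that $\pFan_P$ is by definition the linearity subdivision of $\Psi_P^*=h_P$, so that slices, support function, and markings are all recovered together.

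The main obstacle I anticipate is not the pointwise convex duality, which is classical, but the global coherence that assembles the individual slices into a single polarized $T$-variety. Concretely, one must show that the marking set $\marked$ reconstructed from $\Psi$ --- the cones $\sigma$ with $(\deg\circ\Psi)|_{F_\sigma}\equiv 0$ --- agrees with the degree and marking data of the original variety, and that the case distinction in the ampleness criterion of (\ref{subsec:positivity}) between maximal cones of affine versus complete locus corresponds exactly to the interior/boundary dichotomy for vertices of $\Box$ encoded in conditions (1) and (2). In other words, the delicate point is to match completeness and ampleness on the geometric side with compactness of $\Box$ and strict positivity of $\deg\Psi$ on its interior, and to verify this honestly rather than merely up to the equivalences of (\ref{subsec:nonAffComp1}).
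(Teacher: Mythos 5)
The survey does not actually prove this theorem --- it is quoted verbatim from \cite{NathHend}, Theorem 3.2 --- but the scaffolding set up in (\ref{subsec:DivPol}) (the slice-wise transform $\Psi\mapsto\Psi^*$, the induced subdivisions $\pFan_P$, the marking set $\marked$, and the recorded identity $(\Psi^{**},\Box_{\Psi^*})=(\Psi,\Box)$) is precisely the inverse pair you describe, so your route via slice-wise Legendre--Fenchel duality combined with Theorem~\ref{sec:thm-complete} and the positivity criteria of (\ref{subsec:positivity}) is the intended one.

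Two steps in your plan are asserted rather than justified and are exactly where conditions (1)--(3) of the definition do their work. First, before Proposition~\ref{prop:cadiv} can be invoked you must check that $\Psi^*$ lies in $\CaSF(\pFan)$, i.e.\ that $\Psi^*|_{\pDiv^\sigma}$ is \emph{principal} for every $\sigma$ with complete locus; since $-\Psi^*|_\sigma(0)=\Psi(u_\sigma)$ and the complete-locus cones are the marked ones, this is the content of condition (2) (triviality, not merely vanishing degree, of $\Psi(u)$ at such vertices), whereas you use (2) only to identify $\marked$. The same conditions are needed even earlier, to verify that $(\sum_P\pFan_P\otimes P,\marked)$ is a genuine f-divisor --- that each $\pDiv^\sigma$ with marked $\sigma$ really is a p-divisor --- before Proposition~\ref{prop:degHelps} produces $X_\Psi$ at all. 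Second, the phrase ``condition (1) for interior vertices'' is off: vertices of $\Box$ never lie in its interior. The correct argument is that $\deg\circ\Psi$ is concave, hence nonnegative on all of $\Box$ by condition (1) and continuity, and at an \emph{unmarked} vertex $u_\sigma$ the value $0$ is excluded by the definition of $\marked$, so $\deg\Psi(u_\sigma)>0$ and $\Psi(u_\sigma)$ is ample on the curve. These are fixable local repairs; you correctly identify the global matching of markings, loci, and the interior/boundary dichotomy of $\Box$ as the real content of the proof.
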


%%%%%%%%%%%%%%%%%%%%%%%%%%%%%%%%%%%%%%%%%%%%%%%%%%%%%%%%%%%%%%%%%%%%%%%%%
\subsection{Constructing divisorial polytopes}
\label{subsec:constrDivPoly}
%%%%%%%%%%%%%%%%%%%%%%%%%%%%%%%%%%%%%%%%%%%%%%%%%%%%%%%%%%%%%%%%%%%%%%%%%
Given a polarized toric variety $X$ determined by some lattice polytope $\nabla\subset {\widetilde M}_\QQ$, and given some action of a codimension-one subtorus $T$, one may ask what the corresponding divisorial polytope looks like. A downgrading procedure similar to that of (\ref{subsec:tDown}) yields the answer:
the inclusion of the subtorus $T$ in the big torus once again gives an exact sequence
$$
\xymatrix@R=0.3ex@C=2.5em{
% \ar@/_1.1pc/[lll]_-{\kss}&&
0 \ar[r] & \ZZ \ar[r] & 
\raisebox{0.8ex}{$\tM$} \ar[r]^-{p} & 
M \ar[r] &  0\\
%%%%%%%%%%%%%%%%%%%%%%%%%%%%%%%%%%%%%%%%
%%%%%%%%%%%%%%%%%%%%%%%%%%%%%%%%%%%%%%%%
%%%%%%%%%%%%%%%%%%%%%%%%%%%%%%%%%%%%%%%%
}
$$
where $M$ is the character lattice of $T$.
We again choose a section $s:M\hookrightarrow \tM$ of $p$.
The downgrade of $\nabla$ is then the divisorial polytope $\Psi:p(\nabla) =: \Box\to\DivQ \PP^1$, where for $u\in \Box$,
\begin{align*}
	\Psi_0(u)=\max (p^{-1}(u)-s(u)) \\
\Psi_\infty(u)=\min (p^{-1}(u)-s(u)).
\end{align*}
The corresponding variety $X_\Psi$ is canonically isomorphic to $X$ as a polarized $T$-variety.

\begin{figure} 
\begin{center}
\nablapic
\end{center}
\caption{$\nabla$ for a toric Fano threefold, cf.\ Example \ref{ex:divpoly}.}\label{fig:nablafano}
\end{figure}

\begin{figure}
\begin{center}
\subfloat[$\Box$]{\boxpic}
\subfloat[$\Psi_0$]{\psizero}
\subfloat[$\Psi_\infty$]{\psiinfty}
\end{center}
\caption{A downgraded divisorial polytope, cf.\ Example \ref{ex:divpoly}.}\label{fig:boxdivpoly}
\end{figure}

\begin{example}\label{ex:divpoly}
Consider the polytope
\begin{align*}
\nabla=\conv \big\{ (-1,-1,-1),(-1,0,-1),(0,-1,-1),(0,0,-1),\\(-1,-1,0),(-1,0,0),(0,-1,0),(0,1,0)(1,0,0),(1,1,0)\\(0,0,1),(0,1,1),(1,0,1),(1,1,1)\big\}
\end{align*}
pictured in Figure \ref{fig:nablafano}. Then $\PP(\nabla)$ is a toric Fano threefold. We now consider the subtorus action corresponding to the projection $\ZZ^3\to\ZZ^2$ onto the first two factors. Using the above downgrading procedure we get a divisorial polytope $\Psi:\Box\to\Div_\QQ\PP^1$, where $\Box$ is the hexagon pictured in Figure \ref{fig:boxdivpoly}(a), and $\Psi_0$ and $\Psi_\infty$ are the piecewise affine functions with domains of linearity and values as pictured in Figure \ref{fig:boxdivpoly}(b) and (c).
\end{example}

%%%%%%%%%%%%%%%%%%%%%%%%%%%%%%%%%%%%%%%%%%%%%%%%%%%%%%%%%%%%%%%%%%%%%%%%%
\subsection{Divisorial polytopes and the $\proj$ construction}
\label{subsec:divPolyProj}
%%%%%%%%%%%%%%%%%%%%%%%%%%%%%%%%%%%%%%%%%%%%%%%%%%%%%%%%%%%%%%%%%%%%%%%%%
Similar to the toric case, we can construct a projective variety directly from $\Psi$:
$$
\ptoric(\Psi) := \proj \left(\Sym \bigoplus_{u\in \Box\cap M} \Gamma\left(Y,\cO(\Psi(u))\right)\chi^u\right).
$$
Although the torus $T$ acts on $\ptoric(\Psi)$, this action will in general not be effective.

Note that the relationship between $\ptoric (\Psi)$ and $X_\Psi$ is similar to that between $\ptoric(\nabla)$ and $\toric(\normal(\nabla))$ for a lattice polytope $\nabla$, see \cite[Section 4]{NathHend}. Indeed, the ample divisor $D_{\Psi^*}$ on $X_\Psi$ determines a dominant rational map $\Phi_\Psi:X_\Psi\dashrightarrow \ptoric (\Psi)$. $\Phi_\Psi$ is a regular morphism if $D_{\Psi^*}$ is globally generated, which is in particular the case if $Y=\PP^1$. Furthermore, assuming that $\Phi_\Psi$ is  regular, $\Phi_\Psi$ is the normalization map if it is birational. 
A sufficient condition for $\Phi_\Psi$ to be birational is that 
$\Psi(u)$ is very ample for some $u\in\Box\cap M$, and the set 
$$
\{u\in\Box\cap M\ |\ \dim \Gamma(Y,\CO(\Psi(u)))>0\}
$$
generates the lattice $M$.

%%%%%%%%%%%%%%%%%%%%%%%%%%%%%%%%%%%%%%%%%%%%%%%%%%%%%%%%%%%%%%%%%%%%%
%%%   The momentum map
%%%%%%%%%%%%%%%%%%%%%%%%%%%%%%%%%%%%%%%%%%%%%%%%%%%%%%%%%%%%%%%%%%%%%
\subsection{The moment map}
\label{moMap}
Consider an action of a torus $T$ on $
\PP^k$ given by weights $u_i\in M$, i.e.
$$
t.(x_0:\ldots:x_k)=(t^{u_0}x_0:\ldots:t^{u_k}x_k).
$$
Then a \emph{moment map} for this action is given by
\begin{align*}
\cm:\PP^n&\to M_\RR\\
(x_0:\ldots:x_n)&\mapsto \frac{1}{\sum |x_i|^2}\cdot \sum |x_i|^2 u_i.
\end{align*}
If $X$ is any $T$-invariant subvariety $\iota:X\hookrightarrow\PP^n$; then a moment map $\mu$ for $X$ is given by the composition $
\mu=\cm\circ\iota$. In particular, if $\iota$ is given by sections $s_0,\ldots,s_k$ of some very ample line bundle $\cL$ on $X$, then
we have
\begin{align*}
\mu:X&\to M_\RR\\
x&\mapsto \frac{1}{\sum |s_i(x)|^2}\cdot \sum |s_i(x)|^2 u_i.
\end{align*}
The image of this map is a rational polytope called the \emph{moment polytope}. By the theorem of Atiyah, Guillemin, and Sternberg, this polytope is the convex hull of the images of the points of $X$ fixed by $T$, see \cite{GuilleminSternberg}.

If $X$ is toric, and $\cL$ corresponds to some lattice polytope $\nabla\subset M_\QQ$, then $\mu(X)=\nabla$, see for example \cite{fulton}, section 4.2. This can be easily seen by considering the images of the $T$-fixed points. Indeed, here the weights $u_i$ are simply the elements of $\nabla \cap M$, and the torus fixed points correspond to vertices of $\nabla$. Setting $\rho_j(x) := |s_j(x)|^2/(\sum |s_i(x)|^2)$, one can show that $\rho_j(x)=0$ unless $x$ corresponds to $u_j$.

On the other hand, if $X=X_\Psi$ for some divisorial polytope $\Psi$ and $\cL=\cO(D_{\Psi^*})$, then $\mu(X)=\Box$.
Indeed, let $p:\widetilde X \to X$ resolve the indeterminacies of the rational quotient map $X\to Y$ to a regular map $\pi:\widetilde{X}\to Y$. Then the fixed points of $\widetilde X$ in the fiber $\pi^{-1}(P)$ correspond exactly to the vertices of the graph of $\Psi_P$. By locally upgrading $\widetilde X$ to a toric variety in a formal neighborhood of $\pi^{-1}(P)$, we get that $\rho_j(x)=0$ for a fixed point $x$ unless it corresponds to a vertex of weight $u_j$, in which case $\rho_j(x)$ might be non-zero. Since the fixed points of $\widetilde{X}$ map surjectively onto the fixed points of $X$, the claim follows. 

We can also easily determine the momentum polytopes of the individual components of each fiber $\pi^{-1}(P)$. A component $Z$ of $\pi^{-1}(P)$ corresponds to a facet $\tau$ of the graph of $\Psi_P$, and the fixed points of $Z$ correspond exactly to the vertices of $\tau$. By arguments similar to above, $\mu(Z)$ is nothing other than the projection of $\tau$ to $\Box$. Thus, the subdivision of $\Box$ induced by the linearity regions of $\Psi_P$ reflects the structure of the irreducible components of $\pi^{-1}(P)$. This is similar to the situation presented in \cite[(1.2)]{kapranov:93a} for Grassmannians, which is addressed in more generality in \cite{hu:05a}. 

\begin{example}
Consider the divisorial polytope $\Psi$ from Example \ref{ex:divpoly}. Then the moment polytope of $X_\Psi$ is the hexagon of Figure \ref{fig:boxdivpoly}(a). Furthermore, we can see the fibers of $X_\Psi\mapsto \PP^1$.  The general fiber is the toric del Pezzo surface of degree six corresponding to the aforementioned hexagon, whereas the fibers over $0$ and $\infty$ each consist of three intersecting copies of $\PP^1\times\PP^1$ as can be seen from the subdivisions in Figures \ref{fig:boxdivpoly}(a) and \ref{fig:boxdivpoly}(b).
\end{example}

%%%%%%%%%%%%%%%%%%%%%%%%%%%%%%%%%%%%%%%%%%%%%%%%%%%%%%%%%%%%%%%%%%%%%
%%%%%%%%%%%%%%%%%%%%%%%%%%%%%%%%%%%%%%%%%%%%%%%%%%%%%%%%%%%%%%%%%%%%%
%%%
%%%   Toric intersection theory
%%%
%%%%%%%%%%%%%%%%%%%%%%%%%%%%%%%%%%%%%%%%%%%%%%%%%%%%%%%%%%%%%%%%%%%%%
%%%%%%%%%%%%%%%%%%%%%%%%%%%%%%%%%%%%%%%%%%%%%%%%%%%%%%%%%%%%%%%%%%%%%

\section{Toric intersection theory}
\label{sec:toricInt}

%%%%%%%%%%%%%%%%%%%%%%%%%%%%%%%%%%%%%%%%%%%%%%%%%%%%%%%%%%%%%%%%%%%%%
%%%   Simplicial toric varieties
%%%%%%%%%%%%%%%%%%%%%%%%%%%%%%%%%%%%%%%%%%%%%%%%%%%%%%%%%%%%%%%%%%%%%
\subsection{Simplicial toric varieties}
\label{subsec:simpToricInt}
Let $X=\toric(\Sigma)$ be a $k$-dimensional, complete
toric variety induced from a simplicial
fan. Thus, $X$ has at most abelian quotient singularities and 
one can define (rational) intersection numbers among
$k$ divisors. This intersection product is uniquely determined by
exploiting the following basic rules:
\begin{enumerate}
\item[(i)]
If, with the notation as in (\ref{subsec:affToric}),
$\sigma=\langle a^1,\ldots,a^k\rangle$ is a full-dimensional cone
of $\Sigma$,
then $(\ko{\orb}\,{a^1}\cdot\ldots\cdot\ko{\orb}\,{a^k})
= 1/\vol(\sigma)$
with
$\vol(\sigma) := \det(a^1,\ldots,a^k)$.
\item[(ii)]
If $a^1,\ldots,a^k$ (not necessarily distinct) are not contained in a common
cone of $\Sigma$, 
then $(\ko{\orb}\,{a^1}\cdot\ldots\cdot\ko{\orb}\,{a^k})= 0$.
\item[(iii)]
The intersection product factors via the relation
$\sum_{a\in\Sigma(1)} a \otimes \ko{\orb}(a)\sim0$
of linear equivalence from (\ref{subsec:toricPrinc}).
\end{enumerate}

\begin{example}
\label{ex:intersectionToricSurfaces}
In the case of a smooth, complete toric surface $\TV(\Sigma)$ with $\Sigma(1) = \{a^1,a^2,\dots\}$, one always has relations of the sort $a^{i-1} - b_i\, a^i + a^{i+1}=0$ between adjacent generators. This leads to the self intersection numbers $(\ko{\orb}(a^i))^2=-b_i$. In particular, if $\til{\CC^2}\to\CC^2$ is the blowup of the origin given by the fan $\Sigma$
with $\Sigma(1)=\{(1,0), (1,1), (0,1)\}$, then, despite the fact that it is not complete, one sees that $(E^2)=-1$ with $E := \ko{\orb}\,(1,1)$.

  Let us assume that $a^0=(-1,0)$, $a^i=(p_i,q_i)$, with $a_i$ being numbered counterclockwise. Now, one can prove by induction that the self-intersection numbers $-b_i$ and the generators $a_i$ are related by the following continued fraction formula
\[
[b_{1},\ldots,b_{i}] := b_{1} - \frac{1}{b_{2}-\frac{1}{\ddots}} = \frac{q_i}{p_i'}, \quad \text{ with } 0 < p_i' < q_i \text{ and } p_i' \equiv p_i \,(\text{mod } q_i).
\]
\end{example}

%%%%%%%%%%%%%%%%%%%%%%%%%%%%%%%%%%%%%%%%%%%%%%%%%%%%%%%%%%%%%%%%%%%%%
%%%   Intersecting with Cartier divisors
%%%%%%%%%%%%%%%%%%%%%%%%%%%%%%%%%%%%%%%%%%%%%%%%%%%%%%%%%%%%%%%%%%%%%
\subsection{Intersecting with Cartier divisors}
\label{subsec:toricIntCartier}
For general, i.e.\
non-simplicial toric varieties, intersection numbers as in
(\ref{subsec:simpToricInt}) do not make 
sense. However, one can always intersect cycles with Cartier divisors.
\\[1ex]
Let $\nabla\subseteq M_\QQ$ be a lattice polyhedron and
$\Sigma := \normal(\nabla)$. Then, every face $\Gamma\leq\nabla$ gives rise
to two different geometric objects. First,
in (\ref{subsec:toricPrinc}) we have constructed the Cartier divisor
$\div(\Gamma)$ on the toric variety $\toric(\normal(\Gamma))$.
Afterwards, in (\ref{subsec:bpfreeMaps}), we have defined the cycle
$\ko{\orb}(\Gamma)\subset\toric(\Sigma)$.
Note that, in accordance with (\ref{subsec:toricOrbits}),
this cycle equals $\toric(\normal(\Gamma))$.
Now, the fundamental but trivial observation in intersecting $T$-invariant
cycles with Cartier divisors is
$$
\div(\nabla)\cdot \ko{\orb}(\Gamma) = \div(\Gamma).
$$

\begin{proof}
According to (\ref{subsec:toricOrbits}), this formula is obtained by noting first that the vertices of $\nabla$ corresponding
to cones $\sigma\geq \tau := \normal(\Gamma,\nabla)$ are exactly those from $\Gamma$. Then, one restricts the corresponding monomials by shifting
$\Gamma$ into the abelian subgroup $M_\Gamma := \Gamma-\Gamma$.
This shift makes it possible to understand the above formula as
a relation between divisor \emph{classes}.
\end{proof}

The projective way of expressing the formula for $\div(\nabla)$ from the end of (\ref{subsec:CDtoric}) is
$$
\div(\nabla) = \sum_\Gamma\dist(0,\Gamma)\cdot \ko{\orb}(\Gamma)
$$
where $\Gamma$ runs through the codimension-one faces (``facets'')
of $\nabla$ and $\dist(0,\Gamma)$ denotes the oriented
(positive, if $0\in\nabla$)
lattice distance of the origin 
to the affine hyperplane containing $\Gamma$. 

\begin{example}
\label{ex:blowingUp}
Let $\nabla$ be the polyhedron from (\ref{subsec:projTor}) describing
the blowup $\til{\CC^2}\to\CC^2$ of the origin.
Denoting the unique compact edge by $e$, we have that $\div(\nabla) = - \ko{\orb}(e) = -E$ since $0\notin\nabla$. Thus,
$(E^2) = -\div(\nabla)\cdot \ko{\orb}(e) = \div(e)$ and the latter
describes the ample divisor class of degree one, i.e. a point,  on $\PP(e)=\PP^1$.
\end{example}

\begin{corollary}
\label{cor:selfIntVol}
$(\div(\nabla)^k)=\vol(\nabla)\cdot k!$.
\end{corollary}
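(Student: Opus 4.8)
The plan is to deduce the Corollary from the two facts just established --- the facet decomposition $\div(\nabla)=\sum_\Gamma\dist(0,\Gamma)\cdot\ko{\orb}(\Gamma)$ (sum over the facets $\Gamma\leq\nabla$) and the restriction formula $\div(\nabla)\cdot\ko{\orb}(\Gamma)=\div(\Gamma)$ --- together with a volume-theoretic ``pyramid decomposition'' of $\nabla$. I proceed by induction on $k=\dim\nabla=\dim X$, with the trivial base case $k=1$, where $\div(\nabla)$ has degree equal to the lattice length $\vol(\nabla)$ of the segment $\nabla$. So assume $\nabla$ is a full-dimensional lattice polytope (if $\nabla$ is lower-dimensional, both sides vanish).

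Since $\div(\nabla+m)$ differs from $\div(\nabla)$ by the principal divisor $\div(\chi^m)$ for $m\in M$, while $\normal(\nabla)$ and $\vol(\nabla)$ are translation invariant, I first translate $\nabla$ so that $0\in\innt\nabla$; then $\dist(0,\Gamma)>0$ for every facet $\Gamma$. Expanding one factor via the facet decomposition and using bilinearity gives
\[
(\div(\nabla)^k)=\sum_\Gamma\dist(0,\Gamma)\big(\div(\nabla)^{k-1}\cdot\ko{\orb}(\Gamma)\big).
\]
By the projection formula, intersecting the class $\div(\nabla)^{k-1}$ with the subvariety $\ko{\orb}(\Gamma)=\TV(\normal(\Gamma))$ computes the top self-intersection of the restriction $\div(\nabla)|_{\ko{\orb}(\Gamma)}$ on the $(k-1)$-dimensional complete toric variety $\TV(\normal(\Gamma))$. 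By the restriction formula this restriction is exactly the class $\div(\Gamma)$, so $\div(\nabla)^{k-1}\cdot\ko{\orb}(\Gamma)=(\div(\Gamma)^{k-1})$. The inductive hypothesis, applied to the facet $\Gamma$ viewed as a lattice polytope for the lattice $M\cap\tau^\perp$ (with $\tau=\normal(\Gamma,\nabla)$ the corresponding ray, whose perp is the linear hull of $\Gamma-\Gamma$), then yields $(\div(\Gamma)^{k-1})=(k-1)!\,\vol(\Gamma)$.

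It remains to prove the lattice pyramid identity $\sum_\Gamma\dist(0,\Gamma)\,\vol(\Gamma)=k\cdot\vol(\nabla)$. I would decompose $\nabla$ into the pyramids $P_\Gamma:=\conv(\{0\}\cup\Gamma)$ over its facets, which tile $\nabla$ since $0\in\innt\nabla$. For each $\Gamma$, choosing coordinates in which the primitive inner normal $a_\Gamma$ becomes a coordinate functional identifies the lattice normalization of $\vol(\Gamma)$ (with respect to $M\cap a_\Gamma^\perp$) with the Euclidean $(k-1)$-area of $\Gamma$ inside its supporting hyperplane, and a Fubini computation along the height direction gives $\vol(P_\Gamma)=\tfrac1k\,\dist(0,\Gamma)\,\vol(\Gamma)$; summing over $\Gamma$ gives the identity. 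Substituting back yields $(\div(\nabla)^k)=(k-1)!\cdot k\cdot\vol(\nabla)=k!\,\vol(\nabla)$.

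The main obstacle is bookkeeping the lattice normalizations consistently: one must check that the normalized $(k-1)$-volume of a facet $\Gamma$ with respect to the induced lattice $M\cap\tau^\perp$ --- the volume that appears via the inductive hypothesis on $\TV(\normal(\Gamma))$ --- is precisely the factor making the pyramid formula $\vol(P_\Gamma)=\tfrac1k\,\dist(0,\Gamma)\,\vol(\Gamma)$ hold with the \emph{same} normalization used for $\vol(\nabla)$. The primitivity of the ray generator $a_\Gamma$ is exactly what guarantees this compatibility, so I would isolate that as the one nontrivial lemma and verify it in the coordinates above.
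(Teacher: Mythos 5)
Your proposal is correct and follows essentially the same route as the paper: induction on $k$, expanding one factor of $\div(\nabla)$ via the facet formula, reducing to $(\div(\Gamma)^{k-1})$ via the restriction formula $\div(\nabla)\cdot\ko{\orb}(\Gamma)=\div(\Gamma)$, and concluding with the pyramid identity $\sum_\Gamma\dist(0,\Gamma)\vol(\Gamma)=k\cdot\vol(\nabla)$. The only difference is that you spell out the translation normalization and the lattice-volume compatibility for facets, which the paper leaves implicit (its signed-distance convention makes the translation step unnecessary).
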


\begin{proof}
We proceed by induction.
$$
\renewcommand{\arraystretch}{1.3}
\begin{array}{rcl}
(\div(\nabla)^k)
& = &
(\div(\nabla)^{k-1})\cdot \sum_\Gamma\dist(0,\Gamma)\cdot \ko{\orb}(\Gamma)
\\
& = &
\sum_\Gamma\dist(0,\Gamma)\cdot (\div(\Gamma)^{k-1})
\\
& = &
\sum_\Gamma\dist(0,\Gamma)\cdot \vol(\Gamma)\cdot (k-1)!
\; = \;
k\cdot \vol(\nabla) \cdot (k-1)!\;.
\end{array}
\vspace{-3.5ex}
$$
\end{proof}

%%%%%%%%%%%%%%%%%%%%%%%%%%%%%%%%%%%%%%%%%%%%%%%%%%%%%%%%%%%%%%%%%%%%%
%%%   First attempts for $T$-varieties
%%%%%%%%%%%%%%%%%%%%%%%%%%%%%%%%%%%%%%%%%%%%%%%%%%%%%%%%%%%%%%%%%%%%%
\subsection{A result for complexity-one $T$-varieties}
\label{subsec:intersecT}

We already know from (\ref{subsec:DivPol}) that polytopes have to be replaced by divisorial polytopes when proceeding from toric varieties to complexity-one $T$-varieties. To do intersection theory on a complete complexity-one $T$-variety $\TV(\fan)$ we thus have to come up with a natural extension of the volume function.

\begin{definition}
For a function $h^*:\Box\rightarrow \wdiv_\QQ Y$ we define its \emph{volume} to be
\[\vol h^* := \sum_P \int_{\Box} h^*_P \vol_{M_\RR}\,.\]
We associate a \emph{mixed volume} to functions $h^*_1,\ldots, h^*_k$ by setting
\[V(h^*_1,\ldots, h^*_k) := \sum_{i=1}^k (-1)^{i-1} \hspace*{-2ex} \sum_{1\leq j_1 \leq \ldots j_i \leq  k} \hspace*{-2ex} \vol(h^*_{j_1} + \cdots + h^*_{j_i})\,.\]
\end{definition}

\begin{proposition}[\cite{tidiv}, Proposition 3.31]
\label{prop:intersectionNumbers}
Let $\pFan$ be a complete f-divisor on the curve $Y$ and let $D_h$ be a semiample Cartier divisor on $\TV(\pFan)$. Setting $n := \dim \TV(\pFan)$, the top self-intersection number of $D_h$ is given by
\[(D_h)^n = n!\vol h^*.\]
Moreover, assuming that $h_1, \ldots, h_n$ define semiample Cartier divisors $D_{h_i}$, we have that 
\[(D_{h_1} \cdots D_{h_n}) = n!\,V(h^*_1, \ldots, h^*_n).\]
\end{proposition}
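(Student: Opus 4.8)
The plan is to read off $(D_h)^n$ as the leading coefficient of the Hilbert-type function $m\mapsto h^0\big(\TV(\pFan),\cO(mD_h)\big)$ and to match that coefficient with the integral defining $\vol\hstar$. Since $D_h$ is semiample it is in particular nef, so its top self-intersection equals its volume and the higher cohomology of $mD_h$ grows only like $O(m^{n-1})$; consequently $(D_h)^n=\lim_{m\to\infty} n!\,h^0\big(\TV(\pFan),\cO(mD_h)\big)/m^n$, and it suffices to extract the leading term of $h^0(mD_h)$.

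First I would apply the weight decomposition of (\ref{subsec:globalSections}) to $mD_h$, giving
\[h^0\big(\TV(\pFan),\cO(mD_h)\big)=\sum_{u\in\Box_{mh}\cap M} h^0\big(Y,\cO_Y((mh)^*(u))\big).\]
Two scaling identities make the right-hand side computable. Because the coefficients of $mD_h$ are $m$ times those of $D_h$, the weight polytope scales as $\Box_{mh}=m\,\Box_h$, and the defining $\min$-formula for $\hstar$ in (\ref{subsec:globalSections}) gives the homogeneity $(mh)^*(m u')=m\,\hstar(u')$. Hence each summand is a divisor on the curve $Y$ of degree $m\deg\hstar(u/m)$, and Riemann--Roch on $Y$ (of genus $g$) yields $h^0\big(Y,\cO_Y(A)\big)=\deg A+O(1)$ with the error bounded uniformly, the $H^1$-contribution vanishing once $\deg A>2g-2$ and being at most $g$ otherwise. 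Thus $h^0\big(Y,\cO_Y((mh)^*(u))\big)=m\deg\hstar(u/m)+O(1)$.

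The limit is then a Riemann sum. Writing $k:=\dim M=n-1$, the points $u\in m\Box_h\cap M$ are the grid points $u/m$ of spacing $1/m$ inside $\Box_h$, of which there are $\sim m^{k}\vol_k(\Box_h)$, so the accumulated $O(1)$ errors contribute only $O(m^{n-1})$. Here a genuine subtlety intervenes, which I expect to be the main obstacle: the summand $h^0(Y,\cO_Y(\hstar(u)))$ vanishes wherever $\deg\hstar(u)<0$, so the count naively produces $\int_{\Box_h}\max(0,\deg\hstar)$. One must therefore establish that $\deg\hstar(u)\geq 0$ for all $u\in\Box_h$, and this is precisely where the semiampleness hypothesis is used --- through the concavity of the $h_P$ and the degree condition in the nef criterion of (\ref{subsec:positivity}). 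Granting this, the main term is
\[m\sum_{u\in m\Box_h\cap M}\deg\hstar(u/m)=m^{n}\int_{\Box_h}\deg\hstar\,\vol_{M_\RR}+O(m^{n-1})=m^{n}\,\vol\hstar+O(m^{n-1}),\]
the last equality being just $\vol\hstar=\sum_P\int_{\Box_h}\hstar_P=\int_{\Box_h}\deg\hstar$. Comparing leading coefficients with the nef asymptotics of the first paragraph gives $(D_h)^n=n!\,\vol\hstar$.

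Finally the mixed statement follows by polarization. For any $\emptyset\neq S\subseteq\{1,\dots,n\}$ the divisor $\sum_{j\in S}D_{h_j}=D_{\sum_{j\in S}h_j}$ is again semiample, so the self-intersection formula gives $\big(\sum_{j\in S}D_{h_j}\big)^n=n!\,\vol\big((\sum_{j\in S}h_j)^*\big)$. Inserting this into the polarization identity $(D_{h_1}\cdots D_{h_n})=\tfrac1{n!}\sum_{\emptyset\neq S}(-1)^{n-|S|}\big(\sum_{j\in S}D_{h_j}\big)^n$ and rearranging the inclusion--exclusion into the combinatorial shape defining $V$ yields $(D_{h_1}\cdots D_{h_n})=n!\,V(\hstar_1,\dots,\hstar_n)$. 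A secondary technical point to verify here is that the sum $\hstar_{j_1}+\cdots+\hstar_{j_i}$ appearing inside $V$ is to be read as the transform $(h_{j_1}+\cdots+h_{j_i})^*$ of the summed support function --- the analogue of a Minkowski sum --- so that $\vol$ indeed behaves as the homogeneous degree-$n$ form whose polarization is $V$.
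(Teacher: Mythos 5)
Your argument is correct and is precisely the route the paper takes: the paper's one-line proof derives the formula from $(D_h)^n=\lim_m \frac{n!}{m^n}\dim\Gamma(mD_h)$ together with the weight decomposition of global sections from (\ref{subsec:globalSections}), which is exactly your Riemann-sum computation. The details you flag --- the non-negativity of $\deg h^*$ on $\Box_h$ (which does follow from concavity of $\deg h^*$ together with the nef criterion evaluated at the slopes $u(\sigma)$, whose convex hull is $\Box_h$) and the polarization step for the mixed case --- are left implicit in the paper, and you handle them correctly.
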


This assertion follows from the fact that $(D_h)^n$ is equal to $\lim_{k \to \infty} \frac{n!}{k^n} \dim \Gamma(kD_h)$ and the explicit description of the global sections from (\ref{subsec:globalSections}). Note that this statement is a special instance of \cite[Theorem 8]{timDivs}.

%%%%%%%%%%%%%%%%%%%%%%%%%%%%%%%%%%%%%%%%%%%%%%%%%%%%%%%%%%%%%%%%%%%
\subsection{Intersection graphs for smooth $\CC^*$-surfaces}
%%%%%%%%%%%%%%%%%%%%%%%%%%%%%%%%%%%%%%%%%%%%%%%%%%%%%%%%%%%%%%%%%%%
Let $\pFan=\sum_{P} \pFan_P\otimes P$
together with $\sdeg=\emptyset$ be a complete f-divisor in the lattice $N=\ZZ$ on a curve $Y$ of genus $g$. Let $\{P_1, \ldots, P_r\}$ be the support of $\pFan$ and $\pFan_i:=\pFan_{P_i}$. In this situation we also write $\pFan=\sum_{P} \pFan_P\otimes P = \sum_{i=1}^r \pFan_i \otimes P_i$.

The slices $\pFan_i$ are complete subdivisions of $\QQ$ into bounded and half-bounded intervals. This data gives rise to a $\CC^*$-surface $X=\toric(\pFan)$. Here, we only consider the case of smooth surfaces (for a criterion on smoothness see (\ref{sec:toroidal-resolutions})). Since we assume that $\sdeg = \emptyset$, $\ttoric(\pFan)=\toric(\pFan)$. By the results on invariant prime divisors from (\ref{subsec:WDivX}), we will find two horizontal prime divisors on $X$, one for each ray in the tail fan $\{\QQ_{\leq 0},0 , \QQ_{\geq 0}\}$. In the case of a $\CC^*$-surface a horizontal prime divisor is simply a curve consisting of fixed points. Moreover, these curves turn out to be isomorphic to $Y$; we will denote them by $F_+$ and $F_-$. In addition to these curves we get vertical prime divisors corresponding to boundary points $\frac{p_{ij}}{q_{ij}} \in \pFan_{i}$, $i=1,\ldots,r$ and $j=1,\ldots,n_i$. Here we assume, that  $v_{i(j+1)}:=\frac{p_{i(j+1)}}{q_{i(j+1)}}> \frac{p_{ij}}{q_{ij}}=:v_{ij}$. These curves are closures of maximal orbits of the $\CC^*$-action, and we will denote them by $D_{ij}$. 

Now, similar to the observations in \cite{tsing}, it turns out that in a neighbourhood of $D_{i1}, \ldots, D_{ir_i}$, $X$ is locally formally isomorphic to the toric surface spanned by the rays $(-1,0),(p_{i1},q_{i1}), \ldots, (p_{in_i},q_{in_i}), (1,0)$ in a neighbourhood of the invariant prime divisors. In particular, the mutual  intersection behaviour of the $ D_{i1}, \ldots, D_{in_i}$ is exactly the same as that of the invariant divisors of the toric surface.
Hence, two curves $D_{ij}$ and  $D_{lm}$ intersect (transversely) exactly when $i=l$ and $|m-j|=1$. A curve $D_{ij}$ intersects $F^+$ if and only if  $\frac{p_{ij}}{q_{ij}}$ is a maximal boundary point and  it intersects $F_-$ if and only if it is a minimal boundary point, i.e. $j=1$. In addition, we obtain the self-intersection number
\[c_-=(F_-)^2=((F_-+\div(\chi^{1}))\cdot F_-)=((\sum_i v_{i1}D_{i1})\cdot F_-) = \sum_i v_{i1}\]
and similarly 
\[c_+=(F_+)^2=-\sum_i v_{in_i}.\]
 By the considerations in Example~\ref{ex:intersectionToricSurfaces}, the self-intersection  numbers $b_{j}^i=-(D_{ij})^2$ can be related to the boundary points $v_{ij}=\frac{p_{ij}}{q_{ij}}$ using the formulae
\[
q_{i1}=1,\quad [b_{1}^i,\ldots,b_{(j-1)}^i] := b_{1}^i - \frac{1}{b_{2}^i-\frac{1}{\ddots}} = \frac{q_{ij}}{p_{ij}'}.
\]
Hence, we obtain an intersection graph of the following form
\[
\entrymodifiers={++[o][F-]}
\def\objectstyle{\scriptstyle}
\xymatrix@-1.2pc{
*{}         
& *{}
& {-b^1_{1_{ \ }}} \ar@{-}[r]  
& {-b^1_{2_{ \ }}} \ar@{.}[rr] 
& *{}
& {-b^1_{n_1}}  
& *{}
& *{} 
\\
*{}
& *{}
& *{}
& *{}
& *{}
& *{}
& *{}
& *{}
\\
*{F^-}
&
{  \txt<2em>{$\;c_-$\\$[g]$} } 
\ar@{-}[uur] \ar@{-}[ur] \ar@{-}[ddr] \ar@{-}[dr] 
& *{\vdots}   
&*{\vdots}  
& *{}
& *{\vdots} 
& {\txt<2em>{$\;c_+$\\$[g]$}} 
\ar@{-}[ul] \ar@{-}[uul] \ar@{-}[ddl] \ar@{-}[dl] 
& *{F^+}
\\
*{}
& *{}
& *{}
& *{}
& *{}
& *{}
& *{}
& *{} 
\\
*{}
& *{}
& {-b_{1_{ \ }}^r} \ar@{-}[r]  
& {-b_{2_{ \ }}^r} \ar@{.}[rr] 
& *{}
& {-b_{n_r}^r} 
& *{}
& *{} 
\\
  }
\]
By Orlik and Wagreich \cite{0352.14016}, these graphs correspond to deformation classes of $\CC^*$-surfaces with two curves of consisting of fixed points. We indeed obtain families of surfaces having these intersection graphs by varying $Y$ together with the point configuration $P_1, \ldots, P_r$ and by adding algebraic families of degree zero divisors to $\pFan$.

We can also reverse the procedure to obtain an f-divisor out of an intersection graph. Choose any integral $v_{i1}=p_{i1}$ satisfying  $\sum_i v_{i1} = c_-$. The remaining $v_{ij}$ are then determined by the $b_j^i$ via the above continued fractions. Choosing any genus $g$ curve $Y$ and pairwise different points $P_1,\ldots,P_r\in Y$ then determines an f-divisor. The open choices of the $Y$, of $P_1, \ldots, P_r \in Y$, and of particular $v_{i1}$ reflect the fact that the intersection graph only determines the topological type of $X$.

\begin{remark}
  The Neron-Severi group of a $\CC^*$-surface is generated by the the curves $F^\pm,D_{ij}, D_0$. Here the curve $D_0$ corresponds to the vertex of a trivial coefficient of $\pFan$ and its intersection behaviour is given by $D_0^2=0$, $D_0\cdot F^\pm =1$ and $D_0 \cdot D_{ij} =0$.  The matrix of the intersection form is encoded by our graph and by the intersection behaviour of $D_0$. The relations between the generators are given as the kernel of this matrix. Now Section~\ref{sec:furtherDivisors} provides a representative of the canonical class 
\[K=-F^+ - F^- -\sum_{ij} D_{ij} + (r+2g-2)\cdot D_0.\]
\end{remark}

\begin{remark}
 In order to obtain arbitrary  $\CC^*$-surfaces, Orlik and Wagreich considered a so-called  canonical equivariant resolution of the surface. This resolution is always of the above type. In our notation this is the same as considering the minimal resolution of $\ttoric(\pFan)$ for an arbitrary  $\CC^*$-surface given as $\toric(\pFan)$. 

Recall from the downgrade procedure in Section~\ref{subsec:tDown} that  $\toric(\pFan)$ is toric if $Y=\PP^1$  and the support of $\pFan$ consists of
at most two points (i.e. $(Y,\supp(\pDiv))$ is a toric pair). By the above, such an f-divisor corresponds to a circular intersection graph with $g=0$. 
\end{remark}

%%%%%%%%%%%%%%%%%%%%%%%%%%%%%%%%%%%%%%%%%%%%%%%%%%%%%%%%%%%%%%%%%%%%%
%%%%%%%%%%%%%%%%%%%%%%%%%%%%%%%%%%%%%%%%%%%%%%%%%%%%%%%%%%%%%%%%%%%%%
%%%
%%%   Deformations
%%%
%%%%%%%%%%%%%%%%%%%%%%%%%%%%%%%%%%%%%%%%%%%%%%%%%%%%%%%%%%%%%%%%%%%%%
%%%%%%%%%%%%%%%%%%%%%%%%%%%%%%%%%%%%%%%%%%%%%%%%%%%%%%%%%%%%%%%%%%%%%
\section{Deformations}
\label{sec:Deformations}

%%%%%%%%%%%%%%%%%%%%%%%%%%%%%%%%%%%%%%%%%%%%%%%%%%%%%%%%%%%%%%%%%%%%%
%%%   Homogeneous Deformations of toric varieties
%%%%%%%%%%%%%%%%%%%%%%%%%%%%%%%%%%%%%%%%%%%%%%%%%%%%%%%%%%%%%%%%%%%%%
\subsection{Deformations of affine toric varieties}
\label{subsec:homogToricDef}

\newcommand{\defQ}{\sigma_r}

An \emph{equivariant deformation} of a $T$-variety $X$ is a
$T$-equivariant pullback diagram
$$
\xymatrix@R=0.7pc@C=0.7pc{
     X \ar[r] \ar[d] & \cX \ar[d]^\pi \\
     0 \ar[r] & S
}
$$
where $\pi$ is a flat map. The torus $T$ acts on $X$, $\cX$, and $S$, and the 
$T$-action on the total space $\cX$  induces the given action on the 
special fiber $X = \pi^{-1}(0)$.

A deformation of $X$ over $\CC[\keps]/(\keps^2)$ such that
$T$ acts on $\keps$ with  weight $r \in M$ is called \emph{homogeneous}
of degree $-r$. This induces an $M$-grading on the module
$T^1_X$ of first order deformations.
We fix now some primitive degree $r \in M$,  along with a cosection
$s^*\colon N \to N_r := N \cap r^\perp$. Non-primitive degrees
can also be handled at the cost of more complex notation.

Consider now an affine toric variety $X = \TV(\sigma)$.
Using the toric description of $\Omega_X$~\cite{danilov},
$\sigma$ determines a complex that allows one
to compute  $T^1_X(-r)$ and, if $X$ is non-singular in 
codimension 2, $T^2_X(-r)$~\cite{tdef_T1,tdef_T2}.

$T^1_X(-r)$ may also be described as a vector space of
Minkowski summands of  the polyhedron
$\defQ := s^*(\sigma \cap \{r = 1\})$:
the set of scalar multiples of Minkowski summands of a 
polyhedron $\Delta$ forms
a cone $C(\Delta)$, with Grothendieck group $V(\Delta) := C(\Delta)-C(\Delta)$.
For example, if $\Delta$ is a parallelogram, $C(\Delta)$ is 
two-dimensional, spanned by the edges of $\Delta$. Then $T^1_X(-r)$
may be described by augmenting
$V(\defQ)$ with information about possible non-lattice vertices of 
$\defQ$~\cite[Theorem 2.5]{tdef_flip}.

\begin{definition}
  A \emph{Minkowski decomposition}
  of a polyhedron $\Delta$ is a Minkowski sum $\Delta = \sum \Delta_i$
  of polyhedra $\Delta_i$ with common tailcone. It is
  \emph{admissible} if for each face of $\Delta$, at most one
  of the corresponding faces $\face(\Delta_i,u)$ does not
  contain lattice points.
\end{definition}

For degrees $r \in \sigma\dual$,
admissible decompositions $\defQ = \Delta_0 + \dotsm + \Delta_l$
allow one to construct \emph{toric deformations}, where
$\cX$ is a toric variety and the embedding
$X \hookrightarrow \cX$ is a morphism of toric varieties.
Similarly to the Cayley cone in (\ref{sec:toroidal-resolutions}),
we define the cone $\wt{\sigma}$ in $\wt{N} := N_r \oplus \Z^{l+1}$ 
generated by $\Delta_i \times \{e_i\}$, $0 \le i \le l$.
Taking $\cX=\TV(\wt{\sigma})$, the binomials $\chi^{[0,e^i]} - \chi^{[0,e^0]}$ define a map
$\pi\colon \cX  \to \CC^l$.

\begin{theorem}[{\cite[Theorem 3.2]{tdef_flip}}]
\
\begin{enumerate}
\item $\cX \to \CC^l$ is a toric deformation of $X$.
\item The corresponding Kodaira-Spencer map $\CC^l \to T^1_X(-r)$ maps $e_i$ to the class of the Minkowski summand $\Delta_i \in C(\sigma_r) \subset V(\defQ)$.
\end{enumerate}
\end{theorem}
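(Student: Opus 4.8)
The plan is to produce the embedding $X\hookrightarrow\cX$ and the special fibre directly from the combinatorics, deduce flatness by a Cohen--Macaulay argument, and then read the Kodaira--Spencer map off the shape of $\widetilde\sigma$. First I would record the splitting of $N$ coming from the cosection. Since $r$ is primitive, $r\colon N\to\ZZ$ is surjective with kernel $N_r$, and $s^*$ is a retraction of $N_r\hookrightarrow N$; hence $N\cong N_r\oplus\ZZ n_0$ with $r(n_0)=1$, $s^*(n_0)=0$, and $n=s^*(n)+r(n)\,n_0$ for every $n\in N$. Using this I define the lattice map $\iota\colon N\to\widetilde N=N_r\oplus\ZZ^{l+1}$ by $\iota(n)=(s^*(n),\,r(n)\cdot\mathbf 1)$, where $\mathbf 1=e_0+\dots+e_l$. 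The first key step is to verify $\iota(\sigma)\subseteq\widetilde\sigma$: for $n\in\sigma$ with $r(n)=h>0$ one has $s^*(n/h)\in\sigma_r=\Delta_0+\dots+\Delta_l$, so $s^*(n)=\sum_i h\delta_i$ with $\delta_i\in\Delta_i$, whence $\iota(n)=\sum_i h\,(\delta_i,e_i)\in\widetilde\sigma$; the case $r(n)=0$ lands in the common tailcone. This yields the toric morphism $\TV(\iota)\colon X\to\cX$, and the computation $\iota^*[0,e^i]=r$ for every $i$ shows that all $\chi^{[0,e^i]}$ restrict to $\chi^r$ on $X$, so $X$ maps into the special fibre $\pi^{-1}(0)=\{\chi^{[0,e^0]}=\dots=\chi^{[0,e^l]}\}$.

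Next I would identify $X$ with $\pi^{-1}(0)$ as schemes by comparing semigroups: imposing $e^0=\dots=e^l$ on the dual side collapses $\widetilde N$ along $\iota$ back to $\sigma$, so $\pi^{-1}(0)$ is cut out by exactly the relations defining $\TV(\sigma)$. This is the step where admissibility of $\sigma_r=\sum\Delta_i$ enters, guaranteeing that the special fibre is reduced and irreducible rather than acquiring spurious components. Granting this, flatness follows by miracle flatness: $\cX$ is an affine toric variety, hence normal and Cohen--Macaulay, while the base $\CC^l$ is regular, so $\pi$ is flat provided every fibre has dimension $\dim\cX-l$. A dimension count gives $\dim\cX=(\dim N-1)+(l+1)=\dim X+l$, and each fibre, being cut out by the $l$ functions $t_i=\chi^{[0,e^i]}-\chi^{[0,e^0]}$, has dimension at least $\dim X$; the remaining task is to exclude dimension jumps, which reduces to a property of $\widetilde\sigma$ secured by admissibility (alternatively one checks that $t_1,\dots,t_l$ is a regular sequence on the Cohen--Macaulay semigroup ring). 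Finally, $T$-equivariance is immediate: $T_N$ acts on $\cX$ through $\iota$, each $t_i$ is semi-invariant of weight $\iota^*[0,e^i]=r$, so $\pi$ intertwines the actions and the family is homogeneous of degree $-r$. This establishes (1).

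For (2), I would compute the Kodaira--Spencer image of $e_i$ by restricting the family to first order along the $i$-th axis, that is, by base-changing to $\CC[t_i]/(t_i^2)$ with the remaining $t_j$ set to zero. Under the identification $T^1_X(-r)\cong V(\sigma_r)$ augmented with the non-lattice-vertex data of \cite[Theorem 2.5]{tdef_flip}, this first-order family is governed precisely by switching on the single summand $\Delta_i$ in the decomposition of $\sigma_r$, so the class of $e_i$ maps to $\Delta_i\in C(\sigma_r)\subseteq V(\sigma_r)$. Once both the description of $\widetilde\sigma$ in terms of the $\Delta_i$ and the Minkowski-summand description of $T^1_X(-r)$ are in hand, this matching is bookkeeping.

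The main obstacle I expect lies in part (1): showing that the special fibre is exactly $X$ (reduced and irreducible) and thereby securing flatness. This is where the admissibility hypothesis is indispensable, since without it the binomials may fail to cut out a reduced equidimensional special fibre, the family ceases to be flat, and the Kodaira--Spencer statement loses its meaning. The identification in (2), by contrast, is essentially formal given the Minkowski-summand description of $T^1_X(-r)$ recorded before the theorem.
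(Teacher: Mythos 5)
The survey itself gives no proof of this statement --- it is quoted from \cite[Theorem 3.2]{tdef_flip} --- so your proposal can only be measured against the argument in that reference, whose outline (lattice map $N\to\wt N$, identification of the special fibre, flatness, then the Kodaira--Spencer computation) you have reproduced correctly. Your map $\iota(n)=(s^*(n),r(n)\cdot\mathbf 1)$, the verification $\iota(\sigma)\subseteq\wt\sigma$, and the observation $\iota^*[0,e^i]=r$ are all right, and you have correctly located admissibility as the load-bearing hypothesis.

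The gap is that the one step carrying all the content is asserted rather than proved: you write ``Granting this'' precisely at the identification $\CC[\wt\sigma\dual\cap\wt M]/(\chi^{[0,e^i]}-\chi^{[0,e^0]})\cong\CC[\sigma\dual\cap M]$. The mechanism is concrete and you should exhibit it. A lattice point $[u,t_0,\dots,t_l]\in\wt\sigma\dual\cap\wt M$ is characterized by $t_i\geq\lceil-\min\langle\Delta_i,u\rangle\rceil$, while a lattice point of $\sigma\dual$ over $u$ is characterized by $\eta\geq\lceil-\min\langle\sigma_r,u\rangle\rceil=\lceil\sum_i-\min\langle\Delta_i,u\rangle\rceil$. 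Admissibility is exactly the statement that for each $u$ at most one of the faces $\face(\Delta_i,u)$ is lattice-free, hence at most one of the numbers $\min\langle\Delta_i,u\rangle$ is non-integral, hence $\sum_i\lceil-\min\langle\Delta_i,u\rangle\rceil=\lceil\sum_i-\min\langle\Delta_i,u\rangle\rceil$. This identity is what makes $\iota^*$ surjective on semigroups (so that $X\to\pi^{-1}(0)$ is not merely a closed embedding into a larger scheme) and what forces the Hilbert functions of the quotient and of $\CC[\sigma\dual\cap M]$ to agree degree by degree, which is how one proves both the fibre identification and flatness simultaneously; ``reduced and irreducible'' is a consequence of this count, not an independent input. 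Your miracle-flatness route also leaves the dimension of the \emph{nonzero} fibres unaddressed: to rule out jumps you need either the regular-sequence check you mention in parentheses, or the observation that $T_N$ acts on the base $\CC^l$ with weight $r$ through the $t_i$, so every fibre degenerates to $\pi^{-1}(0)$ and semicontinuity applies. Part (2) is, as you say, a direct verification once the graded comparison above is in place, but it does rest on that comparison, so it inherits the gap.
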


\begin{example}\label{ex:defMinus4}
We consider deformations of the cone over the rational normal curve
from Example~\ref{ex:minus4} with $r = [0,1]$. The non-trivial Minkowski decompositions of $\defQ = \conv\{(-\frac{1}{2},1), (\frac{1}{2},1)\}$
correspond to the decompositions of the interval
$[-\frac{1}{2},\frac{1}{2}] = [-\frac{1}{2},0] + [0,\frac{1}{2}]
                            = \{-\frac{1}{2}\} + [0, 1]$.
For the first decomposition, we get the cones
(generated by the columns of)
\begin{align*}
\wt{\sigma} &= 
  \begin{pmatrix}
        -\frac{1}{2} & 0 & 0 & \frac{1}{2} \\
        1 & 1 & 0 & 0 \\
        0 & 0 & 1 & 1
  \end{pmatrix} &
\wt{\sigma}\dual &=
  \begin{pmatrix}-2 & 0 & 0 & 2 \\
         0 & 0 & 1 & 1 \\
         1 & 1 & 0 & 0
  \end{pmatrix},
\end{align*}
with Hilbert basis $E = \{[e,0,1] \mid -2 \le e \le 0\}
                    \cup \{[e,1,0] \mid  0 \le e \le 2\}$.
The equations for $\TV(\wt{\sigma})$ and those corresponding
to the second decomposition are
\begin{align*}
\rank \pmat{y_0 & y_1 & y_2' & y_3 \\ y_1 & y_2 & y_3 & y_4} &\le 1 &
\rank \pmat{y_0 & y_1 & y_2 \\ y_1 & y_2' & y_3 \\ y_2 & y_3 & y_4} &\le 1.
\end{align*}
yielding two one-parameter deformations with deformation parameters
$s = y_2 - y_2'$ of degree $r$. These two one-parameter deformations generate $T^1(-r)$. This is in fact Pinkham's famous example of a singularity whose versal base space consists of two irreducible components; we see curves from both components via the above toric deformations.
\end{example}

\begin{figure}
\centering
\subfloat[$\sigma_r$]{\minusVierQR}\hspace*{5ex}
\subfloat[Two decompositions.]{\minusVierDecomp}
\caption{Toric deformations of the cone over the rational normal curve.}
\label{fig:defMinusVier}
\end{figure}

This construction may be generalized to degrees $r \not\in \sigma\dual$
by deforming the toric variety associated with
$\tau = \sigma \cap \{r \ge 0\}$~\cite{tdef_flip}, but the total spaces
are no longer toric. The resulting families arise naturally in
the language of polyhedral divisors as explained
in (\ref{subsec:equivDeformLocal}), or
by Mavlyutov's approach using Cox rings~\cite{mavlyutov_cox}.

It is even possible to describe the homogeneous parts
of the versal deformation if $X$ is non-singular in codimension 2.
The \emph{universal Minkowski summand} $\wt{C}(\defQ)$ is a cone lying over $C(\defQ)$.
The associated map of toric varieties $\TV(\wt{C}(\defQ)) \to 
\TV(C(\defQ))$
allows one to obtain a family $\cX \to \ovl{\cM}$, where the scheme
$\ovl{\cM}$ is determined by $C(\defQ) \subset V(\defQ)$.

\begin{theorem}[\cite{tdef_versal},\cite{tdef_versal2}]
$\cX \to \ovl{\cM}$ is the versal deformation of $X$ in degree $-r$.
If $X$ is an isolated Gorenstein singularity, then $T^1$ is 
concentrated in the Gorenstein degree $r_0$, so $\cX$ is the versal deformation.
\end{theorem}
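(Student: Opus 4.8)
The plan is to treat the two assertions in turn: first that $\cX \to \ovl{\cM}$ is versal among deformations of fixed homogeneous degree $-r$, and then that for an isolated Gorenstein singularity the whole module $T^1_X$ sits in a single degree, so that this one homogeneous family is already the full versal deformation. For the versality in degree $-r$ I would begin by pinning down the tangent level. By the Minkowski-summand description of first-order deformations recalled above (cf.\ \cite[Theorem~2.5]{tdef_flip}), the homogeneous piece $T^1_X(-r)$ is the Grothendieck group $V(\defQ)$ of Minkowski summands of $\defQ$, corrected by the data of possible non-lattice vertices. By construction the cone $C(\defQ) \subseteq V(\defQ)$ of genuine scalar Minkowski summands is the tangent cone of $\ovl{\cM}$ at the origin, and the Kodaira--Spencer map sends the tautological summand to its class; the first task is to verify that this map is an isomorphism onto $T^1_X(-r)$, which gives completeness of the family at first order.

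The second and central step is to promote completeness to versality by an obstruction comparison. I would realize the family through the universal Minkowski summand cone $\wt{C}(\defQ)$ lying over $C(\defQ)$: the toric morphism $\TV(\wt{C}(\defQ)) \to \TV(C(\defQ))$, passed through the base change encoding the tautological decomposition, yields $\cX \to \ovl{\cM}$, and flatness of this map follows from a binomial/Hilbert-basis argument exactly as in the toric deformation result \cite[Theorem~3.2]{tdef_flip}. The scheme $\ovl{\cM}$ is cut out inside $V(\defQ)$ by the relations recording which partial sums of summands stay admissible, and the decisive point is to identify the quadratic part of these defining equations with the obstruction pairing $T^2_X(-r)$ receives from $T^1_X(-r) \otimes T^1_X(-r)$; this is where one uses that $X$ is non-singular in codimension two, so that $T^2_X(-r)$ admits the combinatorial description of \cite{tdef_T2}. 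Once the full, not merely quadratic, equations of $\ovl{\cM}$ are matched with the obstruction theory, $\ovl{\cM}$ represents the degree-$(-r)$ deformation functor up to a smooth factor, which is exactly the versality of $\cX \to \ovl{\cM}$.

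The hard part is precisely this obstruction matching: completeness at first order is essentially linear algebra, but controlling \emph{all} higher obstructions combinatorially --- proving that the ideal of $\ovl{\cM}$ is generated exactly by the lifted admissibility relations, with no further obstruction surviving outside this pattern --- is the genuine content of the theorem. For the Gorenstein statement I would invoke the characterization of a Gorenstein toric cone $\sigma$ as the cone over a lattice polytope sitting at lattice height one with respect to a primitive element $r_0 \in M$ (the Gorenstein degree), together with the fact that its vertices are the primitive ray generators. An Euler-type computation of the graded pieces of $T^1_X$, combined with the isolatedness of the singularity, then forces $T^1_X = T^1_X(-r_0)$; hence the homogeneous family constructed above in degree $-r_0$ already accounts for every first-order deformation, so $\cX$ is not merely versal in a single degree but is the versal deformation of $X$.
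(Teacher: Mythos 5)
The survey gives no proof of this theorem; it is imported verbatim from \cite{tdef_versal} and \cite{tdef_versal2}, so the only meaningful comparison is with the arguments in those references. Measured against them, your outline does reproduce the correct architecture: identify $T^1_X(-r)$ with the corrected space $V(\sigma_r)$ of Minkowski summands, check that the Kodaira--Spencer map of the family obtained from the universal Minkowski summand $\wt{C}(\sigma_r)$ is bijective, and then pass from completeness to versality by comparing the defining ideal of $\ovl{\cM}$ with the combinatorial description of $T^2_X(-r)$ available when $X$ is smooth in codimension two.

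Nevertheless, as a proof the proposal has a genuine gap precisely where you flag one. The versality criterion for a family over $\Spec(P/J)$ with $P$ smooth and bijective Kodaira--Spencer map requires the obstruction map $(J/\mathfrak{m}_P J)^{*} \to T^2_X$ to be injective; it is not enough to ``identify the quadratic part of the equations with the obstruction pairing,'' since a priori higher-order obstructions could impose equations on the true versal base that are not visible in $\ovl{\cM}$. The entire content of \cite{tdef_versal} is the explicit verification that the finitely many relations among Minkowski summands cutting out $\ovl{\cM}$ inside $V(\sigma_r)$ are exactly matched, under this map, by the generators of $T^2_X(-r)$ computed in \cite{tdef_T2}; your sketch names this step but supplies no argument for it. Two further caveats: the concentration of $T^1_X$ in the Gorenstein degree is not a consequence of isolatedness alone --- the $A_n$ surface singularities for $n\ge 2$ are isolated, Gorenstein and toric yet have $T^1$ spread over $n$ distinct degrees --- so your ``Euler-type computation'' must invoke smoothness in codimension two (hence $\dim X\ge 3$) and the degree-by-degree formula of \cite{tdef_T1}; and the first-order identification $T^1_X(-r)\cong V(\sigma_r)/\CC$ genuinely needs the correction terms for non-lattice vertices of $\sigma_r$ from \cite[Theorem~2.5]{tdef_flip}, which vanish in the Gorenstein case because $\sigma_r$ is then a lattice polytope --- a simplification your argument should use explicitly rather than leave implicit.
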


%%%%%%%%%%%%%%%%%%%%%%%%%%%%%%%%%%%%%%%%%%%%%%%%%%%%%%%%%%%%%%%%%%%%%
%%%   Equivariant deformations of affine $T$-varieties
%%%%%%%%%%%%%%%%%%%%%%%%%%%%%%%%%%%%%%%%%%%%%%%%%%%%%%%%%%%%%%%%%%%%%
\subsection{Equivariant deformations of affine $T$-varieties}
\label{subsec:equivDeformLocal}

Suppose now that $X$ is some affine $T$-variety. By restricting the torus 
action to the subtorus $T_r := \ker (r\colon T \to \CC^*)$, the study of 
deformations in degree $r$ may be reduced to the study of invariant 
deformations of the $T_r$-variety $X$.

Thus, in the following, we replace $T$ by $T_r$, and consider equivariant 
deformations in degree $r = 0$. In particular, the torus $T$ acts 
trivially on the base, and acts on every fiber. Such families may be 
described by families of polyhedral divisors: the total space $\cX$ over 
$B$ corresponds to a p-divisor $\cE$ on $Z \to B$ that restricts to 
p-divisors $\cE_s = \cE_{|Z_s}$ with $\cX_s = \TV(\cE_s)$.
See Figure~\ref{fig:tvardef} for a simple example.

Note that when two prime divisors on $Z$ restrict to the same prime 
divisor on a fiber $Z_s$, their polyhedral coefficients are added via Minkowski summation.
If we wish to fix some central fiber $X = \TV(\pDiv)$, we may obtain invariant deformations 
of $X$ in two ways. First of all, we may simply move prime divisors on the base of $Y$. Secondly, for more interesting deformations, we may split up some of the polyhedral coefficients into Minkowski sums. 
Requiring 
that $X$ is the fiber $\cX_0$ implies that these Minkowski decompositions 
must be admissible.

Concretely, we say that a deformation of $(Y, \{D_i\})$ is a deformation 
$Z \to B$ of $Y$ together with a collection of prime divisors 
$\{E_{i,j}\}$ such that no $E_{i,j}$ contains fibers of $Z \to B$ and 
such that each $E_{i,j}$ restricts to $D_i$ in $Y$. Given $l_i$-parameter 
Minkowski decompositions $\pDiv_{D_i} = \sum \Delta_i^j$ of the 
coefficients $\pDiv_{D_i}$, we define a polyhedral divisor $\cE$ on $Z$ 
by setting $\cE_{E_{i,j}} = \Delta_i^j$.

\begin{figure}
\centering
\subfloat[$Z = \PP^1 \times \AA^1 \to \AA^1$]{\hspace*{2ex}\tvardefBase\hspace*{2ex}}
\hspace*{2ex}
\subfloat[Restrictions of $\cE$.]{\hspace*{2ex}\tvardefDivisors\hspace*{2ex}}
\caption{One-parameter toric deformation as deformation of p-divisors.}
\label{fig:tvardef}
\end{figure}

\begin{theorem}[{\cite[Section 2]{defrat_tvar}}]
If $Y = \PP^1$ and the given Minkowski decompositions are admissible,
then there exists a deformation $(Z, \{E_{i,j}\})$ of $(Y, \{D_i\})$,
where $Z$ is the trivial family over some open subset of $\CC^l$,
$l = \sum l_i$,
such that $\cE$ is a p-divisor, and $\cX = \TV(\cE) \to \CC^l$ is
an invariant deformation of $X$.
The fibers of this family are $T$-varieties
$\TV(\cE_s)$, where $\cE_s$ is the restriction of $\cE$ to the fiber
$Z_s$.
\end{theorem}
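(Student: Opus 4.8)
The plan is to realise the asserted family explicitly and then to reduce the three claims --- that $\cE$ is a p-divisor, that $\cX=\TV(\cE)\to U\subseteq\CC^l$ is flat, and that its fibres are the $\TV(\cE_s)$ --- to a single cohomology-and-base-change computation on a trivial $\PP^1$-bundle. First I would fix an affine coordinate $t$ on $Y=\PP^1$ so that the deformed points are $D_i=\{t=a_i\}$ with the $a_i\in\CC$ pairwise distinct and $\infty\notin\{D_i\}$, and I assume, as the construction intends, that the decomposed coefficients $\pDiv_{D_i}$ are genuine polyhedra, so that $\Loc\pDiv=Y$ and $\deg\pDiv\subsetneq\sigma:=\tail(\pDiv)$. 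Writing $\pDiv_{D_i}=\sum_{j=0}^{l_i}\Delta_i^j$, I introduce coordinates $s=(s_{ij})$ on $\CC^l$ with $l=\sum_i l_i$, set $Z=\PP^1\times U$ for a small open $U\ni 0$, and take the graph divisors $E_{i,0}=\{t=a_i\}$ and $E_{i,j}=\{t=a_i+s_{ij}\}$ for $j\ge 1$; these restrict to $D_i$ over $s=0$ and are pairwise disjoint over generic $s$. With $\cE=\sum_{i,j}\Delta_i^j\otimes E_{i,j}$, the deformation map $\cX\to U$ is the one induced by the inclusion of the degree-zero sections $\Gamma(Z,\cO_Z)=\cO_U(U)$ into the coordinate ring of $\cX$.

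To check that $\cE$ is a p-divisor I would exploit that $\min\langle\,\cdot\,,u\rangle$ is additive under Minkowski summation: for $u\in\sigma\dual$ one gets $\cE(u)=\sum_{i,j}\min\langle\Delta_i^j,u\rangle\,E_{i,j}$, and since each $E_{i,j}$ is linearly equivalent on $Z$ to $\{\infty\}\times U$ via the function $t-a_i-s_{ij}$, we have $\cE(u)\sim\deg\pDiv(u)\cdot(\{\infty\}\times U)$, the pullback of $\cO_{\PP^1}(\deg\pDiv(u))$. As $\pDiv$ is a p-divisor with complete locus, $\deg\pDiv\subseteq\sigma$ gives $\deg\pDiv(u)\ge 0$, so $\cE(u)$ is semiample, while the bigness of $\pDiv(u)$ for $u\in\innt\sigma\dual$ gives $\deg\pDiv(u)>0$ and hence bigness of $\cE(u)$ over the projective family $Z\to U$. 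This is precisely Definition~\ref{def:pDiv} read fibrewise, as in (\ref{subsec:openComp1}).

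Flatness then follows formally. The round-down $\floor{\cE(u)}=\sum_{i,j}\floor{\min\langle\Delta_i^j,u\rangle}\,E_{i,j}$ is a fixed line bundle on $Z=\PP^1\times U$, so its restriction to every fibre $Z_s$ has the same degree $d(u):=\sum_{i,j}\floor{\min\langle\Delta_i^j,u\rangle}$ --- each $E_{i,j}$ being a section contributes one point in each fibre. Consequently $s\mapsto h^0(Z_s,\cO(\floor{\cE(u)})|_{Z_s})=\max(0,d(u)+1)$ is constant, so by cohomology and base change over the reduced base $U$ for the flat projective $\pi\colon Z\to U$ the sheaf $\pi_*\cO_Z(\cE(u))$ is locally free and compatible with base change. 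The coordinate ring $A=\bigoplus_u\Gamma\big(U,\pi_*\cO_Z(\cE(u))\big)\chi^u$ is then a direct sum of flat $\cO_U$-modules, hence flat, and therefore $\cX\to U$ is flat.

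The real content of the admissibility hypothesis appears in identifying the fibres. Base change gives $\cX_s=\Spec\bigoplus_u\Gamma\big(\PP^1,\cO(\floor{\cE(u)}|_{Z_s})\big)\chi^u$, and over the central point $\floor{\cE(u)}|_{Z_0}=\sum_i\big(\sum_j\floor{\min\langle\Delta_i^j,u\rangle}\big)D_i$, whereas the fibre of $\TV(\cE_0)=\TV(\pDiv)=X$ is governed by $\floor{\pDiv(u)}=\sum_i\floor{\sum_j\min\langle\Delta_i^j,u\rangle}D_i$. These two $\QQ$-divisors have equal degree, and hence equal $h^0$ on $\PP^1$, exactly when $\sum_j\floor{\min\langle\Delta_i^j,u\rangle}=\floor{\sum_j\min\langle\Delta_i^j,u\rangle}$ for every $i$ and every integral $u$. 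Since $\min\langle\Delta_i^j,u\rangle\in\ZZ$ precisely when $\face(\Delta_i^j,u)$ contains a lattice point, admissibility --- at most one such face lattice-point-free per supporting $u$ --- forces at most one fractional summand for each $i$, which is exactly this round-down identity; the same argument over the collision loci in $U$ yields $\cX_s=\TV(\cE_s)$ for all $s$, and in particular $\cX_0=X$. Finally every module in sight is $M$-graded, so the construction is $T$-equivariant, and the $s_{ij}$ are $T$-invariant, so $T$ acts trivially on $U$ and $\cX\to U$ is an invariant deformation. I expect this round-down step to be the main obstacle: flatness is automatic, but without admissibility the restriction of $\floor{\cE(u)}$ to the central fibre would have strictly smaller degree than $\floor{\pDiv(u)}$, so the special fibre would fail to be $X$ and the family would not be a deformation of $X$ at all.
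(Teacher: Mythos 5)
Your argument is correct and is essentially the proof given in the cited source \cite[Section 2]{defrat_tvar} (the survey itself states the theorem without proof): spread the sections apart over $\CC^l$, verify the p-divisor conditions for $\cE$ by reducing each evaluation to a multiple of a fibre class, deduce flatness from the constancy of $h^0$ of the rounded-down evaluations via Grauert, and use admissibility precisely to commute rounding with Minkowski summation when identifying the special fibre with $X$. The one point you leave implicit is that the open subset $U\subseteq\CC^l$ must be chosen to exclude the loci where sections attached to \emph{different} $D_i$ collide (i.e.\ $a_i+s_{ij}=a_{i'}+s_{i'j'}$ with $i\neq i'$), since admissibility only controls the fractional parts within a single coefficient and your phrase ``the same argument over the collision loci'' would fail at such mixed collisions --- this, together with the possibility of empty coefficients at undeformed points (which your assumption $\Loc\pDiv=Y$ excludes but the statement permits), is exactly why the theorem is only asserted over ``some open subset'' of $\CC^l$.
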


\begin{example}
Suppose $X = \TV(\sigma)$ is toric, and consider some degree $r$.
By downgrading the torus action to the torus $T_r = N_r \otimes \CC^*$, 
we obtain the p-divisor $\pDiv = \defQ \otimes \{0\}
+ \sigma_{-r} \otimes  \{\infty\}$ on $\PP^1$.
A Minkowski decomposition of $\defQ$ gives rise to an 
invariant deformation $\TV(\cE)$ of the $T_r$-variety $X$. This 
deformation is equal to the corresponding deformation obtained in 
(\ref{subsec:homogToricDef}), whether $r \in \sigma\dual$ or not. 
The case of a one-parameter deformation is illustrated in
Figure~\ref{fig:tvardef}.
Note that $\sigma_{-r} = \emptyset$ if and only if
$r \not\in \sigma\dual$,
so for $r \in \sigma\dual$, $D_\infty$ has coefficient $\emptyset$,
and $\cE$ is actually defined on $\AA^{l+1}$.

In particular, the deformations of Example~\ref{ex:defMinus4} are
$T_r$-varieties with divisors
$${
\setlength{\arraycolsep}{2pt}
\begin{array}[b]{rcrcrcr}
\cE^1 &=& [-\frac{1}{2}, 0] \otimes D^0 &+& [0, \frac{1}{2}] \otimes D^1 \\
\cE^2 &=& -\frac{1}{2} \otimes D^0 &+& [0,1] \otimes D^1
\end{array}
}$$
on $\AA^2 = \Spec \CC[x,s]$, with $D^0 = V(x)$ and $D^1 = V(s-x)$. Here,
$s$ is the deformation parameter.
\end{example}

%%%%%%%%%%%%%%%%%%%%%%%%%%%%%%%%%%%%%%%%%%%%%%%%%%%%%%%%%%%%%%%%%%%%%
%%%   Deformations of complete toric varieties
%%%%%%%%%%%%%%%%%%%%%%%%%%%%%%%%%%%%%%%%%%%%%%%%%%%%%%%%%%%%%%%%%%%%%
\subsection{Deformations of complete toric varieties}
\label{subsec:deformGlobal}
Consider now a rational com\-plex\-ity-one $T$-variety $X$ defined by a set $\pFan$ of p-divisors as in section \ref{subsec:nonAffineT} on $Y=\PP^1$.  It is possible to construct invariant deformations of $X$ by constructing invariant deformations of $\toric(\pDiv)$ for all $\pDiv\in\pFan$ subject to certain compatibility conditions, see \cite[Section 4]{defrat_tvar}. This construction can in particular be used to obtain deformations of non-affine toric varieties.

There is an especially nice result for the case of complete and smooth toric varieties.
Let $\Sigma$ be a complete fan in $N_\QQ$ describing a smooth toric variety $X$, and let $r$ be as before.  For any ray $\rho\in\Sigma^{(1)}$ with $\langle \rho,r\rangle=1$, let $\Gamma_{\rho}(-r)$ be the graph embedded in $N_\QQ\cap\{r=1\}$ with vertices consisting of rays $\tau\in\Sigma^{(1)}\setminus \rho$ fulfilling $\langle \tau,r\rangle>0$. Two vertices $\tau_1$, $\tau_2$ of $\Gamma_\rho(-r)$ are connected by an edge if they generate a cone in $\Sigma$. After applying any cosection $s^*:N\to N_r$ mapping $\rho$ to the origin, we can consider $\Gamma_\rho(-r)$ to actually be embedded in $N_\QQ\cap r^\perp$.  
Let $\Omega(-r)$ denote the set of all rays $\rho$ of $\Sigma$ such that $\Gamma_\rho(-r)$ is not empty.

\begin{theorem}[\cite{Nathan}, Proposition 2.4]
 There is an isomorphism $$H^1(X,\cT_X)(-r)\cong \bigoplus_{\rho} H^0(\Gamma_\rho(-r),\CC)/\CC$$ describing first-order infinitesimal deformations of $X$.
\end{theorem}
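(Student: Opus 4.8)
The plan is to compute the weight-$(-r)$ piece of $H^1(X,\cT_X)$ by resolving $\cT_X$ through the line bundles attached to the rays and then evaluating their cohomology combinatorially with the Demazure formula of (\ref{subsec:cohomTorVar}). Since $X=\TV(\Sigma)$ is smooth and complete, the class group $\Cl(X)$ is free and the (dual) generalized Euler sequence
\begin{equation*}
0 \to \Cl(X)\dual\otimes_\ZZ\cO_X \xrightarrow{\;\beta\;} \bigoplus_{\rho\in\Sigma(1)}\cO_X(D_\rho) \xrightarrow{\;\gamma\;} \cT_X \to 0
\end{equation*}
is exact; it is obtained by dualizing the cotangent sequence that accompanies the divisor sequence of (\ref{subsec:toricPrinc}), and it is naturally $T$-equivariant once each $\cO_X(D_\rho)$ is given its toric linearization. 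The kernel $\Cl(X)\dual\otimes_\ZZ\cO_X$ is a direct sum of copies of the \emph{trivially} linearized structure sheaf.

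Next I would pass to weight spaces and run the long exact cohomology sequence in $M$-degree $-r$. Because $r\neq 0$, the Demazure description of (\ref{subsec:cohomTorVar}) applied to the trivial bundle shows that the relevant region $Z_{-r}=\{v\in N_\RR\kst \langle -r,v\rangle\ge 0\}$ is a halfspace, hence contractible; consequently $H^i(X,\cO_X)(-r)=0$ for all $i$, and the same holds for the kernel term $\Cl(X)\dual\otimes_\ZZ\cO_X$. The long exact sequence therefore collapses to an isomorphism
\begin{equation*}
H^1(X,\cT_X)(-r)\;\cong\;\bigoplus_{\rho\in\Sigma(1)} H^1\big(X,\cO_X(D_\rho)\big)(-r).
\end{equation*}

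It remains to identify each summand with $H^0(\Gamma_\rho(-r),\CC)/\CC$. For this I would feed the support function $\psi_\rho$ of $\cO_X(D_\rho)$ into the Demazure formula: with $Z_{-r}=\{v\in N_\RR\kst \langle -r,v\rangle\ge\psi_\rho(v)\}$ and $N_\RR=|\Sigma|$ contractible, the long exact sequence of the pair yields $H^1(X,\cO_X(D_\rho))(-r)\cong\til H^0(Z_{-r};\CC)$, so the whole problem becomes counting connected components of the polyhedral set $Z_{-r}$. A direct evaluation of $\psi_\rho$ on the ray generators (it equals $-1$ on the generator of $\rho$ and $0$ on all other rays) shows that $Z_{-r}$ is contractible, hence contributes nothing, unless $\langle\rho,r\rangle=1$; and in that case, slicing $Z_{-r}$ with the affine hyperplane $\{r=1\}$ and projecting by a cosection $s\dual\colon N\to N_r$ sending $\rho$ to $0$ exhibits $Z_{-r}$ as homotopy equivalent to the graph whose vertices are the rays $\tau\neq\rho$ with $\langle\tau,r\rangle>0$ and whose edges record which pairs span a cone of $\Sigma$ --- that is, to $\Gamma_\rho(-r)$. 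Hence $\til H^0(Z_{-r};\CC)\cong\til H^0(\Gamma_\rho(-r);\CC)=H^0(\Gamma_\rho(-r),\CC)/\CC$, and only the rays of $\Omega(-r)$ survive.

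The main obstacle is precisely this last identification. One must verify carefully that the cover of the sliced set $Z_{-r}\cap\{r=1\}$ by the pieces coming from the maximal cones has nerve equal to $\Gamma_\rho(-r)$, so that its reduced $0$-th cohomology genuinely counts the components of the graph; this requires the smoothness of $\Sigma$ to control the local structure along $\rho$, the assumption $\langle\rho,r\rangle=1$ to make the slice finite and star-shaped in the right way, and some care with the reduced-cohomology conventions so that empty or connected graphs correctly produce a vanishing contribution (matching the definition of $\Omega(-r)$). The equivariant exactness of the Euler sequence and the compatibility of the toric linearizations with the grading, while routine, also have to be checked in order to justify the weightwise long exact sequence.
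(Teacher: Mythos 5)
The survey itself contains no proof of this statement (it is quoted from the reference), but your architecture --- the $T$-equivariant generalized Euler sequence $0\to\Cl(X)^\vee\otimes_\ZZ\cO_X\to\bigoplus_{\rho}\cO_X(D_\rho)\to\cT_X\to0$ followed by a weightwise Demazure computation of each $H^1(X,\cO_X(D_\rho))(-r)$ --- is the standard route to this result, and the reduction to $\bigoplus_\rho H^1(X,\cO_X(D_\rho))(-r)$ is correct, since $H^\bullet(X,\cO_X)$ is $\CC$ concentrated in cohomological degree $0$ and weight $0$.

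The gap is in the final identification, and as written it is fatal. The set you analyze, $Z_{-r}=\{v\kst \langle -r,v\rangle\ge\psi_\rho(v)\}$, meets every cone $\sigma\in\Sigma$ in $\{v\in\sigma\kst\langle -r-u_\sigma,v\rangle\ge 0\}$, a convex cone containing the origin; hence $Z_{-r}$ is star-shaped about $0$ and contractible for \emph{every} $\rho$ and $r$, so $\widetilde{H}^0(Z_{-r};\CC)=0$ identically and your recipe would force $H^1(X,\cT_X)(-r)=0$ in all degrees. (Concretely, for $\cF_4$ with $r=[0,1]$ and $\rho=(0,1)$ one computes that $Z_{-r}$ is the half-plane $\{\langle r,\cdot\rangle\le 0\}$ with the ray $\rho$ attached at the origin --- connected --- whereas the theorem requires a one-dimensional contribution from this $\rho$.) The region that actually computes the cohomology is the locus where the inequality \emph{fails}, $V_{-r}=\{v\kst\langle -r,v\rangle<\psi_\rho(v)\}$: Demazure's formula is cohomology with supports in $Z_u$, i.e.\ $H^i(X,\cL)(u)\cong H^i\big(|\Sigma|,\,|\Sigma|\setminus Z_u;\CC\big)\cong\widetilde{H}^{i-1}(V_u;\CC)$ for $i\ge 1$, not the cohomology of the pair $(|\Sigma|,Z_u)$ --- the display in (\ref{subsec:cohomTorVar}) is easy to misread on exactly this point, as a check against the \v{C}ech complex of the cover $\{\TV(\sigma)\}$ shows. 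Once $Z_{-r}$ is replaced by $V_{-r}$, your slicing argument becomes viable: one has $V_{-r}\subseteq\{\langle r,\cdot\rangle>0\}$, scaling retracts $V_{-r}$ onto its slice at $\{r=1\}$, and that slice is homotopy equivalent to $\Gamma_\rho(-r)$ when $\langle\rho,r\rangle=1$ (with the remaining rays contributing nothing, which still has to be checked as you note), yielding $H^1(X,\cO_X(D_\rho))(-r)\cong H^0(\Gamma_\rho(-r),\CC)/\CC$.
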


Now fix some $\rho\in\Omega(-r)$ and choose some connected component $C$ of $\Gamma_{\rho}(-r)$. For each $\sigma\in\Sigma$, define a Minkowski decomposition of $\sigma_r$ via
\begin{align*}
\sigma_r=\begin{cases}
\sigma_r+\tail(\sigma_r) & \sigma_r\cap C=\emptyset\\
\tail(\sigma_r)+\sigma_r & \sigma_r\cap C\neq\emptyset.\\
\end{cases}
\end{align*}

\begin{theorem}[\cite{defrat_tvar}, Section 4]
\
\begin{enumerate}
\item The toric deformations of $\toric(\sigma)$ for $\sigma\in\Sigma$ constructed from the above decompositions of $\sigma_r$ glue to an equivariant deformation $\pi(C,\rho,r)$ of $X=\toric(\Sigma)$.
\item Letting $C$ range over all connected components of $\Gamma_{\rho}(-r)$, the images of the $\pi(C,\rho,r)$ under the Kodaira-Spencer map span $H^0(\Gamma_\rho(-r),\CC)/\CC$.
\end{enumerate}
\end{theorem}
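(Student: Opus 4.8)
The plan is to treat the two assertions separately, the combinatorial heart being the verification that the prescribed local Minkowski decompositions are mutually compatible and hence glue.

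\textbf{Part (1): consistency and admissibility.} First I would record the combinatorial fact that makes the rule well defined. If $\sigma\in\Sigma$ and $\tau_1,\tau_2\neq\rho$ are rays of $\sigma$ with $\langle\tau_i,r\rangle>0$, then $\tau_1,\tau_2$ span a two-dimensional face of $\sigma$, hence a cone of $\Sigma$, so they are joined by an edge of $\Gamma_\rho(-r)$. Consequently all vertices of the cross-section $\sigma\cap\{r=1\}$ coming from rays $\neq\rho$ form a clique and lie in one connected component of $\Gamma_\rho(-r)$. This turns ``$\sigma_r\cap C\neq\emptyset$'' into an all-or-nothing condition: for each maximal $\sigma$ either every such vertex lies in $C$ or none does, so the decomposition $\sigma_r=\Delta_0^\sigma+\Delta_1^\sigma$, with $\{\Delta_0^\sigma,\Delta_1^\sigma\}=\{\tail(\sigma_r),\sigma_r\}$ and the non-tail summand in slot $1$ exactly when $\sigma_r\cap C\neq\emptyset$, is unambiguous. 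Admissibility (cf.\ the definition in (\ref{subsec:homogToricDef})) is automatic, since one summand is always $\tail(\sigma_r)$, every face $\face(\tail(\sigma_r),u)$ of which contains the lattice point $0$.

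\textbf{Gluing.} Next I would check compatibility along a wall $\tau=\sigma\cap\sigma'$, which is exactly what is needed for the local toric deformations of \cite[Theorem 3.2]{tdef_flip} to glue via the mechanism of (\ref{subsec:equivDeformLocal})--(\ref{subsec:deformGlobal}). Restricting $\sigma_r=\Delta_0^\sigma+\Delta_1^\sigma$ to $\tau_r=\face(\sigma_r,u)$ sends the non-tail part into the same slot as on $\sigma_r$, so a conflict can arise only if $\sigma$ and $\sigma'$ are in opposite states. But if $\sigma$ meets $C$ while $\sigma'$ does not, any ray $\eta\neq\rho$ of $\tau$ with $\langle\eta,r\rangle>0$ would lie in $C$ (seen from $\sigma$) and outside $C$ (seen from $\sigma'$), a contradiction; hence $\tau$ has no such ray, $\tau_r$ is a pointed cone equal to its own tailcone, and both induced decompositions read $\tau_r=\tau_r+\tau_r$. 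Thus the decompositions agree on every wall, the local families glue to an equivariant deformation $\pi(C,\rho,r)$ of $X=\toric(\Sigma)$, proving (1).

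\textbf{Part (2): Kodaira--Spencer.} Here I would trace the class of $\pi(C,\rho,r)$ through the isomorphism $H^1(X,\cT_X)(-r)\cong\bigoplus_\rho H^0(\Gamma_\rho(-r),\CC)/\CC$ recorded just above. Locally (\ref{subsec:homogToricDef}) the Kodaira--Spencer image of a one-parameter decomposition $\sigma_r=\Delta_0+\Delta_1$ is the class of $\Delta_1$; in our family $\Delta_1^\sigma$ is the nontrivial summand precisely on the cones meeting $C$ and trivial elsewhere. Assembling these local classes into the \v{C}ech description defining the isomorphism, the global class of $\pi(C,\rho,r)$ is supported in the $\rho$-summand and equals the characteristic function $\mathbf 1_C$ of the component $C$ modulo constants. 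Since the characteristic functions of the components form a basis of the locally constant functions $H^0(\Gamma_\rho(-r),\CC)$, their images span $H^0(\Gamma_\rho(-r),\CC)/\CC$ (subject only to $\sum_C\mathbf 1_C\equiv 0$), so letting $C$ range over all components gives a spanning set, as claimed. The routine part is the combinatorics of (1); the genuinely delicate step is this last identification of the glued family's Kodaira--Spencer class with $\mathbf 1_C$, i.e.\ matching the \v{C}ech cocycle produced by the gluing data of (\ref{subsec:deformGlobal}) against the description of $H^1(X,\cT_X)(-r)$ from \cite{Nathan}, so that the jumps of the summand assignment across the walls bounding $C$ reproduce exactly the coboundary representing $\mathbf 1_C$.
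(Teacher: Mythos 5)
The survey states this theorem without proof, citing \cite[Section 4]{defrat_tvar}, so I am measuring your outline against the argument given there; your route is the same one. Part (1) is essentially complete and correct: the three observations you isolate are exactly the ones needed. Simpliciality of the smooth fan makes the rays of any $\sigma\in\Sigma$ pairing positively with $r$ into a clique of $\Gamma_\rho(-r)$, so ``$\sigma_r\cap C\neq\emptyset$'' is an all-or-nothing condition on the rays of $\sigma$; compatibility along a face $\tau\leq\sigma$ can then only be in question when $\tau$ has no such ray, in which case $\tau_r$ is empty or equal to its own tailcone and the two induced ordered decompositions coincide; and admissibility is automatic because the tail summand contributes the lattice point $0$ to every face. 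This is the combinatorial heart of the gluing statement, modulo the (routine but not literally checked) verification that face-compatible admissible decompositions assemble into a divisorial fan on the deformed base as in (\ref{subsec:equivDeformLocal}).

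Part (2) has a genuine gap, which you yourself flag: the identification of the Kodaira--Spencer class of $\pi(C,\rho,r)$ with $\mathbf{1}_C$ modulo constants \emph{is} the content of the statement, and you assert it rather than establish it. Everything downstream of that identification --- that the characteristic functions of the components span $H^0(\Gamma_\rho(-r),\CC)$ subject only to $\sum_C\mathbf{1}_C\equiv 0$ --- is immediate. What is missing is the computation itself: since each chart $\toric(\sigma)$ is smooth, every local Kodaira--Spencer class vanishes, and the global class is a \v{C}ech $1$-cocycle valued in $\cT_X$ of degree $-r$, read off from the discrepancies of the first-order trivializations on overlaps $\toric(\sigma)\cap\toric(\sigma')$; one must write this cocycle down (it is supported on the walls separating cones meeting $C$ from cones not meeting $C$), push it through the explicit isomorphism $H^1(X,\cT_X)(-r)\cong\bigoplus_\rho H^0(\Gamma_\rho(-r),\CC)/\CC$ of \cite{Nathan}, and in particular confirm it is not a coboundary. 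Without this step ``span'' is not proved --- a priori the glued families could have vanishing or linearly dependent Kodaira--Spencer images. That computation is precisely what \cite[Section 4]{defrat_tvar} supplies and what your proposal defers.
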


\begin{example}
We consider the fourth Hirzebruch surface $\cF_4=\proj_{\PP^1}(\cO\oplus\cO(4))$. This is a toric variety, and a fan $\Sigma$ with $\toric(\Sigma)=\cF_4$ is pictured in Figure \ref{fig:f4tof2}(a). Considering $r=[0,1]$ and $\rho$ the ray through $(0,1)$, we get that $\Gamma_\rho(-r)$ consists of two points. Thus, $\dim T_{\cF_4}^1(-r)=1$. A one-parameter deformation spanning this part of $T_{\cF_4}^1$ can be constructed from the Minkowski decomposition pictured in Figure \ref{fig:f4tof2}(b); the lines $\{\Delta^0\}$ and $\{\Delta^1\}$ picture the $0$th and $1$st summands dilated by a factor of two. This Minkowski decomposition comes from choosing the connected component $C=\{-1/3\}$ of $\Gamma_\rho(-r)$.

Note that the general fiber of this deformation is the second Hirzebruch surface $\cF_2=\proj_{\PP^1}(\cO\oplus\cO(2))$. Indeed, the general fiber is given by a collection of p-divisors on $\PP^1$ with slices $\{\Delta^0\}$ and $\{\Delta^1\}$. Performing a toric upgrade yields the fan in Figure \ref{fig:f4tof2}(c), which is a fan for $\cF_2$.

In these lattice coordinates, $T_{\cF_4}^1$ also has one dimensional components in degrees $-[1,1]$ and $-[2,1]$. The general fibers of the corresponding homogeneous deformations are $\PP^1\times\PP^1$ and $\cF_2$, respectively.
\end{example}

\begin{figure}[h]
  \centering
  \subfloat[Fan for $\cF_4$.]{\hspace*{2ex}\fanhirza\hspace*{1ex}}
  \subfloat[A Minkowski decomposition.]{\hirzmink}
  \subfloat[Fan for $\cF_2$.]{\hspace*{3ex}\fanhirzb\hspace*{3ex}}
  \caption{Deforming $\cF_4$ to $\cF_2$.}
\label{fig:f4tof2}
\end{figure}

%%%%%%%%%%%%%%%%%%%%%%%%%%%%%%%%%%%%%%%%%%%%%%%%%%%%%%%%%%%%%%%%%%%%%
%%%%%%%%%%%%%%%%%%%%%%%%%%%%%%%%%%%%%%%%%%%%%%%%%%%%%%%%%%%%%%%%%%%%%
%%%
%%%   Related constructions
%%%
%%%%%%%%%%%%%%%%%%%%%%%%%%%%%%%%%%%%%%%%%%%%%%%%%%%%%%%%%%%%%%%%%%%%%
%%%%%%%%%%%%%%%%%%%%%%%%%%%%%%%%%%%%%%%%%%%%%%%%%%%%%%%%%%%%%%%%%%%%%

\section{Related constructions}
\label{sec:relConst}

%%%%%%%%%%%%%%%%%%%%%%%%%%%%%%%%%%%%%%%%%%%%%%%%%%%%%%%%%%%%%%%%%%%%%
%%%   A non-toric view on p-divisors
%%%%%%%%%%%%%%%%%%%%%%%%%%%%%%%%%%%%%%%%%%%%%%%%%%%%%%%%%%%%%%%%%%%%%
\subsection{A non-toric view on p-divisors}
\label{subsec:nonToricP}
We return to the setting of (\ref{subsec:pDiv}) and consider affine $T$-varieties. Let us assume that $\pDiv = \sum_i \Delta_i \otimes Z_i$ is a p-divisor on $Y$. Note that, in contrast to (\ref{subsec:pDiv}),
we have symmetrized the notation of the tensor factors, i.e.\ $\Delta_i\in\Pol^+_\Q(N,\sigma)$ and $Z_i\in \CaDiv_{\geq 0}(Y)$.
In the current section, we moreover adopt another convention
differing from (\ref{subsec:pDiv}).
Setting $Y := \Loc(\pDiv)$, this variety is no longer complete. Nevertheless, it is projective over $Y_0 := \Spec\kG(Y,\CO_Y)$.
Thus, we can now assume that the polyhedra $\Delta_i$ are non-empty.
\\[0.5ex]
If $\Delta := \sum_i\Delta_i$ (which equals $\deg\pDiv$ from (\ref{subsec:degPolytope}) if $Y$ is a complete curve),
then $\normal(\Delta)$ is a fan in $M_\QQ$ which refines the
polyhedral cone $\sigma\dual$. Note that we usually would not construct a toric variety from these data since the fan is given in the ``wrong'' space $M_\QQ$ instead of $N_\QQ$. However, we make an exception here and define
$$
W := \PP(\Delta) = \toric(\normal(\Delta)) \longrightarrow 
\toric(\sigma\dual)=: W_0.
$$
This is a projective map, and the polyhedra $\Delta_i$ can be interpreted as semiample divisors $E_i$ on $W$. Thus, $\pDiv=\sum_i E_i\otimes Z_i$ becomes a ``double divisor'', i.e.\
an element of $\CaDiv_\QQ(W)\otimes_\ZZ \CaDiv_\QQ(Y)$.

%%%%%%%%%%%%%%%%%%%%%%%%%%%%%%%%%%%%%%%%%%%%%%%%%%%%%%%%%%%%%%%%%%%%%
%%%   Symmetrizing the equivalences between p-divisors
%%%%%%%%%%%%%%%%%%%%%%%%%%%%%%%%%%%%%%%%%%%%%%%%%%%%%%%%%%%%%%%%%%%%%
\subsection{Symmetrizing equivalences between p-divisors}
\label{subsec:symmRelP}
The equivalence relations on p-divisors mentioned just before Theorem \ref{thm:equivPT} in (\ref{subsec:pDiv}) can also be expressed in the symmetric language of (\ref{subsec:nonToricP}). 
First, the operation of pulling back $\pDiv$ via a modification
$\varphi: Y'\to Y$ can be contrasted with pulling back $\pDiv$ via a 
toric modification $\psi: W'\to W$.
The latter corresponds to giving a subdivision of the fan $\normal(\Delta)$; both $E_i$ and $\psi^*E_i$ correspond to the same polyhedron $\Delta_i$.
\\[0.5ex]
Second, let us recall from (\ref{subsec:pDiv}) the notion of a principal polyhedral divisor
$(a+\sigma)\otimes \div(f)$ with $a\in N$ and $f\in\CC(Y)^*$. In our new setting it is equal to $\div(\chi^{-a})\otimes\div(f)$, i.e.\ the latter can be understood as a ``double principal divisor''.

%%%%%%%%%%%%%%%%%%%%%%%%%%%%%%%%%%%%%%%%%%%%%%%%%%%%%%%%%%%%%%%%%%%%%
%%%   Interpreting the evaluations
%%%%%%%%%%%%%%%%%%%%%%%%%%%%%%%%%%%%%%%%%%%%%%%%%%%%%%%%%%%%%%%%%%%%%
\subsection{Interpreting evaluations}
\label{subsec:intEval}
Recall from (\ref{subsec:pDiv}) that elements $u\in M$ were associated to an evaluation
$\pDiv(u) = \sum_i \min\langle \Delta_i,u\rangle\cdot Z_i\in \CaDiv_\QQ(Y)$.
To interpret this construction within the language of (\ref{subsec:nonToricP}),
we have to understand the former characters $u$ of $T$ as 
germs of curves in $W$, i.e.\ as maps $u:(\CC,0)\to W$. 
Note that the scalar $\min\langle\Delta_i,u\rangle$ equals the multiplicity of $u^*E_i$ at the origin or, likewise, the intersection number $(E_i\cdot u_*\cO_{\CC^1})$ in $W$. Hence, the sheaf $\cO_Y(\pDiv)$ on $Y$ corresponds to $\sum_i (E_i\cdot\cU) D_i$ where
$\cU := \oplus_{u}\, u_\ast \CO_{\CC^1}$ is a quasi-coherent sheaf
on $W$ with one-dimensional support.

The idea behind the preceding construction is that $\ttoric(\pDiv)\to Y$ looks like a fibration which becomes degenerate over each divisor $Z_i \subset Y$, and the polyhedron $\Delta_i$ tells us what the degeneration looks like.
Translating these polyhedra into divisors $E_i$ on a toric variety $W$ is linked to the fact that the fibers of $\ttoric(\pDiv)\to Y$ are also toric. We hope that this might serve as a general pattern to describe degenerate fibrations of a certain type in a much broader context. We conclude this section with a construction pointing into this direction.

%%%%%%%%%%%%%%%%%%%%%%%%%%%%%%%%%%%%%%%%%%%%%%%%%%%%%%%%%%%%%%%%%%%%%
%%%   Abelian Galois covers of $\PP^1$
%%%%%%%%%%%%%%%%%%%%%%%%%%%%%%%%%%%%%%%%%%%%%%%%%%%%%%%%%%%%%%%%%%%%%
\subsection{Abelian Galois covers of $\PP^1$}
\label{subsec:GalCovP1}
Let $A$ be a finite, abelian group. Every divisor
$\pDiv\in \Div_A^0\PP^1 := A\otimes_\Z\Div^0\PP^1$ of degree $0$ can then be understood as a linear map
$A^*\to\Div^0_{\QQ/\ZZ}\PP^1$, $u\mapsto \pDiv(u)$ where
$A^* := \gHom(A,\QQ/\ZZ)$ denotes the dual group.
Choosing arbitrary lifts $D_u\in\Div^0_{\QQ}\PP^1$
and, afterwards, rational functions $f_{u,v}=f_{v,u}\in \CC(\PP^1)$
which satisfy
$$
D_u + D_v + \div(f_{u,v}) = D_{u+v},
$$
we may, after possibly correcting the functions $f_{uv}$ by suitable constants, assume that $f_{u,v+w}\,f_{v,w} = f_{u,v}\,f_{u+v,w}$.
Then we can define a multiplication via
$$
\begin{array}{ccc}
\CO_{\PP^1}(D_u) \otimes \CO_{\PP^1}(D_v)
&\longrightarrow&
\CO_{\PP^1}(D_{u+v})
\\
f\hspace{0.6em}\otimes\hspace{0.6em} g & \mapsto&
f g\, f_{u,v}^{-1}
\end{array}
$$
which provides the sheaf
\[\CO(\pDiv) := \bigoplus_{u\in A^*} \CO_{\PP^1}(D_u).\]
with an associative and commutative $\CO_{\PP^1}$-algebra structure. Up to isomorphism, the latter does not depend on the choices we have made. Finally, the Galois covering associated to $\pDiv$ is defined
as the relative spectrum $C(\pDiv) := \Spec_{\PP^1}\CO(\pDiv)$.

\begin{proposition}[{\cite[Theorem 3.2]{coxComp1}}]
The relative spectrum $C(\pDiv)$ yields a Galois covering $\pi:C(\pDiv)\to\PP^1$ whose ramification points are contained in $\supp \pDiv$. The ramification index of $P \in C$ equals the order of the $\pDiv$-coefficient of $\pi(P)$ inside $A$.%
\end{proposition}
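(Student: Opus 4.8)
The plan is to exhibit $\pi\colon C(\pDiv)\to\PP^1$ as an abelian cover and then read off the ramification from a purely local computation at each point. First I would record the global shape of the algebra: since each summand $\CO_{\PP^1}(D_u)$ is invertible, $\CO(\pDiv)=\bigoplus_{u\in A^*}\CO_{\PP^1}(D_u)$ is a sheaf of $\CO_{\PP^1}$-algebras that is locally free of rank $|A^*|=|A|$, so $\pi$ is finite and flat of degree $|A|$. The $A^*$-grading is respected by the multiplication $f\otimes g\mapsto fgf_{u,v}^{-1}$, whence the Cartier dual $\gHom(A^*,\Cstar)\cong A$ acts on $\CO(\pDiv)$ by $\CO_{\PP^1}$-algebra automorphisms, an element $a\in A$ scaling the $u$-summand by the root of unity attached to $u(a)\in\QQ/\ZZ\hookrightarrow\Cstar$. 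Choosing the lift $D_0=0$, the ring of invariants is the degree-$0$ part $\CO_{\PP^1}(D_0)=\CO_{\PP^1}$, so $C(\pDiv)/A=\PP^1$.

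To see that this is a Galois covering I would pass to the generic point $\eta\in\PP^1$: every $\CO_{\PP^1}(D_u)$ is trivial there, so the fibre is the $A^*$-graded algebra $\bigoplus_u\CC(\PP^1)\chi^u$ whose homogeneous pieces are lines with invertible multiplication, i.e.\ an $A$-torsor over $K=\CC(\PP^1)$. Because $\operatorname{char}\CC=0$ and $K$ contains all roots of unity, this torsor is étale and $A$ acts on it simply transitively; thus $\CC(C)/K$ is an $A$-Galois algebra. Being finite flat over the smooth curve $\PP^1$ with reduced generic fibre, $C(\pDiv)$ is reduced, and the local computation below will show it is even normal.

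The heart of the argument is the local/formal structure at a closed point $P$. Let $a_P\in A$ be the coefficient of $\pDiv$ at $P$ and set $d:=\ord(a_P)$; I work over $\widehat{\cO}_{\PP^1,P}=\CC[[t]]$ and choose the representatives $c_u\in[0,1)$ of $u(a_P)\in\QQ/\ZZ$, so that $\CO_{\PP^1}(D_u)$ has local generator $\chi^u$ and $\ord_P f_{u,v}=c_{u+v}-c_u-c_v\in\{0,-1\}$ records whether a ``carry'' occurs. If $P\notin\supp\pDiv$ then $a_P=0$, all $c_u=0$ and all $f_{u,v}$ are units, so the completed fibre algebra is again an $A$-torsor over $\CC[[t]]$, which is étale; hence $\pi$ is unramified over $P$. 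In general the homomorphism $u\mapsto u(a_P)$ has image $\tfrac1d\ZZ/\ZZ$ and kernel $B=(A/\langle a_P\rangle)^*$ of index $d$; the $B$-graded part contributes an étale factor exactly as before, while for a generator $\bar u_0$ of $A^*/B$ with $\bar u_0(a_P)=1/d$ the carry computation gives $(\chi^{\bar u_0})^d=(\text{unit})\cdot t\cdot\chi^{0}$, exhibiting $\chi^{\bar u_0}$ as a $d$-th root of the uniformizer $t$. Consequently each point $Q$ of $C(\pDiv)$ over $P$ has $\widehat{\cO}_{C,Q}\cong\CC[[s]]$ with $s^d=t$, so $C$ is regular there and the ramification index of $Q$ is $d=\ord(a_P)$. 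This shows ramification occurs only over $\supp\pDiv$ and identifies the ramification index with the order of the coefficient.

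The step I expect to be the main obstacle is this last one for non-cyclic $A$: cleanly separating the single ``ramified'' cyclic direction $A^*/B$ from the étale directions $B$ while keeping the chosen cocycle $(f_{u,v})$ under control, and verifying the identity $(\chi^{\bar u_0})^d=(\text{unit})\cdot t\cdot\chi^{0}$ together with the resulting normality. The splitting $A^*=B\oplus\langle\bar u_0\rangle$ need only be used formally at $P$, where one has enough freedom to adjust the $f_{u,v}$ by units, so the potential incompatibility with the global cocycle does not actually arise.
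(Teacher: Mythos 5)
The survey itself offers no proof of this statement --- it is quoted from \cite[Theorem 3.2]{coxComp1} --- so your argument has to be judged on its own terms. Your overall strategy is the right one: local freeness of $\CO(\pDiv)$ gives a finite flat $\pi$ of degree $|A|$, the $A$-action dual to the $A^*$-grading identifies $\PP^1$ with the quotient, nondegeneracy of the trace form makes the generic fibre an \'etale $A$-torsor, and the completed-local ``carry'' computation at a closed point $P$ is exactly how one reads off the ramification.

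There is, however, one step that fails as written: the splitting $A^*=B\oplus\langle\bar u_0\rangle$ with $B=\ker\big(u\mapsto u(a_P)\big)$. The extension $0\to B\to A^*\to\ZZ/d\ZZ\to 0$ need not split --- take $A^*\cong\ZZ/4\ZZ$ and $a_P$ of order $2$, so that $B=2A^*\cong\ZZ/2\ZZ$ --- and correspondingly your identity $(\chi^{u_0})^d=(\text{unit})\cdot t\cdot\chi^{0}$ is wrong as stated: the carry computation yields $(\chi^{u_0})^d=(\text{unit})\cdot t\cdot\chi^{du_0}$ with $du_0\in B$ possibly nonzero. The repair needs no group splitting: let $R_B=\bigoplus_{u\in B}\CC[[t]]\,\chi^u$ be the carry-free part, which is \'etale and hence a product of $|B|$ copies of $\CC[[t]]$ in which $\chi^{du_0}$ is a unit; since $(\chi^{u_0})^k$ is a unit multiple of $\chi^{ku_0}$ for $0\le k\le d-1$ and the $ku_0$ run over coset representatives of $B$ in $A^*$, the whole completed algebra is $R_B[\chi^{u_0}]$ with $(\chi^{u_0})^d=w\,t$ for some $w\in R_B^{\times}$, and each factor is $\CC[[s]]$ with $s^d=(\text{unit})\,t$, giving ramification index exactly $d=\ord(a_P)$. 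A second, more cosmetic imprecision: $\ord_P f_{u,v}=c_{u+v}-c_u-c_v$ (and ``all $f_{u,v}$ are units'' off $\supp\pDiv$) presupposes that the chosen global lifts $D_u$ have coefficient exactly $c_u\in[0,1)$ at $P$; in general $\ord_P f_{u,v}$ differs from this by $\lfloor (D_{u+v})_P\rfloor-\lfloor (D_u)_P\rfloor-\lfloor (D_v)_P\rfloor$, but the discrepancy disappears once you phrase the computation in terms of the local generators $t^{-\lfloor (D_u)_P\rfloor}$ of $\CO_{\PP^1}(D_u)$, whose products carry the factor $t^{\,c_u+c_v-c_{u+v}}$ you intend. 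With these two repairs the proof is complete.
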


\begin{example}
\label{ex:cyclicCovering}
If $D\in\Div_{\Z}\PP^1$ is an effective divisor with
$n|\deg D$, then the associated well-known cyclic $n$-fold covering of $\PP^1$ is given, via the above recipe, by understanding $D$ as an element of $\Div^0_{\Z/n\Z}\PP^1$.
\end{example}

%\nocite{*}
\bibliographystyle{alpha}
\bibliography{impanga}

\def\cprime{$'$}
\begin{thebibliography}{KKMSD73}

\bibitem[AH03]{tvar_0}
Klaus Altmann and J\"urgen Hausen.
\newblock Polyhedral divisors and algebraic torus actions, extended version.
\newblock arXiv:math/0306285v1, 2003.

\bibitem[AH06]{tvar_1}
Klaus Altmann and J\"urgen Hausen.
\newblock Polyhedral divisors and algebraic torus actions.
\newblock {\em Math. Ann.}, 334:557--607, 2006.

\bibitem[AH08]{fansy}
Klaus Altmann and Georg Hein.
\newblock A fansy divisor on {$\overline{M}\sb {0,n}$}.
\newblock {\em J. Pure Appl. Algebra}, 212(4):840--850, 2008.

\bibitem[AHS08]{tvar_2}
Klaus Altmann, J{\"u}rgen Hausen, and Hendrik S{\"u}{\ss}.
\newblock Gluing affine torus actions via divisorial fans.
\newblock {\em Transformation Groups}, 13(2):215--242, 2008.

\bibitem[AK]{tdef_versal2}
Klaus Altmann and Lars Kastner.
\newblock {Negative deformations of toric singularities that are smooth in
  codimension 2.}
\newblock Unpublished manuscript.

\bibitem[AL]{1105.4494}
I.~{Arzhantsev} and A.~{Liendo}.
\newblock {Polyhedral divisors and SL$\_2$-actions on affine T-varieties}.
\newblock arXiv:math/1105.4494.

\bibitem[Alt94]{tdef_T1}
Klaus Altmann.
\newblock {Computation of the vector space $T\sp 1$ for affine toric
  varieties.}
\newblock {\em J. Pure Appl. Algebra}, 95(3):239--259, 1994.

\bibitem[Alt97a]{tdef_T2}
Klaus Altmann.
\newblock {Infinitesimal deformations and obstructions for toric
  singularities.}
\newblock {\em J. Pure Appl. Algebra}, 119(3):211--235, 1997.

\bibitem[Alt97b]{tdef_versal}
Klaus Altmann.
\newblock {The versal deformation of an isolated toric Gorenstein singularity.}
\newblock {\em Invent. Math.}, 128(3):443--479, 1997.

\bibitem[Alt00]{tdef_flip}
Klaus Altmann.
\newblock {One parameter families containing three dimensional toric Gorenstein
  singularities.}
\newblock {Corti, Alessio (ed.) et al., Explicit birational geometry of
  3-folds. Cambridge: Cambridge University Press. Lond. Math. Soc. Lect. Note
  Ser. 281, 21-50 (2000).}, 2000.

\bibitem[AP]{coxComp1}
Klaus Altmann and Lars Petersen.
\newblock {C}ox rings of rational complexity one {T}-varieties.
\newblock arXiv:1009.0478v1.

\bibitem[AW]{coxmds}
Klaus Altmann and Jaros\l{}aw Wi\'{s}niewski.
\newblock P-{D}ivisors of {C}ox rings.
\newblock arXiv:0911.5167, to appear in Mich.~Math.J.

\bibitem[Bat94]{batyrevMS}
Victor~V. Batyrev.
\newblock {Dual polyhedra and mirror symmetry for Calabi-Yau hypersurfaces in
  toric varieties.}
\newblock {\em J. Algebr. Geom.}, 3(3):493--535, 1994.

\bibitem[BCS05]{toricDMStacks}
Lev~A. Borisov, Linda Chen, and Gregory~G. Smith.
\newblock {The orbifold Chow ring of toric Deligne-Mumford stacks.}
\newblock {\em J. Am. Math. Soc.}, 18(1):193--215, 2005.

\bibitem[Cox95]{cox}
D.~A. Cox.
\newblock The homogeneous coordinate ring of a toric variety.
\newblock {\em J. Algebraic Geometry}, 4(1):17--50, 1995.

\bibitem[Dan78]{danilov}
V.~I. Danilov.
\newblock The geometry of toric varieties.
\newblock {\em Russ. Math. Surv.}, 33(2):97--154, 1978.

\bibitem[Dan91]{danilovDeRham}
V.I. Danilov.
\newblock {De Rham complex on toroidal variety.}
\newblock {Lect. Notes Math. 1479, 26-38.}, 1991.

\bibitem[Dem70]{demazureTV}
Michel Demazure.
\newblock Sous-groupes alg\'ebriques de rang maximum du groupe de {C}remona.
\newblock {\em Ann. Sci. \'{E}cole Norm. Sup.}, 3(4):507--588, 1970.

\bibitem[Ful93]{fulton}
William Fulton.
\newblock {\em {Introduction to Toric Varieties}}.
\newblock Annals of Mathematical Studies. Princeton University Press,
  Princeton, 1993.

\bibitem[FZ03]{1093.14084}
Hubert Flenner and Mikhail Zaidenberg.
\newblock {Normal affine surfaces with $\mathbb{C}^*$-actions.}
\newblock {\em Osaka J. Math.}, 40(4):981--1009, 2003.

\bibitem[Gro63]{EGAIII2}
A.~Grothendieck.
\newblock \'{E}l\'ements de g\'eom\'etrie alg\'ebrique. {III}. \'{E}tude
  cohomologique des faisceaux coh\'erents. {II}.
\newblock {\em Inst. Hautes \'Etudes Sci. Publ. Math.}, (17):91, 1963.

\bibitem[GS82]{GuilleminSternberg}
V.~Guillemin and S.~Sternberg.
\newblock Convexity properties of the moment mapping.
\newblock {\em Invent. Math.}, 67(3):491--513, 1982.

\bibitem[Har77]{ha}
Robin Hartshorne.
\newblock {\em {Algebraic Geometry}}.
\newblock Graduate Texts in Mathematics. Springer-Verlag, Berlin, 1977.

\bibitem[HI]{andreas+nathan}
Andreas Hochenegger and Nathan~Owen Ilten.
\newblock Families of invariant divisors on rational complexity-one
  {$T$}-varieties.
\newblock arXiv:0906.4292v3.

\bibitem[HK00]{MDS}
Yi~Hu and Sean Keel.
\newblock Mori dream spaces and {GIT}.
\newblock {\em Michigan Math. J.}, 48:331--348, 2000.

\bibitem[HS10]{tvarcox}
J\"urgen Hausen and Hendrik S\"u{\ss}.
\newblock {The Cox ring of an algebraic variety with torus action}.
\newblock {\em Adv. Math.}, 225(2):977--1012, 2010.

\bibitem[Hu05]{hu:05a}
Yi~Hu.
\newblock Topological aspects of {C}how quotients.
\newblock {\em J. Differential Geom.}, 69(3):399--440, 2005.

\bibitem[Ilt11]{Nathan}
Nathan~Owen Ilten.
\newblock Deformations of smooth toric surfaces.
\newblock {\em Manuscripta Math.}, 134(1):123--137, 2011.

\bibitem[IS]{NathHend}
Nathan~Owen Ilten and Hendrik S\"u{\ss}.
\newblock Polarized complexity-one {$T$}-varieties.
\newblock arXiv:0910.5919v1 to appear in Mich.~Math.~J.

\bibitem[IS10]{tcodes}
Nathan~Owen Ilten and Hendrik S{\"u}{\ss}.
\newblock {Algebraic geometry codes from polyhedral divisors.}
\newblock {\em J. Symb. Comput.}, 45(7):734--756, 2010.

\bibitem[IVa]{defrat_tvar}
Nathan~Owen Ilten and Robert Vollmert.
\newblock Deformations of rational {$T$}-varieties.
\newblock arXiv:0903.1393v3 to appear in J.~Algebr.~Geom.

\bibitem[IVb]{upDowngrade}
Nathan~Owen Ilten and Robert Vollmert.
\newblock Upgrading and downgrading torus actions.
\newblock arXiv:1103.4010v1.

\bibitem[Kap93]{kapranov:93a}
M.~M. Kapranov.
\newblock Chow quotients of {G}rassmannians. {I}.
\newblock In {\em I. {M}. {G}el\cprime fand {S}eminar}, volume~16 of {\em Adv.
  Soviet Math.}, pages 29--110. Amer. Math. Soc., Providence, RI, 1993.

\bibitem[KKMSD73]{mumford}
G.~Kempf, F.~Knudsen, D.~Mumford, and B.~Saint-Donat.
\newblock {\em Toroidal Embeddings I}.
\newblock Lecture Notes in Mathematics. Springer-Verlag, Berlin, 1973.

\bibitem[Liea]{alvaro08}
Alvaro Liendo.
\newblock Affine {T}-varieties of complexity one and locally nilpotent
  derivations.
\newblock arXiv:0812.0802v2.

\bibitem[Lieb]{alvaro09}
Alvaro Liendo.
\newblock $\mathbb{G}_{\textnormal{a}}$-actions of fiber type on affine
  {T}-varieties.
\newblock arXiv:0911.1110v1.

\bibitem[Liec]{alvaro10}
Alvaro Liendo.
\newblock Roots of the affine {C}remona group.
\newblock arXiv:1010.4034v1.

\bibitem[LS]{tsing}
Alvaro {Liendo} and Hendrik {S{\"u}{\ss}}.
\newblock Normal singularities with torus actions.
\newblock arXiv:1005.2462v2.

\bibitem[Mav]{mavlyutov_cox}
Anvar Mavlyutov.
\newblock {Deformations of toric varieties via Minkowski sum decompositions of
  polyhedral complexes.}
\newblock arXiv:0902.0967v3.

\bibitem[Oda88]{oda}
Tadao Oda.
\newblock {\em Convex bodies and algebraic geometry}, volume~15.
\newblock Springer, Berlin, 1988.

\bibitem[OW77]{0352.14016}
P.~Orlik and P.~Wagreich.
\newblock {Algebraic surfaces with k*-action.}
\newblock {\em Acta Math.}, 138:43--81, 1977.

\bibitem[Pet10]{phdLars}
Lars Petersen.
\newblock {\em Line Bundles on $T$-Varieties and Beyond}.
\newblock PhD thesis, Freie Universit\"at Berlin, 2010.

\bibitem[Pin74]{0304.14006}
Henry Pinkham.
\newblock {Deformations of algebraic varieties with $G_m$ action.}
\newblock {\em Ast\'{e}risque}, 20:1--131, 1974.

\bibitem[Pin77]{0331.14018}
Henry Pinkham.
\newblock {Normal surface singularities with $\mathbf{C}^*$ action.}
\newblock {\em Math. Ann.}, 227:183--193, 1977.

\bibitem[Pin78]{0351.14004}
Henry Pinkham.
\newblock {Deformations of normal surface singularities with $\mathbf{C}^*$
  action.}
\newblock {\em Math. Ann.}, 232:65--84, 1978.

\bibitem[PS]{tidiv}
Lars Petersen and Hendrik S\"u\ss.
\newblock Torus invariant divisors.
\newblock arXiv:0811.0517v3.

\bibitem[S{\"u}{\ss}]{candiv}
Hendrik S{\"u}{\ss}.
\newblock Canonical divisors on {$T$}-varieties.
\newblock arXiv:0811.0626v2.

\bibitem[Tev07]{1154.14039}
Jenia Tevelev.
\newblock {Compactifications of subvarieties of tori.}
\newblock {\em Am. J. Math.}, 129(4):1087--1104, 2007.

\bibitem[Tim97]{timGVar}
Dmitri Timashev.
\newblock {Classification of $G$-varieties of complexity 1.}
\newblock {\em Izv. Math.}, 61(2):363--397, 1997.

\bibitem[Tim00]{timDivs}
Dmitri Timashev.
\newblock {Cartier divisors and geometry of normal $G$-varieties.}
\newblock {\em Transform. Groups}, 5(2):181--204, 2000.

\bibitem[Tim08]{timTorus}
Dmitri Timashev.
\newblock {Torus actions of complexity one.}
\newblock {Harada, Megumi (ed.) et al., Toric topology. International
  conference, Osaka, Japan, May 28--June 3, 2006. Providence, RI: American
  Mathematical Society (AMS). Contemporary Mathematics 460, 349-364 (2008).},
  2008.

\bibitem[Var76]{varchenkoRes}
A.N. Varchenko.
\newblock {Zeta-function of monodromy and Newton's diagram.}
\newblock {\em Invent. Math.}, 37:253--262, 1976.

\bibitem[Vol10]{toroidal}
Robert Vollmert.
\newblock Toroidal embeddings and polyhedral divisors.
\newblock {\em Int. J. Algebra}, 4(5-8):383--388, 2010.

\end{thebibliography}

%\printindex

\end{document}